\pdfoutput=1
\documentclass[11pt]{article}
\usepackage{authblk}
\usepackage[toc,page]{appendix}

\usepackage{color}
\usepackage{helvet}         
\usepackage{courier}        
\usepackage{type1cm}        
%
\usepackage{framed}
\usepackage{tikz}
\usepackage{makeidx}         
\usepackage{graphicx}        
\usepackage{multicol}        
\usepackage[bottom]{footmisc}
\usepackage{amsmath}
\usepackage{amssymb}
\usepackage{bbold}
\usepackage{amsthm}
\usepackage{subcaption}
\usepackage{sidecap}
\usepackage{floatrow}
\usepackage{pdflscape}

\usepackage{comment}
\usepackage[font=small]{caption}
\usepackage[top=2cm, bottom=2cm, left=2cm, right=2cm]{geometry}
\newtheorem{theorem}{Theorem}
\newtheorem{corollary}[theorem]{Corollary}

\newtheorem{lemma}[theorem]{Lemma}

\newtheorem{proposition}[theorem]{Proposition}

\newtheorem{remark}[theorem]{Remark}

\numberwithin{theorem}{section}
\numberwithin{figure}{section}
\numberwithin{equation}{section}

\DeclareMathOperator{\dist}{dist}
\DeclareMathOperator{\SLE}{SLE}
\DeclareMathOperator{\hSLE}{hSLE}

\DeclareMathOperator{\diam}{diam}

\DeclareMathOperator{\hF}{{}_2F_1}

\DeclareMathOperator{\ust}{UST}
\DeclareMathOperator{\Leb}{Leb}





\begin{document}

\title{Hypergeometric SLE with $\kappa=8$: \\Convergence of UST and LERW in Topological Rectangles}

\author{Yong Han\thanks{Funded by National Science Foundation of China (12131016, 12201419)}}
\affil{Shenzhen University, China. Email: hanyong@szu.edu.cn}

\author{Mingchang Liu}
\affil{Tsinghua University, China. Email: liumc\_prob@163.com}

\author{Hao Wu\thanks{Funded by Beijing Natural Science Foundation (JQ20001).}}
\affil{Tsinghua University, China. Email: hao.wu.proba@gmail.com}

\date{\today}

%
%
\maketitle
\vspace{-1cm}
\begin{center}
\begin{minipage}{0.9\textwidth}
\abstract{We consider uniform spanning tree (UST) in topological rectangles with alternating boundary conditions. The Peano curves associated to the UST converge weakly to hypergeometric SLE$_8$, denoted by $\hSLE_8$. From the convergence result, we obtain the continuity and reversibility of $\hSLE_8$ as well as an interesting connection between $\SLE_8$ and $\hSLE_8$. The loop-erased random walk (LERW) branch in the UST converges weakly to $\SLE_2(-1, -1; -1, -1)$. We also obtain the limiting joint distribution of the two end points of the LERW branch. \\
\textbf{Keywords}: uniform spanning tree, loop-erased random walk, Schramm Loewner evolution.\\
\textbf{MSC}: 60J67}
\end{minipage}
\end{center}

\tableofcontents

\newcommand{\eps}{\epsilon}
\newcommand{\ov}{\overline}
\newcommand{\U}{\mathbb{U}}
\newcommand{\T}{\mathbb{T}}
\newcommand{\HH}{\mathbb{H}}
\newcommand{\LA}{\mathcal{A}}
\newcommand{\LB}{\mathcal{B}}
\newcommand{\LC}{\mathcal{C}}
\newcommand{\LD}{\mathcal{D}}
\newcommand{\LF}{\mathcal{F}}
\newcommand{\LK}{\mathcal{K}}
\newcommand{\LE}{\mathcal{E}}
\newcommand{\LG}{\mathcal{G}}
\newcommand{\LL}{\mathcal{L}}
\newcommand{\LM}{\mathcal{M}}
\newcommand{\LQ}{\mathcal{Q}}
\newcommand{\LP}{\mathcal{P}}
\newcommand{\LR}{\mathcal{R}}
\newcommand{\LT}{\mathcal{T}}
\newcommand{\LS}{\mathcal{S}}
\newcommand{\LU}{\mathcal{U}}
\newcommand{\LV}{\mathcal{V}}
\newcommand{\LX}{\mathcal{X}}
\newcommand{\PartF}{\mathcal{Z}}
\newcommand{\LH}{\mathcal{H}}
\newcommand{\R}{\mathbb{R}}
\newcommand{\C}{\mathbb{C}}
\newcommand{\N}{\mathbb{N}}
\newcommand{\Z}{\mathbb{Z}}
\newcommand{\E}{\mathbb{E}}
\newcommand{\PP}{\mathbb{P}}
\newcommand{\QQ}{\mathbb{Q}}
\newcommand{\A}{\mathbb{A}}
\newcommand{\one}{\mathbb{1}}
\newcommand{\bn}{\mathbf{n}}
\newcommand{\MR}{MR}
\newcommand{\cond}{\,|\,}
\newcommand{\la}{\langle}
\newcommand{\ra}{\rangle}
\newcommand{\tree}{\Upsilon}
\newcommand{\prob}{\mathbb{P}}
\renewcommand{\Im}{\mathrm{Im}}
\renewcommand{\Re}{\mathrm{Re}}
\newcommand{\ii}{\mathfrak{i}}

\section{Introduction}
\label{sec::intro}
In \cite{SchrammScalinglimitsLERWUST}, O. Schramm introduced a random process---Schramm Loewner Evolution (SLE)---as a candidate for the scaling limit of interfaces in two-dimensional critical lattice models. The setup is as follows.  We consider a bounded simply connected domain $\Omega\subsetneq\C$ such that $\partial\Omega$ is locally connected. 
Let $\phi$ be any conformal map from the unit disk $\U$ onto $\Omega$. As $\partial\Omega$ is locally connected, the conformal map $\phi$ can be extended continuously to $\overline{\U}$ and $\partial\Omega$ is a curve (see~\cite[Theorem~2.1]{Pommerenke}).
A Dobrushin domain $(\Omega; x, y)$ is a bounded simply connected domain $\Omega$ with two boundary points $x, y$ such that $\partial\Omega$ is locally connected. 
We denote by $(xy)$ the boundary arc going from $x$ to $y$ in counterclockwise order. 
Suppose that $(\Omega_{\delta}; x_{\delta}, y_{\delta})$ is an approximation of $(\Omega; x, y)$ on $\delta\Z^2$. 
Consider a critical lattice model on $\Omega_{\delta}$ with 
 Dobrushin boundary conditions, for instance, Ising model, percolation, uniform spanning tree etc. In these examples, there is an interface in $\Omega_{\delta}$ connecting $x_{\delta}$ to $y_{\delta}$. It is conjectured that such interface has a conformally invariant scaling limit which can be identified by $\SLE_{\kappa}$ where the parameter $\kappa$ varies for different models. Since the introduction of $\SLE$, there are several models for which the conjecture is proved: the Peano curve in uniform spanning tree converges to $\SLE_8$ (with further assumption that $\partial\Omega$ is $C^1$ and simple, see Theorem~\ref{thm::ust_Dobrushin}) and the loop-erased random walk converges to $\SLE_2$ \cite{LawlerSchrammWernerLERWUST}, the interface in percolation converges to $\SLE_6$ \cite{SmirnovPercolationConformalInvariance}, the level line of discrete Gaussian free field converges to $\SLE_4$ \cite{SchrammSheffieldDiscreteGFF}, the interface in Ising model converges to $\SLE_3$ and the interface in FK-Ising model converegs to $\SLE_{16/3}$ \cite{CDCHKSConvergenceIsingSLE}. 
The proof requires two inputs: 1st. tightness of interfaces; 2nd. discrete martingale observable. With the tightness, there are always subsequential limits of interfaces; and then one uses the observable to show that all the subsequential limits are the same  and also identify the unique limit.

Dobrushin boundary conditions are the simplest boundary condition. It is then natural to consider critical lattice models with more complicated boundary conditions. One possibility is to consider the model in polygons with alternating boundary conditions. 
In general, a (topological) polygon $(\Omega; x_1, \ldots, x_p)$ is a bounded simply connected domain $\Omega$ with distinct boundary points $x_1, \ldots, x_p$ such that $\partial\Omega$ is locally connected 
and $x_1, \ldots, x_p$ lies on $\partial\Omega$ in counterclockwise order. 
In this article, we focus on topological rectangles $(\Omega; a, b, c, d)$, i.e. a polygon with four marked points on the boundary. We call it a quad. Suppose  that $(\Omega_{\delta}; a_{\delta}, b_{\delta}, c_{\delta}, d_{\delta})$ is an approximation of $(\Omega; a, b, c, d)$ on $\delta\Z^2$. Consider a critical lattice model on $\Omega_{\delta}$ with alternating boundary conditions, it turns out that the scaling limit of interfaces in this case becomes hypergeometric SLE, denoted by $\hSLE$, which is a variant of SLE process  whose drift term involves a hypergeometric function. 
Note that variant of SLE whose driving function involves hypergeometric functions are considered earlier in~\cite{ZhanReversibilityMore}, ~\cite{QianConformalRestrictionTrichordal} and~\cite{WuHyperSLE}, see Appendix~\ref{appendix_hSLE} for their connection. We stick to the definition in~\cite{WuHyperSLE} where the author relates $\hSLE$ to critical lattice model in quad.
For instance, the interface in critical Ising model in quad converges to $\hSLE_3$, see \cite{IzyurovObservableFree} and \cite{WuHyperSLE}; the interface in critical FK-Ising model in quad converges to $\hSLE_{16/3}$, see \cite{KemppainenSmirnovFKIsingHyperSLE} and \cite{BeffaraPeltolaWuUniqueness}. In this article, we focus on uniform spanning tree in quad with alternating boundary conditions. Not surprisingly, the associated Peano curve converges to $\hSLE_8$ process  (see precise setup in Theorem~\ref{thm::cvg_triple}). 
The third author of this article studies $\hSLE$ process in~\cite{WuHyperSLE} with $\kappa\in (0,8)$.
She only treats the process with $\kappa\in (0,8)$ due to technical difficulty: the analysis there does not apply to $\kappa= 8$. The first goal of this article is to address $\hSLE$ process with $\kappa=8$. 

\subsection{Hypergeometric SLE with $\kappa=8$}
Hypergeometric SLE is a two-parameter family of random curves in quad. The two parameters are $\kappa>0$ and $\nu\in\R$, and we denote it by $\hSLE_{\kappa}(\nu)$.  An $\hSLE_{\kappa}(\nu)$ in quad $(\Omega; a, b, c, d)$ is a process in $\Omega$ from $a$ to $d$ with marked points $(b,c)$. When describing scaling limit of interfaces in quad, the parameter $\kappa$ corresponds to different underlying model and the parameter $\nu$ corresponds to different boundary conditions on the boundary arc $(bc)$. For instance, we consider critical Ising model in $(\Omega_{\delta}; a_{\delta}, b_{\delta}, c_{\delta}, d_{\delta})$ with boundary conditions $\oplus$ on $(a_{\delta}b_{\delta})\cup(c_{\delta}d_{\delta})$ and $\ominus$ on $(d_{\delta}a_{\delta})$ and $\xi\in\{\ominus, \textrm{free}\}$ on $(b_{\delta}c_{\delta})$ and  denote by $\eta_{\delta}$ the interface from $a_{\delta}$ to $d_{\delta}$. Then the scaling limit of $\eta_{\delta}$ has the law of $\hSLE_3(\nu=0)$ if $\xi=\ominus$ and has the law of $\hSLE_3(\nu=-3/2)$ if $\xi=\textrm{free}$. See~\cite[Proposition~1.6]{WuHyperSLE} or~\cite{IzyurovObservableFree}. 
$\hSLE_{\kappa}(\nu)$ is also related to $\SLE_{\kappa}(\rho)$ process: the time-reversal of $\SLE_{\kappa}(\rho)$ is an $\hSLE_{\kappa}(\rho-2)$ process for $\rho\ge \kappa/2-2$. See~\cite[Theorem~1.1]{WuHyperSLE}. 
The continuity and reversibility of $\hSLE_{\kappa}(\nu)$ are addressed in \cite{WuHyperSLE} for $\kappa\in (0,8)$. Our first main result is about the continuity and reversibility of $\hSLE_{\kappa}(\nu)$ with $\kappa=8$.

\begin{theorem}\label{thm::hsle_continuity_reversibility}
Fix $\nu\ge 0$ and $x_1<x_2<x_3<x_4$. The process $\hSLE_{8}(\nu)$ in the upper half-plane $\HH$ from $x_1$ to $x_4$ with marked points $(x_2, x_3)$ is almost surely generated by a continuous curve denoted by $\eta$. Furthermore, the process $\eta$ enjoys reversibility: the time-reversal of $\eta$ has the law of $\hSLE_8(\nu)$ in $\HH$ from $x_4$ to $x_1$ with marked points $(x_3, x_2)$.  
\end{theorem}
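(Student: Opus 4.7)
The plan is to derive both continuity and reversibility of $\hSLE_8(\nu)$ from the convergence of UST Peano curves to $\hSLE_8(\nu)$ in quads with alternating boundary conditions, which is the companion main result of the paper. Once the convergence is in hand, the continuity statement follows because each discrete UST Peano curve is by construction a continuous lattice path: proving tightness of the family $\{\eta^{\delta}\}$ in a topology strong enough to preserve continuity of subsequential limits (e.g.\ the Kemppainen--Smirnov framework for Loewner chains) and then identifying the subsequential limit with $\hSLE_8(\nu)$ forces $\hSLE_8(\nu)$ to be almost surely generated by a continuous curve. This is the standard route for deducing continuity of SLE-type processes from convergence of discrete interfaces.

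For reversibility, the key input is the natural symmetry of the discrete UST on a quad with alternating wired/free boundary conditions. Swapping the two wired arcs with the two free arcs, equivalently passing to the dual spanning tree, reverses the associated Peano curve while preserving its distribution, up to the expected relabeling of marked points. Passing this discrete identity to the scaling limit yields exactly the claimed statement: the time-reversal of $\hSLE_8(\nu)$ from $x_1$ to $x_4$ with marked points $(x_2,x_3)$ has the same law as $\hSLE_8(\nu)$ from $x_4$ to $x_1$ with marked points $(x_3,x_2)$.

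The main obstacle is all on the discrete side. One needs a uniform crossing/RSW-type estimate for UST in quads with alternating boundary conditions in order to establish tightness, together with a discrete martingale observable whose scaling limit is identified with the hypergeometric function characterizing $\hSLE_8(\nu)$. An additional complication is that $\SLE_8$ is space-filling, so the convergence must be formulated in terms of the Loewner driving function rather than the uniform topology on simple curves, and particular care is required when the curve approaches the interior marked points $x_2$ and $x_3$. Extending from the specific $\nu$ produced by standard UST to arbitrary $\nu\ge 0$ would then be handled either by considering UST with modified boundary conductances (which shift the effective $\nu$) or, failing that, by a Girsanov argument comparing $\hSLE_8(\nu)$ to $\hSLE_8(0)$, with the Radon--Nikodym derivative expressed through the hypergeometric weight and controlled by localizing at stopping times that stay away from $\{x_2,x_3\}$.
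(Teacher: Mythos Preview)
Your overall architecture matches the paper: establish continuity and reversibility for $\hSLE_8(0)$ via the UST scaling limit, then transfer both to $\hSLE_8(\nu)$ by a Girsanov comparison. Two points deserve correction. First, the discrete reversibility does not come from passing to the dual tree; the dual of the UST in the quad produces the \emph{other} Peano curve $\eta^R$ (from $b$ to $c$), not the reversal of $\eta^L$. The correct observation is simpler: the time-reversal of $\eta^L_\delta$ is literally the Peano curve of the \emph{same} UST viewed in the relabeled quad $(\Omega; d, c, b, a)$, so Theorem~\ref{thm::ust_hsle} applied with the marked points cyclically reordered gives reversibility of $\hSLE_8(0)$ immediately.

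Second, and more seriously, your Girsanov step is underspecified in a way that hides the actual content. Localizing at stopping times bounded away from $\{x_2,x_3\}$ will give absolute continuity only up to, not including, the swallowing time $T_{x_3}$; this is already Lemma~\ref{lem::hsle_continuity_uptoTy} and does not settle continuity at $T_{x_3}$. What the paper does is show that the Radon--Nikodym derivative $R_t = M_t/N_t$ of $\hSLE_8(\nu)$ against $\hSLE_8(0)$ is \emph{uniformly} bounded on $[0,T_{x_3}]$, using the $\kappa=8$ asymptotic~\eqref{eqn::hyperfunction_asy_8} together with the monotonicity of the boundary Poisson kernel $g_t'(x)g_t'(y)/(g_t(y)-g_t(x))^2$. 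This yields continuity up to and including $T_{x_3}$. More importantly, your proposal does not say why reversibility survives the change of measure. The key computation is that $R_t \to C_\nu \, H_D(x_2,x_3)^{\nu^2/32}$ as $t\to T_{x_3}$, where $H_D$ is the boundary Poisson kernel in the complementary component $D$ bounded by $[x_2,x_3]$ and the curve. Since $H_D$ is a conformally invariant, symmetric functional of the curve, weighting a reversible law by it preserves reversibility. Without identifying the terminal derivative as such a symmetric quantity, a Girsanov tilt has no reason to respect time-reversal.
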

The process $\hSLE_8(\nu)$ in general quad is defined via conformal image: 
For a general quad $(\Omega; a, b, c, d)$, let $\phi$ be any conformal map from $\Omega$ onto $\HH$ such that $\phi(a)<\phi(b)<\phi(c)<\phi(d)$. We define $\hSLE_8(\nu)$ in $\Omega$ from $a$ to $d$ with marked points $(b, c)$ to be $\phi^{-1}(\eta)$ where $\eta$ is an $\hSLE_8(\nu)$ in $\HH$ from $\phi(a)$ to $\phi(d)$ with marked points $(\phi(b), \phi(c))$.

In Section~\ref{sec::preSLE}, we will give preliminaries on SLE; and in Section~\ref{sec::hypersle}, we will give definition of $\hSLE_{\kappa}(\nu)$.  In fact, we will address $\hSLE_{\kappa}(\nu)$ for $\kappa\ge 8$ in Section~\ref{sec::hypersle}. As the case of $\kappa>8$ is less relevant, we omit the corresponding conclusion in the introduction. 
In literature, the continuity of $\SLE_{\kappa}$ is first proved in~\cite{RohdeSchrammSLEBasicProperty} for $\kappa\neq 8$ and then proved in~\cite{LawlerSchrammWernerLERWUST} for $\kappa=8$, because the technical analysis in the continuum~\cite{RohdeSchrammSLEBasicProperty} does not apply for $\kappa=8$ whose continuity is proved using convergence of Peano curve in UST~\cite{LawlerSchrammWernerLERWUST}. We encounter a similar situation for $\hSLE$ as well.
The continuity of $\hSLE_{\kappa}(\nu)$ with $\kappa\in (0,8)$ is proved in~\cite{WuHyperSLE} using continuity of $\SLE_{\kappa}$ and $\SLE_{\kappa}(\rho)$ and analysis in the continuum. However, such analysis does not apply to the case with $\kappa=8$.  The full continuity and reversibility results of $\hSLE_8(\nu)$ process are proved using the convergence of Peano curves in UST.  Note that, in the definition of $\hSLE_{8}(\nu)$, the parameter $\nu$ may take values in $\R$, and the continuity and reversibility are believed to hold for all $\nu>-2$, but we are only able to prove them for $\nu\ge 0$. Only the case when $\nu=0$ is relevant for UST in quad.
We denote $\hSLE_8(\nu)$ by $\hSLE_8$ when $\nu=0$, and the rest of the introduction will focus on $\hSLE_8$. 

First of all, we may find $\hSLE_8$ inside $\SLE_8$. 
%
%

\begin{proposition}\label{prop::fromSLEtohSLE}
Fix $x<y$ and suppose $\eta\sim\SLE_8$ in $\HH$ from $x$ to $\infty$. Let $T_y$ be the first time that $\eta$ swallows $y$, and define $\gamma:=\partial(\eta[0,T_y])\cap\overline{\HH}$ (here we view $\eta[0,T_y]$ as a compact set) and we view $\gamma$ as a continuous simple curve starting from $y$ and terminating at some point in $(-\infty, x)$. Let $\tau$ be any stopping time for $\gamma$ before the terminating time. Then the conditional law of $(\eta(t), 0\le  t\le T_y)$ given $\gamma[0,\tau]$ is $\hSLE_8$ in $\HH\setminus\gamma[0,\tau]$ from $x$ to $y^-$ with marked points $(y^+, \infty)$ conditional that its first hitting point on $\gamma[0,\tau]$ is given by $\gamma(\tau)$. 
\end{proposition}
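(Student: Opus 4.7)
I would prove this proposition via the discrete UST picture, combining the convergence of the UST Peano curve to $\SLE_8$ from \cite{LawlerSchrammWernerLERWUST} with the main convergence theorem of this paper, which identifies UST Peano curves in quads with alternating boundary conditions as $\hSLE_8$. On the lattice, the left boundary of a UST Peano curve up to the first time it separates a vertex $y_\delta$ from the wired arc is exactly a LERW branch in the UST from $y_\delta$ to the wired arc, so conditioning on an initial segment of this LERW is naturally handled by the spatial Markov property of the UST (Wilson's algorithm), which reduces the problem to a UST in a slit domain with alternating wired/free boundary conditions.

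\textbf{Implementation.} Concretely, I would approximate $(\HH;x,y,\infty)$ by a Dobrushin lattice domain $(\Omega_\delta;x_\delta,\infty_\delta)$ with $y_\delta\to y$, and let $\eta^\delta$ be the UST Peano curve, which converges to $\eta\sim\SLE_8$. Let $T_y^\delta$ be the first time $\eta^\delta$ separates $y_\delta$ from $\infty_\delta$, and let $\gamma^\delta$ be the left boundary of $\eta^\delta[0,T_y^\delta]$; by the UST/LERW correspondence, $\gamma^\delta$ is a LERW from (a vertex near) $y_\delta$ to the wired arc $(-\infty_\delta,x_\delta)$, and its scaling limit is the $\SLE_2(-1,-1;-1,-1)$ variant discussed in the introduction. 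Pick discrete stopping times $\tau^\delta$ for $\gamma^\delta$ with $\tau^\delta\to\tau$. By Wilson's algorithm, conditional on $\gamma^\delta[0,\tau^\delta]$, the curve $\eta^\delta[0,T_y^\delta]$ is distributed as the Peano curve of a UST in the slit domain $\Omega_\delta\setminus\gamma^\delta[0,\tau^\delta]$, whose boundary splits into four arcs with alternating boundary conditions determined by the marked points $x_\delta,y_\delta^-,y_\delta^+,\infty_\delta$ (the slit $\gamma^\delta[0,\tau^\delta]$ being wired into the original wired arc through the tree). Applying the paper's main theorem in this slit quad identifies the scaling limit of the conditional Peano curve as $\hSLE_8$ in $\HH\setminus\gamma[0,\tau]$ from $x$ to $y^-$ with marked points $(y^+,\infty)$. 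The fact that on the discrete side $\eta^\delta$ first touches $\gamma^\delta$ at its tip $\gamma^\delta(\tau^\delta)$ passes in the limit to the conditioning on the first-hit point being $\gamma(\tau)$.

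\textbf{Main obstacle.} The principal difficulty is to control the conditioning rigorously in the continuum: the event that the first hitting point of $\eta$ on $\gamma[0,\tau]$ equals $\gamma(\tau)$ is generically of probability zero, so the statement must be phrased and proved via regular conditional distributions, and one must establish joint weak convergence of the triple $(\eta^\delta, \gamma^\delta[0,\tau^\delta],\text{first-hit location})$ strong enough to pass the conditioning to the limit. A secondary subtlety is that $y^-$ and $y^+$ coincide at the single boundary point $y$ of the initial domain, producing two adjacent prime ends on the discrete side; one needs to verify that these collapse in the scaling limit to give precisely the degenerate quad on which $\hSLE_8$ is defined via its conformal image in $\HH$. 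Once these joint-convergence and measurability issues are handled, the identification of the conditional law is immediate from the paper's UST convergence theorem in the slit domain.
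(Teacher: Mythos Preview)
Your overall strategy aligns with the paper's: both rely on the discrete observation (Lemma~\ref{lem::triple_in_single}) that splitting the Dobrushin UST Peano curve at a free-boundary point $a$ yields two Peano pieces separated by the tree branch $\gamma$ from $a$ to the wired arc, and this triple should be matched with the quad triple $(\eta^L,\gamma^M,\eta^R)$.

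However, your central discrete claim is imprecise in a way that matters. Conditioning the Dobrushin UST on the initial branch segment $\gamma^\delta[0,\tau^\delta]$ does \emph{not} produce the quad UST with two independently wired arcs: in the conditioned Dobrushin tree the slit $\gamma^\delta[0,\tau^\delta]$ must still be connected to the original wired arc through the remaining tree, and moreover that connection must leave the slit precisely at its tip $\gamma^\delta(\tau^\delta)$. What you actually obtain is the quad UST \emph{further conditioned} on its connecting branch $\gamma^M_\delta$ meeting the slit arc at the tip --- a positive-probability discrete event whose probability vanishes in the limit. Hence Theorem~\ref{thm::cvg_triple} (which gives the law of the unconditional quad Peano curve) cannot be invoked directly, and your ``apply the main theorem in the slit quad'' step does not go through as stated.

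The paper handles exactly this issue, but by matching laws in the continuum rather than on the lattice. Lemma~\ref{lem::triple_conditional} first establishes that for the Dobrushin triple, $\gamma$ is $\SLE_2(-1,-1;-1,-1)$ and, given $\gamma$, the two Peano pieces are conditionally independent $\SLE_8$'s in the two complementary components. After conditioning on $\gamma[0,\tau]$, this structure persists in the slit domain by the domain Markov property of the $\SLE_2$ variant. The paper then observes that, by Theorems~\ref{thm::cvg_triple} and~\ref{thm::cvg_lerw}, the quad triple in $\Omega\setminus\gamma[0,\tau]$ conditioned on $X^M=\gamma(\tau)$ has \emph{identical} description. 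This identifies the time-reversed first Peano piece with $\hSLE_8$ conditioned on $X^M=\gamma(\tau)$, and reversibility of $\hSLE_8$ (Corollary~\ref{cor::ust_hsle8}) yields the forward statement. Thus the zero-probability conditioning you correctly flagged as the main obstacle is resolved not via joint weak convergence of hitting locations, but by showing both conditional triples share the same $(\text{SLE}_2\text{ branch} + \text{conditional SLE}_8)$ characterization.
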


The proof for Proposition~\ref{prop::fromSLEtohSLE} depends on the domain Markov property 
 for UST which will be given in Section~\ref{sec::lerw}. Moreover, the calculation in Section~\ref{sec::lerw} also gives the following consequence Proposition~\ref{prop::hSLE_right_proba}. Recall that given a quad $(\Omega;a,b,c,d)$, there exists a unique $K>0$ and a unique conformal map from $\Omega$ onto the rectangle $[0,1]\times[0,\ii K]$ which sends $a,b,c,d\in\partial\Omega$ to the four corners $0,1,1+\ii K,\ii K$ respectively. We call $K$ the conformal modulus of the quad $(\Omega;a,b,c,d)$.
\begin{proposition}\label{prop::hSLE_right_proba}
Fix a quad $(\Omega; a, b, c, d)$. 
Let $K>0$ be the conformal modulus of the quad $(\Omega; a, b, c, d)$, and let $f$ be the conformal map from $\Omega$ onto the rectangle $(0,1)\times (0, \ii K)$ which sends $(a, b, c, d)$ to $(0, 1, 1+\ii K, \ii K)$.
Suppose $\eta\sim\hSLE_8$ in $\Omega$ from $a$ to $d$ with marked points $(b,c)$. 
Then we have 
\begin{equation}\label{eqn::hSLE_right_proba}
\PP\left[z\not\in\eta\right]=\Re f(z), \quad \forall z\in \Omega. 
\end{equation}
\end{proposition}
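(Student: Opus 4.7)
The plan is to deduce the identity from the UST convergence together with a harmonic-measure computation. By the conformal invariance of $\hSLE_8$, it suffices to prove it in the rectangle $(0,1) \times (0, \ii K)$ with $(a,b,c,d) = (0, 1, 1+\ii K, \ii K)$; there $f$ is the identity and $\Re f(z) = \Re z$.

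For each $\delta$, I would approximate $\Omega$ by a subgraph of $\delta \Z^2$ carrying a UST with alternating boundary conditions chosen so that the Peano curve $\eta_\delta$ from $a_\delta$ to $d_\delta$ converges weakly to $\eta \sim \hSLE_8$ (by the main convergence theorem of the paper, proved using the calculation in Section~\ref{sec::lerw}). In the discrete model, the event $\{z_\delta \notin \eta_\delta\}$ translates to the event that $z_\delta$ lies on the side of the Peano curve not filled by it---equivalently, the branch of the UST from $z_\delta$ to the wired boundary first reaches one of the two wired arcs rather than the other. Wilson's algorithm, and the classical identity of hitting distributions for LERW and simple random walk, reduce this to the probability that a simple random walk started at $z_\delta$ and absorbed on the wired boundary (with the free arcs acting as reflecting boundary in the limit) first hits the designated wired arc. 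By the invariance principle, this converges as $\delta \to 0$ to the unique harmonic function $h$ on $\Omega$ with Dirichlet data $1$ on one pair of opposite arcs, $0$ on the other, and Neumann data on the remaining arcs. In the rectangle the explicit solution is $h(z) = \Re z$ by direct computation, and the alternating boundary conditions are chosen so that this is the function of interest rather than $1 - \Im z / K$.

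Combining this with the weak convergence $\eta_\delta \Rightarrow \eta$ together with the continuity of $\hSLE_8$ (Theorem~\ref{thm::hsle_continuity_reversibility}) then yields the desired identity $\PP[z \notin \eta] = \Re f(z)$. The main obstacle is exactly this last step: one must verify that for fixed interior $z$ the event $\{z \notin \eta\}$ is a continuity event of the weak convergence, i.e.\ that almost surely $z$ does not lie on the random ``interface'' of $\hSLE_8$ separating the filled region from its complement. This reduces to showing that the boundary of the hull of $\hSLE_8$ has Lebesgue measure zero almost surely, which follows from the local absolute continuity of $\hSLE_8$ with respect to $\SLE_8$ away from the marked points combined with standard facts about SLE hull boundaries.
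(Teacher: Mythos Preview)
Your overall strategy is correct and coincides with the paper's: reduce to the discrete observable $u_\delta$, show it converges to $\Re f$, and close via the Lebesgue-measure-zero argument for the interface (your last paragraph is exactly Corollary~\ref{cor::gamma_lebzero}, and the paper's proof of the proposition simply cites this together with~\eqref{eqn::hSLE_right_proba_aux}).

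One correction: the Wilson-algorithm reduction you describe does not work on the primal UST as written, since every primal vertex lies in a single tree connected to both wired arcs through $\gamma_\delta^M$, so ``which wired arc the branch reaches first'' is ill-defined. The reduction lives on the dual lattice, where the configuration is a two-component forest and $z_\delta^*$ lies to the right of $\eta_\delta^L$ iff its component contains $(b_\delta^*c_\delta^*)$; Wilson's algorithm on the dual then identifies $u_\delta(z_\delta^*)$ with the SRW hitting probability you want. With this fix, your route via the invariance principle for SRW with mixed absorbing/reflecting boundary is the content of Lemma~\ref{lem::harmonic_cvg}, while the paper instead reaches $u_\delta\to\Re f$ through the discrete holomorphic observable of Lemmas~\ref{lem::holomorphic_observable} and~\ref{lem::holo_cvg}.
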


We remark that  we do not derive the probability in  \eqref{eqn::hSLE_right_proba} from standard It\^{o}'s calculus. The proof bases on the analysis from UST: we will first prove that~\eqref{eqn::hSLE_right_proba} holds in the discrete setting and then obtain  \eqref{eqn::hSLE_right_proba} by  proving the convergence of the corresponding discrete harmonic  functions with mixed boundary conditions.

\subsection{Uniform spanning tree (UST)}
\label{subsec::intro_ust}
Let us come back to uniform spanning tree. 
We start with definitions and notations. 
The square lattice $\Z^2$ is the graph with vertex set $V(\Z^2):=\{(m, n): m, n\in \Z\}$ and edge set $E(\Z^2)$ given by edges between nearest neighbors. This is our primal lattice. Its dual lattice is denoted by $(\Z^2)^*$. The medial lattice $(\Z^2)^{\diamond}$ is the graph with centers of edges of $\Z^2$ as vertex set and edges connecting nearest vertices. For a finite subgraph $G=(V(G), E(G))\subset\Z^2$, we denote by $\partial G$ the inner boundary of $G$: $\partial G=\{x\in V(G): \exists y\not\in V(G) \text{ such that }\{x, y\}\in E(\Z^2)\}$. 
In this article, when we add the subscript  or superscript $\delta$, we mean scaling subgraphs of the lattices $\Z^2, (\Z^2)^*, (\Z^2)^\diamond$ by $\delta$.

\paragraph{Uniform spanning tree.}
Suppose that $G=(V(G),E(G))$ is a finite connected graph. A forest is a subgraph of $G$ that has no cycles. A tree is a connected forest. A subgraph of $G$ is spanning if it covers $V(G)$. A uniform spanning tree on $G$ is a probability measure on the set of all spanning trees of $G$ in which every tree is chosen with equal probability.  Given a disjoint sequence $(\alpha_k: 1\leq k\leq N)$  of trees of $G$,  a spanning tree with $(\alpha_k: 1\leq k\leq N)$ wired is a spanning tree $T$ such that $\alpha_k\subset T$ for $1\leq k\leq N$.
A uniform spanning tree with  $(\alpha_k: 1\leq k\leq N)$ wired is a probability measure on the set of all spanning trees of $G$ with  $(\alpha_k: 1\leq k\leq N)$ wired  under which every tree is chosen with equal probability.
\paragraph{Space of curves.}
A path is defined by a continuous map from $[0, 1]$ to $\C$. Let $\mathcal{C}$ be the space of unparameterized paths in $\C$.  Define the metric on $\mathcal{C}$ as follows:
\begin{align}\label{eqn::curves_metric}
d(\gamma_1, \gamma_2):=\inf\sup_{t\in[0,1]}\left|\hat{\gamma}_1(t)-\hat{\gamma}_2(t)\right|,
\end{align}
where the infimum is taken over all the choices of  parameterizations  $\hat{\gamma}_1$ and $\hat{\gamma}_2$ of $\gamma_1$ and $\gamma_2$. The metric space $(\mathcal{C}, d)$ is complete and separable, see~\cite{KemppainenSmirnovRandomCurves}.
Let $\mathcal{P}$ be a family of probability measures on $\mathcal{C}$. We say $\mathcal{P}$ is tight if for any $\epsilon>0$, there exists a compact set $K_{\eps}$ such that $\PP[K_{\eps}]\geq 1-\epsilon$ for any $\PP\in \mathcal{P}$. We say $\mathcal{P}$ is relatively compact
if every sequence of elements in $\mathcal{P}$ has a weakly convergent subsequence. As the metric space is complete and separable, relative compactness is equivalent to tightness. 

\paragraph{Convergence of discrete polygons.}
A sequence of discrete polygons $(\Omega_{\delta}; x_1^{\delta}, \ldots, x_p^{\delta})$ on $\delta\Z^2$ is said to converge to a polygon $(\Omega; x_1, \ldots, x_p)$ in the Carath\'{e}odory sense if there exist conformal maps $\phi_{\delta}$ from $\U$ onto $\Omega_{\delta}$ and conformal map $\phi$ from $\U$ onto $\Omega$ such that $\phi_{\delta}\to \phi$ as $\delta\to 0$ uniformly on compact subsets of $\U$ and $\phi_{\delta}^{-1}(x_j^{\delta})\to \phi^{-1}(x_j)$ for $1\le j\le p$.

We will encounter another type of convergence of polygons: 
a sequence of discrete polygons $(\Omega_{\delta}; x_1^{\delta}, \ldots, x_p^{\delta})$ on $\delta\Z^2$ converge to a polgyon $(\Omega; x_1, \ldots, x_p)$ in the following sense: there exists a constant $C>0$, such that 
\begin{equation}\label{eqn::topology}
d((x_i^{\delta}x_{i+1}^{\delta}),(x_ix_{i+1}))\le C\delta,\quad\text{for }i\in\{1, \ldots, p\},
\end{equation}
where $d$ is the metric~\eqref{eqn::curves_metric} and we use the convention that $x_{p+1}=x_1$. 
Note that such convergence implies the convergence in the Carath\'{e}odory sense. 
These two types of convergence apply to polygons in the medial lattice in a similar way.
\medbreak
Suppose a sequence of medial quads $(\Omega_{\delta}^{\diamond}; a_{\delta}^{\diamond}, b_{\delta}^{\diamond}, c_{\delta}^{\diamond}, d_{\delta}^{\diamond})$ on $\delta(\Z^2)^{\diamond}$ converges to a quad $(\Omega; a, b, c, d)$ in the sense~\eqref{eqn::topology}. See details in Section~\ref{sec::ust}. 
Let $\Omega_{\delta}\subset\delta\Z^2$ be the corresponding graph on the primal lattice.
We consider uniform spanning tree (UST) on $\Omega_{\delta}$ with alternating boundary conditions: $(a_{\delta}b_{\delta})$ is wired and $(c_{\delta}d_{\delta})$ is wired (but $(a_{\delta}b_{\delta})$ and $(c_{\delta}d_{\delta})$ are not wired together). Let $\LT_{\delta}$ be the UST on $\Omega_{\delta}$ with such alternating boundary conditions. 
Then there exists a triple of curves $(\eta_{\delta}^L; \gamma_{\delta}^M; \eta_{\delta}^R)$ such that 
$\eta_{\delta}^L$ runs along the tree $\LT_{\delta}$ from $a_{\delta}$ to $d_{\delta}$, 
and $\gamma_{\delta}^M$ is the unique simple path in $\LT_{\delta}$ connecting $(a_{\delta}b_{\delta})$ to $(c_{\delta}d_{\delta})$, 
and $\eta_{\delta}^R$ runs along the tree $\LT_{\delta}$ from $b_{\delta}$ to $c_{\delta}$, 
see Figure~\ref{fig::quadPeano} and see detail in Section~\ref{subsec::ust_quads}. We have the following convergence of the triple 
$(\eta_{\delta}^L; \gamma_{\delta}^M; \eta_{\delta}^R)$. 

\begin{figure}[ht!]
\begin{center}
\includegraphics[width=0.5\textwidth]{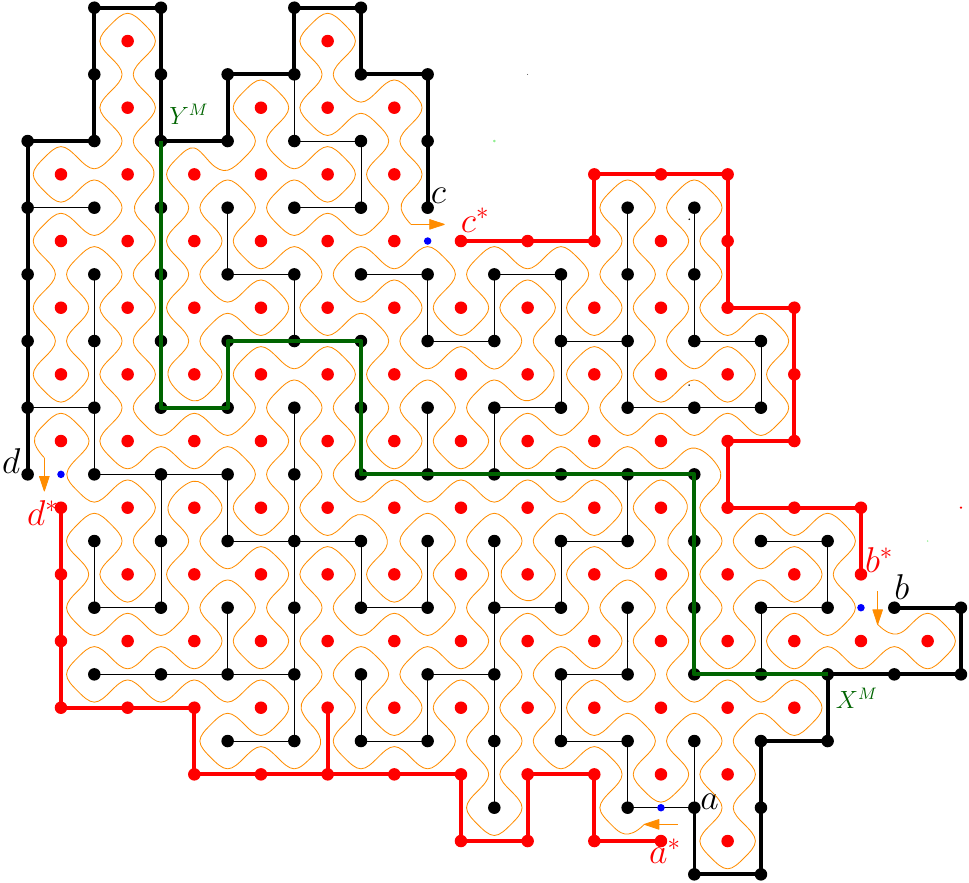}
\end{center}
\caption{\label{fig::quadPeano} The solid edges in black are wired boundary arcs $(ab)$ and $(cd)$, the solid edges in red are dual-wired boundary arcs $(b^*c^*)$ and $(d^*a^*)$. The thin edges are in the tree $\LT$. The solid edges in green are in the branch $\gamma^M$ in $\LT$. This branch intersects $(ab)$ at $X^M$ and intersects $(cd)$ at $Y^M$.  The two curves in orange are the Peano curves $\eta^L$ (from $a$ to $d$) and $\eta^R$ (from $b$ to $c$). }
\end{figure}

\begin{theorem}\label{thm::cvg_triple}
Fix a quad $(\Omega; a, b, c, d)$ such that $\partial\Omega$ is $C^{1}$ and simple. Suppose that a sequence of medial quads $(\Omega_{\delta}^{\diamond}; a_{\delta}^{\diamond}, b_{\delta}^{\diamond}, c_{\delta}^{\diamond}, d_{\delta}^{\diamond})$ converges to $(\Omega; a, b, c, d)$ in the sense~\eqref{eqn::topology}. 
Consider the UST in  $(\Omega_{\delta}; a_{\delta}, b_{\delta}, c_{\delta}, d_{\delta})$ with alternating boundary conditions and consider the triple of curves $(\eta_{\delta}^L; \gamma_{\delta}^M; \eta_{\delta}^R)$ as described above. Then the triple $(\eta_{\delta}^L; \gamma_{\delta}^M; \eta_{\delta}^R)$ converges weakly to a triple of continuous curves $(\eta^L; \gamma^M; \eta^R)$ in metric~\eqref{eqn::curves_metric}. The law of  $(\eta^L; \gamma^M; \eta^R)$ is characterized by the following properties: the marginal law of $\eta^L$ is $\hSLE_8$ in $\Omega$ from $a$ to $d$ with marked points $(b, c)$; given $\eta^L$, the conditional law of $\eta^R$ is $\SLE_8$ in $\Omega\setminus \eta^L$ from $b$ to $c$; and $\gamma^M=\eta^L\cap \eta^R$. Furthermore, given $\gamma^M$, denote by $\Omega^L$ and $\Omega^R$ the two connected components of $\Omega\setminus\gamma^M$ such that $\Omega^L$ has $a, d$ on the boundary and $\Omega^R$ has $b, c$ on the boundary, then the conditional law of $\eta^L$ is $\SLE_8$ in $\Omega^L$ from $a$ to $d$ and the conditional law of $\eta^R$ is $\SLE_8$ in $\Omega^R$ from $b$ to $c$, and $\eta^L$ and $\eta^R$ are conditionally independent given $\gamma^M$. 
\end{theorem}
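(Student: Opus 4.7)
The plan is to follow the Lawler--Schramm--Werner program adapted to the quad setting, combining Kemppainen--Smirnov tightness with a Wilson's-algorithm-based conditional decomposition and a martingale observable for the marginal law of $\eta^L$.

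First I would establish tightness of the triple $(\eta_\delta^L, \gamma_\delta^M, \eta_\delta^R)$. Each of the three curves is an interface in a UST-type model and satisfies a uniform annulus-crossing bound of Kemppainen--Smirnov type; therefore the triple is tight in the topology of curves modulo reparameterization, and every subsequential limit is a triple of continuous curves in $\overline{\Omega}$.

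To pin down the conditional structure of any such subsequential limit, I would use Wilson's algorithm to sample $\LT_\delta$ by first exposing $\gamma_\delta^M$, the branch connecting $(a_\delta b_\delta)$ to $(c_\delta d_\delta)$. Conditional on $\gamma_\delta^M$, the UST restricted to each of the two sub-quads $\Omega_\delta^L, \Omega_\delta^R$ cut by $\gamma_\delta^M$ is an independent UST with Dobrushin-type boundary conditions. By the classical LSW convergence theorem applied in each sub-quad, together with Carath\'eodory stability of $\SLE_8$ under domain perturbation, the Peano curves $\eta_\delta^L$ and $\eta_\delta^R$ converge conditionally to independent chordal $\SLE_8$'s in $\Omega^L$ and $\Omega^R$. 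Passing to the limit along the convergent subsequence yields the conditional independence statement and the conditional $\SLE_8$ laws in the two components.

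The remaining task, identifying the marginal of $\eta^L$ as $\hSLE_8$, uses a discrete martingale observable. I would set $h_\delta(z) := \PP[z \text{ lies strictly to the right of } \eta_\delta^L]$; by tree/dual-tree duality this equals a discrete harmonic function with mixed boundary values determined by the four marked points $a_\delta, b_\delta, c_\delta, d_\delta$. It is a martingale along the exploration of $\eta_\delta^L$, and by convergence of discrete harmonic measure on $C^1$ domains its limit equals the real part $\Re f(z)$ of the conformal map to the rectangle of modulus $K$ sending $(a,b,c,d)$ to the corners (this is the content foreshadowed by Proposition~\ref{prop::hSLE_right_proba}). Pulled back to half-plane coordinates this observable becomes a hypergeometric expression, and applying Loewner's equation and It\^o's formula forces the drift of the driving function of every subsequential limit of $\eta_\delta^L$ to coincide with that of $\hSLE_8$; uniqueness of solutions of the driving SDE then identifies the marginal law. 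The identity $\gamma^M = \eta^L \cap \eta^R$ in the limit follows from passing $\gamma_\delta^M = \eta_\delta^L \cap \eta_\delta^R$ through the Hausdorff limit, using that the conditional $\SLE_8$'s in $\Omega^L, \Omega^R$ touch only along the common boundary $\gamma^M$.

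The main obstacle will be Step three: producing and analysing $h_\delta$ in the four-marked-point setting so that the scaling limit really yields the hypergeometric drift. The mixed boundary conditions forced by the alternating wired/free arcs require boundary regularity of the discrete harmonic function that is stronger than in the Dobrushin case, and matching the continuum limit with the specific hypergeometric form defining $\hSLE_8$ (as developed in Section~\ref{sec::hypersle}) is where the bulk of the work lies. A secondary technical point is controlling the random, potentially irregular sub-domains $\Omega^L, \Omega^R$ when transporting the Dobrushin-case LSW theorem to the conditional setting in Step two.
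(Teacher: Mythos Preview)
Your outline matches the paper's strategy closely: tightness of the triple, Wilson's-algorithm conditioning on $\gamma^M_\delta$ to reduce each side to the Dobrushin case and obtain the conditional $\SLE_8$ laws and conditional independence, and a martingale observable for the marginal of $\eta^L$.  The paper also derives $\gamma^M=\eta^L\cap\eta^R$ from the fact that the conditional $\SLE_8$'s are space-filling, rather than by a Hausdorff-limit argument.

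Two points where the paper differs from your sketch are worth flagging.  First, tightness of the Peano curves is \emph{not} obtained via Kemppainen--Smirnov annulus-crossing bounds; for the space-filling $\SLE_8$ regime the paper instead runs Schramm's trunk/dual-trunk argument (this is precisely where the $C^1$ boundary assumption is used), together with a separate estimate showing the limit avoids $[bc]$.  Second, your observable $h_\delta=u_\delta$ is only the real part of what is needed.  The paper builds a genuinely discrete \emph{holomorphic} function $f_\delta=u_\delta+\ii\,\frac{|\mathrm{SF}_2(\delta)|}{|\mathrm{ST}(\delta)|}\,v_\delta$, where $v_\delta$ comes from a two-tree spanning-forest model on the primal graph and the ratio $|\mathrm{SF}_2(\delta)|/|\mathrm{ST}(\delta)|$ converges to the conformal modulus $K$; it is this full $f_\delta$ that converges to the rectangle map.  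The drift identification (your acknowledged main obstacle) then passes through the Schwarz--Christoffel formula and elliptic integrals, with the hypergeometric function entering via the identity $\LK(x)=\tfrac{\pi}{2}\,\hF(\tfrac12,\tfrac12,1;x)$, and the calculation is genuinely delicate rather than a matter of matching forms.  Your secondary concern about the irregular sub-domains $\Omega^L,\Omega^R$ is resolved exactly as you anticipate: the trunk/dual-trunk disjointness proved in the full quad is inherited by the sub-domains, so the Dobrushin-case argument goes through without needing $C^1$ regularity of $\partial\Omega^R$.
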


In Section~\ref{sec::ust},  we prove the convergence of the Peano curve and complete the proof of Theorem~\ref{thm::cvg_triple}. 
The work~\cite{DubedatEulerIntegralsCommutingSLEs} predicts the limiting distribution of Peano curve in UST in general polygon with a special alternating boundary condition. However, the proof there lacks the complete construction of discrete observable and the analysis of the corresponding limit of the observable.
Our proof in Section~\ref{sec::ust} focuses on the Peano curve in UST in quad and follows the standard strategy: we first derive the tightness of the Peano curves and construct a discrete martingale observable, and then identify the subsequential limits through the observable. This part is a generalization of the work in \cite{SchrammScalinglimitsLERWUST} and \cite{LawlerSchrammWernerLERWUST}, and we proceed following a simplification advocated by Smirnov\footnote{H.W. learned this observable from a master course delivered by Smirnov in 2015, but we are not able to identify a published reference.} who provided a different observable from the one in \cite{LawlerSchrammWernerLERWUST}. Smirnov's observable is more suitable for the setup in quad. We remark that the observable here does not have an explicit closed form and the identification of the driving process is a non-trivial step, see Lemma~\ref{lem::drivingfunction}.
As a byproduct, we obtain the continuity and reversibility of $\hSLE_{8}$ and complete the proof of Theorem~\ref{thm::hsle_continuity_reversibility}. 

We emphasize that the simple and $C^1$ regularity on $\partial\Omega$ in the assumption of Theorem~\ref{thm::cvg_triple} is crucial in the proof of the tightness. We derive the tightness following the argument in~\cite[Theorem~11.1]{SchrammScalinglimitsLERWUST} where the simple and $C^1$ regularity is assumed. See Appendix~\ref{app::tightness}.

\subsection{Loop-erased random walk (LERW)}

From Theorem~\ref{thm::cvg_triple}, we see that the Peano curve $\eta^L_{\delta}$ converges weakly to $\hSLE_8$ and the limit of $\gamma^M_{\delta}$ is part of the boundary of $\hSLE_8$. In the following theorem, we provide an explicit characterization of the limiting distribution of $\gamma^M_{\delta}$. 

\begin{theorem}\label{thm::cvg_lerw}
Fix a quad $(\Omega; a, b, c, d)$ such that $\partial\Omega$ is $C^1$ and simple. 
Let $K>0$ be the conformal modulus of the quad $(\Omega; a, b, c, d)$, and let $f$ be the conformal map from $\Omega$ onto the rectangle $(0,1)\times (0, \ii K)$ which sends $(a, b, c, d)$ to $(0, 1, 1+\ii K, \ii K)$. Assume the same setup as in Theorem~\ref{thm::cvg_triple}. Then the law of $\gamma_{\delta}^M$ converges  weakly to a continuous curve $\gamma^M$ whose law is characterized by the following properties. Denote by $X^M=\gamma^M\cap (ab)$ and by $Y^M=\gamma^M\cap (cd)$. 
\begin{itemize}
\item[(1)] The law of the point $f(X^M)$ is uniform in $(0,1)$. 
\item[(2)] Given $X^M$, the conditional law of $\gamma^M$ is an $\SLE_2(-1,-1;-1,-1)$ in $\Omega$ from $X^M$ to $(cd)$ with force points $(d, a; b, c)$ stopped at the first hitting time of $(cd)$. 
\end{itemize}
Furthermore, denote by $x^M=f(X^M)$ and $y^M=\Re f(Y^M)$, the joint density of $(x^M, y^M)$ is given by   
\begin{align}\label{eqn::jointdensity}
\rho_K(x, y)
=\frac{\pi}{4K}\sum_{n\in\Z}\left(\frac{1}{\cosh^2\left(\frac{\pi}{2K}\left(x-y-2n\right)\right)}+\frac{1}{\cosh^2\left(\frac{\pi}{2K}\left(x+y-2n\right)\right)}\right), \quad \forall x, y\in (0,1).  
\end{align}
\end{theorem}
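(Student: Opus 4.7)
My plan is to deduce Theorem~\ref{thm::cvg_lerw} from Theorem~\ref{thm::cvg_triple} and Proposition~\ref{prop::hSLE_right_proba}, combined with a direct UST/LERW computation for the joint density. Weak convergence of $\gamma^M_\delta$ to a continuous curve $\gamma^M$ is immediate from Theorem~\ref{thm::cvg_triple}, since $\gamma^M_\delta$ is the middle coordinate of the convergent triple. It remains to identify the three distributional statements: the marginal law of $f(X^M)$, the conditional law of $\gamma^M$ given $X^M$, and the joint density~\eqref{eqn::jointdensity}.

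For item~(1), I would use Proposition~\ref{prop::hSLE_right_proba}. It gives $\PP[z\notin\eta^L]=\Re f(z)$ for $z\in\Omega$, and Theorem~\ref{thm::cvg_triple} says that, conditionally on $\gamma^M$, the curve $\eta^L$ is $\SLE_8$ in $\Omega^L$, hence space-filling there; so $\{z\notin\eta^L\}$ coincides a.s.\ with $\{z\in\Omega^R\}$. Letting $z\to x\in(ab)$, continuity of $\Re f$ and the fact that $\{X^M=x\}$ is null yield $\PP[\Re f(X^M)<\Re f(x)]=\Re f(x)$. Since $\Re f$ is a homeomorphism from $(ab)$ onto $(0,1)$, this forces $\Re f(X^M)$ to be uniform on $(0,1)$.

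For item~(2), I would apply Wilson's algorithm after contracting the wired arcs $(ab)$ and $(cd)$ to single vertices $w_1,w_2$: the branch $\gamma^M_\delta$ is then the loop erasure of a simple random walk from $w_1$ to $w_2$. Conditioning on $X^M_\delta=x$ (the first interior vertex the walk selects), the conditional law of $\gamma^M_\delta$ becomes the loop erasure of a simple random walk started at $x$ and conditioned to hit $(cd)$ before returning to $(ab)\setminus\{x\}$. The scaling limit then follows the standard program of~\cite{LawlerSchrammWernerLERWUST}: tightness is supplied by the $C^1$-regularity of $\partial\Omega$ (cf.\ the appendix on tightness), and the Loewner driving function is identified via a discrete harmonic martingale observable with boundary values $0$ on $(ab)$ and $1$ on $(cd)$. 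The drift contributed by the conditioning, together with the four marked corners $a,b,c,d$, assumes the Coulomb-gas form with weights $-1$, which identifies the limit as $\SLE_2(-1,-1;-1,-1)$ with force points $(d,a;b,c)$.

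For the joint density~\eqref{eqn::jointdensity}, I would work directly on the reference rectangle $R=(0,1)\times(0,\ii K)$. Wilson's algorithm expresses $\PP[X^M_\delta=x,\,Y^M_\delta=y]$ as an explicit combination of Green-function quantities for the simple random walk in $\Omega_\delta$ killed on $(ab)\cup(cd)$. By conformal invariance of simple random walk in the scaling limit, the answer is the corresponding Brownian kernel on $R$, which admits a closed form via the method of images: unfolding by reflection across the vertical sides $x=0,1$ produces a sum over $n\in\Z$ of contributions at $x\pm y-2n$, and the boundary--boundary hitting kernel on the resulting strip takes the form $\cosh^{-2}(\tfrac{\pi}{2K}\,\cdot\,)$. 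Matching with the uniform marginal of $x^M$ (which, integrated in $y$, uses the telescoping identity $\sum_n[\tanh(\tfrac{\pi}{2K}(x+1-2n))-\tanh(\tfrac{\pi}{2K}(x-1-2n))]=2$) fixes the prefactor $\pi/(4K)$, yielding~\eqref{eqn::jointdensity}. The main obstacle I anticipate is the identification step in~(2): passing from discrete LERW with the boundary conditioning to an $\SLE_2(\rho)$ process with four specific force points requires a careful martingale analysis and a precise reading of the drift term produced by the conditioning, whereas the rectangle calculation underlying~\eqref{eqn::jointdensity} is essentially bookkeeping once conformal invariance of the Dirichlet Green function on approximating quads is in hand.
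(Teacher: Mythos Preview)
Your plan for item~(1) and the convergence of $\gamma^M_\delta$ matches the paper's argument (Lemma~\ref{lem::X}), and your derivation of the density~\eqref{eqn::jointdensity} by images on the rectangle is morally the same as the paper's Poisson kernel computation, though the paper takes extra care with the discrete-to-continuum step (Lemma~\ref{lem::YconditionalXdiscrete} uses the boundary Harnack estimate of~\cite{ChelkakWanMassiveLERW} rather than a bare appeal to conformal invariance of simple random walk).

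The genuine gap is in item~(2). Your proposed observable --- a harmonic function equal to $0$ on $(ab)$ and $1$ on $(cd)$ --- is the hitting probability for the \emph{unconditioned} walk, and there is no reason for it to be a martingale along the \emph{conditioned} LERW you describe. Even if it were, a single scalar observable yields one relation between the drift $dL_t$ and the quadratic variation $d\langle W\rangle_t$; to pin down both you need a genuine one-parameter family. The paper's choice is quite different: it uses the conditional density of the endpoint $Y^M$ itself as the observable. Concretely, for each target point $x\in(c,\infty)$ the process
\[
g_t'(x)\,\partial_n P\bigl(g_t(x);\,g_t(a),W_t,g_t(b),g_t(c)\bigr)
\]
is a martingale (Lemma~\ref{lem::lerw_observable}), where $P$ is the mixed Dirichlet--Neumann Poisson kernel in~\eqref{eqn::PoissonH_def}. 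The identification then hinges on two nontrivial ingredients you have not supplied: (i) a PDE for $P$ (Lemma~\ref{lem::PoissonH_pde}, whose proof is the bulk of Section~\ref{subsec::cvg_lerw}), which after applying It\^{o}'s formula collapses the drift equation to
\[
\partial_w F\Bigl(dL_t-\sum_{q\in\{a,b,c\}}\tfrac{dt}{g_t(q)-W_t}\Bigr)+\tfrac12\partial_w^2 F\,(d\langle W\rangle_t-2\,dt)=0;
\]
and (ii) a linear independence argument~\eqref{eqn::nolinear} showing that, as $x$ varies, the vectors $(\partial_w F,\partial_w^2 F)$ span $\R^2$, so the two brackets vanish separately. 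Your phrase ``the drift assumes the Coulomb-gas form with weights $-1$'' is exactly the conclusion one wants, but it is not a computation; the drift term $2f''/f'$ in the PDE is what produces the $-1$'s, via the Schwarz--Christoffel identity $2f''(w)/f'(w)=\sum_{q}1/(q-w)$, and this only emerges after the PDE has been established.
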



In Section~\ref{sec::lerw}, we work on the LERW branch and complete the proof of Theorem~\ref{thm::cvg_lerw}. The section has three parts.
\begin{itemize}
\item We first derive the joint distribution of the pair $(X^M, Y^M)$ in Section~\ref{subsec::pairrandompoints}. We derive the formula~\eqref{eqn::jointdensity} through discrete observable. The analysis on discrete harmonic function from~\cite{ChelkakWanMassiveLERW} plays an important role. 
\item We then derive the conditional law of $\gamma^M$ given $X^M$ in Section~\ref{subsec::cvg_lerw}. 
In fact, this part of the conclusion is already solved in~\cite{ZhanLERW} in an implicit form with more generality.
The derivation follows the standard strategy: showing the tightness and constructing discrete martingale observable.  We provide details of the proof in a self-contained way for our particular setting and derive the explicit answer in Section~\ref{subsec::cvg_lerw}. 
\item In Section~\ref{subsec::Dobrushintoquad}, we use the domain Markov property
of the UST and complete the proof of Proposition~\ref{prop::fromSLEtohSLE}. We also remark that the proof for the law of $\gamma^M$ in Theorem~\ref{thm::cvg_lerw} also provides an alternative proof for the duality result of $\SLE_8$, see Corollary~\ref{cor::sle8duality}. Such duality relation was previously proved in~\cite{ZhanDuality} and~\cite{MillerSheffieldIG1} in the continuous setting for general $\kappa$ which is substantially more involved. Our proof in Section~\ref{subsec::Dobrushintoquad} is specific for $\kappa=8$ because we use the convergence of UST and LERW.  
\end{itemize}
\begin{corollary}\label{cor::sle8duality}
Fix $x<y$ and suppose $\eta\sim\SLE_8$ in $\HH$ from $x$ to $\infty$. Let $T_y$ be the first time that $\eta$ swallows $y$, and define $\gamma:=\partial(\eta[0,T_y])\cap{\overline{\HH}}$ (here we view $\eta[0,T_y]$ as a compact set) and we view $\gamma$ as a continuous simple curve starting from $y$ and terminating at some point in $(-\infty, x)$. Then the law of $\gamma$ is the same as $\SLE_2(-1, -1; -1, -1)$ in $\HH$ from $y$ to $(-\infty, x)$ with force points $(x, y^-; y^+, \infty)$. 
\end{corollary}


\smallbreak
\paragraph{Acknowledgment.} We thank Titus Lupu, Eveliina Peltola, Yijun Wan, and Dapeng Zhan for helpful discussion on UST and LERW. 
We also thank referees for careful comments which improved the presentation and clarified the proof.


\section{Preliminaries on SLE}
\label{sec::preSLE}
\paragraph{Notations.}
For $z\in \C$ and $r>0$, we denote by $B(z,r)$ the ball with center $z$ and radius $r$ and by $B(z,r)^{c}$ the complement of $B(z,r)$ in $\C$. In particular, we denote $B(0,1)$ by $\U$. 
\paragraph{Loewner chain.}
An $\HH$-hull is a compact subset $K$ of $\overline{\HH}$ such that $\HH\setminus K$ is simply connected. By Riemann's mapping theorem, there exists a unique conformal map $g_K$ from $\HH\setminus K$ onto $\HH$ with normalization $\lim_{z\to\infty}|g_K(z)-z|=0$, and we call $a(K):=\lim_{z\to\infty}z(g_K(z)-z)$ the half-plane capacity of $K$ seen from $\infty$. 
Loewner chain is a collection of $\HH$-hulls $(K_t, t\ge 0)$ associated to the family of conformal maps $(g_t, t\ge 0)$ which solves the following Loewner equation: for each $z\in\HH$,
\[\partial_t g_t(z)=\frac{2}{g_t(z)-W_t},\quad g_0(z)=z,\]
where $(W_t, t\ge 0)$ is a one-dimensional continuous function which we call the driving function. For $z\in\overline{\HH}$, the swallowing time of $z$ is defined to be $\sup\left\{t\ge 0: \min_{s\in[0,t]}|g_s(z)-W_s|>0\right\}$. Let $K_t$ be the closure of $\{z\in\HH: T_z\le t\}$. It turns out that $g_t$ is the unique conformal map from $\HH\setminus K_t$ onto $\HH$ with normalization $\lim_{z\to\infty}|g_t(z)-z|=0$. Since the half-plane capacity of $K_t$ is $\lim_{z\to\infty}z(g_t(z)-z)=2t$, we say that the process $(K_t, t\ge 0)$ is parameterized by the half-plane capacity. We say that $(K_t, t\ge 0)$ can be generated by continuous curve $(\eta(t), t\ge 0)$ if, for any $t$, the unbounded connected component of $\HH\setminus\eta[0,t]$ is the same as $\HH\setminus K_t$. 
\paragraph{Schramm Loewner evolution.}
Schramm Loewner evolution $\SLE_{\kappa}$ is the random Loewner chain driven by $W_t=\sqrt{\kappa}B_t$ where $\kappa> 0$ and $(B_t, t\ge 0)$ is one-dimension Brownian motion starting from 0. 
$\SLE_{\kappa}$ process is almost surely generated by continuous curve $\eta$. The continuity for $\kappa\neq 8$ is proved in \cite{RohdeSchrammSLEBasicProperty}, and the continuity for $\kappa=8$ is proved in \cite{LawlerSchrammWernerLERWUST}. Moreover, the curve $\eta$ is almost surely transient: $\lim_{t\to\infty}|\eta(t)|=\infty$. When $\kappa\in (0,4]$, the curve is simple; when $\kappa\in (4,8)$, the curve is self-touching; when $\kappa\ge 8$, the curve is space-filling. 

In the above, $\SLE_{\kappa}$ is in $\HH$ from $0$ to $\infty$, we may define it in any Dobrushin domain $(\Omega; x, y)$ via conformal image: 
let $\phi$ be any conformal map from $\Omega$ onto $\HH$ such that $\phi(x)=0$ and $\phi(y)=\infty$. We define $\SLE_{\kappa}$ in $\Omega$ from $x$ to $y$ to be $\phi^{-1}(\eta)$ where $\eta$ is an $\SLE_{\kappa}$ in $\HH$ from $0$ to $\infty$. When $\kappa\in (0,8]$, $\SLE_{\kappa}$ enjoys reversibility: suppose $\eta$ is an $\SLE_{\kappa}$ in $\Omega$ from $x$ to $y$, the time-reversal of $\eta$ has the same law as $\SLE_{\kappa}$ in $\Omega$ from $y$ to $x$, proved in \cite{ZhanReversibility}, \cite{MillerSheffieldIG2}, \cite{MillerSheffieldIG3}. When $\kappa>8$, the time-reversal of $\SLE_{\kappa}$ is nolonger $\SLE_{\kappa}$, it becomes $\SLE_{\kappa}(\kappa/2-4; \kappa/2-4)$, see\cite[Theorem~1.19]{MillerSheffieldIG4}. 
\paragraph{$\SLE_{\kappa}(\underline{\rho})$ process.}
$\SLE_{\kappa}(\underline{\rho})$ process is a variant of $\SLE_{\kappa}$ where one keeps track of multiple marked points. Suppose 
$\underline{y}^L=(y^{L, l}<\cdots<y^{L, 1}\le 0)$, $\underline{y}^R=(0\le y^{R, 1}<y^{R, 2}<\cdots<y^{R, r})$ and 
$\underline{\rho}^L=(\rho^{L, l}, \ldots, \rho^{L, 1})$, $\underline{\rho}^R=(\rho^{R, 1}, \ldots, \rho^{R, r})$ with $\rho^{L, i}, \rho^{R, i}\in\R$. An $\SLE_{\kappa}(\underline{\rho}^L;\underline{\rho}^R)$ process with force points $(\underline{y}^L; \underline{y}^R)$ is the Loewner chain driven by $W_t$ which is the solution to the following system of SDEs:
\begin{align*}
\begin{cases}
dW_t=\sqrt{\kappa}dB_t+\sum_{i=1}^l \frac{\rho^{L, i} dt}{W_t-V_t^{L, i}}+\sum_{i=1}^r \frac{\rho^{R, i} dt}{W_t-V_t^{R, i}}, \quad W_0=0; \\
dV_t^{L, i}=\frac{2dt}{V_t^{L, i}-W_t},\quad V_0^{L, i}=y^{L, i}, \quad\text{for }1\le i\le l; \\
dV_t^{R, i}=\frac{2dt}{V_t^{R, i}-W_t},\quad V_0^{R, i}=y^{R, i}, \quad\text{for }1\le i\le r; 
\end{cases}
\end{align*}
where $(B_t, t\ge 0)$ is one-dimensional Brownian motion starting from $0$. We define the continuation threshold of
$\SLE_{\kappa}(\underline{\rho}^L; \underline{\rho}^R)$ to be the infimum of the time $t$ for which 
\[\text{either }\sum_{i: V_t^{L, i}=W_t}\rho^{L, i}\le -2,\quad\text{or } \sum_{i: V_t^{R, i}=W_t}\rho^{R, i}\le -2.\]
$\SLE_{\kappa}(\underline{\rho}^L; \underline{\rho}^R)$ process is well-defined up to the continuation threshold and it is almost surely generated by continuous curve up to and including the continuation threshold, see~\cite{MillerSheffieldIG1}. 

The law of $\SLE_{\kappa}(\underline{\rho}^L; \underline{\rho}^R)$ is absolutely continuous with respect to $\SLE_{\kappa}$, and we will give the Radon-Nikodym derivative below, see also~\cite{SchrammWilsonSLECoordinatechanges}. To simplify the notation for the Radon-Nikodym derivative, we focus on $\SLE_{\kappa}(\underline{\rho})$ process when all force points are located to the same side of the process. Consider $\SLE_{\kappa}(\underline{\rho})$ with force points $\underline{y}$ where $\underline{\rho}=(\rho_1, \ldots, \rho_n)\in\R^n$ and $\underline{y}=(0\le y_1<\cdots<y_n)$. The law of $\SLE_{\kappa}(\underline{\rho})$ with force points $\underline{y}$ is absolutely continuous with respect to $\SLE_{\kappa}$ up to the first time that $y_1$ is swallowed, and the Radon-Nikodym derivative is $M_t/M_0$ where 
\begin{equation}\label{eqn::sle_kappa_rho_mart}
M_t=\prod_{1\le i\le n} \left(g_t'(y_i)^{\rho_i(\rho_i+4-\kappa)/(4\kappa)}(g_t(y_i)-W_t)^{\rho_i/\kappa}\right)\times \prod_{1\le i<j\le n}(g_t(y_j)-g_t(y_i))^{\rho_i\rho_j/(2\kappa)}.
\end{equation} 

$\SLE_{\kappa}(\underline{\rho}^L; \underline{\rho}^R)$ process can be defined in general polygons. Suppose $(\Omega; y^{L, l}, \ldots, y^{L, 1}, x, y^{R, 1}, \ldots, y^{R, r}, y)$ is a polygon with $l+r+2$ marked points. 
Let $\phi$ be any conformal map from $\Omega$ onto $\HH$ such that $\phi(x)=0$ and $\phi(y)=\infty$. 
We define $\SLE_{\kappa}(\underline{\rho}^L; \underline{\rho}^R)$ in $\Omega$ from $x$ to $y$ with force points $(y^{L, l}, \ldots, y^{L, 1}; y^{R, 1}, \ldots, y^{R, r})$ to be $\phi^{-1}(\eta)$ where $\eta$ is an $\SLE_{\kappa}(\underline{\rho}^L; \underline{\rho}^R)$ in $\HH$ from $0$ to $\infty$ with force points $(\phi(y^{L, l}), \ldots, \phi(y^{L, 1}); \phi(y^{R, 1}), \ldots, \phi(y^{R, r}))$. 
\paragraph{$\SLE_2(-1,-1;-1,-1)$ process.}
Let us discuss $\SLE_2(-1,-1;-1,-1)$ mentioned in Theorem~\ref{thm::cvg_lerw}. Suppose $(\Omega; d, a, x, b, c, y)$ is a polygon with six marked points and consider $\SLE_2(-1,-1;-1,-1)$ in $\Omega$ from $x$ to $y$ with force points $(d,a;b,c)$. Note that the total of the force point weights is $-4$ which is $\kappa-6$ for $\kappa=2$. Such process is target independent in the following sense: for distinct $y_1, y_2\in (cd)$, let $\eta_i$ be the $\SLE_2(-1,-1;-1,-1)$ in $\Omega$ from $x$ to $y_i$ with force points $(d,a;b,c)$, and let $T_i$ be the first time that $\eta_i$ hits $(cd)$ (in fact, $T_i$ is the continuation threshold of $\eta_i$) for $i=1,2$. Then the law of $(\eta_1(t), 0\le t\le T_1)$ is the same as the law of 
$(\eta_2(t), 0\le t\le T_2)$. See~\cite{SchrammWilsonSLECoordinatechanges} for the target-independence for a general setup. As the law of $(\eta_i(t), 0\le t\le T_i)$ does not depend on the location of the target point, we say that it is an $\SLE_2(-1, -1; -1, -1)$ in $\Omega$ from $x$ to $(cd)$ with force points $(d, a; b, c)$. 


\section{Hypergeometric SLE with $\kappa\ge 8$}
\label{sec::hypersle}

For $\kappa>0$ and $\nu>(-4)\vee(\kappa/2-6)$, define the hypergeometric function (see Appendix~\ref{appendix_hyper_elliptic}): 
\begin{align}\label{eqn::hyper_def}
F(z):=\hF\left(\frac{2\nu+4}{\kappa}, 1-\frac{4}{\kappa},\frac{2\nu+8}{\kappa};z\right).
\end{align}
Set
\begin{equation}\label{eqn::hab_def}
h=\frac{6-\kappa}{2\kappa},\quad a=\frac{\nu+2}{\kappa}, \quad b=\frac{(\nu+2)(\nu+6-\kappa)}{4\kappa}.
\end{equation}
For $x_1<x_2<x_3<x_4$, define partition function
\begin{equation}\label{eqn::hSLE_partition}
\PartF_{\kappa,\nu}(x_1, x_2, x_3, x_4)=(x_4-x_1)^{-2h}(x_3-x_2)^{-2b}z^aF(z),\quad \text{where}\quad z=\frac{(x_2-x_1)(x_4-x_3)}{(x_3-x_1)(x_4-x_2)}.
\end{equation}

The process $\hSLE_{\kappa}(\nu)$ in $\HH$ from $x_1$ to $x_4$ with marked points $(x_2, x_3)$ is the Loewner chain driven by $W_t$ which is the solution to the following SDEs: 
\begin{align}\label{eqn::hypersle_sde}
\begin{cases}
dW_t=\sqrt{\kappa}dB_t+\kappa(\partial_{1}\log\PartF_{\kappa, \nu})(W_t, V_t^2, V_t^3, V_t^4)dt,\quad W_0=x_1; \\
dV_t^i=\frac{2dt}{V_t^i-W_t},\quad V_0^i=x_i, \quad\text{for }i=2,3,4;
\end{cases}
\end{align}
where $(B_t, t\ge 0)$ is one-dimensional Brownian motion starting from $0$.
Combining~\eqref{eqn::hypersle_sde} and~\eqref{eqn::euler_eq}, the law of $\hSLE_{\kappa}(\nu)$ is the same as $\SLE_{\kappa}$ in $\HH$ from $x_1$ to $\infty$ weighted by the following local martingale:
\begin{equation}\label{eqn::hypersle_mart}
M_t=g_t'(x_2)^{b}g_t'(x_3)^bg_t'(x_4)^{h}\PartF_{\kappa, \nu}(W_t, g_t(x_2), g_t(x_3), g_t(x_4)).
\end{equation}

It is clear that the solution to~\eqref{eqn::hypersle_sde} is well-defined up to the swallowing time of $x_2$. We denote by $T_{x_3}$ the swallowing time of $x_3$. To fully understand solutions to~\eqref{eqn::hypersle_sde}, we will address the following two questions: 
\begin{itemize}
\item Is there a unique solution (in law) to~\eqref{eqn::hypersle_sde} up to and including $T_{x_3}$?
\item Whether the Loewner chain is generated by a continuous curve up to and including $T_{x_3}$?
\end{itemize}
The answers to these questions are positive. The proof turns out to be very different for $\kappa\neq 8$ and for $\kappa=8$. 
These questions are addressed in~\cite{WuHyperSLE} for $\kappa\in (0,8)$. A similar analysis applies to the case when $\kappa>8$, see Section~\ref{subsec::hsle_continuity}. The proof for $\kappa=8$ uses analysis from UST and will be completed in Section~\ref{sec::ust}, more precisely, in the proof of Theorem~\ref{thm::ust_hsle} and in the proof of Corollary~\ref{cor::ust_hsle8}. 

In summary, for $\kappa\ge 8$, we will show that the process is well-defined up to $T_{x_3}$; moreover, it is generated by a continuous curve $\eta$ up to and including $T_{x_3}$. After $T_{x_3}$, we continue the process as a standard $\SLE_{\kappa}$ from $\eta(T_{x_3})$ towards $x_4$ in the remaining domain. The reason for such choice comes from the observation in the discrete setup, see the second last paragraph in the proof of Theorem~\ref{thm::ust_hsle}.

In the above, we have defined $\hSLE$ in $\HH$ and we may extend the definition to general quad via conformal image: For a quad $(\Omega; x_1, x_2, x_3, x_4)$, let $\phi$ be any conformal map from $\Omega$ onto $\HH$ such that $\phi(x_1)<\phi(x_2)<\phi(x_3)<\phi(x_4)$. We define $\hSLE_{\kappa}(\nu)$ in $\Omega$ from $x_1$ to $x_4$ with marked points $(x_2, x_3)$ to be $\phi^{-1}(\eta)$ where $\eta$ is an $\hSLE_{\kappa}(\nu)$ in $\HH$ from $\phi(x_1)$ to $\phi(x_4)$ with marked points $(\phi(x_3), \phi(x_4))$. Moreover, if the marked points $x_1, x_2, x_3, x_4$ lie on sufficiently regular boundary segments (e.g. $C^{1+\eps}$ for some $\eps>0$), we may extend the partition function $\PartF_{\kappa, \nu}$ via conformal image: 
\begin{equation}\label{def::PPF_quad}
\PartF_{\kappa, \nu}(\Omega; x_1, x_2, x_3, x_4)=\phi'(x_1)^{-h}\phi'(x_2)^{-b}\phi'(x_3)^{-b}\phi'(x_4)^{-h}\PartF_{\kappa, \nu}(\phi(x_1), \phi(x_2), \phi(x_3), \phi(x_4)). 
\end{equation}

\subsection{Continuity of $\hSLE_{\kappa}(\nu)$ with $\kappa\ge 8$}
\label{subsec::hsle_continuity}

To derive the continuity of $\hSLE_{\kappa}(\nu)$, it is more convenient to work in $\HH$ with $x_1=0$ and $x_4=\infty$. Consider $\hSLE_{\kappa}(\nu)$ in $\HH$ from $0$ to $\infty$ with marked points $(x,y)$ where $0<x<y$. In this case, the SDEs~\eqref{eqn::hypersle_sde} becomes the following: 
\begin{align}\label{eqn::hsle_sde}
\begin{cases}
dW_t=\sqrt{\kappa}dB_t+\frac{(\nu+2)dt}{W_t-V_t^x}+\frac{-(\nu+2)dt}{W_t-V_t^y}-\kappa\frac{F'(Z_t)}{F(Z_t)}\left(\frac{1-Z_t}{V_t^y-W_t}\right)dt,\quad W_0=0; \\
dV_t^x=\frac{2dt}{V_t^x-W_t},\quad dV_t^y=\frac{2dt}{V_t^y-W_t},\quad V_0^x=x, V_0^y=y; \quad\text{where}\quad Z_t=\frac{V_t^x-W_t}{V_t^y-W_t}. 
\end{cases}
\end{align}
We denote by $T_x$ the swallowing time of $x$ and by $T_y$ the swallowing time of $y$. 
The main result of this section is the continuity of the process up to and including $T_y$. 


From~\eqref{eqn::hsle_sde}, it is clear that the Loewner chain is well-defined up to $T_x$. As in~\eqref{eqn::hypersle_mart}, the process has the same law as $\SLE_{\kappa}$ in $\HH$ from 0 to $\infty$ weighted by the following local martingale: 
\begin{equation}\label{eqn::hsle_mart}
M_t=g_t'(x)^bg_t'(y)^b(g_t(y)-g_t(x))^{-2b}Z_t^a F(Z_t).
\end{equation}
In particular, it is generated by continuous curve up to $T_x$. However, the continuity of the process around $T_x$ or $T_y$ can be problematic in general. The behavior of the process near $T_x$ or $T_y$ depends on the asymptotic of the hypergeometric function $F$ in the definition of the driving function, see Lemma~\ref{lem::hyperfunction_asy}.

\begin{lemma}\label{lem::hsle_continuity_uptoTy}
Fix $\kappa\ge 8, \nu>-2$ and $0<x<y$. Suppose $\eta\sim \hSLE_{\kappa}(\nu)$ in $\HH$ from 0 to $\infty$ with marked points $(x,y)$. Then $\eta$ is generated by a continuous curve up to $T_y$. 
\end{lemma}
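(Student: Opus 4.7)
The plan is to reduce the continuity of $\hSLE_\kappa(\nu)$ to that of a standard $\SLE_\kappa(\underline{\rho})$ process via absolute continuity. The natural reference is $\SLE_\kappa(\nu+2, -(\nu+2))$ in $\HH$ from $0$ to $\infty$ with force points $(x,y)$, which by~\cite{MillerSheffieldIG1} is generated by a continuous curve up to its continuation threshold (equal to $T_y$ when $\nu \ge 0$, and strictly later when $\nu \in (-2, 0)$).

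First, I compute the Radon-Nikodym derivative of $\hSLE_\kappa(\nu)$ with respect to this reference. Comparing the drift in~\eqref{eqn::hsle_sde} with that of $\SLE_\kappa(\nu+2, -(\nu+2))$, the two differ only by the hypergeometric drift $-\kappa (F'(Z_t)/F(Z_t))(1-Z_t)/(V_t^y-W_t)$. Matching the exponents from~\eqref{eqn::hab_def} with those appearing in the $\SLE_\kappa(\rho_1,\rho_2)$ martingale~\eqref{eqn::sle_kappa_rho_mart} with $\rho_1=\nu+2$ and $\rho_2=-(\nu+2)$, a direct computation gives
\[
R_t = \left(\frac{g_t(y)-g_t(x)}{g_t'(y)}\right)^{(\nu+2)(\kappa-4)/(2\kappa)} F(Z_t),
\]
in which the singular factors $(g_t(x)-W_t)^a$ and $(g_t(y)-W_t)^{-a}$ from the $\hSLE$ martingale $M_t$ in~\eqref{eqn::hsle_mart} cancel exactly against those from the reference. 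It\^o's formula together with the hypergeometric ODE satisfied by $F$ then shows that $R_t$ is a positive local martingale under the reference law.

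Second, I fix any stopping time $\tau < T_y$ and argue that $R_t$ is almost surely bounded on $[0,\tau]$. Under the reference law, which is pathwise continuous on $[0,\tau]$, the quantities $g_t'(y)$, $g_t(y)-g_t(x)$, and $V_t^y-W_t$ are continuous and strictly positive on $[0,\tau]$, while $V_t^x<V_t^y$ strictly forces $Z_t\in[0,1)$ with $\sup_{t\le\tau}Z_t<1$ almost surely. The finiteness of $F$ on $[0,1)$ from Lemma~\ref{lem::hyperfunction_asy} then bounds $F(Z_t)$, and hence $R_t$, on $[0,\tau]$. By Girsanov's theorem, $\hSLE_\kappa(\nu)|_{[0,\tau]}$ is absolutely continuous with respect to $\SLE_\kappa(\nu+2,-(\nu+2))|_{[0,\tau]}$, and the continuity of the latter on $[0,\tau]$ transfers to $\hSLE_\kappa(\nu)$. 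Taking a sequence of stopping times $\tau_n\nearrow T_y$ (e.g.\ $\tau_n=\inf\{t:V_t^y-W_t\le 1/n\}\wedge n$) yields continuity on $[0,T_y)$.

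The main subtlety is verifying that $R_t$ remains a positive local martingale under the reference law throughout $[0,T_y)$, in particular across the possible swallowing time $T_x$ of $x$. This swallowing time can be finite when $\nu$ is sufficiently small that the effective Bessel dimension $1+(2\nu+8)/\kappa$ governing $V_t^x-W_t$ lies below $2$, in which case formula~\eqref{eqn::sle_kappa_rho_mart} is valid as a Radon-Nikodym derivative only up to $T_x$. However, the Girsanov change of measure between $\hSLE_\kappa(\nu)$ and the reference is dictated by the drift difference displayed above, which remains locally integrable in a neighborhood of $T_x$ because $Z_t\to 0$ and $F$ is smooth at $0$ with $F(0)=1$. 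Hence the change of measure extends across $T_x$, and the argument goes through on all of $[0,T_y)$.
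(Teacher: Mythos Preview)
Your comparison process differs from the paper's: the paper uses $\SLE_\kappa(\nu+2,\kappa-6-\nu)$, not $\SLE_\kappa(\nu+2,-(\nu+2))$, and this choice is what makes the argument clean. With the paper's reference one gets $R_t=F(Z_t)(V_t^y-W_t)^{4/\kappa-1}$, and the asymptotic~\eqref{eqn::hyperfunction_asy} (resp.~\eqref{eqn::hyperfunction_asy_8}) lets one rewrite $R_t$ so that the divergence of $F(Z_t)$ as $Z_t\to 1$ is absorbed by the remaining factors, giving a \emph{deterministic} bound on $R_t$ up to $S_n=\inf\{t:V_t^y-V_t^x\le 1/n\}$. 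Your $R_t$, by contrast, carries the naked factor $F(Z_t)$, and your stopping time $\tau_n$ gives no deterministic control on $Z_t$: you only get the random statement $\sup_{t\le\tau_n}Z_t<1$ almost surely.

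This is a real gap: an almost-sure (path-dependent) bound on a positive local martingale does \emph{not} make it a true martingale---the inverse of a three-dimensional Bessel process on a bounded time interval is the standard counterexample---so the passage from ``$R_t$ a.s.\ bounded on $[0,\tau]$'' to ``absolute continuity via Girsanov'' is unjustified as written. The fix, however, is already implicit in your last paragraph. The drift difference is $-\kappa\,(F'(Z_t)/F(Z_t))(1-Z_t)/(V_t^y-W_t)$, and the same asymptotics from Lemma~\ref{lem::hyperfunction_asy} show that $z\mapsto(F'(z)/F(z))(1-z)$ extends to a bounded function on $[0,1]$; combined with $V_t^y-W_t\ge 1/n$ on $[0,\tau_n]$, the drift difference is deterministically bounded there, and Novikov's criterion then makes $R_{\cdot\wedge\tau_n}$ a true martingale. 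You invoke exactly this boundedness to cross $T_x$, but it is precisely the argument that should replace the ``a.s.\ bounded'' step in your main paragraph.
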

\begin{proof}
We compare the law of $\eta$ with $\SLE_{\kappa}(\nu+2, \kappa-6-\nu)$ in $\HH$ from 0 to $\infty$ with force points $(x,y)$. By~\eqref{eqn::sle_kappa_rho_mart} and~\eqref{eqn::hsle_mart}, the Radon-Nikodym derivative is given by $R_t/R_0$ where 
\[R_t=F(Z_t)(V_t^y-W_t)^{4/\kappa-1}.\]

When $\kappa>8$, we write 
\[R_t=F(Z_t)(1-Z_t)^{1-8/\kappa} (V_t^y-V_t^x)^{8/\kappa-1}(V_t^y-W_t)^{-4/\kappa}.\]
By~\eqref{eqn::hyperfunction_asy}, the function $F(z)(1-z)^{1-8/\kappa}$ is uniformly bounded for $z\in [0,1]$. Define, for $n\ge 1$,  
\[S_n=\inf\{t: V_t^y-V_t^x\le 1/n\}.\]
Then $R_t$ is bounded up to $S_n$. Since $\SLE_{\kappa}(\nu+2, \kappa-6-\nu)$ is generated by a continuous curve, the process $\eta$ is generated by a continuous curve up to $S_n$. This holds for any $n$, thus $\eta$ is generated by a continuous curve up to $T_y=\lim_n S_n$. 

When $\kappa=8$, we write
\[R_t=\frac{F(Z_t)}{\log\frac{1}{1-Z_t}}\left(\log\frac{1}{1-Z_t}\right)(V_t^y-W_t)^{4/\kappa-1}.\]
By~\eqref{eqn::hyperfunction_asy_8}, we know that $F(z)/\left(\log\frac{1}{1-z}\right)$ is uniformly bounded for $z\in [0,1]$. Define $S_n$ in the same way as before. Then $R_t$ is bounded up to $S_n$. Similarly, the process $\eta$ is continuous up to $T_y=\lim_n S_n$. 
\end{proof}

In Lemma~\ref{lem::hsle_continuity_uptoTy}, we obtain the continuity of $\hSLE_{\kappa}(\nu)$ up to $T_y$ by showing that the process is absolutely continuous with respect to $\SLE_{\kappa}(\nu+2, \kappa-6-\nu)$. However, the absolute continuity is nolonger true when the process approaches $T_y$. In the following, we will derive the continuity of the process as $t\to T_y$. 
From Lemma~\ref{lem::hyperfunction_asy}, we see that the asymptotic of $F(z)$ as $z\to 1$ is very different between $\kappa>8$ and $\kappa=8$. We will treat the two cases separately: we prove the continuity of $\hSLE$ with $\kappa>8$ in Lemma~\ref{lem::hsle_continuity_includingTy_greater8}, and the continuity with $\kappa=8$ in Corollary~\ref{cor::ust_hsle8}. 

\begin{lemma}\label{lem::hsle_continuity_includingTy_greater8}
Fix $\kappa> 8, \nu\ge \kappa/2-6$ and $0<x<y$. Suppose $\eta\sim\hSLE_{\kappa}(\nu)$ in $\HH$ from 0 to $\infty$ with marked points $(x,y)$. Then $\eta$ is generated by a continuous curve up to and including $T_y$. 
\end{lemma}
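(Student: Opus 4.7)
The plan is to compare $\hSLE_\kappa(\nu)$ with $\SLE_\kappa(\nu+2,\kappa-\nu-10)$ in $\HH$ from $0$ to $\infty$ with force points $(x,y)$. The motivation for this particular choice of weights is that $\rho^y=\kappa-\nu-10$ captures precisely the leading singular contribution of the hypergeometric drift in~\eqref{eqn::hsle_sde} near $T_y$: by the Euler transformation used in the proof of Lemma~\ref{lem::hyperfunction_asy}, one may write $F(z)=(1-z)^{8/\kappa-1}\tilde F(z)$ where $\tilde F(z):=\hF(4/\kappa,(2\nu+12)/\kappa-1,(2\nu+8)/\kappa;z)$, and the hypothesis $\kappa>8$ gives $C-A-B=1-8/\kappa>0$ for $\tilde F$'s parameters, so that $\tilde F$ extends continuously to $[0,1]$ and is bounded above and below there by positive constants.

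I would then compute the Radon-Nikodym derivative of $\hSLE_\kappa(\nu)$ with respect to $\SLE_\kappa(\nu+2,\kappa-\nu-10)$ by forming the ratio $M_t/N_t$ of the martingales from~\eqref{eqn::hypersle_mart} and~\eqref{eqn::sle_kappa_rho_mart}. After substituting $F(z)=(1-z)^{8/\kappa-1}\tilde F(z)$ and simplifying, the factors containing $g_t'(x)$, $V_t^x-W_t$ and $V_t^y-W_t$ cancel exactly, leaving
\[
\frac{M_t}{N_t}\;=\;\left(\frac{g_t'(y)}{V_t^y-V_t^x}\right)^{(\kappa-2\nu-12)/\kappa}\tilde F(Z_t).
\]
This cancellation is the crucial point: it shows that the Radon-Nikodym derivative has no singularity at $T_x$, so the comparison extends all the way up to $T_y$, in contrast with the comparison with $\SLE_\kappa(\nu+2,\kappa-6-\nu)$ used in Lemma~\ref{lem::hsle_continuity_uptoTy}, whose density blows up as $t\to T_y$.

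The hypothesis $\nu\geq\kappa/2-6$ makes the exponent $(\kappa-2\nu-12)/\kappa$ non-positive, so to bound $M_t/N_t$ uniformly it suffices to bound $g_t'(y)/(V_t^y-V_t^x)$ from below by a positive constant. For $t\leq T_x$ a direct Loewner computation gives
\[
\partial_t\log\!\left(\frac{g_t'(y)}{V_t^y-V_t^x}\right)\;=\;\frac{2(V_t^y-V_t^x)}{(V_t^x-W_t)(V_t^y-W_t)^2}>0,
\]
yielding $g_t'(y)/(V_t^y-V_t^x)\geq 1/(y-x)$ on $[0,T_x]$. For $t\in[T_x,T_y]$, where $V_t^x=W_t$, a boundary Koebe-type estimate for $g_t^{-1}$ bounds $g_t'(y)/(V_t^y-W_t)$ below by a positive constant depending only on $\dist(y,\eta[0,t])\leq y$. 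Thus $M_t/N_t$ stays uniformly bounded on $[0,T_y]$.

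Finally, by~\cite{MillerSheffieldIG1} the process $\SLE_\kappa(\nu+2,\kappa-\nu-10)$ is generated by a continuous curve up to and including $T_y$: either the continuation threshold is reached exactly at $T_y$ (when $\kappa-\nu-10\leq-2$) or the process is continuous for all times (when $\kappa-\nu-10>-2$) and in particular at $T_y$. Absolute continuity with bounded Radon-Nikodym derivative then transfers this continuity to $\hSLE_\kappa(\nu)$. The main difficulty I anticipate is the rigorous control of $g_t'(y)/(V_t^y-W_t)$ on the random interval $[T_x,T_y]$: past $T_x$ the deterministic monotonicity argument breaks down, and one must invoke a geometric estimate that is robust under the stochastic evolution of the driving function.
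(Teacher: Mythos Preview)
Your approach is genuinely different from the paper's, and your Radon--Nikodym computation is correct: with $\rho_1=\nu+2$, $\rho_2=\kappa-\nu-10$ the $g_t'(x)$, $V_t^x-W_t$ and $V_t^y-W_t$ factors do cancel, leaving exactly $(g_t'(y)/(V_t^y-V_t^x))^{(\kappa-2\nu-12)/\kappa}\tilde F(Z_t)$. The paper instead applies the M\"obius map $\varphi(z)=z/(1-z)$ (after normalising $y=1$), sending $y\to\infty$, and then compares the image process with $\SLE_\kappa(2;\nu+2)$ with force points $(-1;\tilde x)$. In those coordinates the only non-trivial factor in the density is $(\tilde g_s(\tilde x)-\tilde g_s(-1))^{1-8/\kappa-2a}$, which is deterministically increasing in $s$; the hypothesis $\nu\ge\kappa/2-6$ then makes the exponent $\le0$, and boundedness is immediate. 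The virtue of the coordinate change is that the quantity one has to control becomes manifestly monotone, with no case analysis near swallowing times.

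There is, however, a real gap in your write-up on $[T_x,T_y]$. You assert that $V_t^x=W_t$ for $t\in[T_x,T_y]$ and then switch to bounding $g_t'(y)/(V_t^y-W_t)$ via a ``boundary Koebe-type estimate depending only on $\dist(y,\eta[0,t])\le y$''. Both steps are problematic: first, under the reference $\SLE_\kappa(\nu+2,\kappa-\nu-10)$ dynamics the force point $V_t^x$ does \emph{not} stay glued to $W_t$ after $T_x$ --- it evolves by the Loewner ODE and detaches, with $\{t:V_t^x=W_t\}$ of Lebesgue measure zero since $\rho_1>-2$; second, a bound depending on $\dist(y,\eta[0,t])$ cannot be uniform, because that distance tends to $0$ as $t\to T_y$. (Your remark on the continuation threshold is also slightly off: $\rho_1+\rho_2=\kappa-8>0$, so the threshold is never reached and the reference process is continuous for all time.)

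The good news is that your own monotonicity argument already closes the gap, and you simply abandoned it too early. The identity
\[
\partial_t\log\frac{g_t'(y)}{V_t^y-V_t^x}=\frac{2(V_t^y-V_t^x)}{(V_t^x-W_t)(V_t^y-W_t)^2}>0
\]
is valid at every $t<T_y$ with $V_t^x>W_t$; since the complementary set of times has measure zero and the left-hand side is continuous, integrating gives $g_t'(y)/(V_t^y-V_t^x)\ge 1/(y-x)$ for all $t<T_y$. With this in place your density is uniformly bounded up to and including $T_y$, and the conclusion follows exactly as you outline. Thus your route can be made to work, but via the extension of the monotonicity computation rather than the Koebe heuristic; the paper's M\"obius trick achieves the same end with less bookkeeping.
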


\begin{proof}
Since $\hSLE_{\kappa}(\nu)$ is scaling invariant, we may assume $y=1$ and $x\in (0,1)$. We denote $T_y$ by $T$. In this lemma, we discuss the continuity of the process $(K_t, 0\le t\le T)$ as $t\to T$. We need to zoom in around the point $1$. To this end, we perform a standard change of coordinate and parameterize the process according to the capacity seen from the point $1$. See \cite[Theorem~3]{SchrammWilsonSLECoordinatechanges}.

Set $\varphi(z)=z/(1-z)$, this is the M\"{o}bius transformation of $\HH$ that sends the triple $(0,1,\infty)$ to $(0,\infty, -1)$. Denote by $\tilde{x}=\varphi(x)=x/(1-x)>0$. Denote the image of $(K_t, 0\le t\le T)$ under $\varphi$ by $(\tilde{K}_s, 0\le s\le \tilde{S})$ where we parameterize this process by its capacity seen from $\infty$. Let $(\tilde{g}_s, s\ge 0)$ be the corresponding family of conformal maps and $(\tilde{W}_s, s\ge 0)$ be the driving function. From~\eqref{eqn::hypersle_mart} and~\eqref{def::PPF_quad}, the law of $\tilde{K}_s$ is the same as $\SLE_{\kappa}$ in $\HH$ from 0 to $\infty$ weighted by the following local martingale 
\[\tilde{M}_s=\tilde{g}_s'(-1)^h\tilde{g}_s'(\tilde{x})^b(\tilde{W}_s-\tilde{g}_s(-1))^{-2h}\tilde{Z}_s^aF(\tilde{Z}_s),\quad\text{where }\tilde{Z}_s=\frac{\tilde{g}_s(\tilde{x})-\tilde{W}_s}{\tilde{g}_s(\tilde{x})-\tilde{g}_s(-1)}. \]
Compare the law of $\tilde{K}$ with respect to $\SLE_{\kappa}(2;\nu+2)$ in $\HH$ from 0 to $\infty$ with force points $(-1; \tilde{x})$. The Radon-Nikodym derivative is given by $\tilde{R}_s/\tilde{R}_0$ where
\[\tilde{R}_s=(1-\tilde{Z}_s)^{1-8/\kappa}F(\tilde{Z}_s)(\tilde{g}_s(\tilde{x})-\tilde{g}_s(-1))^{1-8/\kappa-2a}.\]

When $\kappa>8$ and $\nu\ge \kappa/2-6>-2$, the function $(1-z)^{1-8/\kappa}F(z)$ is uniformly bounded on $z\in[0,1]$ due to~\eqref{eqn::hyperfunction_asy}. The process $\tilde{g}_s(\tilde{x})-\tilde{g}_s(-1)$ is increasing in $s$, thus $\tilde{g}_s(\tilde{x})-\tilde{g}_s(-1)\ge 1/(1-x)$. Since $\nu\ge \kappa/2-6$, the exponent of the term $\tilde{g}_s(\tilde{x})-\tilde{g}_s(-1)$ is $1-8/\kappa-2a\le 0$. Therefore, $\tilde{R}_s$ is bounded. This implies that the law of $\tilde{K}_s$ is absolutely continuous with respect to the law of $\SLE_{\kappa}(2;\nu+2)$ up to and including the swallowing time of $-1$. Hence $(\tilde{K}_s, 0\le s\le \tilde{S})$ is generated by a continuous curve up to and including $\tilde{S}$. 
In particular, this implies that the original process $(K_t, 0\le t\le T)$ is generated by a continuous curve up to and including $T$.
\end{proof}

To sum up the results in this section for $\kappa>8$, we have the following continuity of $\hSLE_{\kappa}(\nu)$. 
\begin{proposition}
Fix $\kappa>8, \nu\ge \kappa/2-6$ and $x_1<x_2<x_3<x_4$. The process $\hSLE_{\kappa}(\nu)$ in $\HH$ from $x_1$ to $x_4$ is almost surely generated by a continuous curve. 
\end{proposition}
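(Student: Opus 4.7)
The plan is to assemble this proposition from Lemma~\ref{lem::hsle_continuity_includingTy_greater8} together with conformal invariance of $\hSLE_\kappa(\nu)$ and the continuity of standard $\SLE_\kappa$ past the swallowing time of $x_3$.

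First I would invoke the conformal covariance of $\hSLE_{\kappa}(\nu)$ described at the end of Section~\ref{sec::hypersle}: applying the M\"obius transformation of $\HH$ that sends $x_1 \mapsto 0$ and $x_4 \mapsto \infty$ reduces the statement to the normalized setup $0 < x < y$ with marked points $(x,y)$ driven by~\eqref{eqn::hsle_sde}. Under the standing hypotheses $\kappa > 8$ and $\nu \geq \kappa/2 - 6$ (which in particular imply $\nu > -2$), Lemma~\ref{lem::hsle_continuity_includingTy_greater8} then directly gives that $(K_t)$ is generated by a continuous curve $\eta$ on the closed interval $[0,T_y]$, where $T_y$ is the swallowing time of the marked point $y$ (equivalently, $T_{x_3}$ in the original coordinates).

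To extend past $T_y$, I would apply the continuation rule recalled in the paragraph above Section~\ref{subsec::hsle_continuity}: after time $T_y$ one runs a standard $\SLE_\kappa$ from $\eta(T_y)$ inside the unbounded connected component of $\HH \setminus K_{T_y}$ targeting $\infty$. Since $\SLE_\kappa$ with $\kappa > 8$ is almost surely generated by a continuous transient curve by~\cite{RohdeSchrammSLEBasicProperty}, concatenating it with the curve already constructed on $[0, T_y]$ produces a continuous curve generating the full Loewner chain on $[0, \infty)$, which is what is asserted.

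I expect no genuinely new obstacle at this assembly stage: the delicate analysis has already been carried out in Lemma~\ref{lem::hsle_continuity_includingTy_greater8}, whose proof relied on the asymptotic~\eqref{eqn::hyperfunction_asy} (which forces the restriction $\kappa > 8$ so that $(1-z)^{1-8/\kappa} F(z)$ remains bounded near $z=1$) and on the inequality $1 - 8/\kappa - 2a \leq 0$ (which is exactly where the assumption $\nu \geq \kappa/2 - 6$ enters) in order to control the Radon-Nikodym derivative against $\SLE_\kappa(2;\nu+2)$ viewed from the target point $y$. Neither restriction can be relaxed by the present method, and this is precisely why the borderline case $\kappa = 8$ is deferred to Section~\ref{sec::ust}, where it is obtained via the convergence of UST Peano curves rather than continuum analysis.
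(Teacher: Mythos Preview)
Your proposal is correct and follows essentially the same approach as the paper's proof: invoke Lemma~\ref{lem::hsle_continuity_includingTy_greater8} for continuity up to and including $T_{x_3}$, then continue as standard $\SLE_\kappa$ toward $x_4$. You include a bit more explicit detail (the M\"obius reduction and the citation for continuity of $\SLE_\kappa$), but the argument is the same.
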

\begin{proof}
Lemma~\ref{lem::hsle_continuity_includingTy_greater8} gives the continuity of $\hSLE_{\kappa}(\nu)$ up to and including $T_{x_3}$. After $T_{x_3}$, we continue the process by standard $\SLE_{\kappa}$ in the remaining domain from $\eta(T_{x_3})$ to $x_4$. Thus, the process $\hSLE_{\kappa}(\nu)$ is continuous for all time.
\end{proof}

\subsection{Non-reversibility of $\hSLE_{\kappa}(\nu)$ with $\kappa>8$}
\label{subsec::hsle_rev_greater8}
The time-reversal of $\SLE_{\kappa}$ with $\kappa>8$ was fully addressed in \cite[Theorem~1.19]{MillerSheffieldIG4}: consider $\SLE_{\kappa}(\rho_1; \rho_2)$ with force points next to the starting point for $\rho_1, \rho_2\in (-2, \kappa/2-2)$, its time-reversal is an $\SLE_{\kappa}(\kappa/2-4-\rho_2; \kappa/2-4-\rho_1)$ process with force points next to the starting point. In particular, the time-reversal of $\SLE_{\kappa}$ is an $\SLE_{\kappa}(\kappa/2-4; \kappa/2-4)$ process. This indicates that the time-reversal of $\hSLE_{\kappa}(\nu)$ with $\kappa>8$ is a variant of $\SLE_{\kappa}$ where one has four extra marked points. In particular, the time-reversal is nolonger in the family of hSLE which is a variant of $\SLE$ with two extra marked points, see Lemma~\ref{lem::hsle_rev_greater8}. Therefore, it is only reasonable to talk about reversibility of $\hSLE_{\kappa}(\nu)$ with $\kappa\le 8$. The reversibility of $\hSLE_{\kappa}(\nu)$ with $\kappa<8$ was addressed in \cite[Section~3.3]{WuHyperSLE}. We will discuss the reversibility of $\hSLE_{\kappa}(\nu)$ with $\kappa=8$ in Section~\ref{subsec::hsle_rev}.

\begin{lemma}\label{lem::hsle_rev_greater8}
When $\kappa>8$ and $\nu\ge \kappa/2-6$. The time-reversal of $\hSLE_{\kappa}(\nu)$ is not an $\hSLE_{\kappa}(\tilde{\nu})$ for any value of $\tilde{\nu}$. 
\end{lemma}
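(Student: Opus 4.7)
The plan is to obtain a contradiction by comparing the infinitesimal behaviour of the two candidate laws at the starting point $x_4$ of the reversed curve. Assume for contradiction that the time-reversal of an $\hSLE_{\kappa}(\nu)$ in $\HH$ from $x_1$ to $x_4$ with marked points $(x_2,x_3)$ has the law of an $\hSLE_{\kappa}(\tilde{\nu})$ in $\HH$ from $x_4$ to $x_1$ with marked points $(x_3,x_2)$, for some $\tilde{\nu}\in\R$.

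The first step is to isolate the tail of the original curve. By the construction recalled just before Section~\ref{subsec::hsle_continuity}, after the swallowing time $T_{x_3}$ the curve $\eta\sim\hSLE_{\kappa}(\nu)$ is continued as a standard $\SLE_{\kappa}$ from $\eta(T_{x_3})$ to $x_4$ inside the unbounded connected component of $\HH\setminus\eta[0,T_{x_3}]$. Applying the reversibility result \cite[Theorem~1.19]{MillerSheffieldIG4} to this tail, and using that $\kappa/2-4>0$ for $\kappa>8$, the time-reversal of this $\SLE_{\kappa}$ piece, starting at $x_4$, has the law of an $\SLE_{\kappa}(\kappa/2-4;\kappa/2-4)$ with the two force points placed immediately adjacent to $x_4$ on either side. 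Picking a radius $r>0$ small enough that $\overline{B(x_4,r)}$ is disjoint from $\eta[0,T_{x_3}]\cup\{x_2,x_3\}$, and letting $\sigma$ denote the exit time of $B(x_4,r)$, the prefix $\hat{\eta}[0,\sigma]$ of the reversed curve consequently has the law of a local $\SLE_{\kappa}(\kappa/2-4;\kappa/2-4)$ excursion from $x_4$ with force points at $x_4^{\pm}$.

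The second step is to examine the same prefix from the point of view of the candidate law. Since the marked points $(x_3,x_2)$ lie at strictly positive distance from $x_4$, as long as the driving function stays inside $(x_4-r,x_4+r)$ the drift in~\eqref{eqn::hypersle_sde} is bounded and the martingale~\eqref{eqn::hsle_mart} is bounded away from $0$ and $\infty$. Consequently, under the $\hSLE_{\kappa}(\tilde{\nu})$ assumption, the prefix $\hat{\eta}[0,\sigma]$ is mutually absolutely continuous with respect to the law of a standard $\SLE_{\kappa}$ started at $x_4$ and stopped at the same exit time, with Radon--Nikodym density bounded above and below.

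Finally, a standard $\SLE_{\kappa}$ and an $\SLE_{\kappa}(\kappa/2-4;\kappa/2-4)$ with force points adjacent to the starting point are mutually singular on every non-trivial initial prefix, because the non-zero weights $\kappa/2-4$ introduce a Bessel-type singular drift of $W_t-V_t^{\pm}$ that is not Girsanov-integrable near $t=0$. The prefix $\hat{\eta}[0,\sigma]$ cannot simultaneously be absolutely continuous and mutually singular with respect to $\SLE_{\kappa}$, contradicting the assumption. The main obstacle I expect is this last step, namely a clean justification of the mutual singularity on an arbitrarily short initial prefix; one way is to observe that the rescaled processes $(W_t-V_t^{\pm})/\sqrt{t}$ converge in law to non-degenerate Bessel-like distributions whose parameters depend measurably on the force-point weights, so that the value $\kappa/2-4$ is encoded in the germ $\sigma$-algebra of the driving function at $t=0$.
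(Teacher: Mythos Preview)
Your proposal is correct and follows essentially the same approach as the paper: the paper also isolates the terminal $\SLE_{\kappa}$ piece of $\eta$ after $T_{x_3}$, invokes \cite[Theorem~1.19]{MillerSheffieldIG4} to identify the initial segment of $\hat{\eta}$ near $x_4$ with $\SLE_{\kappa}(\kappa/2-4;\kappa/2-4)$, observes that any $\hSLE_{\kappa}(\tilde{\nu})$ is locally absolutely continuous with respect to ordinary $\SLE_{\kappa}$ near its starting point, and concludes from the failure of absolute continuity between $\SLE_{\kappa}$ and $\SLE_{\kappa}(\kappa/2-4;\kappa/2-4)$. Your treatment is slightly more explicit on the last step (the germ $\sigma$-algebra argument), whereas the paper simply asserts the non-absolute continuity; one small point to tidy is that your radius $r$ depends on the random hull $\eta[0,T_{x_3}]$, so it is cleaner to fix $r$ deterministically and restrict to the positive-probability event that $\eta[0,T_{x_3}]$ avoids $B(x_4,r)$.
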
 

\begin{proof}
Fix $x_1<x_2<x_3<x_4$. Suppose $\eta$ is an $\hSLE_{\kappa}(\nu)$ in $\HH$ from $x_1$ to $x_4$ and let $\hat{\eta}$ be its time-reversal. Let $\gamma$ be an $\hSLE_{\kappa}(\tilde{\nu})$ in $\HH$ from $x_4$ to $x_1$ with marked points $(x_3, x_2)$. We will compare the laws of $\hat{\eta}$ and $\gamma$ in small neighborhood of $x_4$.
\begin{itemize}
\item In the construction of $\eta$, we know that the process almost surely hits the interval $(x_3, x_4)$ and after the hitting time, we continue the process as a standard $\SLE_{\kappa}$ in the remaining domain. Therefore, the initial segment of $\hat{\eta}$ is the time-reversal of a standard $\SLE_{\kappa}$. By~\cite[Theorem~1.19]{MillerSheffieldIG4}, we know that, in small neighborhood of $x_4$, the law of $\hat{\eta}$ and the law of $\SLE_{\kappa}(\kappa/2-4; \kappa/2-4)$ are absolutely continuous with respect to each other.
\item From the definition of $\hSLE_{\kappa}(\tilde{\nu})$, we know that, in small neighborhood of $x_4$, the law of $\gamma$ and the law of $\SLE_{\kappa}$ are absolutely continuous with respect to each other. 
\end{itemize}
Combining the above two observations, we see that $\hat{\eta}$ can not have the same law as $\gamma$, because $\SLE_{\kappa}(\kappa/2-4; \kappa/2-4)$ and $\SLE_{\kappa}$ are not absolutely continuous with respect to each other. 
\end{proof}
\subsection{Continuity and reversibility of $\hSLE_{\kappa}(\nu)$ with $\kappa= 8$}
\label{subsec::hsle_rev}

The continuity and reversibility of $\hSLE_8$ will be given in Corollary~\ref{cor::ust_hsle8} in Section~\ref{subsec::ust_hsle}. The proof there is based on the convergence of the Peano curve for UST. Assuming this is true, we are able to prove the continuity and reversibility of $\hSLE_8(\nu)$ for $\nu\ge 0$. 

\begin{proof}[Proof of Theorem~\ref{thm::hsle_continuity_reversibility}]
We may assume $x_1=0<x_2=x<x_3=y<x_4=\infty$. Suppose $\eta\sim\hSLE_8(\nu)$ in $\HH$ from $0$ to $\infty$ with marked points $(x, y)$. 
Recall that $\eta$ has the same law as $\SLE_8$ in $\HH$ from 0 to $\infty$ weighted by the following local martingale: 
\begin{equation}\label{eqn::rev_mart_aux1}
M_t=g_t'(x)^bg_t'(y)^b(g_t(y)-g_t(x))^{-2b}Z_t^aF(Z_t),
\end{equation}
where \[h=\frac{-1}{8},\quad 
a=\frac{\nu+2}{8}, \quad b=\frac{(\nu+2)(\nu-2)}{32},\quad 
F(z)=\hF\left(2a, \frac{1}{2}, 2a+\frac{1}{2}; z\right).\]
Suppose $\gamma\sim\hSLE_8$ in $\HH$ from 0 to $\infty$ with marked points $(x,y)$. Then $\gamma$ has the same law as $\SLE_8$ in $\HH$ from 0 to $\infty$ weighted by the following local martingale: 
\begin{equation}\label{eqn::rev_mart_aux2}
N_t=g_t'(x)^hg_t'(y)^h(g_t(y)-g_t(x))^{-2h}Z_t^{1/4}G(Z_t),\quad \text{where }G(z)=\hF\left(\frac{1}{2}, \frac{1}{2}, 1; z\right).
\end{equation}
Combining~\eqref{eqn::rev_mart_aux1} and~\eqref{eqn::rev_mart_aux2}, we see that the law of $\eta$ is the same as the law of $\gamma$ weighted by the following local martingale: 
\[R_t=\frac{M_t}{N_t}=\left(\frac{g_t'(x)g_t'(y)}{(g_t(y)-g_t(x))^2}\right)^{\nu^2/32}\times Z_t^{\nu/8} \times \frac{F(Z_t)}{G(Z_t)}.\]
The term $Z_t$ takes values in $[0,1]$, the term $F(Z_t)/G(Z_t)$ is uniformly bounded due to~\eqref{eqn::hyperfunction_asy_8}. The term 
\[\frac{g_t'(x)g_t'(y)}{(g_t(y)-g_t(x))^2}\]
is the boundary Poisson kernel of the domain $\HH\setminus\gamma[0,t]$ and it is positive and bounded from above by $(y-x)^{-2}$. Thus $R_t$ is uniformly bounded and the law of $\eta$ is absolutely continuous with respect to the law of $\gamma$ up to and including $T_y$. 

By Corollary~\ref{cor::ust_hsle8}, the process $\gamma$ is continuous up to and including $T_y$ and $\gamma\cap [x,y]=\emptyset$. By the absolute continuity, the process $\eta$ is continuous up to and including $T_y$ and $\eta\cap [x,y]=\emptyset$. After $T_y$, we continue the process by standard $\SLE_8$ in the remaining domain from $\eta(T_y)$ to $\infty$. Thus $\eta$ is a continuous curve for all time. 

It remains to show the reversibility. 
We denote by $D$ the connected component of $\HH\setminus\gamma$ with $[x,y]$ on the boundary. Since $\gamma$ is continuous and $\gamma\cap [x,y]=\emptyset$, we have $Z_t\to 1$ as $t\to T_y$. Thus, from~\eqref{eqn::hyperfunction_asy_8}, 
\[R_t\to H_D(x,y)^{\nu^2/32}\times \sqrt{\pi}\frac{(\nu+2)\Gamma(2+\frac{\nu}{4})}{(\nu+4)\Gamma(\frac{3}{2}+\frac{\nu}{4})},\quad\text{as }t\to T_y.\]
From this, we find that the law of $\eta$ is the same as the law of $\gamma$ weighted by the boundary Poisson kernel $H_D(x,y)^{\nu^2/32}$. Since the law of $\gamma$ is reversible due to Corollary~\ref{cor::ust_hsle8}, and the boundary Poisson kernel is conformally invariant, we obtain the reversibility of  $\eta$. This completes the proof.
\end{proof}


\section{Convergence of UST in quads}
\label{sec::ust}
\subsection{UST with Dobrushin boundary conditions}
\label{subsec::ust_dobrushin}
We first introduce Dobrushin domains. Informally speaking, a Dobrushin domain is a simply connected subgraph $\Omega$ of $\Z^2$ with two fixed boundary points $a, b$, and the boundary arc $(ab)$ is in $\Z^2$ and the boundary arc $(ba)$ is in $(\Z^2)^*$.

Consider the medial lattice $(\Z^2)^{\diamond}$. We orient the edges of the medial lattice such that edges of a face containing a vertex in $\Z^2$ are oriented counterclockwise and edges of a face containing a vertex in $(\Z^2)^*$ are oriented clockwise.
Let $a^\diamond, b^\diamond$ be two distinct medial vertices, and $(a^{\diamond}b^{\diamond})$ and $(b^{\diamond}a^{\diamond})$ be two paths of neighboring medial vertices satisfying the following conditions: (1) the edges along $(a^{\diamond}b^{\diamond})$ point in clockwise way with the orientation inherited from the medial lattice; (2) the edges along $(b^{\diamond}a^{\diamond})$ point in counterclockwise way with the orientation inherited from the medial lattice; (3) the two paths are edge-avoiding and $(a^{\diamond}b^{\diamond})\cap(b^{\diamond}a^{\diamond})=\{a^\diamond, b^\diamond\}$. See Figure~\ref{fig::Dobrushin_Peano}. 

Given $(a^{\diamond}b^{\diamond})$ and $(b^{\diamond}a^{\diamond})$, the medial Dobrushin domain $(\Omega^\diamond; a^\diamond, b^\diamond)$ is defined as the subgraph of $(\Z^2)^\diamond$ induced by the vertices enclosed by or on the path $(a^{\diamond}b^{\diamond})\cup(b^{\diamond}a^{\diamond})$. 
Let $\Omega\subset\Z^2$ be the graph with edge set consisting of edges passing through end-points of medial edges in $E(\Omega^{\diamond})\setminus (b^{\diamond}a^{\diamond})$ and with vertex set given by the endpoints of these edges.  Here we always assume that $\Omega$ is simply connected. The vertices of $\Omega$ nearest to $a^\diamond, b^\diamond$ are denoted by $a, b$ and we call $(\Omega; a, b)$ primal Dobrushin domain. Let $(ab)$ be the set of edges corresponding to medial vertices in $(a^{\diamond}b^{\diamond})\cap\partial\Omega^\diamond$.
Let $\Omega^*\subset\Z^2$ be the graph with edge set consisting of edges passing through end-points of medial edges in $E(\Omega^{\diamond})\setminus (a^{\diamond}b^{\diamond})$ and with vertex set given by the endpoints of these edges. Here we always assume that $\Omega^*$ is simply connected.
The vertices of $\Omega^*$ nearest to $a^\diamond, b^\diamond$ are denoted by $a^*, b^*$. Let $(b^*a^*)$ be the set of edges corresponding to medial vertices in $(b^{\diamond}a^{\diamond})\cap\partial\Omega^\diamond$. Note that $a$ is the vertex of $\Omega$ that is nearest to $a^{\diamond}$ and $a^*$ is the vertex of $\Omega^*$ that is nearest to $a^{\diamond}$. See Figure~\ref{fig::Dobrushin_Peano}. 

\begin{figure}[ht!]
\begin{center}
\includegraphics[width=0.4\textwidth]{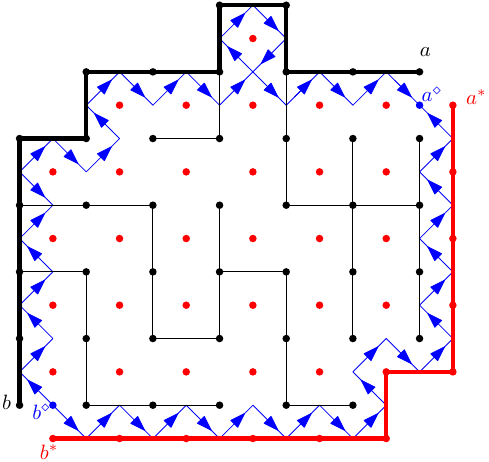}$\quad$
\includegraphics[width=0.4\textwidth]{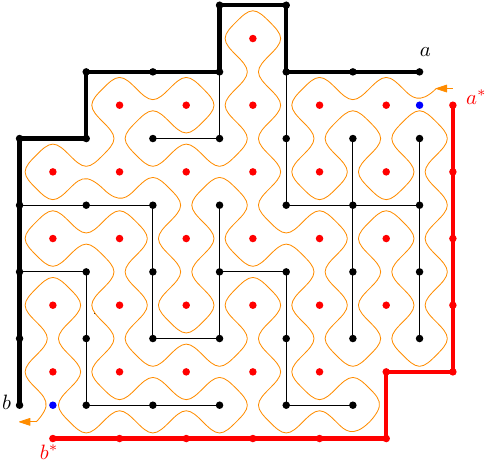}
\end{center}
\caption{\label{fig::Dobrushin_Peano} In the left panel, the solid edges in black are wired boundary arc $(ab)$, the solid edges in red are dual-wired boundary arc $(b^*a^*)$. The edges in blue are the boundary arcs $(a^{\diamond}b^{\diamond})$ and $(b^{\diamond}a^{\diamond})$ on the medial lattice. The thin edges are in the tree $\LT$. In the right panel, the curve in orange is the Peano curve associated to $\LT$.}
\end{figure}

Suppose that $\LT$ is a spanning tree on some primal Dobrushin domain $(\Omega; a, b)$ with $(ab)$ wired. 
Consider its dual configuration $\LT^*\subset E(\Omega^*)$ defined as follows: $\one_{\LT^*}(e^*)=1-\one_{\LT}(e)$ for any $e\in E(\Omega)$ where $e^*$ is the dual edge corresponding to $e$. 
It is clear that $\LT^*$ is a spanning tree on the dual Dobrushin domain $(\Omega^*; a^*, b^*)$ with $(b^*a^*)$ wired. 
There exists a unique  path, called Peano curve, on $(\Z^2)^\diamond$,  running between $\LT$ and $\LT^*$ from $a^{\diamond}$ to $b^{\diamond}$. The following theorem concerns the convergence of the Peano curve of UST.  

\begin{theorem}\label{thm::ust_Dobrushin}
Fix a Dobrushin domain $(\Omega; a, b)$ such that $\partial\Omega$ is $C^{1}$ and simple. Suppose that a sequence of medial Dobrushin domains $(\Omega_\delta^\diamond; a_\delta^\diamond, b_\delta^\diamond)$ converges to $(\Omega; a, b)$  in  the sense of~\eqref{eqn::topology}. 
Consider UST on the primal domain $\Omega_{\delta}$ with $(a_{\delta}b_{\delta})$ wired. Denote by $\eta_{\delta}$ the induced Peano curve. Then the law of $\eta_{\delta}$ converges weakly to $\mathrm{SLE}_{8}$ in $\Omega$ from $a$ to $b$. 
\end{theorem}

This statement is proved in~\cite[Theorems~4.7 and~4.8]{LawlerSchrammWernerLERWUST}. The proof of tightness of Peano curves uses argument in~\cite{SchrammScalinglimitsLERWUST} where the notions of trunk and dual trunk play an important role. The trunk and dual trunk are also important later in this article, and we will give its formal definition below. Roughly speaking, they are the limits of the UST $\LT_{\delta}$ and its dual $\LT_{\delta}^*$ as $\delta\to 0$. 

Consider the UST  $\LT_{\delta}$ on the primal domain. For $\eps>0$, we first define its $\eps$-trunk. For any $x,y\in\LT_{\delta}$, there is a unique simple path on $\LT_{\delta}$ from $x$ to $y$, which we denote by $\eta_{\delta}^{x,y}$. We denote by $x'$ the first point at which $\eta_{\delta}^{x,y}$ hits $\partial B(x,\eps)$ and $y'$ the last point at which $\eta_{\delta}^{x,y}$ hits $\partial B(y,\eps)$. We denote by $\mathcal{I}_{\eps}(x,y)$ the unique simple path on $\LT_{\delta}$ connecting $x'$ and $y'$. If such $x'$ or $y'$ does not exist, we define $\mathcal{I}_{\eps}(x,y)=\emptyset$. Then, the $\eps$-trunk is defined to be 
\[\mathbf{trunk}_{\delta}\left(\eps\right):=\bigcup_{x,y\in\LT_{\delta}}\mathcal{I}_{\eps}(x,y).\] 
We can couple the configurations in the same probability space and choose $\delta_{m}\to 0$ such that $\mathbf{trunk}_{\delta_{m}}(\frac{1}{n})$ converges in Hausdorff  metric for every $n\in\mathbb{N}$ as $m\to\infty$ almost surely.  We define 
\[\mathbf{trunk}_{0}\left(\frac{1}{n}\right):=\lim_{m\to\infty}\mathbf{trunk}_{\delta_{m}}\left(\frac{1}{n}\right),\quad \text{and}\quad\mathbf{trunk}=\bigcup_{n>0}\mathbf{trunk}_{0}\left(\frac{1}{n}\right).\] 
Note that the wired boundary arc $(a_\delta b_\delta)$ belongs to the $\ust$ on $\Omega_\delta$. Thus, by definition, $\mathbf{trunk}$ contains $(ab)$. Similarly, we define $\mathbf{trunk}_{\delta}^*\left(\eps\right)$, $\mathbf{trunk}^{*}_{0}\left(\frac{1}{n}\right)$ and $\mathbf{trunk}^{*}$  for the dual UST $\LT_{\delta}^*$.
\cite[Theorem~11.1]{SchrammScalinglimitsLERWUST} guarantees that, if $\partial\Omega$ is $C^1$ and simple, we have 
\[\PP[\mathbf{trunk}\cap\mathbf{trunk}^*=\emptyset]=1.\]

The strategy of the proof Theorem~\ref{thm::ust_Dobrushin} can be summarized as follows: The first step is to show the tightness of the Peano curves. This is a consequence of the fact that $\mathbf{trunk}\cap\mathbf{trunk}^*=\emptyset$ almost surely. The second step is to show the convergence of driving function by martingale observable~\cite[Theorem~4.4]{LawlerSchrammWernerLERWUST}. This step only requires $\Omega_{\delta}^{\diamond}\to\Omega$ in the Carath\'{e}odory sense and does not need {\color{red}}smoothness regularity assumption on $\partial\Omega$. 

\subsection{UST in quads: tightness}
\label{subsec::ust_quads}

Now we introduce the discrete quad in Theorem~\ref{thm::cvg_triple}. 
Informally speaking, this is a simply connected subgraph $\Omega$ of $\Z^2$ with four fixed boundary points $a,b,c,d$ in counterclockwise order, and the boundary arcs $(ab), (cd)$ are in $\Z^2$ and the boundary arcs $(bc), (da)$ are in $(\Z^2)^*$.

Let $a^\diamond, b^\diamond, c^\diamond, d^\diamond$ be four distinct medial vertices, and $(a^{\diamond}b^{\diamond})$,  $(b^{\diamond}c^{\diamond})$, $(c^{\diamond}d^{\diamond})$ and $(d^{\diamond}a^{\diamond})$ be four paths of neighboring medial vertices satisfying the following conditions: (1) the edges along $(a^{\diamond}b^{\diamond})$ and $(c^{\diamond}d^{\diamond})$ point in clockwise way with the orientation inherited from the medial lattice; (2) the edges along $(b^{\diamond}c^{\diamond})$ and $(d^{\diamond}a^{\diamond})$ point in counterclockwise way with the orientation inherited from the medial lattice; 
(3) all the paths are edge-avoiding and $(a^{\diamond}b^{\diamond})\cap (b^{\diamond}c^{\diamond})=\{b^\diamond\}$, $(b^{\diamond}c^{\diamond})\cap (c^{\diamond}d^{\diamond})=\{c^\diamond\}$, $(c^{\diamond}d^{\diamond})\cap (d^{\diamond}a^{\diamond})=\{d^\diamond\}$, $(d^{\diamond}a^{\diamond})\cap (a^{\diamond}b^{\diamond})=\{a^\diamond\}$. See Figure~\ref{fig::quad}. 

Given $(a^{\diamond}b^{\diamond}), (b^{\diamond}c^{\diamond}), (c^{\diamond}d^{\diamond})$ and $(d^{\diamond}a^{\diamond})$, the medial quad $(\Omega^\diamond; a^\diamond, b^\diamond, c^\diamond, d^\diamond)$ is defined as the subgraph of $(\Z^2)^\diamond$ induced by the vertices enclosed by or on the path $(a^{\diamond}b^{\diamond})\cup(b^{\diamond}c^{\diamond})\cup (c^{\diamond}d^{\diamond})\cup(d^{\diamond}a^{\diamond})$. 
The inner boundary $\partial\Omega^\diamond$ is the set of vertices  of $\Omega^\diamond$ with strictly fewer than four incident edges in $E(\Omega^{\diamond})$. 
Let $\Omega\subset\Z^2$ be the graph with edge set consisting of edges passing through end-points of medial edges in $E(\Omega^{\diamond})\backslash \left((b^{\diamond}c^{\diamond})\cup (d^{\diamond}a^{\diamond})\right)$ and with vertex set given by the endpoints of these edges. Here we always assume that $\Omega$ is simply connected. The vertices of $\Omega$ nearest to $a^\diamond, b^\diamond, c^\diamond, d^\diamond$ are denoted by $a, b, c, d$ and we call $(\Omega; a, b, c, d)$ the primal quad.  
Let $(ab)$ and $(cd)$ be the set of edges corresponding to medial vertices in $\partial\Omega^\diamond$, which are also endpoints of medial edges in $(a^{\diamond}b^{\diamond})$  and $(c^{\diamond}d^{\diamond})$ respectively.  One can define $(bc)$ and $(da)$ to be the two components of $\partial \Omega\backslash \left((ab)\cup (cd)\right)$. See Figure~\ref{fig::quad}. 
Similarly, we can define the dual quad as follows.  Let $\Omega^*\subset(\Z^2)^*$ be the graph with edge set consisting of edges passing through end-points of medial edges in $E(\Omega^{\diamond})\backslash \left((a^{\diamond}b^{\diamond})\cup (c^{\diamond}d^{\diamond})\right)$ and with vertex set given by the endpoints of these edges. Here we always assume that $\Omega^*$ is simply connected. The vertices of $\Omega^*$ nearest to $a^\diamond, b^\diamond, c^\diamond, d^\diamond$ are denoted by $a^*, b^*, c^*, d^*$ and we call $(\Omega^*; a^*, b^*, c^*, d^*)$ the dual quad. Let $(b^*c^*)$ and $(d^*a^*)$ be the set of edges corresponding to medial vertices in $(b^{\diamond}c^{\diamond})$ and $(d^{\diamond}a^{\diamond})$, which are also in $\partial\Omega^\diamond$.  One can define $(a^*b^*)$ and $(c^*d^*)$ to be the two components of $\partial \Omega^*\backslash \left((b^*c^*)\cup (d^*a^*)\right)$. See Figure~\ref{fig::quad}. 

Suppose that $\LT$ is a spanning tree on some primal quad $(\Omega; a, b, c, d)$ with $(ab)$ wired and $(cd)$ wired respectively. 
Its dual configuration $\LT^*$ is a spanning forest with two trees in the dual quad $(\Omega^*; a^*, b^*, c^*, d^*)$ such that one tree contains the dual-wired arc $(b^*c^*)$ and the other tree contains the dual-wired arc $(d^*a^*)$. There exist two paths on $\left(\Z^2\right)^{\diamond}$ running between $\LT$ and $\LT^*$ from $a^{\diamond}$ to $d^{\diamond}$ and from $b^{\diamond}$ to $c^{\diamond}$ respectively, see Figure~\ref{fig::quadPeano}.
We still call them Peano curves as before. 
With the same notations as in Section~\ref{sec::intro}, we denote by $\eta^{L}$ the Peano curve from $a^{\diamond}$ to $d^{\diamond}$, by $\eta^{R}$ the Peano curve from $b^{\diamond}$ to $c^{\diamond}$ and by $\gamma^{M}$ the unique simple path in $\LT$ from $(ab)$ to $(cd)$.
\begin{figure}[ht!]
\begin{center}
\includegraphics[width=0.5\textwidth]{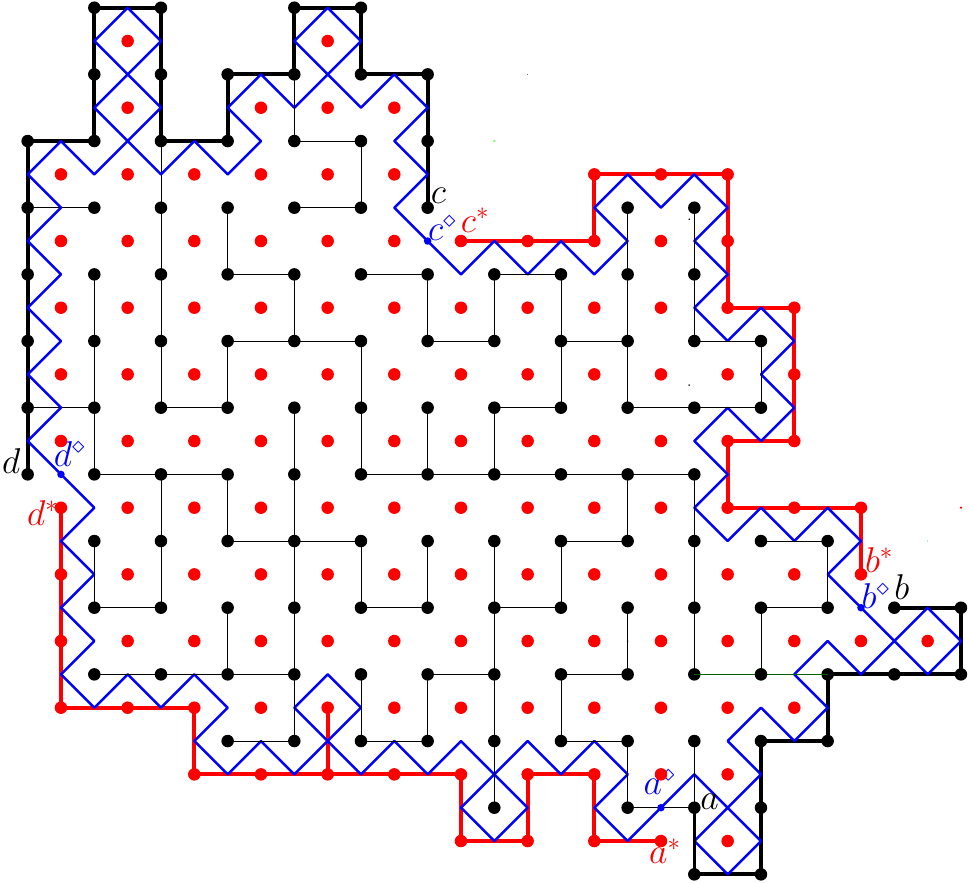}
\end{center}
\caption{\label{fig::quad} The solid edges in black are wired boundary arcs $(ab)$ and $(cd)$, the solid edges in red are dual-wired boundary arcs $(b^*c^*)$ and $(d^*a^*)$. The edges in blue are the boundary arcs $(a^{\diamond}b^{\diamond})$, $(b^{\diamond}c^{\diamond})$, $(c^{\diamond}d^{\diamond})$, and $(d^{\diamond}a^{\diamond})$ on the medial lattice. The thin edges are in the tree $\LT$.}
\end{figure}
\begin{theorem}\label{thm::ust_hsle}
Fix a quad $(\Omega; a, b, c, d)$ such that $\partial\Omega$ is $C^{1}$ and simple. Suppose that a sequence of medial quads $(\Omega_{\delta}^{\diamond}; a_{\delta}^{\diamond}, b_{\delta}^{\diamond}, c_{\delta}^{\diamond}, d_{\delta}^{\diamond})$ converges to $(\Omega; a, b, c, d)$ in the sense of~\eqref{eqn::topology}.
Consider UST on the primal domain $\Omega_{\delta}$ with $(a_{\delta}b_{\delta})$ wired and $(c_{\delta}d_{\delta})$ wired. Denote by $\eta_{\delta}^{L}$ the Peano curve connecting $a_{\delta}^{\diamond}$ and $d_{\delta}^{\diamond}$. Then the law of $\eta^{L}_{\delta}$ converges weakly to $\hSLE_8$ in $\Omega$ from $a$ to $d$ with marked points $(b, c)$ as $\delta\to 0$. 
\end{theorem}

The proof of Theorem~\ref{thm::ust_hsle} consists of two steps: the first step is  showing the tightness of the Peano curves, see Proposition~\ref{prop::tightness}; the second step is constructing an holomorphic observable, see Lemmas~\ref{lem::holomorphic_observable} and~\ref{lem::holo_cvg}. With these two steps at hand, we will complete the proof of Theorem~\ref{thm::ust_hsle} in Section~\ref{subsec::ust_hsle}. Although these steps are standard, 
the proof involves a non-trivial calculation, see Lemma~\ref{lem::drivingfunction}. 

\begin{proposition}\label{prop::tightness}
Assume the same setup as in Theorem~\ref{thm::ust_hsle}. The family of Peano curves $\{\eta^{L}_\delta\}_{\delta>0}$ is tight. Furthermore, $\PP[\eta^L\cap [bc]=\emptyset]=1$ for  any subsequential limit  $\eta^L$ of $\{\eta^{L}_\delta\}_{\delta>0}$.
\end{proposition}
The proof of Proposition~\ref{prop::tightness} has two parts. 
The proof of tightness follows the same strategy as~\cite[Proposition~4.5, Lemma~4.6]{LawlerSchrammWernerLERWUST} where the property of trunk and dual trunk plays an essential role, see Lemma~\ref{lem::tightness_trunk}.  
\begin{lemma}\label{lem::tightness_trunk}
Assume the same setup as in Theorem~\ref{thm::ust_hsle}. 
Define $\mathbf{trunk}_{\delta}\left(\eps\right)$, $\mathbf{trunk}_{0}\left(\frac{1}{n}\right)$ and $\mathbf{trunk}$ for the UST $\LT_{\delta}$ and $\mathbf{trunk}_{\delta}^*\left(\eps\right)$, $\mathbf{trunk}^{*}_{0}\left(\frac{1}{n}\right)$ and $\mathbf{trunk}^{*}$ for  the corresponding dual forest $\LT_{\delta}^*$ as in Section~\ref{subsec::ust_dobrushin}. 
We have 
\begin{equation}\label{eqn::trunktrunkstar_C1regularity}
\PP[\mathbf{trunk}\cap\mathbf{trunk}^{*}\neq\emptyset]=0. 
\end{equation}
\end{lemma}
In the proof of Lemma~\ref{lem::tightness_trunk}, we need that $\partial\Omega$ is $C^1$ and simple.
 To prove  $\PP[\eta^L\cap [bc]=\emptyset]=1$ in Proposition~\ref{prop::tightness}, we first show that $\PP[\eta^L\cap (bc)=\emptyset]=1$, which is a consequence of Lemma~\ref{lem::tightness_trunk}. We then show that $\PP[c\not\in\eta^L]=1$. To this end, we need to estimate the probability $\PP[\eta_{\delta}^L\cap B(c_{\delta}, \eps)\neq\emptyset]$, see Lemma~\ref{lem::subseqlimit_avoids_c}. Similarly, we have $\PP[b\not\in\eta^L]=1$. 
\begin{lemma}\label{lem::subseqlimit_avoids_c}
Assume the same setup as in Theorem~\ref{thm::ust_hsle}. We have
\begin{equation}\label{eqn::subseqlimit_avoids_c}
\lim_{\epsilon\to 0}\varlimsup_{\delta\to 0}\PP[\eta_{\delta}^{L}\text{ hits }B(c_{\delta},\epsilon)]=0.
\end{equation}
\end{lemma}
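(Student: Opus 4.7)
The plan is to reduce the event $\{\eta_\delta^L\cap B(c_\delta,\epsilon)\neq\emptyset\}$ to an event about the LERW branch $\gamma_\delta^M$ coming close to the corner $c_\delta$, and then to control the latter via Wilson's algorithm together with a Beurling-type estimate that exploits the $C^1$ regularity of $\partial\Omega$ at $c$.

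The first step is purely geometric. The branch $\gamma_\delta^M$ separates $\Omega_\delta$ into two components: $\Omega_\delta^L$, whose boundary contains $a_\delta$ and $d_\delta$, and $\Omega_\delta^R$, whose boundary contains $b_\delta$ and $c_\delta$; the Peano curve $\eta_\delta^L$ is supported in $\overline{\Omega_\delta^L}$. The boundary $\partial\Omega_\delta^L$ consists of $\gamma_\delta^M$, the arc $(d_\delta a_\delta)$, the portion of $(a_\delta b_\delta)$ from $a_\delta$ to $X_\delta^M$, and the portion of $(c_\delta d_\delta)$ from $Y_\delta^M$ to $d_\delta$. The first three stay at Euclidean distance bounded below from $c_\delta$, uniformly for $\delta$ small; the fourth can only approach $c_\delta$ if $Y_\delta^M$ does, in which case $\gamma_\delta^M\ni Y_\delta^M$ already does. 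Using $C^1$ regularity of $\partial\Omega$ near $c$ to compare boundary distance to Euclidean distance, one obtains a constant $C>0$ depending only on $\Omega$ such that
\[
\PP[\eta_\delta^L\cap B(c_\delta,\epsilon)\neq\emptyset]\le \PP[\gamma_\delta^M\cap B(c_\delta,C\epsilon)\neq\emptyset].
\]

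For the second step, I would contract the two wired arcs $(a_\delta b_\delta)$ and $(c_\delta d_\delta)$ to single vertices $v_{ab}$ and $v_{cd}$. In the resulting (unconstrained) UST on the contracted graph, $\gamma_\delta^M$ is the unique path between $v_{ab}$ and $v_{cd}$, so by Wilson's algorithm rooted at $v_{ab}$, it has the law of the loop-erasure of a simple random walk started at $v_{cd}$ and stopped upon hitting $v_{ab}$. Since loop-erasure only shrinks the trace, it suffices to prove
\[
\lim_{\epsilon\to 0}\varlimsup_{\delta\to 0}\PP[\text{SRW from }v_{cd}\text{ visits }B(c_\delta,C\epsilon)\text{ before hitting }v_{ab}]=0.
\]

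For the final step, the $C^1$ regularity of $\partial\Omega$ at $c$ permits a local conformal straightening of the boundary. A discrete Beurling-type estimate then implies that each excursion of the SRW from $(c_\delta d_\delta)$ into the bulk has probability of order $\epsilon^{1/2}$ to enter $B(c_\delta,C\epsilon)$ before returning to $(c_\delta d_\delta)$ or being absorbed at $(a_\delta b_\delta)$, while a complementary harmonic-measure bound (again using $C^1$-regularity) ensures each excursion has a uniformly positive chance of being absorbed at $(a_\delta b_\delta)$, so that the expected number of excursions is bounded uniformly in $\delta$. The main obstacle is this last step: because the SRW excursions may begin arbitrarily close to $c_\delta$, the Beurling-type inequality must hold uniformly up to the boundary and uniformly in the mesh $\delta$, and this is precisely where the $C^1$ assumption on $\partial\Omega$ enters, paralleling its role in Lemma~\ref{lem::tightness_trunk}.
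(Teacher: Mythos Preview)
Your geometric reduction in the first step is sound, and the Wilson-algorithm representation in the second step is correct. The fatal problem is in the third step: the probability you propose to bound does \emph{not} tend to zero; in fact it tends to one as $\delta\to 0$.

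Concretely, each excursion of the quotient SRW from $v_{cd}$ begins at a uniformly chosen neighbour of $(c_\delta d_\delta)$, hence at distance $\delta$ from $(c_\delta d_\delta)$. The probability that such an excursion reaches $(a_\delta b_\delta)$ before returning to $(c_\delta d_\delta)$ is of order $\delta$ (just look at the $\Im f$--coordinate, which is a martingale), not ``uniformly positive''. Consequently the expected number of excursions before absorption is $\asymp 1/\delta$. Since a fraction $\asymp\epsilon$ of the neighbours of $(c_\delta d_\delta)$ already lie in $B(c_\delta,C\epsilon)$ (the arc $(c_\delta d_\delta)$ has $c_\delta$ as an endpoint), the expected number of excursions that \emph{start} in $B(c_\delta,C\epsilon)$ is $\asymp \epsilon/\delta\to\infty$; a standard computation then gives
\[
\PP\bigl[\text{SRW from }v_{cd}\text{ visits }B(c_\delta,C\epsilon)\text{ before }v_{ab}\bigr]\;=\;\frac{p_1}{p_1+p_2}\;\asymp\;\frac{\epsilon}{\epsilon+\delta}\;\xrightarrow[\delta\to 0]{}\;1.
\]
So the ``loop-erasure only shrinks the trace'' reduction, while logically valid, is useless here: the SRW trace almost surely visits $B(c_\delta,C\epsilon)$, and all the work must be done by the loop-erasure. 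No Beurling estimate, however uniform, can repair this---the difficulty is not regularity at the boundary but the sheer number of returns to $v_{cd}$.

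The paper circumvents this by running Wilson's algorithm from the \emph{other} side (contracting only $(a_\delta b_\delta)$ and stopping the walk at $(c_\delta d_\delta)$), and, crucially, by a two-scale decomposition: one fixes an arc $\ell\subset\partial B(c,r_0)\cap\Omega$ separating $(ab)$ from $c$, splits it into a part $\ell_r$ at distance $\ge r$ from the free arc $(bc)$ and a part $\ell_{y,r}$ within $r$ of $(bc)$, and handles the two events separately. On $\{\gamma_\delta^M$ first crosses $\ell$ through $\ell_r\}$ a genuine Beurling/harmonic-measure bound applies (the crossing point is uniformly away from the free boundary). The complementary event $\{\gamma_\delta^M$ crosses $\ell$ through $\ell_{y,r}\}$ forces $\mathbf{trunk}$ and $\mathbf{trunk}^*$ to come within $r$ of each other, and is controlled by Lemma~\ref{lem::tightness_trunk} by sending $r\to 0$ after $\delta\to 0$. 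The dependence on the trunk lemma is essential; a purely random-walk argument of the type you sketch cannot close the estimate.
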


In the proof of Lemma~\ref{lem::subseqlimit_avoids_c}, we do not need that $\partial\Omega$ is $C^1$ and simple. The details of the proofs of Proposition~\ref{prop::tightness} and Lemma~\ref{lem::tightness_trunk} and Lemma~\ref{lem::subseqlimit_avoids_c} will be given in Appendix~\ref{app::tightness}. 

The following conclusion is about the tightness of the LERW branch in the UST. It will be useful in both this section and Section~\ref{sec::lerw}. 
\begin{proposition}\label{prop::tightness_LERW}
Assume the same setup as in Theorem~\ref{thm::cvg_triple}. Then $\{\gamma_{\delta}^{M}\}_{\delta>0}$ is tight. Moreover, any subsequential limit is a simple curve in $\overline\Omega$ which intersects $\partial\Omega$ only at its two ends. Furthermore, one of the two ends is in $(ab)$ and the other one is in $(cd)$. 
\end{proposition}
The proof of Proposition~\ref{prop::tightness_LERW} is similar to~\cite[Lemma~3.12]{LawlerSchrammWernerLERWUST} and we prove it in Appendix~\ref{app::tightness}.

\subsection{UST in quads: holomorphic observable}
A function $u:\Z^2 \rightarrow \C$ is called (discrete) harmonic at a vertex $x\in \Z^2$ if  
$\sum_{i=1}^4u(x_i)=4u(x)$, where $(x_i: i=1,2,3,4)$ are the four neighbors of $x$ in $\Z^2$.   We say a function $u$ is harmonic on a subgraph of $\Z^2$ if it is harmonic at all vertices in the subgraph.
A function $f: \Z^2\cup (\Z^2)^*\rightarrow \C$  is said to be (discrete) holomorphic around a medial vertex $x^\diamond$  if   one has $f(n)-f(s)=\ii (f(e)-f(w))$, where  $n, s, w, e$ are the vertices incident to $x^\diamond$  in counterclockwise order.  
We say a function $f$ is holomorphic on a subgraph of $\Z^2\cup (\Z^2)^*$ if it is holomorphic at all vertices in the subgraph.
 Note that, for a discrete holomorphic function $f$ on a subgraph of  $\Z^2\cup(\Z^2)^*$, its restriction on $\Z^2$ and its restriction on $\left(\Z^2\right)^*$ are both harmonic (see \cite[Proposition~8.15]{DCParafermionic}). 
We summarize the setup for discrete observable below. 
\begin{itemize}
\item  
Consider the set of spanning trees in the primal domain $\Omega_{\delta}$ with $(a_{\delta}b_{\delta})$ wired and $(c_{\delta}d_{\delta})$ wired. Denote this set by $\mathrm{ST}(\delta)$ and denote its cardinality by $\left|\mathrm{ST}(\delta)\right|$. Let $\LT_{\delta}$ be chosen uniformly among these trees. 
Recall that $\eta_{\delta}^L$ is the Peano curve along $\LT_{\delta}$ from $a_{\delta}^{\diamond}$ to $d_{\delta}^{\diamond}$, and $\eta_{\delta}^R$ is the Peano curve along $\LT_{\delta}$ from $b_{\delta}^{\diamond}$ to $c_{\delta}^{\diamond}$, and $\gamma_{\delta}^M$ is the unique simple path in $\LT_{\delta}$ connecting $(a_\delta b_\delta)$ to $(c_\delta d_\delta)$.  
For a vertex $z^*$ in $\Omega_\delta^*$, define $u_\delta(z^*)$ to be  the probability that $z^*$ lies to the right of $\gamma_{\delta}^M$, i.e. $z^*$ lies in the component of $\Omega^*_\delta\backslash \gamma^M_\delta$ with $b_\delta^*$ and $c_\delta^*$ on the boundary. We abuse notation here and regard $\Omega_\delta^*$ as a planar domain.

\item Consider the set of spanning forests in the primal domain $\Omega_{\delta}$ with $(a_{\delta}b_{\delta})$ wired and $(c_{\delta}d_{\delta})$ wired such that it has only two trees: one of them
contains the wired arc $(a_\delta b_\delta)$ and the other one contains the wired arc $(c_\delta d_\delta)$. Denote this set by $\mathrm{SF}_2(\delta)$ and denote its cardinality by $\left|\mathrm{SF}_2(\delta)\right|$. Let $\LF_{\delta}$ be chosen uniformly among these forests. 
For $z\in \Omega_\delta$, define $v_\delta(z)$ to be the probability that $z$ lies in the same tree as the wired arc $(c_\delta d_\delta)$ in $\LF_{\delta}$.
\end{itemize}
\begin{lemma}\label{lem::holomorphic_observable}
Define 
\[
f_{\delta}(\cdot):=u_\delta(\cdot)+\ii\frac{|\mathrm{SF}_2(\delta)|}{|\mathrm{ST}(\delta)|}v_\delta(\cdot). 
\]
We view it as a function on $\Omega_\delta\cup \Omega_\delta^*$: it equals $u_\delta$ on $\Omega^*_\delta$ and it equals $\ii\frac{|\mathrm{SF}_2(\delta)|}{|\mathrm{ST}(\delta)|}v_\delta$ on $\Omega_\delta$. The function $f_{\delta}$ is discrete holomorphic on $(\Omega_\delta\cup \Omega_\delta^*)\backslash \left((a_{\delta}b_{\delta})\cup(c_{\delta}d_{\delta})\cup(b_{\delta}^*c_{\delta}^*)\cup(d_{\delta}^*a_{\delta}^*)\right)$. Moreover, it has the following boundary data: 
\begin{align*}
\begin{cases}
\mathrm{Re}f_{\delta}=0, \quad\text{on }(d_\delta^*a_\delta^*); \quad 
\mathrm{Re} f_{\delta}=1, \quad\text{on }(b_\delta^*c_\delta^*); \\
\mathrm{Im} f_{\delta}=0, \quad \text{on }(a_{\delta}b_{\delta}); \quad 
\mathrm{Im} f_{\delta}=\frac{|\mathrm{SF}_2(\delta)|}{|\mathrm{ST}(\delta)|},\quad\text{on }(c_{\delta}d_{\delta}).
\end{cases}
\end{align*}
\end{lemma}

\begin{proof}
For $z^*\in \Omega_\delta^*$, denote by $E(\LT_{\delta}; z^*)$ the event that $z^*$ lies to the right of $\gamma_{\delta}^M$.  For $z\in \Omega_\delta$, denote by $E(\LF_{\delta}; z)$ the event that $z$ lies in the same tree as the wired arc $(c_\delta d_\delta)$ in $\LF_{\delta}$. 
 Assume $\{n, s\}$ is a primal edge of $\Omega_\delta$, and the corresponding dual edge is denoted by $\{w,e\}$ such that $w,s,e,n$ are in counterclockwise order (see Figure~\ref{fig::observable}).  Then we have 
\begin{align*}
u_\delta(e)-u_\delta(w)=&\PP\left[E(\LT_{\delta}; e)\right]-\PP\left[E(\LT_{\delta}; w)\right]\\
=&\PP\left[E(\LT_{\delta}; e)\cap E(\LT_{\delta}; w)^c\right]-\PP\left[E(\LT_{\delta}; w)\cap E(\LT_{\delta}; e)^c\right]\\
=&\frac{|\mathrm{SF}_2(\delta)|}{|\mathrm{ST}(\delta)|}\PP\left[E(\LF_{\delta}; n)\cap E(\LF_{\delta}; s)^c\right]-\frac{|\mathrm{SF}_2(\delta)|}{|\mathrm{ST}(\delta)|}\PP\left[E(\LF_{\delta}; s)\cap E(\LF_{\delta}; n)^c\right]\\
=&\frac{|\mathrm{SF}_2(\delta)|}{|\mathrm{ST}(\delta)|}\left(\PP\left[E(\LF_{\delta}; n)\right]-\PP\left[E(\LF_{\delta}; s)\right]\right)\\
=&\frac{|\mathrm{SF}_2(\delta)|}{|\mathrm{ST}(\delta)|}\left(v_\delta(n)-v_\delta(s)\right).
\end{align*}
 The third equal sign is due to the observation explained in Figure~\ref{fig::observable}.  This gives the discrete holomorphicity of $f_{\delta}$. 
The boundary data is clear from the construction.
\end{proof}
\begin{figure}
\includegraphics[scale=0.8]{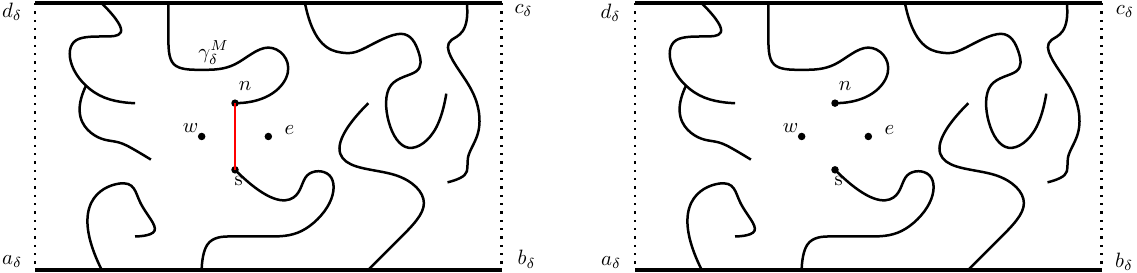}
\caption{\label{fig::observable} We denote by $\mathrm{ST}(\delta; w, e)$ the subset of $\mathrm{ST}(\delta)$ consisting of spanning trees such that $w$ lies to the left of $\gamma_\delta^M$ and $e$ lies to the right of $\gamma_\delta^M$. We denote by $\mathrm{SF}_2(\delta; n, s)$ the subset of $\mathrm{SF}_2(\delta)$ consisting of spanning forests such that $n$ lies in the same tree as $(c_\delta d_\delta)$ and $s$ lies in the same tree as $(a_\delta b_\delta)$. 
Deleting the edge $\{n, s\}$ induces a bijection from $\mathrm{ST}(\delta; w, e)$ (left) to $\mathrm{SF}_2(\delta; n, s)$ (right).}
\end{figure}

\begin{lemma}\label{lem::holo_cvg}
Fix a quad $(\Omega; a, b, c, d)$. 
Suppose that a sequence of medial domains $(\Omega_\delta^\diamond; a_\delta^\diamond, b_\delta^\diamond, c_\delta^\diamond, d_\delta^\diamond)$ converges to $(\Omega; a, b, c, d)$  in the Carath\'{e}odory sense as $\delta\rightarrow 0$. 
Let $K>0$ be the conformal modulus of the quad $(\Omega; a, b, c, d)$, and let $f$ be the conformal map from $\Omega$ onto the rectangle $(0,1)\times (0, \ii K)$ which sends $(a, b, c, d)$ to $(0, 1, 1+\ii K, \ii K)$. 
Then the discrete holomorphic function $f_{\delta}$ in Lemma~\ref{lem::holomorphic_observable} (regarded as a function on $\Omega_\delta$ by interpolating linearly among vertices) converges to $f$ locally uniformly as $\delta\to 0$. 
\end{lemma}
We emphasize that, in Lemma~\ref{lem::holo_cvg}, we do not need that $\partial\Omega$ is $C^1$ and simple. We only assume that $\partial\Omega$ is locally connected and we only require the convergence of polygons in the Carath\'{e}odory sense. 
\begin{proof}
We claim that $ \left\{\frac{\left|\mathrm{SF}_2(\delta)\right|}{|\mathrm{ST}(\delta)|}\right\}_{\delta>0}$ is uniformly bounded. Assuming  this, for any sequence $\delta_n\to 0$, there exists a subsequence, still denoted by $\delta_n$, and a constant $\tilde{K}$ such that 
\[u_{\delta_n}\to u, \quad v_{\delta_n}\to v, \quad \text{locally uniformly;}\quad\text{and} \quad \frac{\left|\mathrm{SF}_2(\delta_n)\right|}{|\mathrm{ST}(\delta_n)|}\to \tilde{K},\quad \text{as }n\to\infty. \]
From the  boundary values of $u_{\delta}$ and $v_{\delta}$, the discrete harmonicity of $u_\delta$ and $v_\delta$, and Beurling estimate (for instance, see~\cite[Proposition 2.11]{ChelkakSmirnovDiscreteComplexAnalysis}), it is clear that $u=1$ on $(bc)$ and $u=0$ on $(da)$, and that $v=1$ on $(cd)$ and $v=0$ on $(ab)$. 
By standard argument, the limit of discrete holomorphic function is holomorphic, see for instance the first step in the proof of Lemma~\ref{lem::harmonic_cvg}. In other words, $g:=u+\ii\tilde{K}v$ is holomorphic on $\Omega$.
Moreover, if we fix a conformal map $\xi$ from $\Omega$ onto $\U$, then the boundary data of $u\circ\xi^{-1}$ is as follows: $u\circ\xi^{-1}=1$ on $(\xi(b)\xi(c))$, and $u=0$ on $(\xi(d)\xi(a))$, and $\partial_n (u\circ\xi^{-1})=0$ on $(\xi(a)\xi(b))\cup (\xi(c)\xi(d))$.
Such boundary data uniquely determines the bounded harmonic function $u$, see Lemma~\ref{lem::harmonic_unique}.
Note that $\Re f$ has the same boundary data, thus $u=\Re f$. Consequently, $g$ and $f$ differ up to a constant. Since $v$ and $\Im f$ both equal to $0$ on $(ab)$, we have $g=f$. This also implies that $\tilde{K}=K$.

It remains to show that $ \left\{\frac{\left|\mathrm{SF}_2(\delta)\right|}{|\mathrm{ST}(\delta)|}\right\}_{\delta>0}$ is uniformly bounded. If this is not the case, there exists a sequence $\delta_n\to 0$ such that $\frac{\left|\mathrm{SF}_2(\delta_n)\right|}{|\mathrm{ST}(\delta_n)|}\to \infty$. 
Then, by the same argument as above, the function $\frac{|\mathrm{ST}(\delta_{n})|}{|\mathrm{SF}_{2}(\delta_{n})|}f_{\delta_n}$ converges to a holomorphic map $h$ on $\Omega$ locally uniformly and $h$ extends continuously to $\overline{\Omega}$. In such case, $\Re h=0$ on $\Omega$, and hence $h$ has to be a constant. But $\Im h=1$ on $(cd)$ and $\Im h=0$ on $(ab)$. This is a contradiction. Therefore, $ \left\{\frac{\left|\mathrm{SF}_2(\delta)\right|}{|\mathrm{ST}(\delta)|}\right\}_{\delta>0}$ is uniformly bounded and we complete the proof.
\end{proof}

As a consequence of Lemmas~\ref{lem::holomorphic_observable} and~\ref{lem::holo_cvg}, we see that 
$\frac{|\mathrm{SF}_2(\delta)|}{|\mathrm{ST}(\delta)|}\to K$ as $\delta\to 0$.
This is a special case of~\cite[Theorem~1.1]{KenyonWilsonBoundaryPartitionsTreesDimers} for the grove with two nodes. 
\subsection{Proof of Theorem~\ref{thm::ust_hsle}}
\label{subsec::ust_hsle}
We fix the following notations in this section.
\begin{itemize}
\item Fix a quad $(\Omega; a, b, c, d)$. Let $K$ be its conformal modulus and denote by $f_{(\Omega; a, b, c, d)}$ the conformal map from $\Omega$ onto $(0,1)\times (0,\ii K)$ sending $(a, b, c, d)$ to $(0,1,1+\ii K, \ii K)$. 
\item Suppose that a sequence of medial quads $(\Omega_\delta^\diamond; a_\delta^\diamond, b_\delta^\diamond, c_\delta^\diamond, d_\delta^\diamond)$ and $(\Omega; a, b, c, d)$ satisfy the assumptions in Theorem~\ref{thm::ust_hsle}. 
We choose conformal maps $\phi_{\delta}:\Omega^{\diamond}_{\delta}\to \HH$ with $\phi_{\delta}(a^{\diamond}_{\delta})=0,\phi_{\delta}(d^{\diamond}_{\delta})=\infty$ and  $\phi:\Omega\to \HH$ with $\phi(a)=0,\phi(d)=\infty$ such that $\phi_{\delta}^{-1}$ converges to $\phi^{-1}$ uniformly on $\overline\HH$. (See~\cite[Corollary 2.4]{Pommerenke}. See also~\cite[the last paragraph above Proposition 4.5]{LawlerSchrammWernerLERWUST}).

\item For a primal quad $(\Omega_{\delta}; a_{\delta}, b_{\delta}, c_{\delta}, d_{\delta})$, we denote by $f^{\delta}_{(\Omega_{\delta}; a_{\delta}, b_{\delta}, c_{\delta}, d_{\delta})}$ the discrete observable as defined in Lemma~\ref{lem::holomorphic_observable} and we linearly interpolate it among vertices. Recall that $f^{\delta}_{(\Omega_{\delta}; a_{\delta}, b_{\delta}, c_{\delta}, d_{\delta})}$ converges to $f_{(\Omega; a, b, c, d)}$ locally uniformly due to Lemma~\ref{lem::holo_cvg}. 
\item The uniform spanning tree $\LT_{\delta}$ and the Peano curve $\eta^L_{\delta}$ are defined in the same way as in Section~\ref{subsec::ust_quads}. 
Define $\hat\eta_{\delta}^{L}:=\phi_{\delta}(\eta_{\delta}^{L})$ and parameterize $\hat\eta_{\delta}^{L}$ by the half-plane capacity and parameterize $\eta_{\delta}^{L}$ so that $\hat{\eta}_{\delta}^L(t)=\phi_{\delta}(\eta_{\delta}^L(t))$.
\item For $\hat{\eta}_{\delta}^L$, denote by $(W^{\delta}_t, t\ge 0)$ the driving function of $\hat\eta^{L}_{\delta}$ and by $(g^{\delta}_{t}, t\ge 0)$ the corresponding conformal maps. 
Define $X_t^\delta:=g_t^\delta(\phi_{\delta}(b_{\delta})), Y_t^\delta:=g_t^\delta(\phi_{\delta}(c_{\delta}))$. Denote by $\mathcal{F}_t^\delta$ the filtration generated by $(W_s^\delta, 0\le s\le t)$. 
Let $K_{\delta}(t)$ be the modulus of the quad $\left(\HH; W_t^\delta, X_t^\delta, Y_t^\delta, \infty\right)$ and define $f^{\delta}_{t}$ to be the conformal map from $\HH$ onto $(0,1)\times (0,\ii K_{\delta}(t))$ sending $\left(W_t^\delta, X_t^\delta, Y_t^\delta, \infty\right)$ to $(0,1,1+\ii K_{\delta}(t),\ii K_{\delta}(t))$. 
\item For $\eta_{\delta}^L$, let $\tau_\delta$ be the first time that $\eta_{\delta}^{L}$ hits $(c^{\diamond}_{\delta}d^{\diamond}_{\delta})$. For every $t<\tau_\delta$,  the slit domain $\Omega^{\diamond}_\delta(t)$ is defined as the component of $\Omega_\delta^\diamond\backslash\eta^{L}_{\delta}[0,t]$  that contains $c_\delta^\diamond$ and $d_\delta^\diamond$ on the boundary. 
We define $(\Omega_\delta(t);a_{\delta}(t),b_{\delta}(t),c_{\delta},d_{\delta})$ to be the primal discrete quad 
as follows: The domain $\Omega_{\delta}(t)$ is the primal domain associated to $\Omega_{\delta}^{\diamond}(t)$. The point $a_{\delta}(t)$ is the primal vertex nearest to $\eta^L_{\delta}(t)$. The definition of the point $b_{\delta}(t)$ is a little complicated: if $\eta^L_{\delta}[0,t]$ does not hit $(b_{\delta}^{\diamond}c_{\delta}^{\diamond})$, then $b_{\delta}(t)=b_{\delta}$; if $\eta^L_{\delta}[0,t]$ hits $(b_{\delta}^{\diamond}c_{\delta}^{\diamond})$, then $b_{\delta}(t)$ is the  primal vertex nearest to  $\eta^L_{\delta}[0,t]\cap (b_{\delta}^{\diamond}c_{\delta}^{\diamond})$. The boundary conditions for $(\Omega_\delta(t);a_{\delta}(t),b_{\delta}(t),c_{\delta},d_{\delta})$ are inherited from $(\Omega_{\delta}; a_{\delta}, b_{\delta}, c_{\delta}, d_{\delta})$ and $\eta^L_{\delta}[0,t]$: the boundary arc $(a_{\delta}(t)b_{\delta}(t))$ is wired and the boundary arc $(c_{\delta}d_{\delta})$ is wired. 
\end{itemize}


\begin{lemma}\label{lem::limit}
For $z\in\Omega_{\delta}\cup\Omega_{\delta}^*$, denote by $\tau^{\delta}_{z}:=\inf\{t:z\notin\Omega^{\diamond}_{\delta}(t)\}$. Then, the process given by the observables
\[(f^{\delta}_{(\Omega_\delta(t); a_{\delta}(t),b_{\delta}(t), c_\delta, d_\delta)}(z), t\ge 0)\]  is a local martingale up to $\tau^{\delta}_{z}\wedge\tau_{\delta}$ with respect to the filtration $(\mathcal{F}_t^\delta, t\ge 0)$. Moreover, for every $\epsilon>0$ and for any compact subset $A$ on $\Omega$, one has almost surely the following  convergence of the conformal maps:
\begin{align}\label{eqn::discretemart_cvg}
\mathbb{E}\left[f^{\delta}_{\tau_2^\delta}\left(g^\delta_{\tau_2^\delta}(\phi_{\delta}(z))\right)-f^{\delta}_{\tau_1^\delta}\left(g^\delta_{\tau_1^\delta}(\phi_{\delta}(z))\right)\mid\mathcal{F}_{\tau_1}^\delta\right]\xrightarrow{\delta\rightarrow 0}0,
\end{align}
uniformly for $z\in A$ and uniformly for any stopping times $0<\tau_1^\delta<\tau_2^\delta<\tau_{\epsilon}^\delta\wedge\tau_{A,\epsilon}^{\delta}$, where $\tau_{A,\epsilon}^{\delta}:=\inf\{t:\dist(\eta_{\delta}^{L}([0,t]),A)=\epsilon\}$ and $\tau^{\delta}_{\epsilon}$ is the first time that $\eta_{\delta}^{L}$ hits the $\epsilon$-neighborhood of $(b_{\delta}^{\diamond}d_{\delta}^{\diamond})$.
\end{lemma}
\begin{proof}
We first show that $(f^\delta_{(\Omega_\delta(t); a_{\delta}(t),b_{\delta}(t), c_\delta, d_\delta)}(z), t\ge 0)$ is a local martingale and we will consider its real part and its imaginary part separately. We fix two stopping times $\tau_1^\delta<\tau_2^\delta<\tau^\delta_z\wedge\tau_\delta$. 
Define $u_{\delta}^{(i)}$ and $v_{\delta}^{(i)}$ similarly as $u_\delta$ and $v_\delta$ in the primal quad $(\Omega_\delta(\tau_i^\delta); a_{\delta}(\tau_i^\delta),b_{\delta}(\tau_i^\delta), c_\delta, d_\delta)$ for $i=1,2$. 

For the real part, for every $z\in\Omega_\delta^*$, we have
\begin{align*}
u_{\delta}^{(1)}(z)&=\E\left[z\text{ lies to the right of }\eta_\delta^L\cond \LF_{\tau_1^\delta}^\delta\right]=\E\left[\E\left[z\text{ lies to the right of }\eta_\delta^L\cond \LF_{\tau_2^\delta}^\delta\right]\cond \LF_{\tau_1^\delta}^\delta\right]=\E\left[u_{\delta}^{(2)}(z)\cond \LF_{\tau_1^\delta}^\delta\right].
\end{align*}
This implies that $(\Re f^{\delta}_{(\Omega_\delta(t); a_{\delta}(t),b_{\delta}(t), c_\delta, d_\delta)}(z), t\ge 0))$ is a martingale  up to $\tau^{\delta}_{z}\wedge\tau_{\delta}$. 

For the imaginary part, define $\mathrm{SF}_2(\tau_i^\delta)$ and $\mathrm{ST}(\tau_i^\delta)$ similarly as $\mathrm{SF}_2(\delta)$ and $\mathrm{ST}(\delta)$ in the primal quad $(\Omega_\delta(\tau_i^\delta); a_{\delta}(\tau_i^\delta),b_{\delta}(\tau_i^\delta), c_\delta, d_\delta)$ for $i=1,2$. Define $\mathrm{SF}_2(\tau_i^\delta;z)$ to be the subset of $\mathrm{SF}_2(\tau_i^\delta)$ such that $z$ lies in the same tree as the wired arc $(c_\delta d_\delta)$ for $i=1,2$. Then, for every $z\in\Omega_\delta$, we have
\begin{align*}
\frac{|\mathrm{SF}_2(\tau_1^\delta)|}{|\mathrm{ST}(\tau_1^\delta)|}v_{\delta}^{(1)}(z)
&=\frac{|\mathrm{SF}_2(\tau_1^\delta;z)|}{|\mathrm{ST}(\tau_1^\delta)|}=\E\left[\frac{|\mathrm{SF}_2(\tau_2^\delta;z)|}{|\mathrm{ST}(\tau_2^\delta)|}\cond\LF_{\tau_1^\delta}^{\delta}\right]
=\E\left[\frac{|\mathrm{SF}_2(\tau_2^\delta)|}{|\mathrm{ST}(\tau_2^\delta)|}v_{\delta}^{(2)}(z)\cond\LF_{\tau_1^\delta}^{\delta}\right].
\end{align*}
This implies that $(\Im f^{\delta}_{(\Omega_\delta(t); a_{\delta}(t),b_{\delta}(t), c_\delta, d_\delta)}(z), t\ge 0))$ is a martingale  up to $\tau^{\delta}_{z}\wedge\tau_{\delta}$. We get the first conclusion.

It remains to show~\eqref{eqn::discretemart_cvg}. 
We will prove that $f^{\delta}_{\tau_{i}^{\delta}}\circ g^{\delta}_{\tau_{i}^{\delta}}\circ\phi_{\delta}-f^{\delta}_{(\Omega_\delta(\tau_{i}^{\delta}); a_\delta(\tau_i^{\delta}), b_\delta, c_\delta, d_\delta)}$ converges to $0$ uniformly on $A$ for $i=1,2$.
If this is not the case, there exists a sequence $\delta_n\to 0$ and a sequence $z_{\delta_n}\in A$ such that 
\begin{equation}\label{eqn::contradiction1}
f^{\delta_n}_{\tau_i^{\delta_n}}\left(g^{\delta_n}_{\tau_i^{\delta_n}}(\phi_{\delta_n}(z_{\delta_n}))\right)-f^{\delta_n}_{(\Omega_{\delta_n}(\tau_{i}^{{\delta_n}}); a_\delta(\tau_i^{{\delta_n}}), b_{\delta_n}, c_{\delta_n}, d_{\delta_n})}(z_{\delta_n})\quad\text{ does not converge to }0.
\end{equation}
Note that, by Carath\'{e}odory kernel theorem (for instance, see~\cite[Theorem 1.8]{Pommerenke}), there exists a subsequence, still denoted by $\delta_n$, such that  $(\Omega_{\delta_{n}}(\tau_{i}^{\delta_{n}}); a_{\delta_{n}}(\tau_{i}^{\delta_{n}}), b_{\delta_{n}}, c_{\delta_{n}}, d_{\delta_{n}})$ converges to a quad $(\Omega_{i};a_{i},b_{i},c_{i},d_{i})$ in the Carath\'{e}odory sense for $i=1,2$. (Here we need definitions of $\tau_{i}^{\delta_{n}}$ to guarantee the limit is still a quad.) By Lemma~\ref{lem::holo_cvg}, the sequence $f^{\delta_n}_{(\Omega_{\delta_{n}}(\tau_{i}^{\delta_n}); a_{\delta_{n}}(\tau_i^{\delta_n}), b_{\delta_{n}}, c_{\delta_{n}}, d_{\delta_{n}})}$ converges to $f_{(\Omega_{i};a_{i},b_{i},c_{i},d_{i})}$ uniformly on  $A$.
Note that $f^{\delta_n}_{\tau_i^{\delta_n}}\circ g^{\delta_n}_{\tau_i^{\delta_n}}\circ\phi_{\delta_n}$ is the conformal map from $\Omega^{\diamond}_{\delta_{n}}(\tau_{i}^{\delta_n})$ to a rectangle which maps $(\eta^{L}_{\delta_n}(\tau_i^{\delta_n}), b_{\delta_{n}}^{\diamond}, c_{\delta_{n}}^{\diamond}, d_{\delta_{n}}^{\diamond})$ to the four corners. By the Carath\'{e}odory convergence and the description of $f_{(\Omega_{i};a_{i},b_{i},c_{i},d_{i})}$ in Lemma~\ref{lem::holo_cvg}, the sequence $f^{\delta_n}_{\tau_i^{\delta_n}}\circ g^{\delta_n}_{\tau_i^{\delta_n}}\circ\phi_{\delta_n}$ converges to $f_{(\Omega_{i};a_{i},b_{i},c_{i},d_{i})}$ on $A$ as well.
This is a contradiction with~\eqref{eqn::contradiction1}. To obtain~\eqref{eqn::discretemart_cvg}, we still need to show that there exists a constant $M=M(\eps)$, such that
\[\left|f^{\delta}_{\tau_i^\delta}\left(g^\delta_{\tau_i^\delta}(\phi_{\delta}(z))\right)\right|\le M \quad\text{and }\left|f^{\delta_n}_{(\Omega_{\delta_n}(\tau_{i}^{{\delta_n}}); a_\delta(\tau_i^{{\delta_n}}), b_{\delta_n}, c_{\delta_n}, d_{\delta_n})}(z)\right|\le M\quad\text{uniformly for }z\in A\text{ and }i=1,2.\]
This can also be obtained by contradiction similarly as above argument. (Here we need the definitions of the stopping times: $0<\tau_1^\delta<\tau_2^\delta<\tau_{\epsilon}^\delta\wedge\tau_{{A},\epsilon}^{\delta}$.) Combining with~\eqref{eqn::contradiction1} and dominated convergence theorem, we get~\eqref{eqn::discretemart_cvg}.
\end{proof}


From Proposition~\ref{prop::tightness} and Proposition~\ref{prop::tightness_LERW}, both the law of $\{\eta_\delta^L\}_{\delta>0}$ and the law of $\{\gamma^M_{\delta}\}_{\delta>0}$ are tight in metric~\eqref{eqn::curves_metric}. In the following, we will consider the distributions of the limits of  $\eta^L_{\delta}$ and $\gamma^M_{\delta}$. Assume the same setup as in Theorem~\ref{thm::cvg_triple} and fix the following notations. 
\begin{itemize}
\item Suppose $(\eta^L; \gamma^M)$ is any subsequential limit of $\{(\eta^L_{\delta}; \gamma^M_{\delta})\}_{\delta>0}$. We may assume $\eta^L_{\delta_n}\to\eta^L$ in law and $\gamma^M_{\delta_n}\to \gamma^M$ in law, both in metric~\eqref{eqn::curves_metric}. By Skorokhod's representation theorem, we may couple them together such that $\eta_{\delta_n}^L\to \eta^L$ and $\gamma_{\delta_n}^M\to \gamma^M$ almost surely, both in metric~\eqref{eqn::curves_metric}. We will show that the conditional law of $\eta^L$ given $\gamma^M$ is $\SLE_8$ in $\Omega^L$ in Lemma~\ref{lem::etaLgivengammaM}.  

\item Define $\hat{\eta}^L=\phi(\eta^L)$ and parameterize $\hat{\eta}^L$ by the half-plane capacity and parameterize $\eta^L$ so that $\hat{\eta}^L(t)=\phi(\eta^L(t))$. We will show that $\hat{\eta}^L$ has continuous driving function in Corollary~\ref{cor::continuousdriving} and denote its driving function by $(W_t, t\ge 0)$. We will derive the law of $(W_t, t\ge 0)$ in Lemma~\ref{lem::drivingfunction}. 
\end{itemize}

\begin{lemma}\label{lem::etaLgivengammaM}
The conditional law of $\eta^L$ given $\gamma^M$ is $\SLE_8$ in $\Omega^L$ from $a$ to $d$. 
\end{lemma}
\begin{proof}

We denote by $\Omega^{L}$ the connected component of $\Omega\setminus\gamma^{M}$ which contains $[da]$ on its boundary. This is well-defined since we have $\gamma^M\cap [da]=\emptyset$ almost surely due to Proposition~\ref{prop::tightness_LERW}. 
Denote by $\Omega_{\delta_n}^{L}$ the connected component of $\Omega_{\delta_n}\setminus\gamma_{\delta_n}^{M}$ which contains $[d_{\delta_n}a_{\delta_n}]$ on its boundary. Denote by $\Omega^{\diamond,L}_{\delta_n}$ the the medial graph associated with $\Omega_{\delta_n}^{L}$. Since $\gamma_{\delta_n}^{M}\to\gamma^{M}$ as curves, the medial (random) Dobrushin domains $(\Omega^{\diamond,L}_{\delta_n};a_{\delta_n}^{\diamond},d_{\delta_n}^{\diamond})$ converges to $(\Omega^{L};a,d)$ in the following sense: there exists a constant $C>0$, 
\[d((d_{\delta_n}^{\diamond}a_{\delta_n}^{\diamond}),(da))\le C\delta_n,\quad d((a_{\delta_n}^{\diamond}d_{\delta_n}^{\diamond}),(ad))\to 0,\quad \text{as }n\to \infty,\]
where $d$ is the metric~\eqref{eqn::curves_metric}. Note that in the proof of Theorem~\ref{thm::ust_Dobrushin}, the conditions that $\partial\Omega$ is $C^{1}$ and simple and the convergence of boundaries are only used in~\cite[Section 4.3]{LawlerSchrammWernerLERWUST} to get the uniform continuity, that is, the tightness of the discrete Peano curves in metric~\eqref{eqn::curves_metric}. See the begining of~\cite[Section 4.3]{LawlerSchrammWernerLERWUST}. These conditions are used to ensure that $\mathbf{trunk}\cap\mathbf{trunk}^{*}=\emptyset$ almost surely. We will summarize the proof of tightness in the proof the Proposition~\ref{prop::tightness}.
Although we do not have $C^1$ regularity on $\partial\Omega^{L}$ neither the required convergence of domains, we already have $\mathbf{trunk}\cap\mathbf{trunk}^{*}=\emptyset$ almost surely due to Lemma~\ref{lem::tightness_trunk}.
This is because $\mathbf{trunk}_{\delta_n}$ in the primal graph associated with the medial Dobrushin domain $(\Omega^{\diamond,L}_{\delta_n};a_{\delta_n}^{\diamond},d_{\delta_n}^{\diamond})$ is a subset of $\mathbf{trunk}_{\delta_n}$ in $(\Omega_{\delta_n};a_{\delta_n},b_{\delta_n},c_{\delta_n},d_{\delta_n})$ and $\mathbf{trunk}^{*}_{\delta_n}$ in the dual graph associated with the medial Dobrushin domain $(\Omega^{\diamond,L}_{\delta_n};a_{\delta_n}^{\diamond},d_{\delta_n}^{\diamond})$ is a subset of $\mathbf{trunk}^{*}_{\delta_n}$ in $(\Omega^*_{\delta_n};a^*_{\delta_n},b^*_{\delta_n},c^*_{\delta_n},d^*_{\delta_n})$. In the part of the proof of Theorem~\ref{thm::ust_Dobrushin} to obtain the convergence of the driving functions, the authors only need two requirements:
\begin{itemize}
\item
the radius of the domains seen from an interior point is uniformly large if we rescale the discrete domain so that they are subgraphs of $\Z^2$;
\item
the harmonic measure of the wired arc seen from the same interior point belongs to $(\eps,1-\eps)$ for $\eps>0$. 
\end{itemize}
See more details in~\cite[Theorem 4.4]{LawlerSchrammWernerLERWUST}. Note that in our case, we have the convergence of domains in the Carath\'eodory sense, which satisfies the above two assumptions. Thus, by the same proof of Theorem~\ref{thm::ust_Dobrushin}, we have that $\eta^{L}$ is $\SLE_{8}$ in $\Omega^{L}$ from $a$ to $d$.
\end{proof}
\begin{corollary}\label{cor::continuousdriving}
The process $\hat{\eta}^L=\phi(\eta^L)$ has continuous driving function when parameterized by the half-plane capacity.
Moreover, when parameterized by the half-plane capacity, $\hat{\eta}^L_{\delta_n}\to \hat{\eta}^L$ locally uniformly and 
the driving functions $(W^{\delta_n}_t, t\ge 0)\to (W_t, t\ge 0)$ locally uniformly.  
\end{corollary}
\begin{proof}
We only need to show that $\hat{\eta}^L$ has continuous driving function when parameterized by the half-plane capacity. The rest of the statement is true due to~\cite[Proposition~4.10]{LupuWuLevellineGFF}. 
Recall that $\phi$ is a conformal map from $\Omega$ onto $\HH$. Since $\partial\Omega$ is simple, $\phi$ can be extended injectively and continuously to $\partial\Omega$. This implies that $\hat{\eta}^L=\phi(\eta^L)$ is a continuous curve.
Recall that $\Omega^L$ is the connected component of $\Omega\setminus\gamma^M$ which contains $[da]$ on its boundary. 
Fix a conformal map $\phi^L$ from $\Omega^L$ onto $\HH$. We parameterize $\eta^L$ by $[0,+\infty)$ such that for every $t>0$, the half-plane capacity of $\phi^L(\eta^L[0,t])$ equals $t$. Note that under this parameterization, $\eta^L$ is a continuous function on $[0,+\infty)$.  
From Lemma~\ref{lem::etaLgivengammaM}, $\phi^L(\eta^L)$ is an $\SLE_8$. Thus, we have the following two observations:
\begin{itemize}
\item
For every $0<t<T$, we have $\eta^L(t,T)$ is contained in the closure of the connected component of $\Omega^L\setminus\eta^L(0,t)$. This implies that $\phi(\eta^L(t,T))$ is contained in the closure of the unbounded connected component of $\HH\setminus\phi(\eta^L(0,t))$.
\item
The set $\{s\in (t,T): \eta^L(s)\in \eta^L[0,t]\cup\partial\Omega^L\}$ has empty interior. In paticular, this implies that the set $\{s\in (t,T): \phi(\eta^L(s))\in \phi(\eta^L[0,t])\cup\R\}$ has empty interior.
\end{itemize} 
Thus, by~\cite[Proposition 4.3]{Lawler-book} and~\cite[Proposition 6.12]{MillerSheffieldIG1}, almost surely, $\hat{\eta}^L=\phi(\eta^L)$ is driven by a continuous function after being reparameterized by the half-plane capacity. This completes the proof.
\end{proof}

For $x<y<w$, define 
\begin{align}\label{eqn::driftfunction}
\Theta(x, y, w):=\frac{2}{w-x}+\frac{-2}{w-y}-8\frac{y-x}{(y-w)^2}\frac{F'\left(\frac{x-w}{y-w}\right)}{F\left(\frac{x-w}{y-w}\right)},\quad\text{where }F(z)=\hF\left(\frac{1}{2}, \frac{1}{2}, 1; z\right).
\end{align}
Note that $F$ is the hypergeometric function in~\eqref{eqn::hyper_def} with $\kappa=8, \nu=0$. 
\begin{lemma}\label{lem::drivingfunction}
For every $\epsilon>0$, we denote by $\tau_{\epsilon}$ the first time that $\eta^L$ hits the $\epsilon$-neighbourhood of $(bd)$. Then, the law of the driving function $(W_t, t\ge 0)$ of $\hat\eta^{L}$ is given by the following SDEs up to $\tau_{\eps}$:
\begin{align}\label{eqn::hsle}
\begin{cases}
dW_t=\sqrt{8}dB_t+\Theta(X_{t}, Y_{t}, W_{t})dt,\quad W_{0}=0;\\
dX_{t}=\frac{2dt}{X_{t}-W_t},\quad X_{0}=\phi(b);\\
dY_{t}=\frac{2dt}{Y_{t}-W_t},\quad Y_{0}=\phi(c); 
\end{cases}
\end{align}
where $(B_t, t\ge 0)$ is one-dimensional Brownian motion starting from 0 and $\Theta$ is defined in~\eqref{eqn::driftfunction}.  
\end{lemma}
\begin{proof}
Recall that $\hat\eta^L_{\delta_n}=\phi_{\delta_n}(\eta_{\delta_n}^{L})$ and $\hat\eta^L=\phi(\eta^L)$ are parameterized by half-plane capacity. From Corollary~\ref{cor::continuousdriving}, we see that $\hat\eta^L$ is driven by a continuous function $W$; and that $\hat\eta^L_{\delta_n}$ converges to $\hat\eta^L$ locally uniformly. We define $T_{M}:=\inf\{t: \hat\eta^{L}[0,t]\cap\partial B(0,M)\neq\emptyset\}$ for every $M>0$ and $\tau_{\epsilon',z}:=\inf\{t:\hat\eta^{L}[0,t]\cap\partial B(z,\epsilon')\neq\emptyset\}$ for every $z\in\HH$ and $\epsilon'>0$. It suffices to prove that~\eqref{eqn::hsle} holds up to $\tau_{\epsilon}\wedge T_{M}$ since we may get the result by letting $M\to\infty$. We define $T^{\delta_n}_{M}$ and $\tau^{\delta_n}_{\epsilon',z}$ similarly for $\hat\eta_{\delta_n}^{L}$. We may assume $T^{\delta_n}_{M}\to T_{M}$, $\tau^{\delta_n}_{\epsilon',z}\to\tau_{\epsilon',z}$ and $\tau^{\delta_n}_{\epsilon}\to\tau_{\epsilon}$ by considering a continuous modification, see details in~\cite[Appendix B]{KarrilaMultipleSLELocalGlobal} and~\cite{KarrilaUSTBranches}.
Then, Lemma~\ref{lem::limit} implies that 
\[M_{t}(z):=f_{(\HH; 0,X_t-W_{t}, Y_t-W_{t},\infty)}(g_t(z)-W_t)\] 
is a martingale up to $\tau_{\epsilon}\wedge T_{M}\wedge\tau_{\epsilon',z}$. 

First, we prove that $W_t$ is a semimartingale (similar argument already appeared in~\cite{KarrilaUSTBranches}). Define $g(w,x,y; z):=f_{(\HH; 0, x-w, y-w,\infty)}(z-w)$ on $\{(w,x,y)\in\R^3:w<x<y\}\times\HH$. 
Note that $\partial_w g(w,x,y;\cdot)$ is also an analytic function on $\HH$. We now  show that the zero set of $\partial_w g(w,x,y;\cdot)$ is isolated.
Otherwise, $\partial_w g(w,x,y;\cdot)$ equals a constant $C$ on $\HH$. By letting $z\to w$, we have  $C=0$. Thus $g(w,x,y;\cdot)$ is independent of $w$. This contradicts  the fact that $g(w,x,y;\cdot)$ is the conformal map from $\HH$ onto $(0,1)\times (0,\ii K)$ sending $(w, x, y,\infty)$ to $(0,1,1+\ii K, \ii K)$. Thus,
for every $w<x<y$, the zero set of $\partial_w g(w,x,y; \cdot)$ is isolated. Consequently,
there exists $z\in\HH$ such that $\partial_w g(w,x,y; z)\neq 0$. By continuity, $\partial_w g(\cdot,\cdot,\cdot;z)\neq 0$ on an interval containing $(w,x,y)$. Combining with implicit function theorem, there exists a smooth function $\psi$ such that $w=\psi(x,y,z,g)$ on an open neighborhood of $(x,y,z,g)$. 
Consequently, there exists  a sequence of triple sets $\{(O_i;z_i,\psi_i)\}_{i\ge 1}$ satisfying: 
\begin{itemize}
\item $O_i$ is an open set of $\R^3$ for each $i\geq 1$; 
\item $\cup_i O_i=\{(w, x,y)\in\R^3:w<x<y\}$;
\item For each $O_i$, there exist $z_i\in\HH$ and a smooth function $\psi_i$ such that 
\[w=\psi_i(x,y,z_i,g), \text{ for all }(w, x,y)\in O_i.\]
\end{itemize}
Define a sequence of stopping times $\{T_i\}_{i\geq 1}$ as follows: Define $T_1:=0$ and define $(O_{T_1};z_{T_1},\psi_{T_1})$ to be any element in $\{(O_i;z_i,\psi_i)\}_{i\ge 1}$ such that $O_{T_1}$ contains $(0,x,y)$. Suppose that $T_n$ and $(O_{T_n};z_{T_n},\psi_{T_n})$ are well-defined, we set 
\[T_{n+1}:=\inf\{t>T_n:(W_t, X_t,Y_t)\notin O_{T_n}\}\]
and define $(O_{T_{n+1}};z_{T_{n+1}},\psi_{T_{n+1}})$ to be any element in $\{(O_i;z_i,\psi_i)\}_{i\ge 1}$ such that $O_{T_{n+1}}$ contains the point $(W_{T_{n+1}}, X_{T_{n+1}}, Y_{T_{n+1}})$. Then, we have
\[W_t=\sum_{i=1}^{\infty}\one_{\{T_i\le t< T_{i+1}\}}\psi_{T_i}(X_t,Y_t,g_t(z_{T_i}),M_t(z_{T_i})).\]
This implies that $W_t$ is a semimartingale.

Next, let us calculate the drift term of $M_t(z)$. From Schwarz-Christorffel formula (see e.g.~\cite[Chapter~6-Section~2.2]{AhlforsComplexAnalysis}), we have, for $0<x<y$ and for $z\in\HH$, 
\begin{align*}
f_{(\HH; 0, x, y,\infty)}(z)=\frac{\int_0^{z/x} \left(s(s-1)(s-\frac{y}{x})\right)^{-1/2}ds}{\int_0^{1} \left(s(s-1)(s-\frac{y}{x})\right)^{-1/2}ds}.
\end{align*}
Denote by $\LK$ the elliptic integral of the first kind~\eqref{eqn::elliptic_def}. 
By changing of variable $s=\sin^2\theta$, we have
\begin{equation}\label{eqn::exactform}
f_{(\HH; 0, x, y,\infty)}(z)=\frac{\LK(\arcsin\sqrt{z/x}, x/y)}{\LK(x/y)}. 
\end{equation}
Therefore, 
\begin{align}\label{eqn::mart_aux1}
M_t(z)=\frac{\LK\left(S_t, U_t\right)}{\LK\left(U_t\right)},\quad\text{where}\quad S_t=\arcsin\sqrt{\frac{g_t(z)-W_t}{X_t-W_t}},\quad U_t=\frac{X_t-W_t}{Y_t-W_t}. 
\end{align}

We first calculate $dU_t$ and $dS_t$: 
\begin{equation}\label{eqn::mart_aux2}
\begin{split}
dU_t&=\frac{1}{(Y_t-W_t)^2}\left(\left(\frac{2}{U_t}-2U_t\right)dt +(X_t-Y_t)dW_t-(1-U_t)d\langle W\rangle_t\right),\\
dS_t&=\frac{\cot S_t}{(X_t-W_t)^2}\left((2+\cot^2 S_t)dt-\frac{1}{2}(X_t-W_t)dW_t-\frac{1}{8}\left(3+\cot^2 S_t\right)d\langle W\rangle_t\right). 
\end{split}
\end{equation}
Applying It\^{o}'s formula in~\eqref{eqn::mart_aux1}, we have
\begin{equation}\label{eqn::Ito}
dM_{t}(z)=\frac{1}{\LK(U_{t})}d\LK(S_{t},U_{t})-\frac{\LK(S_{t},U_{t})}{\LK^{2}(U_{t})}d\LK(U_{t})-\frac{1}{\LK^{2}(U_{t})}d\langle \LK(S,U),\LK(U)\rangle_{t}+\frac{\LK(S_{t},U_{t})}{\LK^{3}(U_{t})}d\langle \LK(U)\rangle_{t}.
\end{equation}
We denote by $L_{t}$ the drift term of $W_{t}$. Since the drift term of $M_{t}(z)$ is zero,  plugging~\eqref{eqn::mart_aux2} into~\eqref{eqn::Ito}, we have
\begin{equation}\label{eqn::drift}
\begin{split}
0=&\frac{\partial_{\varphi}\LK(S_{t},U_{t})}{\LK(U_{t})}\frac{\cot S_{t}}{(X_{t}-W_{t})^{2}}\left((2+\cot^{2}S_{t})dt-\frac{1}{2}(X_{t}-W_{t})dL_t-\frac{1}{8}(3+\cot^{2}S_{t})d\langle W\rangle_{t}\right)\\
&+\frac{\partial_{x}\LK(S_{t},U_{t})}{\LK(U_{t})}\frac{1}{(Y_{t}-W_{t})^{2}}\left(\left(\frac{2}{U_{t}}-2U_{t}\right)dt+(X_{t}-Y_{t})dL_{t}-(1-U_{t})d\langle W\rangle_{t}\right)\\
&+\frac{1}{8}\frac{\partial_{\varphi}^2\LK(S_{t},U_{t})}{\LK(U_{t})}\frac{\cot^{2}S_{t}}{(X_{t}-W_{t})^{2}}d\langle W\rangle_{t}+\frac{1}{2}\frac{\partial_{x}^2\LK(S_{t},U_{t})}{\LK(U_{t})}\frac{(X_{t}-Y_{t})^{2}}{(Y_{t}-W_{t})^{4}}d\langle W\rangle_{t}\\
&-\frac{1}{2}\frac{\partial_{x}\partial_{\varphi}\LK(S_{t},U_{t})}{\LK(U_{t})}\frac{(X_{t}-Y_{t})\cot S_{t}}{(Y_{t}-W_{t})^{2}(X_{t}-W_{t})}d\langle W\rangle_{t}\\
&-\frac{\LK(S_{t},U_{t})}{\LK^2(U_{t})}\frac{\partial_{x}\LK(U_{t})}{(Y_{t}-W_{t})^{2}}\left(\left(\frac{2}{U_{t}}-2U_{t}\right)dt+(X_{t}-Y_{t})dL_{t}-(1-U_{t})d\langle W\rangle_{t}\right)\\
&-\frac{1}{2}\frac{\LK(S_{t},U_{t})}{\LK^{2}(U_{t})}\frac{(X_{t}-Y_{t})^{2}}{(Y_{t}-W_{t})^{4}}\partial_{x}^2\LK(U_{t})d\langle W\rangle_{t}-\frac{\partial_{x}\LK(U_{t})\partial_{x}\LK(S_{t},U_{t})}{\LK^{2}(U_{t})}\frac{(X_{t}-Y_{t})^{2}}{(Y_{t}-W_{t})^{4}}d\langle W\rangle_{t}\\
&+\frac{\partial_{x}\LK(U_{t})\partial_{\varphi}\LK(S_{t},U_{t})}{\LK^{2}(U_{t})}\frac{(X_{t}-Y_{t})\cot S_{t}}{2(Y_{t}-W_{t})^{2}(X_{t}-W_{t})}d\langle W\rangle_{t}\\
&+\frac{\LK(S_{t},U_{t})}{\LK^{3}(U_{t})}\frac{(X_{t}-Y_{t})^{2}}{(Y_{t}-W_{t})^{4}}(\partial_{x}\LK(U_{t}))^{2}d\langle W\rangle_{t}.
\end{split}
\end{equation}
Note that
\[g_t(z)-W_t\to 0,\quad S_t\to 0,  \quad \sqrt{g_{t}(z)-W_{t}}\cot S_{t}\to \sqrt{X_{t}-W_{t}},\quad \text{as }z\to \hat{\eta}^L(t). \]
Combining with~\eqref{eqn::elliptic_derivatives} and the trivial facts $\partial_x\LK(\varphi, x), \partial^2_x\LK(\varphi, x)\to 0$ as $\varphi\to 0$, we have
\begin{align*}
&\LK(S_{t},U_{t})\to 0,\quad\partial_{\varphi}\LK(S_t, U_t)\to 1,\quad \partial_{x}\LK(S_{t},U_{t})\to 0,\\
&\frac{\partial_{\varphi}^2\LK(S_{t},U_{t})}{\sqrt{g_{t}(z)-W_{t}}}\to \frac{U_{t}}{\sqrt{X_{t}-W_{t}}},\quad\frac{\partial_{x}\partial_{\varphi}\LK(S_{t},U_{t})}{g_{t}(z)-W_{t}}\to\frac{1}{2(X_{t}-W_{t})},\quad \partial_{x}^2\LK(S_{t},U_{t})\to 0,\quad \text{as }z\to \hat{\eta}^L(t). 
\end{align*}
In the right hand-side of~\eqref{eqn::drift}, the leading term is of order $\cot^3 S_{t}$. 
Dividing~\eqref{eqn::drift} by $\cot^3 S_{t}$ and letting $z\to \hat{\eta}^L(t)$, we have
\begin{equation}\label{eqn::variation}
d\langle W\rangle_{t}=8dt.
\end{equation}
Plugging~\eqref{eqn::variation} into~\eqref{eqn::drift}, the leading term now is of order $\cot S_t$. Dividing~\eqref{eqn::drift} by $\cot S_{t}$ and letting $z\to \hat{\eta}^L(t)$, we have
\[-\frac{1}{X_{t}-W_{t}}\left(dt+\frac{1}{2}(X_{t}-W_{t})dL_{t}\right)+\frac{U_{t}}{X_{t}-W_{t}}dt+\frac{4\partial_{x}\LK(U_{t})}{\LK(U_{t})}\frac{X_{t}-Y_{t}}{(Y_{t}-W_{t})^{2}}dt=0.\]
By~\eqref{eqn::elliptic_hyper}, we have
\begin{equation}\label{eqn::correct_drift}
dL_{t}=\Theta(X_t, Y_t, W_t)dt. 
\end{equation}
Combining~\eqref{eqn::variation} and~\eqref{eqn::correct_drift}, we obtain the result. 
\end{proof}
\begin{proof}[Proof of Theorem~\ref{thm::ust_hsle}]
Recall that $\hat{\eta}^L_{\delta_n}=\phi_{\delta_n}(\eta^L_{\delta_n})$ and $\hat{\eta}^L=\phi(\eta^L)$ are parameterized by the half-plane capacity. From Corollary~\ref{cor::continuousdriving}, $\hat{\eta}^L_{\delta_n}\to \hat{\eta}^L$ locally uniformly as $n\to\infty$ almost surely.
Recall the definitions of $\mathbf{trunk}_{\delta}\left(\eps\right)$, $\mathbf{trunk}_{0}\left(\frac{1}{n}\right)$ and $\mathbf{trunk}$ for $\LT_{\delta}$ and $\mathbf{trunk}_{\delta}^*\left(\eps\right)$, $\mathbf{trunk}^{*}_{0}\left(\frac{1}{n}\right)$ and $\mathbf{trunk}^{*}$ for $\LT_{\delta}^*$ in Lemma~\ref{lem::tightness_trunk}. We may assume that (by subtracting a further subsequence), in such coupling, the sequence of trunks $\{\mathbf{trunk}_{\delta_{n}}^{*}\left(\frac{1}{m}\right)\}_n$ converges to $\mathbf{trunk}^{*}_{0}\left(\frac{1}{m}\right)$ in  Hausdorff distance for every $m\in\N$. 
We define $\tau:=\inf\{t>0:\eta^{L}[0,t]\cap(cd)\neq\emptyset\}$. By Lemma~\ref{lem::drivingfunction}, the driving function of $\hat\eta^{L}$ has the same law as the one for $\hSLE_{8}$ up to $\tau$. (Note that $\eta^L\cap [bc]=\emptyset$ almost surely.)
In order to show that $\hat\eta^{L}$ has the same law as $\hSLE_{8}$ as a whole process, 
it remains to analyze the continuity of the process as $t\to \tau$ and to derive the limit after the time $\tau$. 

Define 
$\tau_{\delta_n,\epsilon}:=\inf\{t>0:\dist(\eta_{\delta_{n}}^{L}(t),(c^{\diamond}_{\delta_{n}}d^{\diamond}_{\delta_{n}}))=\epsilon\}$.
Recall that $\tau_{\delta_n}$ is the first time that $\eta_{\delta_n}^{L}$ hits $(c_{\delta_n}^{\diamond}d_{\delta_n}^{\diamond})$. Firstly, we will show 
\begin{align}\label{eqn::hittingtime}
\lim_{n\to\infty}\tau_{\delta_n}=\tau \quad \text{almost surely}. 
\end{align} 
It is clear that
\[\lim_{\epsilon\to 0}\varlimsup_{n\to \infty}\tau_{\delta_n,\epsilon}\le \tau\le\varliminf_{n\to \infty}\tau_{\delta_n}\le\varlimsup_{n\to \infty}\tau_{\delta_n}\quad \text{almost surely}. \]
Denote by $T=\lim_{\epsilon\to 0}\varlimsup_{n\to \infty}\tau_{\delta_n,\epsilon}$. From Lemma~\ref{lem::subseqlimit_avoids_c}, we have $\hat\eta^{L}(T)\in (cd)$. 
If~\eqref{eqn::hittingtime} doesn't hold, there exists $t$  with $T<t<\varlimsup_{n\to \infty}\tau_{\delta_n}$ such that $\rho:=\dist(\eta^{L}(t),\partial\Omega)>0$. By the locally uniform convergence,  there exists a subsequence of $\{\delta_{n}\}_{n\ge 1}$ (still denoted by $\{\delta_{n}\}_{n\ge 1}$) such that  $\dist(\eta^{L}_{\delta_{n}}(t),\partial\Omega_{\delta_{n}})>\frac{\rho}{2}$ and $t\in (\tau_{\delta_n,\epsilon},\tau_{\delta_n})$ for a  for some $\epsilon>0$. In such case, we can choose $v\in\Omega_{\delta_{n}}^{*}$ adjacent to $\eta_{\delta_{n}}^{L}(t)$ on  the right-hand side such that it is connected to $(b^{*}_{\delta_{n}}c^{*}_{\delta_{n}})$ in the dual forest by a unique simple path which we denote by $T_{v,n}$. Then, we have $\diam(T_{v,n})>\frac{\rho}{2}$. Since the trunk $\{\mathbf{trunk}_{\delta_{n}}^{*}\left(\frac{1}{m}\right)\}_n$ converges to $\mathbf{trunk}^{*}_{0}\left(\frac{1}{m}\right)$ in  Hausdorff distance for every $m\in\N$, this implies $\{\mathbf{trunk}\cap\mathbf{trunk}^{*}\neq\emptyset\}$. But Lemma~\ref{lem::tightness_trunk} says that  almost surely $\mathbf{trunk}\cap\mathbf{trunk}^{*}=\emptyset $. This implies~\eqref{eqn::hittingtime}.

Secondly, we see that the driving function of $\hat\eta^{L}$ solves~\eqref{eqn::hsle} up to $\tau$ by Lemma~\ref{lem::drivingfunction}. From Proposition~\ref{prop::tightness}, the curve $\eta^{L}$ does not hit $[bc]$.  We define $x:=\phi(b)$ and $y:=\phi(c)$. By Lemma~\ref{lem::drivingfunction}, we can couple $W$ and a one-dimensional Brownian motion $B$ starting from 0  together such that, for $t<\tau$, 
\begin{align*}\label{eqn::coupling}
W_t=\sqrt{8}B_t+\int_{0}^{t}\frac{2ds}{W_s-V_s^x}+\int_{0}^{t}\frac{-2ds}{W_s-V_s^y}-8\int_{0}^{t}\frac{F'(Z_s)}{F(Z_s)}\left(\frac{1-Z_s}{V_s^y-W_s}\right)ds.
\end{align*}

Thirdly, we prove that $W$ solves~\eqref{eqn::hsle} up to and including $\tau$. We may assume that $\tau<\infty$. Note that, for any $t<\tau$, 
\[W_t=\sqrt{8}B_t-2\int_{0}^{t}\frac{V_{s}^{y}-V_{s}^{x}}{(W_s-V_s^x)(W_s-V_s^y)}ds-8\int_{0}^{t}\frac{F'(Z_s)}{F(Z_s)}\left(\frac{1-Z_s}{V_s^y-W_s}\right)ds.\]
By Corollary \ref{cor::continuousdriving}, we get that $W:[0,\infty)\to\R$ is a continuous function.
Thus, we have
\[8\int_{0}^{t}\frac{F'(Z_s)}{F(Z_s)}\left(\frac{1-Z_s}{V_s^y-W_s}\right)ds\le \max_{t\in [0,\tau)}|W_{t}-\sqrt{8}B_{t}|<\infty.\]
Then, by monotone convergence theorem, we have
\[\lim_{t\to\tau}\int_{0}^{t}\frac{F'(Z_s)}{F(Z_s)}\left(\frac{1-Z_s}{V_s^y-W_s}\right)ds=\int_{0}^{\tau}\frac{F'(Z_s)}{F(Z_s)}\left(\frac{1-Z_s}{V_s^y-W_s}\right)ds<\infty.\] 
Take $z\in\R$ such that $z>\hat\eta^{L}(\tau)$. For any $t<\tau$, we have $g_{t}(y)\le g_{t}(z)$. This implies
\[\int_{0}^{t}\frac{2ds}{V_{s}^{y}-W_{s}}\le g_{t}(z).\]
By monotone convergence theorem, we have
\[\lim_{t\to\tau}\int_{0}^{t}\frac{2ds}{V_{s}^{x}-W_{s}}=\int_{0}^{\tau}\frac{2ds}{V_{s}^{x}-W_{s}}\le \lim_{t\to\tau}\int_{0}^{t}\frac{2ds}{V_{s}^{y}-W_{s}}=\int_{0}^{\tau}\frac{2ds}{V_{s}^{y}-W_{s}}\le g_{\tau}(z)<\infty.\]
Therefore, letting $t\to\tau$, we have
\[W_\tau=\sqrt{8}B_\tau+\int_{0}^{\tau}\frac{2ds}{W_s-V_s^x}+\int_{0}^{\tau}\frac{-2ds}{W_s-V_s^y}-8\int_{0}^{\tau}\frac{F'(Z_s)}{F(Z_s)}\left(\frac{1-Z_s}{V_s^y-W_s}\right)ds.\]
In other words, the driving function $W$ of $\hat{\eta}^L$ has the same law as the one for $\hSLE_8$ up to and including $\tau$. In this step, it is important that $W$ is continuous up to and including $\tau$ due to Corollary~\ref{cor::continuousdriving}. 

Finally, we will show that the driving function of $\eta^L[\tau,\infty]$ given $\eta^L[0,\tau]$ is $\sqrt{8}$ times a one-dimensional Brownian motion starting from $0$. Denote by $\Omega(\tau)$ the connected component of $\Omega\setminus\eta^{L}[0,\tau]$ having $d$ on its boundary. From above, we know that   $\Omega_{\delta_n}(\tau_{\delta_n})$ converges to $\Omega(\tau)$ in the Carath\'eodory sense by Carath\'eodory kernel theorem.  By the domain Markov property, conditioning on $\eta_{\delta_{n}}[0,\tau_{\delta_n}]$, the remaining curve $\eta_{\delta_{n}}[\tau_{\delta_n},\infty]$ has the same law as the Peano curve from $\eta_{\delta_{n}}(\tau_{\delta_n})$ to $d^{\diamond}_{\delta_{n}}$ in $\Omega_{\delta_n}(\tau_{\delta_n})$ with Dobrushin boundary conditions. By~\cite[Theorem~4.4]{LawlerSchrammWernerLERWUST}, the driving function of $\hat\eta^{L}[\tau,\infty]$ has the same law as the driving function of $\SLE_{8}$ in $\Omega_{\tau}$ from $\eta(\tau)$ to $d$. It is important that the convergence of driving function only requires the convergence of domains in the Carath\'eodory sense and there is no smoothness regularity requirement on the boundary of the limiting domain. Thus, the driving function of $\hat{\eta}^L[\tau,\infty]$ given $\hat{\eta}^L[0,\tau]$ is $\sqrt{8}$ times a one-dimensional Brownian motion starting from $0$. 

In summary, the driving function of $\hat{\eta}^L$ is the same as the one for $\hSLE_8$ as  a whole process. 
This completes the proof.
\end{proof}
As a consequence of Theorem~\ref{thm::ust_hsle}, we have the following.
\begin{corollary}\label{cor::ust_hsle8}
Fix a quad $(\Omega; a, b, c, d)$. The process $\eta\sim \hSLE_8$ in $\Omega$ from $a$ to $d$ with marked points $(b, c)$ has the following properties: It is almost surely generated by continuous curve and $\eta\cap [bc]=\emptyset$ almost surely. Moreover, it is reversible: the time-reversal of $\eta$ has the law of $\hSLE_8$ in $\Omega$ from $d$ to $a$ with marked points $(c, b)$.  
\end{corollary}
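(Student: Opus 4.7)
The plan is to read off continuity and avoidance of $[bc]$ almost directly from the ingredients in the preceding subsections, and to establish reversibility by applying Theorem~\ref{thm::ust_hsle} a second time to an orientation-reversed copy of the quad. First, the discrete Peano curves $\eta^L_\delta$ are continuous by construction and, by Theorem~\ref{thm::ust_hsle}, converge weakly in the space of unparameterized curves $(\mathcal{C},d)$ to $\hSLE_8$; since $(\mathcal{C},d)$ consists of continuous curves, $\eta$ is almost surely continuous. The disjointness $\eta\cap [bc]=\emptyset$ is then exactly the second assertion of Proposition~\ref{prop::tightness} applied to the unique weak limit $\eta$.

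For reversibility, I would exploit the natural symmetry of the UST under complex conjugation $\iota\colon z\mapsto\bar z$. Set $(\bar\Omega_\delta;\bar a_\delta,\bar b_\delta,\bar c_\delta,\bar d_\delta):=\iota(\Omega_\delta;a_\delta,b_\delta,c_\delta,d_\delta)$. Because $\iota$ reverses orientation, the tuple $(\bar d,\bar c,\bar b,\bar a)$ is in counterclockwise order in $\bar\Omega$, so $(\bar\Omega;\bar d,\bar c,\bar b,\bar a)$ is a bona fide quad whose alternating wired arcs $(\bar d\bar c),(\bar b\bar a)$ are exactly the $\iota$-images of the original wired arcs $(cd),(ab)$; hence the UST in this new quad is simply the $\iota$-image of the original $\LT_\delta$. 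The critical observation is that the corresponding Peano curve $\tilde\eta_\delta$ from $\bar d^{\diamond}$ to $\bar a^{\diamond}$ coincides, as an unparameterized curve, with $\iota$ applied to the time-reversal of $\eta^L_\delta$: both curves have the same trace (namely $\iota$ of the trace of $\eta^L_\delta$), both run from $\bar d^{\diamond}$ to $\bar a^{\diamond}$, and the two orientation-reversals---one spatial via $\iota$, one temporal via reversing time---cancel, so that the convention ``primal tree on the right, dual tree on the left'' is preserved. Applying Theorem~\ref{thm::ust_hsle} to $(\bar\Omega;\bar d,\bar c,\bar b,\bar a)$ then shows that $\tilde\eta_\delta$ converges weakly to $\hSLE_8$ in $\bar\Omega$ from $\bar d$ to $\bar a$ with marked points $(\bar c,\bar b)$. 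Pulling back by $\iota$ and using that time-reversal is continuous on $(\mathcal{C},d)$, I conclude that the time-reversal of $\eta$ has the law of $\hSLE_8$ in $\Omega$ from $d$ to $a$ with marked points $(c,b)$, the latter $\hSLE$ being defined via the anti-conformal map $\iota$ (since $(d,c,b,a)$ is clockwise in $\Omega$, this extension is the only reasonable interpretation of the reversibility statement).

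The main obstacle I foresee is the discrete identification $\tilde\eta_\delta=\iota(\text{time-reversal of }\eta^L_\delta)$; although morally obvious, it requires a careful check of the paper's orientation conventions for Peano curves (cf.\ Figures~\ref{fig::Dobrushin_Peano} and~\ref{fig::quadPeano}) and of the way alternating boundary conditions transform under the relabeling of the marked points. Once this discrete symmetry is nailed down, the rest of the argument reduces to Theorem~\ref{thm::ust_hsle} combined with the continuity of time-reversal on $(\mathcal{C},d)$.
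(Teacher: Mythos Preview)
Your reversibility argument via the anti-conformal reflection $\iota$ is essentially the paper's own argument made more explicit about orientation: the paper simply observes that the time-reversal $\mathcal{R}(\eta_\delta^L)$ is itself a Peano curve to which Theorem~\ref{thm::ust_hsle} applies (with the marked points relabeled $(d,c,b,a)$), and concludes in one line. Your conjugation trick is a clean way to restore the counterclockwise convention, but the underlying idea is identical.

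There is, however, a genuine gap in your treatment of continuity and of $\eta\cap[bc]=\emptyset$. Corollary~\ref{cor::ust_hsle8} is a statement about $\hSLE_8$ \emph{as the process defined by the SDE}~\eqref{eqn::hsle_sde}, not merely about the scaling limit of Peano curves. What the proof of Theorem~\ref{thm::ust_hsle} actually shows is that the driving function of every subsequential limit \emph{solves} the SDE up to and including the swallowing time $\tau$; it does not show that the SDE has a \emph{unique} solution in law up to and including $\tau$. The coefficients of~\eqref{eqn::hsle_sde} blow up as $t\to\tau$, so a priori there could be other solutions behaving differently near $\tau$, and for such solutions neither continuity at $\tau$ nor avoidance of $[bc]$ would follow from Proposition~\ref{prop::tightness}. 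The paper closes this gap explicitly: it lets $\tau_\eps$ be the first time the curve enters an $\eps$-neighborhood of $(\phi(b),\infty)$, notes that the SDE has a unique solution up to each $\tau_\eps$, and then uses the scaling-limit property $\eta\cap[\phi(b),\phi(c)]=\emptyset$ (from Proposition~\ref{prop::tightness}) to conclude that $\tau_\eps\uparrow\tau$, forcing any solution to coincide with the scaling-limit solution up to and including $\tau$. Only after this uniqueness step is $\hSLE_8$ a well-defined object whose properties can be read off from the scaling limit. You should insert this argument before invoking Proposition~\ref{prop::tightness}; otherwise you have only shown that \emph{one particular} solution of the SDE is continuous and avoids $[bc]$. (A minor additional point: Theorem~\ref{thm::ust_hsle} and Proposition~\ref{prop::tightness} both assume $\partial\Omega$ is $C^1$ and simple, so you also need the paper's one-line reduction to that case via conformal image.)
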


\begin{proof}
We may assume that $\partial\Omega$ is $C^1$ and simple. Note that, if the conclusion holds under such assumption, it would also hold for a general quad with locally connected boundary via conformal mapping. 

First of all, we argue that there exists a unique solution in law to the SDE~\eqref{eqn::hsle} up to and including $\tau$---the first time that $\phi(b)$ is swallowed.
From the proof of Theorem~\ref{thm::ust_hsle}, we see that there exists a version of solution $W$ to the SDE~\eqref{eqn::hsle} up to $\tau$. Moreover, it is generated by a continuous curve $\eta$ up to and including $\tau$ and $\eta\cap [\phi(b), \phi(c)]=\emptyset$. Suppose $\tilde{W}$ is another solution. Denote by $\tau_{\eps}$ the first time that the process gets within $\eps$-neighborhood of $(\phi(b), \infty)$. As the SDE~\eqref{eqn::hsle} has a unique solution up to $\tau_{\eps}$, the two processes $W$ and $\tilde{W}$ have the same law up to $\tau_{\eps}$. We may couple them so that $W_t=\tilde{W}_t$ for $t\le \tau_{\eps}$. 
 We denote by $\PP_{\eps}$ the probability measure corresponding to this coupling. Then, the family $\{\PP_\eps\}_{\eps>0}$ is consistent on the product space of continuous functions on $[0,+\infty)$. Thus, we can construct a new probability measure $Q$ such that under $Q$, two driving functions $W_t=\tilde{W}_t$ for all $t<T$ where $T$ is the first hitting time of $[\phi(b), \infty)$. Since $\eta\cap [\phi(b), \phi(c)]=\emptyset$, we have $\tau=T$. This implies that the SDE~\eqref{eqn::hsle} has a unique solution up to and including $\tau$, which is given by the limit of the Peano curve in Theorem~\ref{thm::ust_hsle}. 
Then, the continuity of $\hSLE_8$ is a consequence of Proposition~\ref{prop::tightness}. For the reversibility, we denote by $\LR(\eta_{\delta_n}^{L})$ the time-reversal of $\eta_{\delta_n}^{L}$. By Theorem~\ref{thm::ust_hsle}, the law of $\LR(\eta_{\delta_n}^{L})$ converges to $\hSLE_8$ in $\Omega$ from $d$ to $a$ with marked points $(c, b)$ as $\delta_n\to 0$. This implies the reversibility and completes the proof.
\end{proof}

\subsection{Proof of Theorem~\ref{thm::cvg_triple}}
In this section, we will complete the proof of Theorem~\ref{thm::cvg_triple}. 



\begin{proof}[Proof of Theorem~\ref{thm::cvg_triple}]
Note that, the families $\{\eta^L_{\delta}\}_{\delta>0}$ and $\{\eta^R_{\delta}\}_{\delta>0}$ are tight due to Proposition~\ref{prop::tightness}; and the family $\{\gamma^M_{\delta}\}_{\delta>0}$ is tight due to Proposition~\ref{prop::tightness_LERW}. 
Hence for any sequence $\delta_{n}\to 0$, there exists a subsequence, still denoted by $\delta_n$, such that $\{\left(\eta_{\delta_{n}}^{L};\gamma_{\delta_{n}}^{M};\eta_{\delta_{n}}^{R}\right)\}$ converges in law as $n\to\infty$. We couple $\{\left(\eta_{\delta_n}^{L};\gamma_{\delta_n}^{M};\eta_{\delta_n}^{R}\right)\}$ together such that $\eta_{\delta_n}^{L}\to\eta^{L}$ and $\gamma_{\delta_n}^{M}\to\gamma^{M}$ and $\eta_{\delta_n}^{R}\to\eta^{R}$ as curves almost surely. We will prove that the law of the triple $\left(\eta^{L}; \gamma^{M}; \eta^{R}\right)$ is the one in Theorem~\ref{thm::cvg_triple}. 

The law of $\eta^{L}$ is $\hSLE_8$ in $\Omega$ from $a$ to $d$ with marked points $(b, c)$ due to Theorem~\ref{thm::ust_hsle}. 
By Lemma~\ref{lem::etaLgivengammaM}, the conditional law of $\eta^L$ given $\gamma^M$ is $\SLE_8$. Similarly, the conditional law of $\eta^R$ given $\gamma^M$ is $\SLE_8$. These imply that $\eta^L$ and $\eta^R$ are conditionally independent given $\gamma^M$. 
Since $\SLE_{8}$ is space-filling, we have $\gamma^{M}=\eta^{L}\cap\eta^{R}$. Therefore, the conditional law of $\eta^R$ given $\eta^L$ is the same as the conditional law of $\eta^R$ given $\gamma^M$ which is $\SLE_8$. This completes the proof.
\end{proof}

\begin{corollary}\label{cor::gamma_lebzero}
Consider the continuous curve $\gamma^M$ in the triple of Theorem~\ref{thm::cvg_triple}. We have $\PP[z\in\gamma^M]=0$ for any $z\in\Omega$ and $\Leb(\gamma^M)=0$ almost surely. 
\end{corollary}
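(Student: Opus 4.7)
My plan is to first establish the pointwise statement $\PP[z\in\gamma^M]=0$ for every fixed $z\in\Omega$, from which $\Leb(\gamma^M)=0$ almost surely follows immediately by Fubini's theorem, since
\[\E[\Leb(\gamma^M)]=\int_\Omega \PP[z\in\gamma^M]\,dz=0.\]

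For the pointwise claim, I would exploit the coupling from the proof of Theorem~\ref{thm::cvg_triple}, along which $\gamma^M_{\delta_n}\to\gamma^M$ as continuous curves. Fix $z\in\Omega$ and let $e_{\delta_n}$ be the primal edge of $\Omega_{\delta_n}$ nearest to $z$. Under Hausdorff convergence one has $\PP[z\in\gamma^M]\le\varliminf_n\PP[e_{\delta_n}\in\gamma^M_{\delta_n}]$. The right-hand side is then controlled by standard two-dimensional LERW estimates (cf.\ Masson's theorem: $\E[|\gamma^M_{\delta_n}|]=O(\delta_n^{-5/4+o(1)})$ on bounded domains), which via a translation-type comparison give $\PP[e_{\delta_n}\in\gamma^M_{\delta_n}]=O(\delta_n^{3/4-o(1)})\to 0$, so that $\PP[z\in\gamma^M]=0$.

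A conceptually cleaner but forward-referencing route is to invoke Theorem~\ref{thm::cvg_lerw} (proved in Section~\ref{sec::lerw}), which identifies the conditional law of $\gamma^M$ given $X^M$ as that of an $\SLE_2(-1,-1;-1,-1)$ curve stopped on hitting $(cd)$. Since such $\SLE_2$-type curves have Hausdorff dimension $5/4<2$ by Beffara's theorem, they almost surely miss any prescribed interior point and carry zero planar Lebesgue measure; both conclusions of the corollary then follow at once.

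The main obstacle in either route is the need for external input: the discrete route relies on a sharp LERW length bound, while the $\SLE_2$ route depends on a forward reference together with the dimension computation for $\SLE_2$. A genuinely self-contained argument using only Theorem~\ref{thm::cvg_triple} is elusive, because the conditional characterization (given $\gamma^M$, $\eta^L\sim\SLE_8$ in $\Omega^L$ and $\eta^R\sim\SLE_8$ in $\Omega^R$) reduces the event $\{z\in\gamma^M\}$ to the tautology $\{z\in\partial\Omega^L\cap\partial\Omega^R\}$; breaking out of this circularity seems to require essentially the content of Theorem~\ref{thm::cvg_lerw}.
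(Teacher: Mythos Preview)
Your Route~2 is not merely forward-referencing but circular: Corollary~\ref{cor::gamma_lebzero} is invoked in the proof of Lemma~\ref{lem::X}, which is an ingredient of Proposition~\ref{prop::cvg_pair_points}, itself part of the proof of Theorem~\ref{thm::cvg_lerw}. So you cannot appeal to the $\SLE_2(-1,-1;-1,-1)$ identification here. Your Route~1 could be made to work, but the inequality $\PP[z\in\gamma^M]\le\varliminf_n\PP[e_{\delta_n}\in\gamma^M_{\delta_n}]$ is not quite right as written: convergence of $\gamma^M_{\delta_n}$ to $\gamma^M$ only forces $\gamma^M_{\delta_n}$ to come close to $z$, not to pass through the specific nearest edge. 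You would need to bound $\PP[\gamma^M_{\delta_n}\cap B(z,\epsilon)\neq\emptyset]$ and send $\epsilon\to 0$, which is doable via LERW hitting estimates but brings in external machinery.

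The paper's argument is precisely the ``self-contained'' one you declared elusive. Theorem~\ref{thm::ust_hsle} identifies $\eta^L$ as $\hSLE_8$, and by Theorem~\ref{thm::cvg_triple} the curve $\gamma^M$ is the interior boundary of $\eta^L$. The local martingale~\eqref{eqn::rev_mart_aux2} in Section~\ref{subsec::hsle_rev} shows that $\hSLE_8$ is absolutely continuous with respect to ordinary $\SLE_8$ up to the swallowing time of $b$. Since the frontier of $\SLE_8$ has zero Lebesgue measure (a standard fact about $\SLE_8$, independent of any identification of $\gamma^M$), absolute continuity gives $\Leb(\gamma^M[0,t])=0$ a.s.\ for each $t<1$, and a countable union over $t=1-1/n$ yields $\Leb(\gamma^M)=0$ a.s.; the pointwise statement $\PP[z\in\gamma^M]=0$ follows the same way. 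The step you missed is that the absolute continuity of $\hSLE_8$ with respect to $\SLE_8$, already established in Section~\ref{sec::hypersle}, lets you transfer frontier properties of $\SLE_8$ directly to $\gamma^M$ without ever identifying the law of $\gamma^M$ itself.
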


\begin{proof}
By Theorem~\ref{thm::ust_hsle}, the law of $\eta^L$ is $\hSLE_8$ in $\Omega$ from $a$ to $d$ with marked points $(b, c)$. From Theorem~\ref{thm::cvg_triple}, the curve $\gamma^M$ is the part of the boundary of $\eta^L$ inside $\Omega$. We parameterize $\gamma^M$ so that $\gamma^M(0)=X^M$ and $\gamma^M(1)=Y^M$. 
Let $\eta$ be an $\SLE_8$ in $\Omega$ from $a$ to $d$ and denote by $\tau$ the first time that $\eta$ swallows $b$. From~\eqref{eqn::rev_mart_aux2}, the law of $\eta^L$ is absolutely continuous with respect to $\eta$ up to $\tau$. 
For every $z\in\Omega$, the probability that $z$ belongs to the frontier of $\SLE_8$ equals $0$, thus $\PP[z\in\gamma^M[0,t]]=0$ for any $t\in (0,1)$.   
As $\gamma^M=\cup_n \gamma^M[0,1-1/n]$, we have $\PP[z\in\gamma^M]=0$. 
This implies that $\PP[\Leb(\gamma^M)]=0$,
which implies that almost surely, we have $\Leb(\gamma^M)=0$ as desired.
\end{proof}

\section{Convergence of LERW in quads}
\label{sec::lerw}
\subsection{The pair of random points $(X^M, Y^M)$}
\label{subsec::pairrandompoints}
The goal of this section is to derive the limiting distribution of the pair of random points $(X^M, Y^M)$ in Theorem~\ref{thm::cvg_lerw}. We summarize the setup below.
\begin{itemize}
\item Fix a quad $(\Omega; a, b, c, d)$ such that $\partial\Omega$ is $C^{1}$ and simple. Suppose that a sequence of medial quads $(\Omega_{\delta}^{\diamond}; a_{\delta}^{\diamond}, b_{\delta}^{\diamond}, c_{\delta}^{\diamond}, d_{\delta}^{\diamond})$ converges to $(\Omega; a, b, c, d)$ in the sense of~\eqref{eqn::topology}.
Assume the same setup as in Section~\ref{subsec::ust_quads}. 
We consider the UST $\LT_{\delta}$ in $\Omega_{\delta}$ with $(a_{\delta}b_{\delta})$ wired and $(c_{\delta}d_{\delta})$ wired. 
There are two Peano curves running along $\LT_{\delta}$, and we denote by $\eta^L_{\delta}$ the one from $a_{\delta}^{\diamond}$ to $d_{\delta}^{\diamond}$ and by $\eta^R_{\delta}$ the one from $b_{\delta}^{\diamond}$ to $c_{\delta}^{\diamond}$. 
There exists a unique simple path in $\LT_{\delta}$, denoted by $\gamma^M_{\delta}$, connecting $(a_{\delta}b_{\delta})$ to $(c_{\delta}d_{\delta})$. Recall that $X_{\delta}^{M}:=\gamma_{\delta}^{M}\cap(a_{\delta}b_{\delta})$ and $Y_{\delta}^{M}:=\gamma_{\delta}^{M}\cap(c_{\delta}d_{\delta})$. 
\item For the quad $(\Omega; a, b, c, d)$, let $K$ be its conformal modulus and denote by $f=f_{(\Omega;a,b,c,d)}$ the conformal map from $\Omega$ onto $(0,1)\times (0,\ii K)$ which sends $(a,b,c,d)$ to $(0,1,1+\ii K,\ii K)$ and extend its definition continuously to the boundary. 
\item Consider the Poisson kernel for the rectangle $f(\Omega)=(0,1)\times (0,\ii K)$. 
Define, for all $r\in (f(a)f(b))\cup(f(c)f(d))$ and for all $z\in ([0,1]\times[0,\ii K])\setminus\{r\}$, 
\begin{equation}\label{eqn::Poissonkernel_rect}
P_K(z, r)=\Im\sum_{n\in\Z}\left(\frac{1}{\exp(\frac{\pi}{K}(2n-r+z))-1}+\frac{1}{\exp(\frac{\pi}{K}(2n-r-\bar{z}))-1}\right). 
\end{equation}
Note that $P_K(\cdot, r)$ is continuous on $[0,1]\times[0,\ii K]\setminus\{r\}$, and it is harmonic on $(0,1)\times (0,\ii K)$ with the following boundary data: 
\begin{equation}\label{eqn::Poissonkernel_def}
P_K(\cdot, r)=0 \text{ on } (f(a)f(b))\cup (f(c)f(d))\setminus\{r\},\quad  
\partial_{n}P_K(\cdot, r)=0 \text{ on } (f(b)f(c))\cup (f(d)f(a)),
\end{equation}
where $n$ is the outer normal vector. 
\end{itemize}

Before we proceed, let us explain how we figure out the formula for Poisson kernel in~\eqref{eqn::Poissonkernel_rect} as a harmonic solution with boundary data~\eqref{eqn::Poissonkernel_def}. The Poisson kernel $P_K(z,r)$ with boundary data~\eqref{eqn::Poissonkernel_def} can be regarded as the ``renormalized probability" that the Brownian motion from $z$  reflecting on the edges $(0, \ii K)\cup (1, 1+\ii K)$  first hits  $(0, 1)\cup (\ii K, 1+\ii K)$ at $r$. By considering the strip $\mathcal{S}:=\mathbb{R}\times[0,\ii K]$, this ``renormalzied probability" is equal to the ``renormalized probability" that the Brownian motion from $z$ exits $\mathcal{S}$ at the points
 $\{r+n, -r+n\}_{n\in \mathbb{Z}}$. 
Hence we have
\[P_K(z,r)=\sum_{n\in\mathbb{Z}} \left(P_{\mathcal{S}}(z,r+n)+P_{\mathcal{S}}(z,-r+n)\right), \]
where $P_{\mathcal{S}}(z,q)$ is the Poisson kernel of the strip $\mathcal{S}$ with Dirichlet boundary condition. Note that the Poisson kernel $P_{\mathcal{S}}(z,q)$ can be obtained from the Poisson kernel of the upper half plane $\mathbb{H}$ by composing with the conformal map $\varphi(z)= e^z$. This gives the desired formula in~\eqref{eqn::Poissonkernel_rect}. 

Let us come back to the distribution of the limits of the pairs $(X_{\delta}^M; Y_{\delta}^M)_{\delta>0}$. 
\begin{proposition}\label{prop::cvg_pair_points}
The pair $(X_{\delta}^{M},Y_{\delta}^{M})$ converges weakly  as a pair of points in $\R^2$ to a random pair of points $(X^{M},Y^{M})$ as $\delta\to 0$. Denote by $x^M:=f(X^{M})$ and $y^M:=\Re f(Y^M)$. The law of the pair $(x^{M},y^{M})$ is characterized by the following. 
\begin{itemize}
\item[(1)] The law of $x^{M}$ is uniform on $(0,1)$. 
\item[(2)] The conditional density of $y^{M}$ given $x^{M}\in (0,1)$ is the following: 
\begin{equation}\label{eqn::conditionalYgivenX}
\rho_K(x^M, y)=\partial_n P_K(z, y+\ii K)|_{z=x^M}, \quad \forall y\in (0,1). 
\end{equation}
\end{itemize}
In particular, the joint density of $(x^M, y^M)$ is given by~\eqref{eqn::jointdensity}. 
\end{proposition}

The proof of Proposition~\ref{prop::cvg_pair_points} is split into three lemmas. In Lemma~\ref{lem::X}, we first derive the limiting distribution of $X^M_{\delta}$. This step is immediate from the convergence of the observable in Lemmas~\ref{lem::holomorphic_observable} and~\ref{lem::holo_cvg}.  
We then derive the conditional law of $y^M$ given $x^M$. 
To this end, we first analyze the conditional probability in the discrete setting in Lemma~\ref{lem::YconditionalXdiscrete} and use good control on discrete harmonic functions proved in~\cite{ChelkakWanMassiveLERW}; and then we derive the limit of the conditional probability in Lemma~\ref{lem::YconditionalX}. 

\begin{lemma}\label{lem::X}
The pair $(X_{\delta}^{M},Y_{\delta}^{M})$ converges weakly  as a pair of points in $\R^2$ to a random pair of points $(X^{M},Y^{M})$ as $\delta\to 0$. Moreover, the law of $x^M=f(X^{M})$ is uniform on $(0,1)$. 
\end{lemma}
\begin{proof}
From Theorem~\ref{thm::cvg_triple}, the curve $\gamma_{\delta}^M$ converges weakly to $\gamma^M$ as $\delta\to 0$. Since $X^M_\delta=\gamma_\delta^M(0)$, $Y_\delta^M=\gamma_\delta^M(1)$ and $X^M=\gamma^M(0)$, $Y^M=\gamma^M(1)$, this implies that $(X_{\delta}^M, Y_{\delta}^M)$ converges weakly to $(X^M, Y^M)$ as a pair of points in $\R^2 $ as $\delta\to 0$, where $X^M=\gamma^M\cap (ab)$ and $Y^M=\gamma^M\cap (cd)$. It remains to show that $f(X^M)$ is uniform in $(0,1)$. 

Recall from Lemma~\ref{lem::holomorphic_observable} that $u_\delta(z^*)$ is the probability that $z^*$ lies to the right of $\eta_{\delta}^L$ for every $z^{*}\in\Omega^{*}_{\delta}$ and that $u_{\delta}$ converges to $\Re f$ locally uniformly due to Lemma~\ref{lem::holo_cvg}. 
We denote by $\Omega^{R}$ the connected component of $\Omega\setminus\gamma^{M}$ which contains $[bc]$ on its boundary. It is also the connected component of $\Omega\setminus\eta^{L}$ which contains $[bc]$ on its boundary. For every $z\in\Omega$ and $z_{\delta_n}^{*}\in\Omega_{\delta_n}^{*}$ such that $z_{\delta_n}^{*}\to z$, we have
\begin{align*}
\{z\in\Omega^{R}\}&\subset\left\{\cup_{i=1}^{\infty}\cap_{n=i}^{\infty}\{z_{\delta_n}^{*}\text{ lies to the right of }\eta_{\delta_n}^L\}\right\}\\
&\subset\left\{\cap_{i=1}^{\infty}\cup_{n=i}^{\infty}\{z_{\delta_n}^{*}\text{ lies to the right of }\eta_{\delta_n}^L\}\right\}\subset\{z\in\overline\Omega^{R}\}.
\end{align*}
This implies that
\[\PP[z\in\Omega^{R}]\le \varliminf_{n\to\infty}u_{\delta_n}(z_{\delta_{n}}^{*})=\Re f(z)=\varlimsup_{n\to\infty}u_{\delta_n}(z_{\delta_{n}}^{*})\le\PP[z\in\overline\Omega^{R}].\]
Note that $\PP[z\in\Omega^R]=\PP[z\in\overline{\Omega}^R]$ as $\PP[z\in\gamma^M]=0$ due to Corollary~\ref{cor::gamma_lebzero}. Therefore, 
\begin{equation}\label{eqn::hSLE_right_proba_aux}
\PP[z\in\Omega^{R}]=\Re f(z),\quad \forall z\in\Omega.
\end{equation}
For every $\theta\in(ab)$, we choose $\{w_n\}\subset\Omega$ such that $w_{n}\to\theta$ as $n\to\infty$. Then, we have 
\[\{X^{M}\in(a\theta)\}\subset\left\{\cup_{i=1}^{\infty}\cap_{n=i}^{\infty}\{w_{n}\in\Omega^{R}\}\right\}\subset\left\{\cap_{i=1}^{\infty}\cup_{n=i}^{\infty}\{w_{n}\in\Omega^{R}\}\right\}\subset\{X^{M}\in(a\theta]\}.\]
Since $f$ is continuous on $\overline\Omega$, we have
\[\PP[X^{M}\in(a\theta)]\le\varliminf_{n\to\infty}\Re f(w_{n})=\Re f(\theta)\le\varlimsup_{n\to\infty}\Re f(w_{n})\le\PP[X^{M}\in(a\theta]].\]
Furthermore, for every $\tilde\theta\in(\theta b)$, we have
\[\Re f(\theta)\le\PP[X^{M}\in(a\theta]]\le\PP[X^{M}\in(a\tilde\theta)]\le\Re f(\tilde\theta).\]
By letting $\tilde\theta\to\theta$, we have 
\[\PP[X^{M}\in(a,\theta]]=\Re f(\theta).\]
This implies that $\PP[f(X^{M})\in(0,\Re f(\theta)]]=\PP[X^{M}\in(a,\theta]]=\Re f(\theta)$ as desired.  Note that $f(a,b)=(0,1)$ and we complete the proof.
\end{proof}
\begin{figure}[ht!]
\includegraphics[width=0.7\textwidth]{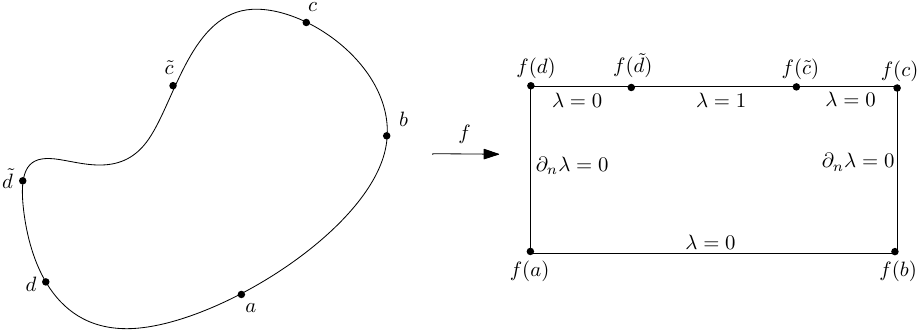}
\caption{\label{fig::boundary-conditions} The function $\lambda=\lambda_{\tilde{c}, \tilde{d}}$ is the unique bounded harmonic function on $(0,1)\times (0,\ii K)$ with the following boundary data: $\lambda=0$ on $(f(a)f(b))\cup(f(c)f(\tilde c))\cup(f(\tilde d)f(d))$; $\lambda=1$ on $(f(\tilde c)f(\tilde d))$; and $\partial_n\lambda=0$ on $(f(b)f(c))\cup(f(d)f(a))$ where $n$ is the outer normal vector. }
\end{figure}

\begin{lemma}\label{lem::YconditionalXdiscrete}
Fix a polygon $(\Omega; a, \tilde{a}, \tilde{b}, b, c, \tilde{c}, \tilde{d}, d)$ with eight marked points. Suppose that a sequence of medial polygons $(\Omega_{\delta}^{\diamond}; a_{\delta}^{\diamond}, \tilde{a}_{\delta}^{\diamond}, \tilde{b}_{\delta}^{\diamond}, b_{\delta}^{\diamond}, c_{\delta}^{\diamond}, \tilde{c}_{\delta}^{\diamond}, \tilde{d}_{\delta}^{\diamond}, d_{\delta}^{\diamond})$ converges to $(\Omega; a, \tilde{a}, \tilde{b}, b, c, \tilde{c}, \tilde{d}, d)$ in the sense of~\eqref{eqn::topology}. Denote by $\lambda_{\tilde c,\tilde d}$ the unique bounded harmonic function on $(0,1)\times (0,\ii K)$ with the boundary data as shown in Figure~\ref{fig::boundary-conditions}.  Denote by $\lambda_{c, d}$ when $\tilde{c}=c$ and $\tilde{d}=d$.  
Then, for every $\epsilon>0$, there exist $\delta_0>0$ and $s>0$ such that for all $\delta\le\delta_0$ and $x_{\delta}\in(\tilde a_{\delta}\tilde b_{\delta})$ and $x\in (\tilde a\tilde b)$ with $\dist(x_{\delta},x)\le s$, we have 
\begin{align}\label{eqn::discretelaw}
\left|\PP\left[Y_{\delta}^{M}\in(\tilde c_{\delta}\tilde d_{\delta})\cond X_{\delta}^{M}=x_{\delta}\right]-\frac{\partial_n \lambda_{\tilde c,\tilde d}(f(x))}{\partial_{n}\lambda_{c,d}(f(x))}\right|\le\epsilon.
\end{align}
\end{lemma}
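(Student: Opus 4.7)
The plan is to express the conditional probability as a ratio of discrete harmonic measures using the electrical-network / Wilson's-algorithm picture of the UST, and then extract the boundary asymptotic via the discrete boundary estimates of~\cite{ChelkakWanMassiveLERW}.

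First I would contract the two wired arcs $(a_\delta b_\delta)$ and $(c_\delta d_\delta)$ to single vertices $\alpha_\delta$ and $\beta_\delta$ and apply Wilson's algorithm on the contracted graph. The branch $\gamma^M_\delta$ then has the law of the loop-erasure of a simple random walk from $\alpha_\delta$ to $\beta_\delta$, and loop-erasure does not alter the endpoints. For an arc $A \subseteq (c_\delta d_\delta)$, let $h^A_\delta \colon V(\Omega_\delta) \to [0,1]$ denote the discrete harmonic function with Dirichlet data $\one_A$ on $(a_\delta b_\delta) \cup (c_\delta d_\delta)$ and no imposed condition on $(b_\delta c_\delta) \cup (d_\delta a_\delta)$ (which produces Neumann data in the scaling limit, since vertices on these arcs have fewer graph-neighbours in $\Omega_\delta$). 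Decomposing on the first edge of the walk out of $\alpha_\delta$ and using that loop-erasure preserves exit distributions, one obtains
\[
\PP\bigl[Y^M_\delta \in (\tilde c_\delta \tilde d_\delta) \mid X^M_\delta = x_\delta\bigr]
\;=\; \frac{h^{(\tilde c_\delta \tilde d_\delta)}_\delta(v_0)}{h^{(c_\delta d_\delta)}_\delta(v_0)},
\]
where $v_0$ is an interior graph-neighbour of $x_\delta$ (or a weighted average over the finitely many such neighbours, all sitting within $O(\delta)$ of $x$).

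Next I would identify the continuum limit. Denote by $h^{(\tilde c \tilde d)}$ and $h^{(cd)}$ the bounded harmonic functions on $\Omega$ with matching Dirichlet data on $(ab) \cup (cd)$ and zero Neumann data on $(bc) \cup (da)$; by uniqueness (Lemma~\ref{lem::harmonic_unique}) and conformal covariance, $h^{(\tilde c \tilde d)} = \lambda_{\tilde c,\tilde d} \circ f$ and $h^{(cd)} = \lambda_{c,d} \circ f$. Both continuous functions vanish at $x$, so interior convergence alone does not determine the ratio. What I need is a one-lattice-step boundary expansion
\[
h^A_\delta(v_0) \;=\; \delta\, C(x_\delta)\, \partial_n h^A(x) + o(\delta),
\]
with a prefactor $C(x_\delta)$ that depends only on the local geometry of $\partial\Omega$ at $x$ and not on $A$. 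Such an expansion is exactly the type of statement established in~\cite{ChelkakWanMassiveLERW} for discrete harmonic functions with mixed Dirichlet--Neumann data on $C^1$ domains. Dividing, the $\delta$ and the geometric prefactor cancel, and the conformal chain rule $\partial_n h^A(x) = |f'(x)|\,\partial_n \lambda_A(f(x))$ cancels $|f'(x)|$ and yields exactly the right-hand side of~\eqref{eqn::discretelaw}. The uniformity in $x_\delta$ with $\dist(x_\delta, x) \le s$ follows from the uniform version of these boundary estimates over compact sub-arcs of $\partial\Omega$ together with continuity of $x \mapsto \partial_n \lambda_{\bullet}(f(x))$ inherited from the $C^1$ regularity.

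The hard part is the boundary extraction. Pointwise interior convergence gives only $h^A_\delta(v_0) = O(\delta)$ without identifying the coefficient, and a direct boundary Harnack principle only compares values at comparable distances from $\partial\Omega$ rather than producing the genuine normal derivative. A coefficient-precise, one-lattice-step asymptotic for the discrete harmonic measure near a $C^1$ boundary piece under mixed Dirichlet--Neumann conditions is genuinely needed, and this is the role played by~\cite{ChelkakWanMassiveLERW}; the bulk of the work in a full proof would therefore be to recast those estimates in the present eight-marked-point setup and to verify them uniformly for $x_\delta$ ranging over a compact sub-arc of $(\tilde a \tilde b)$.
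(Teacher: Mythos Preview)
Your setup via Wilson's algorithm and the identification of the conditional probability as a ratio of discrete harmonic measures at a neighbour of $x_\delta$ matches the paper exactly, including the identification of the continuum targets $\lambda_{\tilde c,\tilde d}\circ f$ and $\lambda_{c,d}\circ f$.

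Where you diverge is in the boundary extraction, and here you make the problem harder than it is. You assert that a coefficient-precise one-lattice-step expansion $h^A_\delta(v_0)=\delta\,C(x_\delta)\,\partial_n h^A(x)+o(\delta)$ is ``genuinely needed'' and is what \cite{ChelkakWanMassiveLERW} provides. Neither claim matches the paper. What the paper actually uses from \cite[Corollary~3.8]{ChelkakWanMassiveLERW} is a boundary Harnack comparison for the \emph{ratio}: for any $\epsilon>0$ there is $s_1>0$ with
\[
1-\epsilon\le \frac{\tilde u_\delta(v_\delta)}{u_\delta(v_\delta)}\cdot\frac{u_\delta(y_\delta)}{\tilde u_\delta(y_\delta)}\le 1+\epsilon
\]
for all $v_\delta,y_\delta$ in $B(x_\delta,s_1)\cap\Omega_\delta\setminus\partial\Omega_\delta$. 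This is precisely the kind of statement you dismissed (``only compares values at comparable distances''), but it bridges from $v_\delta$ at distance $\delta$ to $y_\delta$ at a fixed macroscopic distance $s$. At that macroscopic $y_\delta$ one applies interior convergence $\tilde u_\delta\to\lambda_{\tilde c,\tilde d}\circ f$, $u_\delta\to\lambda_{c,d}\circ f$ (Lemma~\ref{lem::harmonic_cvg}), and then lets $s\to 0$ using a Taylor expansion of the \emph{continuous} functions: since both vanish on $(f(a)f(b))$, their ratio at $f(y)$ tends to $\partial_n\lambda_{\tilde c,\tilde d}(f(x))/\partial_n\lambda_{c,d}(f(x))$. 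No individual $O(\delta)$ coefficient is ever identified; the argument works entirely with ratios.

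So your proposal is not wrong in spirit, but the route you outline requires a sharper discrete estimate than the one actually cited and used, and you have explicitly rejected the simpler mechanism that does the job. Rewriting with the three-step scheme (boundary Harnack for the ratio $\to$ interior convergence at a mesoscopic point $\to$ Taylor for the continuous ratio) closes the argument with exactly the input available.
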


\begin{proof}
First, we show that $\partial_{n}\lambda_{c,d}(f(x))\neq0$ for all $x\in(ab)$. Let $g$ be the bounded harmonic function on $(0,1)\times (0,\ii K)$ with the following boundary data: $g=0$ on $(f(d)f(c))$ and $g=1$ on $(f(c)f(d))$. By maximum principle, we have $\lambda_{c,d}(y)\ge g(y)$ for every $y\in[0,1]\times[0,\ii K]$. Thus, we have 
\[
\partial_n\lambda_{c,d}(f(x))\le\partial_ng(f(x))<0, \quad \text{for all }x\in (ab). 
\] 

Next, we prove~\eqref{eqn::discretelaw}. 
To this end, Wilson's algorithm allows us to use estimates on loop erased random walk to get estimations on the branch $\gamma_\delta^M$.
To be precise, denote by $\tilde\Omega_{\delta}$ the graph obtained from $\Omega_{\delta}$ by regarding $(a_{\delta}b_{\delta})$ as a single vertex (and keep all edges connecting $(a_{\delta}b_{\delta})$ to interior vertices). Let $\tilde{\gamma}_{\delta}$ be the loop-erasure of the random walk on $\tilde\Omega_{\delta}$ starting from $(a_{\delta}b_{\delta})$ and stopped whenever it hits $(c_{\delta}d_{\delta})$. 
Wilson's algorithm~\cite{PemantleSpanningTree} tells that $\gamma_{\delta}^{M}$ (viewed in $\tilde{\Omega}_{\delta}$) has the same law as $\tilde{\gamma}_{\delta}$. Going back to $\Omega_{\delta}$, we obtain that, for $v_{\delta}\in\Omega_{\delta}$ with $v_{\delta}\sim(a_{\delta}b_{\delta})$ and $\PP[\tilde{\gamma}_{\delta}(1)=v_{\delta}]>0$,  it holds that  $\PP[\gamma_{\delta}^M(1)=v_{\delta}]=\PP[\tilde{\gamma}_{\delta}(1)=v_{\delta}]$. Moreover, the conditional law of $\gamma_{\delta}^M$ given $\{\gamma_{\delta}^M(1)=v_{\delta}\}$ is the same as the loop-erasure of the random walk in $\Omega_{\delta}$ starting from $v_{\delta}$ conditioned to hit $(a_{\delta}b_{\delta})\cup(c_{\delta}d_{\delta})$ at  $(c_{\delta}d_{\delta})$. 

For any $w_{\delta}\in \Omega_{\delta}$, denote by $\PP^{w_{\delta}}$ the law of  random walk $\LR$ in $\Omega_{\delta}$ starting from $w_{\delta}$. Define
\begin{align*}
u_{\delta}(w_{\delta}):=&\PP^{w_{\delta}}\left[\LR\text{ hits }(a_{\delta}b_{\delta})\cup(c_\delta d_\delta)\text{ at }\left(c_{\delta}d_{\delta}\right)\right],\\
\tilde u_{\delta}(w_{\delta}):=&\PP^{w_{\delta}}\left[\LR\text{ hits }(a_{\delta}b_{\delta})\cup(c_\delta d_\delta)\text{ at }\left(\tilde c_{\delta}\tilde d_{\delta}\right)\right]. 
\end{align*}
By the above relation  between $\gamma_{\delta}^M$ and the random walk, we have
\begin{align*}
\PP\left[Y_{\delta}^{M}\in(\tilde c_{\delta}\tilde d_{\delta})\cond X_{\delta}^{M}=x_{\delta}\right]&=\frac{\sum_{\substack{v_{\delta}\sim x_{\delta}\\ v_{\delta}\in \Omega_{\delta}}}\PP^{v_{\delta}}\left[\LR\text{ hits }(a_{\delta}b_{\delta})\cup(c_\delta d_\delta)\text{ at }\left(\tilde c_{\delta}\tilde d_{\delta}\right)\right]}{\sum_{\substack{v_{\delta}\sim x_{\delta}\\ v_{\delta}\in \Omega_{\delta}}}\PP^{v_{\delta}}\left[\LR\text{ hits }(a_{\delta}b_{\delta})\cup(c_\delta d_\delta)\text{ at }(c_{\delta} d_{\delta})\right]}
=\frac{\sum_{\substack{v_{\delta}\sim x_{\delta}\\ v_{\delta}\in \Omega_{\delta}}}\tilde u_{\delta}(v_{\delta})}{\sum_{\substack{v_{\delta}\sim x_{\delta}\\ v_{\delta}\in \Omega_{\delta}}}u_{\delta}(v_{\delta})}.
\end{align*}
The function $\tilde u_{\delta}$ is a discrete harmonic function on $\Omega_{\delta}\setminus (a_{\delta}b_{\delta})\cup(c_{\delta}d_{\delta})$ with the following boundary data: $\tilde u_{\delta}=0$ on $(a_{\delta}b_{\delta})\cup(c_{\delta}\tilde c_{\delta})\cup(\tilde d_{\delta}d_{\delta})$ and $\tilde u_{\delta}=1$ on $(\tilde c_{\delta} \tilde d_{\delta})$. Similarly, $u_{\delta}$ is a discrete harmonic function on $\Omega_{\delta}\setminus (a_{\delta}b_{\delta})\cup(c_{\delta}d_{\delta})$ with the following boundary data: $u_{\delta}=0$ on $(a_{\delta}b_{\delta})$ and $u_{\delta}=1$ on $(c_{\delta}d_{\delta})$. By~\cite[Corollary~3.8]{ChelkakWanMassiveLERW}, for every $\epsilon>0$, there exists $s_1>0$ such that 
\[1-\epsilon\le\frac{\tilde u_{\delta}(v_{\delta})}{u_{\delta}(v_{\delta})}\times\frac{u_{\delta}(z_{\delta})}{\tilde u_{\delta}(z_{\delta})}\le 1+\epsilon, \quad \text{for all } z_{\delta}\in \Omega_{\delta}\cap B(x_{\delta},s_1)\setminus \partial\Omega_{\delta} \text{ and all } x_{\delta}\in(\tilde a_{\delta}\tilde b_{\delta}).\]
This implies that
\[1-\epsilon\le\PP\left[Y_{\delta}^{M}\in(\tilde c_{\delta}\tilde d_{\delta})\cond X_{\delta}^{M}=x_{\delta}\right]\times\frac{u_{\delta}(z_{\delta})}{\tilde u_{\delta}(z_{\delta})}\le 1+\epsilon.\]
We choose $s<\frac{s_1}{4}$. Since $\partial\Omega$ is a curve, we can choose a simple curve $L$ such that $\frac{s}{4}< \dist(L,(\tilde a\tilde b))< \frac{s}{2}$. Note that there exists $\delta_{1}>0$ such that for all $\delta<\delta_1$, we can choose a discrete simple curve $L_{\delta}\subset\Omega_{\delta}$ with $\frac{s}{4}< \dist(L_{\delta},(\tilde a_{\delta}\tilde b_{\delta}))< \frac{s}{2}$ and  $L_{\delta}\to L$ as curves. 
By the same argument as in the proof of Lemma~\ref{lem::harmonic_cvg}, we have  $\tilde u_{\delta}\to\lambda_{\tilde c,\tilde d}\circ f$ and $u_{\delta}\to\lambda_{c, d}\circ f$ locally uniformly in $\Omega$. Then, there exists $\delta_2>0$ such that the following holds: for every $\delta<\delta_2$ and $\dist(x_{\delta},x)<s$, there exists $z\in B(x,s)\cap L$ such that 
\[
\left|\frac{\tilde u_{\delta}(z_{\delta})}{u_{\delta}(z_{\delta})}-\frac{\lambda_{\tilde c,\tilde d}(f(z))}{\lambda_{c, d}( f(z))}\right|<\epsilon,\quad \forall z_{\delta}\in B(x_{\delta},s)\cap L_{\delta}.
\]
Since $f$ is continuous on $\overline\Omega$, we have $\diam(f(B(x,s)))\to 0$ as $s\to 0$.
By Taylor expansion, we can choose $s$ small enough such that 
\[\left|\frac{\lambda_{\tilde c,\tilde d}(f(z))}{\lambda_{c,d}(f(z))}-\frac{\partial_n\lambda_{\tilde c,\tilde d}(f(x))}{\partial_n\lambda_{c,d}(f(x))}\right|\le\epsilon, \quad \text{for all } x\in(\tilde a\tilde b) \text{ and } z\in B(x,s)\cap L. \]
This implies that, if $\delta<\delta_1\wedge\delta_2$ and $\dist(x_{\delta},x)<s$, we have 
\[(1-3\epsilon)\frac{\partial_n\lambda_{\tilde c,\tilde d}(f(x))}{\partial_n\lambda_{c,d}(f(x))}\le\PP\left[Y_{\delta}^{M}\in(\tilde c_{\delta}\tilde d_{\delta})\cond X_{\delta}^{M}=x_{\delta}\right]\le (1+3\epsilon)\frac{\partial_n\lambda_{\tilde c,\tilde d}(f(x))}{\partial_n\lambda_{c,d}(f(x))}.\]
This completes the proof.
\end{proof}
\begin{lemma}\label{lem::YconditionalX}
The conditional law of $Y^{M}$ given $X^{M}$ is given by
\begin{equation}\label{eqn::YconditionalX_aux}
\PP[Y^{M}\in (\tilde{c}\tilde{d})\cond X^{M}]=\frac{\partial_n\lambda_{\tilde c,\tilde d}(f(X^{M}))}{\partial_n\lambda_{c,d}(f(X^{M}))}. 
\end{equation}
\end{lemma}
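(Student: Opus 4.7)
The plan is to verify the integrated identity
\[
\E\bigl[g(X^M)\,\mathbb{1}_{Y^M \in (\tilde c, \tilde d)}\bigr]=\E\left[g(X^M)\,\frac{\partial_n\lambda_{\tilde c,\tilde d}(f(X^M))}{\partial_n\lambda_{c,d}(f(X^M))}\right]
\]
for a sufficiently rich class of test functions $g$ on $(ab)$ and sub-arcs $(\tilde c, \tilde d)\subset (cd)$, and then to read off~\eqref{eqn::YconditionalX_aux} from it. I will extract this identity by passing to the limit in the discrete version supplied by Lemma~\ref{lem::YconditionalXdiscrete}, using the joint weak convergence $(X_\delta^M, Y_\delta^M)\Rightarrow (X^M, Y^M)$ established in Lemma~\ref{lem::X}.

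Concretely, fix a compact subinterval $[\tilde a, \tilde b] \subset (ab)$ and a continuous bounded function $g$ supported in $(\tilde a, \tilde b)$; fix also $\tilde c, \tilde d$ to be continuity points of the marginal law of $Y^M$, which leaves a dense choice since at most countably many atoms can occur. By the tower property,
\[
\E\bigl[g(X_\delta^M)\,\mathbb{1}_{Y_\delta^M\in(\tilde c_\delta\tilde d_\delta)}\bigr]=\E\bigl[g(X_\delta^M)\,\PP[Y_\delta^M\in(\tilde c_\delta\tilde d_\delta)\mid X_\delta^M]\bigr].
\]
Write $R(x):=\frac{\partial_n\lambda_{\tilde c,\tilde d}(f(x))}{\partial_n\lambda_{c,d}(f(x))}$. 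By Lemma~\ref{lem::YconditionalXdiscrete}, for any $\epsilon>0$ and all sufficiently small $\delta$, the inner conditional probability differs from $R(X_\delta^M)$ by at most $\epsilon$, uniformly over $X_\delta^M\in(\tilde a_\delta,\tilde b_\delta)$. The function $R$ is continuous and bounded on $[\tilde a,\tilde b]$, since its denominator is bounded away from $0$ there by the maximum-principle argument given in the first paragraph of the proof of Lemma~\ref{lem::YconditionalXdiscrete}, so the weak convergence of $X_\delta^M$ gives
\[
\lim_{\delta\to 0}\E\bigl[g(X_\delta^M)\,R(X_\delta^M)\bigr]=\E\bigl[g(X^M)\,R(X^M)\bigr].
\]
On the other hand, the continuity-point choice of $\tilde c,\tilde d$ makes the boundary $\{Y^M\in\{\tilde c,\tilde d\}\}$ a null set for the limit law, so Portmanteau's theorem yields
\[
\lim_{\delta\to 0}\E\bigl[g(X_\delta^M)\,\mathbb{1}_{Y_\delta^M\in(\tilde c_\delta\tilde d_\delta)}\bigr]=\E\bigl[g(X^M)\,\mathbb{1}_{Y^M\in(\tilde c,\tilde d)}\bigr].
\]

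Equating these two limits yields the integrated identity for every continuous $g$ supported in a compact subinterval of $(ab)$, which identifies $\PP[Y^M\in(\tilde c,\tilde d)\mid X^M]=R(X^M)$ almost surely on $\{X^M\in(\tilde a,\tilde b)\}$. Exhausting $(ab)$ by an increasing sequence of such subintervals delivers the identity for $X^M$-a.e.~realization, and letting $(\tilde c,\tilde d)$ range over the countable dense collection of continuity sub-arcs in $(cd)$ upgrades it to~\eqref{eqn::YconditionalX_aux} via the standard $\pi$-$\lambda$ argument. The main technical obstacle is the uniformity in $x_\delta$ built into~\eqref{eqn::discretelaw}: without it, the discrete conditional probability could in principle oscillate too wildly in $X_\delta^M$ for the weak-convergence step to go through, and the argument above would break.
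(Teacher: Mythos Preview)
Your argument is correct and follows essentially the same route as the paper: both pass the uniform discrete estimate of Lemma~\ref{lem::YconditionalXdiscrete} to the limit via the joint convergence $(X_\delta^M,Y_\delta^M)\Rightarrow(X^M,Y^M)$ from Lemma~\ref{lem::X}. The paper carries this out by partitioning $(\tilde a\tilde b)$ into pieces of length $<s$ and forming a Riemann sum (together with an open/closed sandwich and the uniformity of the marginals), whereas you use continuous compactly supported test functions and Portmanteau; these are equivalent packagings of the same idea.
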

\begin{proof}
By  conformal invariance, we may assume $\Omega=(0,1)\times (0,\ii K)$. We couple $(X_{\delta}^{M},Y_{\delta}^{M})$ and $(X^{M},Y^{M})$ together such that $X_{\delta}^{M}\to X^{M}$ and $Y_{\delta}^{M}\to Y^{M}$ almost surely. 
Fix a polygon $(\Omega; a, \tilde{a}, \tilde{b}, b, c, \tilde{c}, \tilde{d}, d)$ with eight marked points. Suppose that a sequence of medial polygons $(\Omega_{\delta}^{\diamond}; a_{\delta}^{\diamond}, \tilde{a}_{\delta}^{\diamond}, \tilde{b}_{\delta}^{\diamond}, b_{\delta}^{\diamond}, c_{\delta}^{\diamond}, \tilde{c}_{\delta}^{\diamond}, \tilde{d}_{\delta}^{\diamond}, d_{\delta}^{\diamond})$ converges to $(\Omega; a, \tilde{a}, \tilde{b}, b, c, \tilde{c}, \tilde{d}, d)$ in the sense of~\eqref{eqn::topology}. 
For any $\delta_n\to 0$, we have 
\begin{align*}
\{X^{M}\in(\tilde a\tilde b),Y^{M}\in(\tilde c\tilde d)\}&\subset\left\{\cup_{j=1}^{\infty}\cap_{n=j}^{\infty}\{X^{M}_{\delta_n}\in(\tilde a_{\delta_n}\tilde b_{\delta_n}),Y^{M}_{\delta_n}\in(\tilde c_{\delta_n}\tilde d_{\delta_n})\}\right\}\\
&\subset\left\{\cap_{j=1}^{\infty}\cup_{n=j}^{\infty}\{X^{M}_{\delta_n}\in(\tilde a_{\delta_n}\tilde b_{\delta_n}),Y^{M}_{\delta_n}\in(\tilde c_{\delta_n}\tilde d_{\delta_n})\}\right\}\subset\{X^{M}\in[\tilde a\tilde b], Y^{M}\in[\tilde c\tilde d]\}. 
\end{align*}
Thus, 
\begin{equation}\label{eqn::upboundlowerbound}
\begin{split}
\PP[X^{M}\in(\tilde a\tilde b),Y^{M}\in(\tilde c\tilde d)]&\le \varliminf_{n\to\infty}\PP[X^{M}_{\delta_n}\in(\tilde a_{\delta_n}\tilde b_{\delta_n}),Y^{M}_{\delta_n}\in(\tilde c_{\delta_n}\tilde d_{\delta_n})]\\
&\le  \varlimsup_{n\to\infty}\PP[X^{M}_{\delta_n}\in(\tilde a_{\delta_n}\tilde b_{\delta_n}),Y^{M}_{\delta_n}\in(\tilde c_{\delta_n}\tilde d_{\delta_n})]\le \PP[X^{M}\in[\tilde a\tilde b],Y^{M}\in[\tilde c\tilde d]].
\end{split}
\end{equation}

For every $\epsilon>0$, we choose $s$ the same as in Lemma~\ref{lem::YconditionalXdiscrete}. Divide $(\tilde a\tilde b)$ into $\cup_{j=0}^{m}[x^{j}x^{j+1}]$ with $x^{0}=\tilde a$ and $x^{m+1}=\tilde b$ such that the length of $[x^{j}x^{j+1}]$ is less than $s$. Denote by $\{x_{\delta_n}^{0}=\tilde a_{\delta_n},x_{\delta_n}^{1},\ldots,x_{\delta_n}^{m+1}=\tilde b_{\delta_n}\}\subset(\tilde a_{\delta_n}\tilde b_{\delta_n})$ the corresponding discrete approximation. By Lemma~\ref{lem::YconditionalXdiscrete}, for $n$ large enough, we have
\[\left|\PP[X^{M}_{\delta_n}\in(\tilde a_{\delta_n}\tilde b_{\delta_n}),Y^{M}_{\delta_n}\in(\tilde c_{\delta_n}\tilde d_{\delta_n})]- \sum_{j=0}^{m}\frac{\partial_n\lambda_{\tilde c,\tilde d}(f(x^j))}{\partial_n\lambda_{c,d}(f(x^j))}\PP[X_{\delta_n}^{M}\in[x_{\delta_{n}}^{j}x_{\delta_{n}}^{j+1}]]\right|\le \epsilon.\]
By Lemma~\ref{lem::X}, the law of $X^{M}$ is uniform on $(ab)$. This implies that
\[\lim_{n\to\infty}\PP[X_{\delta_n}^{M}\in[x_{\delta_{n}}^{j}x_{\delta_{n}}^{j+1}]]=\PP[X^{M}\in[x^{j}x^{j+1}]], \quad \text{for }0\le j\le m. \]
Thus, we have
\[\varlimsup_{n\to\infty}\left|\PP[X^{M}_{\delta_n}\in(\tilde a_{\delta_n}\tilde b_{\delta_n}),Y^{M}_{\delta_n}\in(\tilde c_{\delta_n}\tilde d_{\delta_n})]-\sum_{j=0}^{m}\frac{\partial_n\lambda_{\tilde c,\tilde d}(f(x^j))}{\partial_n\lambda_{c,d}(f(x^j))}\PP[X^{M}\in[x^{j}x^{j+1}]]\right|\le \epsilon.\]
By letting $s\to 0 \,(m\to\infty)$ and $\epsilon\to 0$, we obtain
\[\lim_{n\to\infty}\PP[X^{M}_{\delta_n}\in(\tilde a_{\delta_n}\tilde b_{\delta_n}),Y^{M}_{\delta_n}\in(\tilde c_{\delta_n}\tilde d_{\delta_n})]=\int_{f(\tilde a)}^{f(\tilde b)}\frac{\partial_n\lambda_{\tilde c,\tilde d}(x)}{\partial_n\lambda_{c,d}(x)}dx.\]
where we used  that  $X^M$ is uniform distributed on $(0,1)$ (see Lemma~\ref{lem::X}).
Plugging into~\eqref{eqn::upboundlowerbound} implies that
\begin{equation*}
\PP[X^{M}\in(\tilde a\tilde b),Y^{M}\in(\tilde c\tilde d)]
\le \int_{f(\tilde a)}^{f(\tilde b)}\frac{\partial_n\lambda_{\tilde c,\tilde d}(x)}{\partial_n\lambda_{c,d}(x)}dx
\le \PP[X^{M}\in[\tilde a\tilde b],Y^{M}\in[\tilde c\tilde d]]. 
\end{equation*}
Note that the marginal law of $X^M$ is uniform on $(ab)$ and the marginal law of $Y^M$ is uniform on $(cd)$, we have 
\[\PP[X^{M}\in(\tilde a\tilde b), Y^{M}\in(\tilde c\tilde d)]=\PP[X^{M}\in[\tilde a\tilde b],Y^{M}\in[\tilde c\tilde d]].\]
Therefore, 
\[\PP[X^{M}\in(\tilde a\tilde b), Y^{M}\in(\tilde c\tilde d)]
=\int_{f(\tilde a)}^{f(\tilde b)}\frac{\partial_n\lambda_{\tilde c,\tilde d}(x)}{\partial_n\lambda_{c,d}(x)}dx. \]
This gives~\eqref{eqn::YconditionalX_aux} and completes the proof. 
\end{proof}

\begin{proof}[Proof of Proposition~\ref{prop::cvg_pair_points}]
The convergence of $(X^M_{\delta}, Y^M_{\delta})$ and the law of $x^M=f(X^M)$ is derived Lemma~\ref{lem::X}. The conditional law of $Y^M$ given $X^M$ is derived in Lemma~\ref{lem::YconditionalX}. 
Since $\lambda_{\tilde{c}, \tilde{d}}(\cdot )$ is a bounded harmonic function on $[0,1]\times [0,\ii K]$ with boundary data illustrated in Figure~\ref{fig::boundary-conditions}, Poisson integral formula implies 
\[
\lambda_{\tilde{c}, \tilde{d}}(z )=\int_{f(\tilde{d})}^{f(\tilde{c})}P_K(z, i+\ii K)dy,\quad  z\in  (0,1)\times (0,\ii K).
\]
Similarly, 
\[
\lambda_{c, d}(z )=\int_{0}^{1}P_K(z, i+\ii K)dy,\quad z\in (0,1)\times (0,\ii K).
\]
Let us calculate the outer normal derivative of $P_K$: for $x, y\in (0,1)$, 
\begin{equation}\label{eqn::Poissonkernel_rect_partialn}
\partial_{n}P_K(z, y+\ii K)|_{z=x} =\frac{\pi}{4K}\sum_{n\in\Z}\left(\frac{1}{\cosh^2\left(\frac{\pi}{2K}\left(x-y-2n\right)\right)}+\frac{1}{\cosh^2\left(\frac{\pi}{2K}\left(x+y-2n\right)\right)}\right), 
\end{equation}
where the right-hand side is the same as $\rho_K(x,y)$ defined in~\eqref{eqn::jointdensity}. 
Hence for $x\in (0,1)$, we have
\[
\partial_n\lambda_{\tilde c,\tilde d}(x)=\int_{\Re f(\tilde{d})}^{\Re f(\tilde{c})} \left(\partial_{n}P_K(z, y+\ii K)|_{z=x} \right)dy=\int_{\Re f(\tilde{d})}^{\Re f(\tilde{c})}\rho_K(x, y)dy
\]
and
\begin{align*}
\partial_n\lambda_{c,d}(x)&=\int_0^1 \left(\partial_{n}P_K(z, y+\ii K)|_{z=x} \right)dy\\
&=\frac{\pi}{4K}\sum_{n\in\Z}\left(\int_0^1\frac{dy}{\cosh^2\left(\frac{\pi}{2K}\left(x-y-2n\right)\right)}+\int_0^1\frac{dy}{\cosh^2\left(\frac{\pi}{2K}\left(x+y-2n\right)\right)}\right)\\
&=\frac{\pi}{4K}\sum_{n\in\Z}\left(\int_{2n}^{2n+1}\frac{dr}{\cosh^2\left(\frac{\pi}{2K}\left(x-r\right)\right)}+\int_{2n-1}^{2n}\frac{dr}{\cosh^2\left(\frac{\pi}{2K}\left(x-r\right)\right)}\right)\\
&=\frac{\pi}{4K}\int_{-\infty}^{+\infty}\frac{dr}{\cosh^2\left(\frac{\pi}{2K}\left(x-r\right)\right)}=\frac{1}{2}\int_{-\infty}^{+\infty}\frac{dr}{\cosh^2(r)}=1. 
\end{align*}
Therefore, combing with~\eqref{eqn::YconditionalX_aux} we get
\begin{equation}\label{eqn::YconditionalXcontinuumaux}
\PP[Y^{M}\in (\tilde{c}\tilde{d})\cond X^{M}]=\frac{\partial_n\lambda_{\tilde c,\tilde d}(f(X^{M}))}{\partial_n\lambda_{c,d}(f(X^{M}))}=\partial_n\lambda_{\tilde c,\tilde d}(f(X^{M}))=\int_{\Re f(\tilde{d})}^{\Re f(\tilde{c})}\rho_K(x^M, y)dy. 
\end{equation}
This gives the density in~\eqref{eqn::jointdensity} and completes the proof. 
\end{proof}

\begin{corollary}\label{cor::YconditionalXdiscrete}
Fix a polygon $(\Omega; a, x, b, c, \tilde{c}, \tilde{d}, d)$ with seven marked points. Suppose that a sequence of medial polygons $(\Omega_{\delta}^{\diamond}; a_{\delta}^{\diamond}, x_{\delta}^{\diamond}, b_{\delta}^{\diamond}, c_{\delta}^{\diamond}, \tilde{c}_{\delta}^{\diamond}, \tilde{d}_{\delta}^{\diamond}, d_{\delta}^{\diamond})$ converges to $(\Omega; a, x, b, c, \tilde{c}, \tilde{d}, d)$ in the sense of~\eqref{eqn::topology}. Denote by $f=f_{(\Omega;a,b,c,d)}$ the conformal map from $\Omega$ onto $(0,1)\times (0,\ii K)$ which sends $(a,b,c,d)$ to $(0,1,1+\ii K,\ii K)$. We  extend $f$ continuously to the boundary. Then,
\begin{align*}\label{eqn::YconditionalXdiscretecor}
\lim_{\delta\to 0}\PP\left[Y_{\delta}^{M}\in(\tilde c_{\delta}\tilde d_{\delta})\cond X_{\delta}^{M}=x_{\delta}\right]=\int_{\Re f(\tilde{d})}^{\Re f(\tilde{c})}\rho_K(f(x), y)dy. 
\end{align*}
\end{corollary}
\begin{proof}
First let $\delta\to 0$ and then let $\eps\to 0$ in~\eqref{eqn::discretelaw}, combining with~\eqref{eqn::YconditionalXcontinuumaux}, we obtain the conclusion. 
\end{proof}
We emphasize that  in Corollary~\ref{cor::YconditionalXdiscrete}  we only need the assumption that $\partial\Omega$ is locally connected and we do not require extra regularity. 

\subsection{Proof of Theorem~\ref{thm::cvg_lerw}}
\label{subsec::cvg_lerw}
The joint law of $(X^M, Y^M)$ in Theorem~\ref{thm::cvg_lerw} is given in Proposition~\ref{prop::cvg_pair_points}, and to complete the proof of Theorem~\ref{thm::cvg_lerw}, it remains to show that the conditional law of $\gamma^M$ given $X^M$ is $\SLE_2(-1, -1; -1, -1)$. 
We follow the strategy in~\cite{ZhanLERW}. 
We fix the following notation in this section. 
\begin{itemize}
\item Fix $d=-\infty<a<b<c$.  
Denote by $K$ the conformal modulus of the quad $(\HH; a, b, c, \infty)$ and by $f(\cdot; a, b, c)$ the conformal map from $\HH$ onto $(0,1)\times (0,\ii K)$ sending $(a, b, c, \infty)$ to $(0,1,1+\ii K, \ii K)$.
\item Fix $d=-\infty<a<w<b<c$. Define 
\begin{equation}\label{eqn::PoissonH_def}
P(z;a,w,b,c):=P_{K}(f(z;a,b,c),f(w;a,b,c)),\quad \forall z\in \HH, 
\end{equation}
where $P_{K}$ is given in~\eqref{eqn::Poissonkernel_rect}. Note that $P(\cdot;\cdot,\cdot,\cdot,\cdot)$ is smooth on $\HH\times\{(a,w,b,c)\in\R^{4}:a<w<b<c\}$. It is the Poisson kernel on $\HH$ with the boundary data: 
\begin{equation}\label{eqn::PoissonH_boundarydata}
P(\cdot; a, w, b, c)=0,\text{ on }(a,w)\cup (w, b)\cup (c, \infty); \quad \partial_{n}P(\cdot; a, w, b, c)=0,\text{ on }(-\infty, a)\cup (b, c); 
\end{equation}
and the normalization: 
\begin{equation}\label{eqn::PoissonH_normalization}
\int_{c}^{\infty}(\partial_{n}P(z;a,w,b,c)|_{z=x})dx=1.
\end{equation}
\end{itemize}  

The strategy is as follows: first, we show that the Poisson kernel satisfies a certain PDE in Lemma~\ref{lem::PoissonH_pde}; then we show that the conditional density in~\eqref{eqn::conditionalYgivenX} gives a martingale observable for $\gamma^M$ in Lemma~\ref{lem::lerw_observable}. With these two lemmas at hand, we obtain the driving function from the martingale observable. This last step involves a non-trivial calculation where Lemma~\ref{lem::PoissonH_pde} plays a crucial role. 

\begin{lemma}\label{lem::PoissonH_pde}
For $a<w<b<c$ and $z=x+\ii y\in\HH$, 
consider the function $P(z; a, w, b, c)$ in~\eqref{eqn::PoissonH_def}. Denote by $\partial_x$ the partial derivative with respect to the real part of the first (complex) variable and by $\partial_y$ the partial derivative with respect to the imaginary part of the first (complex) variable. Define 
\[\LD:=\frac{2}{a-w}\partial_a+\frac{2}{b-w}\partial_b+\frac{2}{c-w}\partial_c+2\frac{f''(w;a,b,c)}{f'(w;a,b,c)}\partial_w+\partial_w^2+\Re\left(\frac{2}{z-w}\right)\partial_x+\Im\left(\frac{2}{z-w}\right)\partial_y.\]
Then, we have $\LD P(z;a,w,b,c)=0$.
\end{lemma}

To prove Lemma~\ref{lem::PoissonH_pde}, we define 
\begin{equation}\label{eqn::PoissonH_auxV}
\LV(z):=\LD P(z;a,w,b,c). 
\end{equation}
The goal is to show $\LV\equiv0$. 
To this end, we will show that $\LV$ is harmonic in $\HH$ and has the same boundary data as $P(\cdot; a,w,b,c)$ in Lemma~\ref{lem::PoissonH_aux_harm}. We will show that 
$\LV$ is bounded near $a,b,c,\infty$ in Lemma~\ref{lem::PoissonH_aux_bounded}.  This implies that $\LV$
is a bounded harmonic function such that
\[
\LV=0,\text{ on }(a,b)\cup (c, \infty); \quad \partial_{n}\LV=0,\text{ on }(-\infty, a)\cup (b, c).
\]
 By the same argument used  in the proof of Lemma~\ref{lem::harmonic_unique}, we get that $\LV\equiv0$ and then complete the proof of Lemma~\ref{lem::PoissonH_pde}. 

\begin{lemma}\label{lem::PoissonH_aux_harm}
The function $\LV$ in~\eqref{eqn::PoissonH_auxV} is harmonic in $\HH$ and has the same boundary data as $P(\cdot; a,w,b,c)$. 
\end{lemma}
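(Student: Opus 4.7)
The plan is to decompose the operator as $\LD = \LD_{\mathrm{par}} + \LD_{z}$, where
$$\LD_{\mathrm{par}} := \tfrac{2}{a-w}\partial_a + \tfrac{2}{b-w}\partial_b + \tfrac{2}{c-w}\partial_c + 2\tfrac{f''(w;a,b,c)}{f'(w;a,b,c)}\partial_w + \partial_w^2$$
acts only in the parameter variables $(a,w,b,c)$, and $\LD_{z} := \Re(\tfrac{2}{z-w})\partial_x + \Im(\tfrac{2}{z-w})\partial_y$ is a first-order differential operator in $z$ whose coefficients are the real and imaginary parts of the holomorphic function $H(z):=2/(z-w)$. This splitting reflects the interpretation of $\LD$ as the generator of a Markov process on $(W_t, V_t^a, V_t^b, V_t^c, g_t(z))$ of SLE$_2$ type, with $\LD_z$ encoding the Loewner flow on $z$ and $\LD_{\mathrm{par}}$ encoding the SDE on the driving and force points.

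For harmonicity of $\LV$ in $z$, the operator $\LD_{\mathrm{par}}$ commutes with the Laplacian $\Delta_z$, so $\LD_{\mathrm{par}} P$ is harmonic in $z$ because $P$ is. Writing $\LD_z P = X P_x + Y P_y$ with $X+\ii Y = H$, I would expand
$$\Delta_z(XP_x+YP_y) = 2\nabla X\cdot\nabla P_x + 2\nabla Y\cdot\nabla P_y,$$
using that $\Delta X = \Delta Y = 0$ (real and imaginary parts of a holomorphic function in $z$) and $\Delta P_x = \Delta P_y = 0$ (derivatives of a harmonic function). Applying the Cauchy--Riemann identities $X_x=Y_y$ and $X_y=-Y_x$, the right-hand side collapses to $2X_x(P_{xx}+P_{yy})+2(X_y+Y_x)P_{xy} = 2X_x\,\Delta_z P = 0$, so $\LV$ is harmonic in $z$.

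For the boundary data, I would treat Dirichlet and Neumann intervals separately. If $z_0$ lies in the interior of a Dirichlet interval $(a,w)\cup(w,b)\cup(c,\infty)$, then $P(z_0;a',w',b',c') \equiv 0$ for all $(a',w',b',c')$ in a small neighborhood of $(a,w,b,c)$, so each of $\partial_a P, \partial_b P, \partial_c P, \partial_w P, \partial_w^2 P$ and $\partial_x P$ vanishes at $z_0$; combined with $\Im(2/(z_0-w))=0$ for real $z_0$, this gives $\LV(z_0)=0$. For $z_0$ in the interior of a Neumann interval $(-\infty,a)\cup(b,c)$, the identity $\partial_y P \equiv 0$ holds in a neighborhood on the boundary uniformly in nearby parameters, so $\partial_y$ annihilates the five $\LD_{\mathrm{par}}$-terms (they commute with $\partial_y$ and land on $\partial_y P \equiv 0$). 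Expanding $\partial_y[\LD_z P]|_{y=0} = X_y P_x + X P_{xy} + Y_y P_y + Y P_{yy}$ and using $Y|_{y=0}=0$, $\partial_y X|_{y=0}=0$, $\partial_y P|_{y=0}=0$ near $z_0$, and $\partial_x(\partial_y P)=0$ along this segment, every surviving term vanishes, giving $\partial_n \LV(z_0) = 0$.

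The only substantive step is the harmonicity computation, which succeeds precisely because the coefficients of $\LD_z$ come from a holomorphic vector field in $z$; the boundary checks reduce to inspecting which pieces of $\LD$ survive, given the Dirichlet/Neumann conditions on $P$ and that $y=0$ on the boundary.
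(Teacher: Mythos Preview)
Your argument is correct. The harmonicity computation via Cauchy--Riemann for the holomorphic vector field $H(z)=2/(z-w)$ is clean and valid, and your boundary checks match the paper's term-by-term verification.

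The paper proceeds differently for harmonicity: rather than arguing abstractly at the operator level, it invokes the explicit representation
\[
P(z;a,w,b,c)=\Im\!\left(G(z;a,w,b,c)+\frac{K}{\pi f'(w;a,b,c)}\frac{1}{z-w}\right)
\]
with $G$ analytic in $z$ on $\overline{\HH}\setminus\{a,b,c\}$, and then computes each piece of $\LD P$ explicitly. The upshot is a structural formula
\[
\LV(z)=\Im\!\left(G_1(z)+\frac{G_2(z)}{z-w}\right)
\]
for certain analytic $G_1,G_2$, from which harmonicity is immediate. Your route is more economical and would apply to any harmonic $P$; the paper's route is more computational but yields this explicit decomposition, which is not incidental: the function $G_2$ and in particular the value $G_2(w)$ are used in the proof of Lemma~\ref{lem::PoissonH_pde} to conclude that $\LV$ is a scalar multiple of $P$ and then to identify that scalar as zero. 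If you adopt your argument for Lemma~\ref{lem::PoissonH_aux_harm}, you would still need to recover the singular structure of $\LV$ near $z=w$ separately before completing the proof of Lemma~\ref{lem::PoissonH_pde}. One minor point: both arguments silently interchange $\partial_y$ (i.e.\ $\partial_n$) with parameter derivatives on the Neumann arcs; the paper justifies this by appealing to the explicit smooth form of $P$, and you should note the same.
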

\begin{proof}
First, we show that $\LV(\cdot)$ is harmonic in $\HH$. Note that by the explicit form of $P(z;a,w,b,c)$ in~\eqref{eqn::PoissonH_def} and $P_K$ in~\eqref{eqn::Poissonkernel_rect}, the sum over terms $n\neq 0$ is smooth on $\overline\HH\times\{(a,w,b,c)\in\R^{4}:a<w<b<c\}$ and it is analytic on $\overline\HH\setminus\{a,b,c\}$ when $a<w<b<c$ is fixed. For the term $n=0$, by subtracting $\Im \frac{K}{\pi f'(w;a,b,c)}\frac{1}{(z-w)}$, we get a function which is smooth on $\overline\HH\times\{(a,w,b,c)\in\R^{4}:a<w<b<c\}$ and  analytic on $\overline\HH\setminus\{a,b,c\}$ when $a<w<b<c$ is fixed.
Therefore we may write
\[
P(z;a,w,b,c)=\Im \left(G(z; a, w, b,c)+\frac{K}{\pi f'(w;a,b,c)}\frac{1}{(z-w)}\right),\quad\forall z\in\HH, w\in (a, b),
\]
where $G(\cdot;\cdot,\cdot,\cdot,\cdot)$ is a smooth function function on $\overline\HH\times\{(a,w,b,c)\in\R^{4}:a<w<b<c\}$. Moreover,  $G$ is analytic on $\overline\HH\setminus\{a,b,c\}$ when $a<w<b<c$ is fixed,
Then, we get
\begin{align*}
&\left(\Re\left(\frac{2}{z-w}\right)\partial_x+\Im\left(\frac{2}{z-w}\right)\partial_y\right)P(z;a,w,b,c)\\
=&\Im \left(\frac{2G'(z; a,w,b,c)}{z-w}-\frac{2K}{\pi f'(w; a,b,c)(z-w)^{3}}\right),\\
&\partial_wP(z; a, w, b,c)\\
=&\Im\left(\partial_w G(z;a,w,b,c)+\frac{K}{\pi f'(w;a,b,c)}\frac{1}{(z-w)^2}-\frac{K}{\pi}\frac{f''(w;a,b,c)}{f'(w;a,b,c)^2}\frac{1}{z-w}\right),\\
&\partial_w^2P(z; a, w, b, c)\\
=&\Im\left(\partial^2_{w}G(z;a,w,b,c)-\frac{K}{\pi}\frac{f'''(w;a,b,c)f'(w;a,b,c)-2f''(w;a,b,c)^2}{f'(w;a,b,c)^3}\frac{1}{z-w}\right)\\
&+\Im\left(\frac{2K}{\pi f'(w;a,b,c)}\frac{1}{(z-w)^3}-\frac{2K}{\pi}\frac{f''(w;a,b,c)}{f'(w;a,b,c)^2}\frac{1}{(z-w)^2}\right).
\end{align*}
Therefore, we can write
\begin{equation}\label{eqn::PoissonH_auxV_harm}
\LV(z)=\Im\left(G_{1}(z)+\frac{G_{2}(z)}{z-w}\right),
\end{equation}
where $G_1$ and $G_2$ are analytic functions on $\overline\HH\setminus\{a,b,c\}$. This implies that $\LV$ is harmonic.

Next, we show that $\LV(\cdot)$ has the same boundary data as $P(\cdot;a,w,b,c)$. From~\eqref{eqn::PoissonH_boundarydata}, we have $P(z; a, w, b, c)=0$ for all $a<z\neq w<b<c$. Thus 
\begin{align*}
&\ell P(z;a,w,b,c)=0,\quad \forall a<z\neq w<b<c,\quad\text{for all }\ell=\partial_a, \partial_b, \partial_c, \partial_w,\partial^2_w, \partial_x.
\end{align*}
Therefore, $\LV(\cdot)=0$ on $(a,w)\cup(w,b)$. Similarly, since $P(z; a,w,b,c)=0$ for all $a<w<b<c<z$, we have $\LV(\cdot)=0$ on $(c,+\infty)$. Since $\partial_{n}P(x; a,w,b,c)=0$ for all $a<w<b<x<c$ or $x<a<w<b<c$, we have
\begin{align*}
\partial_{n}\ell P(x; a,w,b,c)=&\ell\partial_{n} P(x; a,w,b,c)=0,\quad\text{for all }\ell=\partial_a,\partial_b, \partial_c,\partial_w,\partial_w^2,\\
\partial_{n}\left(\Re\frac{2}{z-w}\partial_xP(z; a, w, b, c)\right)\Big|_{z=x}=&\frac{2}{x-w}\partial_x\partial_{n}P(x;a,w,b,c)=0,\\
\partial_{n}\left(\Im\frac{2}{z-w}\partial_y P(z; a,w,b,c)\right)\Big|_{z=x}=&\frac{-2}{(x-w)^2}\partial_{n}P(x;a,w,b,c)=0, \\&\forall a<w<b<x<c\text{ or }x<a<w<b<c. 
\end{align*}
Here the interchange of $\partial_n$ and $\partial_a, \partial_b, \partial_c, \partial_w, \partial_x$ is legal due to the explicit form of $P(z;a,w,b,c)$ in~\eqref{eqn::PoissonH_def}. Thus, $\partial_{n}\LV(\cdot)=0$ on $(b,c)\cup(-\infty,a)$. This completes the proof. 
\end{proof}

\begin{lemma}\label{lem::PoissonH_aux_bounded}
The function $\LV$ in~\eqref{eqn::PoissonH_auxV} is bounded near $a, b, c, \infty$.  
\end{lemma}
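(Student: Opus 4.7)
The plan is to bound $\LV$ in a neighborhood of each of $a$, $b$, $c$, $\infty$, combining Schwarz--Christoffel asymptotics of $f(\cdot; a, b, c)$ with the reflection structure of the rectangle Poisson kernel $P_K$ and the structural matching built into the coefficients of $\LD$.

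Near $z = a$, Schwarz--Christoffel yields $f(z; a, b, c) - f(a; a, b, c) = c_a(a, b, c)(z - a)^{1/2} + O((z - a)^{3/2})$ with $c_a$ smooth in $(a, b, c)$. Since $P_K(\cdot, r)$ has a right-angle Dirichlet--Neumann corner at $f(a) = 0$ and $r$ is bounded away from $0$, reflection across the Neumann side shows $P_K(\zeta, r) = \alpha(r)\,\Im\zeta + O(|\zeta|^3)$ with $\alpha$ smooth. Composition gives
\[
P(z; a, w, b, c) = A(a, w, b, c)\,\Im\bigl((z - a)^{1/2}\bigr) + R(z; a, w, b, c),
\]
with $A$ smooth in all arguments and $R$ smooth in the local coordinate $(z - a)^{1/2}$ of order $|z - a|^{3/2}$; analogous expansions hold near $b$ and $c$. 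From the identities
\[
\partial_a \Im\bigl((z - a)^{1/2}\bigr) = -\tfrac{1}{2}\Im\bigl((z - a)^{-1/2}\bigr), \qquad \partial_x \Im\bigl((z - a)^{1/2}\bigr) = \tfrac{1}{2}\Im\bigl((z - a)^{-1/2}\bigr),
\]
together with $\Re\bigl(\frac{2}{z - w}\bigr) = \frac{2}{a - w} + O(z - a)$ and $\Im\bigl(\frac{2}{z - w}\bigr) = O(z - a)$ as $z \to a$, the leading $\Im((z - a)^{-1/2})$ coefficients in $\frac{2}{a - w}\partial_a P$ and $\Re\bigl(\frac{2}{z - w}\bigr)\partial_x P$ cancel exactly, and the $\partial_y$ contribution is harmless since its coefficient vanishes at $z = a$. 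The remaining terms $\partial_b P, \partial_c P, \partial_w P, \partial_w^2 P$ and the drift $2(f''/f')\partial_w P$ only differentiate the smooth data $A$ and $R$, so contribute boundedly. The same mechanism applies at $z = b$ and $z = c$, using $\Re\bigl(\frac{2}{z - w}\bigr)\big|_{z = b} = \frac{2}{b - w}$ and $\Re\bigl(\frac{2}{z - w}\bigr)\big|_{z = c} = \frac{2}{c - w}$ to match the coefficients of $\partial_b$ and $\partial_c$.

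Near $z = \infty$, Schwarz--Christoffel gives $f(z; a, b, c) - \ii K = O(|z|^{-1/2})$, so $P(z; a, w, b, c) = O(|z|^{-1/2})$; the same order bound holds for $\partial_a P, \partial_b P, \partial_c P, \partial_w P, \partial_w^2 P$, while $\partial_x P, \partial_y P = O(|z|^{-3/2})$ combined with $\Re\bigl(\frac{2}{z - w}\bigr), \Im\bigl(\frac{2}{z - w}\bigr) = O(|z|^{-1})$ produce further decaying contributions. Consequently every term of $\LD P$ tends to $0$ at infinity, and $\LV$ is bounded there. The principal obstacle is the cancellation at each finite corner; once the leading-order cancellation is established as above, the argument closes because $R$ is smooth in the coordinate $(z - a)^{1/2}$ (and its analogues at $b, c$), so every subleading contribution to $\LD P$ is automatically bounded.
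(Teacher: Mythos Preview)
Your argument is correct and follows essentially the same approach as the paper: both identify that the only dangerous singularity near each finite corner comes from the $(z-a)^{1/2}$ behaviour of the Schwarz--Christoffel map, and that the leading $|z-a|^{-1/2}$ contributions to $\LD P$ cancel precisely because the coefficient $\frac{2}{a-w}$ of $\partial_a$ matches $\Re\bigl(\frac{2}{z-w}\bigr)\big|_{z=a}$, while the $\partial_y$ term is harmless since its coefficient $\Im\bigl(\frac{2}{z-w}\bigr)$ vanishes to order $|z-a|$. The paper carries this out by direct computation with the explicit elliptic-integral formula for $f$, whereas you package the same cancellation in a clean asymptotic expansion $P = A\,\Im((z-a)^{1/2}) + R$; the underlying mechanism is identical.
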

\begin{proof}
We first investigate its behavior around $a$. 
From~\eqref{eqn::exactform}, we have
\[f(z; a,b,c)=\LK\left(\arcsin\sqrt{\frac{z-a}{b-a}}, \frac{b-a}{c-a}\right)/\LK\left(\frac{b-a}{c-a}\right),\]
and
\[K=K(a,b,c)=\Im\, \LK\left(\arcsin\sqrt{\frac{c-a}{b-a}}, \frac{b-a}{c-a}\right)/\LK\left(\frac{b-a}{c-a}\right).\]
Note that $f(\cdot;\cdot,\cdot,\cdot)$ is smooth on $\overline\HH\setminus\{a,b,c\}\times\{(a,b,c)\in \R^3:a<b<c\}$ and $K(\cdot,\cdot,\cdot)$ is smooth on $\{(a,b,c)\in \R^3:a<b<c\}$. This implies that $\partial_w P$ and $\partial^2_w P$ are continuous at $a$. Moreover, for all $\ell\in\{\partial_a,\partial_b, \partial_c,\partial_x,\partial_y\}$, we have
\begin{align*}
\ell P(z;a,w,b,c)=&\Im \sum_{n\in \Z}\frac{1}{\sinh^2\left(\frac{\pi}{2K}(2n-f(w;a,b,c)+f(z;a,b,c))\right)}\ell\left(\frac{\pi}{4K}(f(z;a,b,c)-f(w;a,b,c))\right)\\
&-\Im \sum_{n\in \Z}\frac{1}{\sinh^2\left(\frac{\pi}{2K}(2n-f(w;a,b,c)-\overline f(z;a,b,c))\right)}\ell\left(\frac{\pi}{4K}(\overline f(z;a,b,c)+f(w;a,b,c))\right).
\end{align*}
Denote by $\tilde z:=\arcsin\sqrt{(z-a)/(b-a)}$ and $s:=(b-a)/(c-a)$. We have
\begin{align*}
&\partial_b f(z;a,b,c)=-\frac{\sqrt{(c-a)(z-a)}}{2(b-a)\LK(s)\sqrt{(c-z)(b-z)}}+\frac{\partial_x \LK(\tilde z,s)}{(c-a)\LK(s)}-\frac{\LK(\tilde z,s)\LK'(s)}{(c-a)\LK^{2}(s)};\\
&\partial_c f(z;a,b,c)=-\frac{(b-a)\partial_x\LK(\tilde z,s)}{(c-a)^2\LK(s)}+\frac{(b-a)\LK(\tilde z,s)\LK'(s)}{(c-a)^2\LK^2(s)}.
\end{align*}
This implies that 
\begin{equation}\label{eqn::PoissonH_aux_bounded1}
\partial_b P(z;a,w,b,c)\to 0, \quad \partial_c P(z;a,w,b,c)\to 0, \quad \text{as }z\to a.
\end{equation}
A direct calculation implies 
\begin{align*}
\left|\partial_y P(z;a,w,b,c)\Im\left(\frac{2}{z-w}\right)\right|&=\left|\frac{\sqrt{c-a}}{2\sqrt{(b-z)(c-z)(z-a)}\LK(s)}\Im \frac{2}{z-w}\right|\\
&=\frac{\sqrt{c-a}\times\Im z}{|\sqrt{(b-z)(c-z)(z-a)}|\LK(s)|z-w|^2}.
\end{align*}
Thus
\begin{equation}\label{eqn::PoissonH_aux_bounded2}
\partial_y P(z;a,w,b,c)\Im\left(\frac{2}{z-w}\right)\to 0,\quad \text{as }z\to a. 
\end{equation}
Note that 
\begin{align*}
&\left(\frac{2}{a-w}\partial_a+\Re\left(\frac{2}{z-w}\right)\partial_x\right)f(z;a,w,b,c)\\
=&\frac{\sqrt{c-a}}{\sqrt{(c-z)(b-z)(z-a)}\LK(s)}\left(\frac{z-a}{(b-a)(a-w)}-\Re \frac{z-a}{(z-w)(a-w)}\right)\\
&+\frac{(b-c)\partial_x \LK(\tilde z,s)}{(c-a)^2\LK(s)}-\frac{(b-c)\LK(\tilde z,s)\LK'(s)}{(c-a)^2\LK^2(s)}.
\end{align*}
Therefore we have
\begin{equation}\label{eqn::PoissonH_aux_bounded3}
\left(\frac{2}{a-w}\partial_a+\Re\left(\frac{2}{z-w}\partial_x\right)\right)P(z;a,w,b,c)\to 0, \quad\text{as }z\to a. 
\end{equation}
Recall that $\partial_w P$ and $\partial^2_w P$ are continuous at $a$, combining with~\eqref{eqn::PoissonH_aux_bounded1}, \eqref{eqn::PoissonH_aux_bounded2} and~\eqref{eqn::PoissonH_aux_bounded3}, we see that $\LV(z)$ remains bounded as $z\to a$. Similarly, we may show that $\LV(z)$ is also bounded near $b, c, \infty$.
\end{proof}

\begin{proof}[Proof of Lemma~\ref{lem::PoissonH_pde}]
When there is no ambiguity, we write $\partial_{n}P(z; a, w, b, c)|_{z=x}$ as $\partial_{n}P(x; a,w,b,c)$. 
The goal is to show $\LV(z)=0$ for every $z\in\HH$. To this end, we evaluate the value of $\int_c^{\infty}\partial_{n}\LV(x)dx$. On the one hand,  we consider the following function:
\begin{equation*}\label{eqn::PoissonH_aux_bounded}
\widetilde{\LV}(\cdot):=\LV(\cdot)-\frac{\pi}{K}G_{2}(w)f'(w;a,b,c)P(\cdot;a,w,b,c),
\end{equation*}
where $G_2$ is defined as in~\eqref{eqn::PoissonH_auxV_harm}. By Lemma~\ref{lem::PoissonH_aux_harm}, $\widetilde{\LV}$ is harmonic on $\HH$ and has the same boundary data as $P(\cdot;a,w,b,c)$. Moreover,  by the construction $\widetilde{\LV}$ is bounded near $w$ and by Lemma~\ref{lem::PoissonH_aux_bounded} $\widetilde{\LV}$ is bounded near $a, b, c, \infty$. Thus, $\widetilde{\LV}$ is bounded in $\overline{\HH}$ and hence $\widetilde{\LV}\equiv 0$.
This implies that 
\begin{align}\label{eqn::Poisson_pde_aux}
\int_{c}^{+\infty}\partial_{n}\LV(x)dx=\frac{\pi G_{2}(w)f'(w;a,b,c)}{K}\int_{c}^{+\infty}\partial_{n}P(x;a,w,b,c)dx=\frac{\pi}{K}G_{2}(w)f'(w;a,b,c),  
\end{align}
where the second equality  is due to~\eqref{eqn::PoissonH_normalization}. 
On the other hand,  we have  
\begin{align*}
&\int_c^{\infty}\partial_{n}\ell P(x; a, w, b, c)dx =\ell \int_c^{\infty}\partial_{n}P(x; a, w, b, c)dx=0,\quad \text{for }\ell=\partial_a, \partial_b, \partial_w, \partial_w^2; \tag{by~\eqref{eqn::PoissonH_normalization}}
\\
&\int_c^{\infty}\partial_{n}\frac{2}{c-w}\partial_c P(x; a, w, b, c)dx=\frac{2}{c-w}\int_c^{\infty}\partial_c\partial_{n} P(x; a, w, b, c)dx=\frac{2}{c-w}\partial_{n}P(c; a, w, b, c); \tag{by~\eqref{eqn::PoissonH_normalization}}\\
&\int_c^{\infty}\left(\partial_{n}\left(\Re\frac{2}{z-w}\partial_xP(z; a, w, b, c)\right)\right)\Big|_{z=x} dx\\
&=\int_c^{\infty}\frac{2}{x-w}\partial_x\partial_{n}P(x; a, w, b, c)dx=\frac{-2}{c-w}\partial_{n}P(c; a, w, b, c)+\int_c^{\infty}\frac{2}{(x-w)^2}\partial_{n}P(x; a, w, b, c)dx; \\
&\int_c^{\infty}\left(\partial_{n}\left(\Im\frac{2}{z-w}\partial_y P(z; a,w,b,c)\right)\right)\Big|_{z=x}dx =\int_c^{\infty}\frac{-2}{(x-w)^2}\partial_{n}P(x;a,w,b,c)dx. 
\end{align*}
Therefore, 
\begin{align*}
\int_{c}^{+\infty}\partial_{n}\LV(x)dx=\int_c^{\infty}\partial_{n}\LD P(z; a,w,b,c)\Big|_{z=x}dx=0.
\end{align*}
Comparing with~\eqref{eqn::Poisson_pde_aux}, we have $G_{2}(w)=0$. Consequently, $\LV\equiv 0$ as desired.  
\end{proof}

\begin{corollary}
For $a<w<b<c<x$, define 
\begin{equation}\label{eqn::Poisson_partialn_def}
F(x; a,w,b,c)=\partial_{n}P(z; a, w, b, c)|_{z=x}. 
\end{equation}
Then we have 
\begin{equation}\label{eqn::Poisson_partialn_PDE}
\left(\frac{2}{a-w}\partial_a+\frac{2}{b-w}\partial_b+\frac{2}{c-w}\partial_c+2\frac{f''(w; a,b,c)}{f'(w; a,b,c)}\partial_w+\partial_w^2+\frac{2}{x-w}\partial_x+\frac{-2}{(x-w)^2}\right)F=0.
\end{equation}
\end{corollary}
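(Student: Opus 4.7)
The plan is to differentiate the identity $\LD P(z; a, w, b, c) = 0$ from Lemma~\ref{lem::PoissonH_pde} in the normal direction and evaluate at a point $z = x$ with $x > c$. Since the operators $\partial_a, \partial_b, \partial_c, \partial_w$ and $\partial_w^2$ act only on the auxiliary variables, they commute with $\partial_n$, so the corresponding five terms of $\LD P = 0$ yield, after normal differentiation, the first five terms of the target PDE with $P$ replaced by $F$. It therefore remains only to compute the normal derivatives of the two $z$-dependent terms $\Re(2/(z-w))\partial_x P$ and $\Im(2/(z-w))\partial_y P$ at $z=x$, and show that their sum equals $\frac{2}{x-w}\partial_x F - \frac{2}{(x-w)^2}F$.

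Writing $z = x + \ii y$ with $x > c$, I would use the following three observations. First, since $P(\cdot;a,w,b,c)$ vanishes identically along $(c,\infty)$, its tangential derivative $\partial_x P(x;\cdot)$ vanishes there as well. Second, direct inspection of $\Re(2/(z-w)) = 2(x-w)/((x-w)^2+y^2)$ shows it is even in $y$, so $\partial_y\Re(2/(z-w))|_{y=0} = 0$, while $\Im(2/(z-w)) = -2y/((x-w)^2+y^2)$ vanishes on $\R$ with $\partial_y\Im(2/(z-w))|_{y=0} = -2/(x-w)^2$. Third, $\partial_n$ commutes with $\partial_x$, so $\partial_n\partial_x P(x;\cdot) = \partial_x F(x;\cdot)$. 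Applying the product rule, the $\Re$-contribution reduces to $\frac{2}{x-w}\partial_x F$, while in the $\Im$-contribution the term multiplied by $\Im(2/(x-w)) = 0$ drops out and one is left with $\partial_n\Im(2/(z-w))|_{z=x}\cdot\partial_y P(x;\cdot) = -\frac{2}{(x-w)^2}F$ (the two sign ambiguities from the normal convention cancel, since $\partial_n = \pm\partial_y$ appears once in each factor). Summing these contributions with the five commuting terms yields precisely~\eqref{eqn::Poisson_partialn_PDE}.

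There is no substantive obstacle: the corollary is a direct consequence of Lemma~\ref{lem::PoissonH_pde} via the product rule together with the vanishing of $P$ (and hence $\partial_x P$) along $(c,\infty)$. The only minor subtlety is the sign bookkeeping for the normal derivative, which as noted above is inconsequential because the relevant factors of $\pm 1$ appear in pairs.
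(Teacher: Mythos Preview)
Your proposal is correct and follows exactly the same approach as the paper, whose proof reads in its entirety: ``The PDE~\eqref{eqn::Poisson_partialn_PDE} can be obtained by taking $\partial_{n}$ in $\LD P=0$ from Lemma~\ref{lem::PoissonH_pde}.'' Your detailed product-rule computation of the two $z$-dependent terms is in fact already carried out in the paper within the proofs of Lemmas~\ref{lem::PoissonH_aux_harm} and~\ref{lem::PoissonH_pde}, so you have simply spelled out what the paper leaves implicit.
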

\begin{proof}
The PDE~\eqref{eqn::Poisson_partialn_PDE} can be obtained by taking $\partial_{n}$ in $\LD P=0$ from Lemma~\ref{lem::PoissonH_pde}. 
\end{proof}

\begin{lemma}\label{lem::lerw_observable}
Assume the same setup as in Theorem~\ref{thm::cvg_lerw}. Choose a conformal map $\phi$ from $\Omega$ onto $\HH$ such that $\phi(d)=\infty$ and $\phi(a)<\phi(b)<\phi(c)$. Denote by $(W_t, t\ge 0)$ the driving function of $\phi\left(\gamma^{M}\right)$ and by $(g_{t}, t\ge 0)$ the corresponding conformal maps. 
For any $x\in(\phi(c),+\infty)$, the process 
\[\left(g_t'(x)F(g_t(x);g_t(\phi(a)),W_t,g_t(\phi(b)),g_t(\phi(c))),\, t\ge 0\right)\] 
is a martingale up to the first time that $\gamma^M$ hits $(cd)$, where $F$ is defined in~\eqref{eqn::Poisson_partialn_def}. 
\end{lemma}
\begin{proof}
Fix two boundary points $\tilde{c}, \tilde{d}$ such that $a, b, c, \tilde{c}, \tilde{d}, d$ are in counterclockwise order. Choose a sequence of medial polygons $(\Omega_{\delta}^{\diamond} ;a_\delta^{\diamond},b_{\delta}^{\diamond},c_{\delta}^{\diamond}, \tilde{c}_{\delta}^{\diamond},\tilde{d}_{\delta}^{\diamond}, d_{\delta}^{\diamond})$ converges to $(\Omega;a,b,c,\tilde{c},\tilde{d}, d)$ in the sense of~\eqref{eqn::topology} and choose a sequence of conformal maps $\phi_{\delta}:\Omega_\delta\to\HH$ with $\phi_\delta(d_\delta)=\infty$ such that $\phi_\delta^{-1}$ converges to $\phi^{-1}$ uniformly on $\overline\HH$ as $\delta\to 0$.
By Theorem~\ref{thm::cvg_triple}, we have that $\gamma_\delta^M\to\gamma^M$ in law as $\delta\to 0$.  Couple $\{\gamma_\delta^M\}_{\delta>0}$ and $\gamma^M$ together such that $\gamma_\delta^M\to\gamma^M$ almost surely as $\delta\to 0$. Recall that $X^M=\gamma^M\cap (ab), Y^M=\gamma^M\cap (cd)$ and $X^M_{\delta}=\gamma^M_{\delta}\cap(a_{\delta}b_{\delta}), Y^M_{\delta}=\gamma^M_{\delta}\cap (c_{\delta}d_{\delta})$. 

We parameterize $\phi(\gamma^M)$ by the half-plane capacity and parameterize $\gamma^M$ such that $\phi(\gamma^M(t))=\phi(\gamma^M)(t)$. 
Denote by $T$ the first time that $\gamma^M$ hits $(cd)$. For $\eps>0$, define $T_{\epsilon}=\inf\{t:\dist(\gamma^M(t),(ba))=\epsilon\}$. 
For $t<T$, denote by $K_t$ the conformal modulus of the quad $(\HH; g_t(\phi(a)), g_t(\phi(b)), g_t(\phi(c)),\infty)$ and by $f_t$ the conformal map from $\HH$ onto $(0,1)\times (0,\ii K_t)$ sending $(g_t(\phi(a)), g_t(\phi(b)), g_t(\phi(c)),\infty)$ to $(0,1,1+\ii K_t, \ii K_t)$. 

We parameterize $\gamma_{\delta}^M$ similarly, define $T^{\delta}_{\eps}=\inf\{t: \dist(\gamma^M_{\delta}, (b_{\delta}a_{\delta}))=\eps\}$.  
We may assume $T_\epsilon^\delta\to T_{\epsilon}$ almost surely as $\delta\to 0$ by considering the continuous modification, see details in~\cite[Appendix B]{KarrilaMultipleSLELocalGlobal}and~\cite{KarrilaUSTBranches}. For every $t<T^{\delta}_{\eps}$, define $\Omega_{\delta}(t):=\Omega_{\delta}\setminus\gamma_{\delta}^M[0,t]$. The boundary conditions of $\Omega_{\delta}(t)$ are inherited from $(\Omega_{\delta}; a_{\delta}, b_{\delta}, c_{\delta}, d_{\delta})$ and $\gamma_{\delta}^M[0,t]$ as follows: $(a_{\delta} b_{\delta})\cup\gamma_{\delta}^M[0,t]$ is wired and $(c_{\delta} d_{\delta})$ is wired. Consider the UST in the quad $(\Omega_{\delta}(t); a_{\delta}, b_{\delta}, c_{\delta}, d_{\delta})$ with such boundary condition.  Let $\gamma^M_{\delta,t}$ be the unique simple path from $(a_\delta b_\delta)\cup\gamma_\delta^M[0,t]$ to $(c_\delta d_\delta)$ in the UST on $\Omega_{\delta}(t)$. Denote by $X_{\delta}^M(t)$ the starting point of $\gamma^M_{\delta,t}$ on $(a_\delta b_\delta)\cup\gamma_\delta^M[0,t]$ and by $Y_{\delta}^M(t)$ the ending point of $\gamma^M_{\delta,t}$ on $(c_\delta d_\delta)$.
For any bounded continuous function $R$ on curves, we have
\begin{align*}
&\E\left[\one_{\{Y_\delta^M\in(\tilde c_\delta\tilde d_\delta)\}}R(\gamma_\delta^M[0,t\wedge T_\epsilon^\delta])\right]\\
&=\E\left[\PP\left[Y_{\delta}^M(t\wedge T_\epsilon^\delta)\in(\tilde c_\delta\tilde d_\delta)\cond X_{\delta}^M(t\wedge T_{\eps}^{\delta})=\gamma^M_\delta(t\wedge T_\epsilon^\delta)\right]R(\gamma_\delta^M[0,t\wedge T_\epsilon^\delta])\right].
\end{align*}
From the convergence of $\gamma_{\delta}^M$ to $\gamma^M$, we have
\begin{align}\label{eqn::leftside}
\E\left[\one_{\{Y_\delta^M\in(\tilde c_\delta\tilde d_\delta)\}}R(\gamma_\delta^M[0,t\wedge T_\epsilon^\delta])\right]\to\E\left[\one_{\{Y^M\in(\tilde c\tilde d)\}}R(\gamma^M[0,t\wedge T_\epsilon])\right],\quad \text{as }\delta\to 0. 
\end{align}
From Corollary~\ref{cor::YconditionalXdiscrete}, we have
\[\PP\left[Y_{\delta}^M(t\wedge T_\epsilon^\delta)\in(\tilde{c}_{\delta}\tilde{d}_{\delta})\cond X_{\delta}^M(t\wedge T_\epsilon^\delta)=\gamma^M_\delta(t\wedge T_\epsilon^\delta)\right]\to\int_{f_{t\wedge T_\epsilon}(g_{t\wedge T_\epsilon}(\phi(\tilde d)))}^{f_{t\wedge T_\epsilon}(g_{t\wedge T_\epsilon}(\phi(\tilde c)))}\rho_{K_{t\wedge T_\epsilon}}(f_{t\wedge T_\epsilon}(W_{t\wedge T_\epsilon}),\Re\, y)dy,\]
where $\rho_K$ is defined in~\eqref{eqn::jointdensity}. 
Thus, by  bounded convergence theorem, we have
\begin{equation}\label{eqn::rightside}
\begin{split}
&\E\left[\PP\left[Y_{\delta}^M(t\wedge T_\epsilon^\delta)\in(\tilde c_\delta\tilde d_\delta)\cond X_{\delta}^M(t\wedge T_\epsilon^\delta)=\gamma_\delta^M(t\wedge T_\epsilon^\delta)\right]R(\gamma_\delta^M[0,t\wedge T_\epsilon^\delta])\right]\\
&\to\E\left[R(\gamma^{M}[0,t\wedge T_\epsilon])\int_{f_{t\wedge T_\epsilon}(g_{t\wedge T_\epsilon}(\phi(\tilde d)))}^{f_{t\wedge T_\epsilon}(g_{t\wedge T_\epsilon}(\phi(\tilde c)))}\rho_{K_{t\wedge T_{\epsilon}}}(f_{t\wedge T_\epsilon}(W_{t\wedge T_\epsilon}), \Re\, y)dy\right],\quad \text{as }\delta\to 0.
\end{split}
\end{equation}
Combining~\eqref{eqn::leftside} and~\eqref{eqn::rightside}, we have
\begin{align*}
&\E\left[\one_{\{Y^M\in(\tilde c\tilde d)\}}R(\gamma^M([0,t\wedge T_\epsilon]))\right]\\
&=\E\left[R(\gamma^{M}([0,t\wedge T_\epsilon]))\int_{f_{t\wedge T_\epsilon}(g_{t\wedge T_\epsilon}(\phi(\tilde d)))}^{f_{t\wedge T_\epsilon}(g_{t\wedge T_\epsilon}(\phi(\tilde c)))}\rho_{K_{t\wedge T_\epsilon}}(f_{t\wedge T_\epsilon}(W_{t\wedge T_\epsilon}),\Re\, y)dy\right].
\end{align*}
This implies that the process 
\[\left(\int_{f_{t}(g_{t}(\phi(\tilde d)))}^{f_{t}(g_{t}(\phi(\tilde c)))}\rho_{K_{t}}(f_{t}(W_{t}),\Re\, y)dy, \, t\ge 0\right)\] 
is a martingale up to $T_{\eps}$. Thus, the process
\[\left((f_{t}\circ g_{t})'(x)\rho_{K_{t}}(f_{t}(W_{t}),\Re f_{t}(g_{t}(x))), \, t\ge 0\right)\] 
is a martingale up to $T_{\eps}$ for every $x\in(\phi(c),+\infty)$. Combining~\eqref{eqn::jointdensity}, \eqref{eqn::Poissonkernel_rect_partialn}, \eqref{eqn::PoissonH_def} and~\eqref{eqn::Poisson_partialn_def}, we have
\begin{align*}
F(x; \phi(a), W_0, \phi(b), \phi(c))=f'(x)\rho_K(f(W_0), \Re f(x)). 
\end{align*}
Thus, the process 
\begin{align*}
\left(g_t'(x)F(g_t(x); g_t(\phi(a)), W_t, g_t(\phi(b)), g_t(\phi(c))), \, t\ge 0\right)
\end{align*}
is a martingale up to $T_{\eps}$ for every $x\in(\phi(c),+\infty)$. From Proposition~\ref{prop::tightness_LERW}, the curve $\gamma^M$ intersects $\partial\Omega$ only at its two ends almost surely. Thus $T_\epsilon\to T$ as $\epsilon\to 0$. This completes the proof.
\end{proof}
\begin{proof}[Proof of Theorem~\ref{thm::cvg_lerw}]
The joint law of $(X^M, Y^M)$ is derived in Proposition~\ref{prop::cvg_pair_points}. It remains to show that the conditional law of $\gamma^M$ given $X^M$ is $\SLE_2(-1,-1;-1,-1)$. To this end, we may assume $\Omega=\HH$ with $d=\infty$ and $a<b<c$ and parameterize $\gamma^M$ by the half-plane capacity. Denote by $T$ the first time that $\gamma^M$ hits $(c,\infty)$. 
Denote by $(W_t, t\ge 0)$ the driving function of $\gamma^M$ and by $(g_t, t\ge 0)$ the corresponding conformal maps. For $a<w<b<c<x$, define $F(x; a, w, b, c)$ as in~\eqref{eqn::Poisson_partialn_def}. Lemma~\ref{lem::lerw_observable} tells that the process 
\[(g_t'(x)F(g_t(x); g_t(a), W_t, g_t(b), g_t(c)),\, t\ge 0)\]
is a martingale up to $T$. 
By the same argument in the proof of Lemma~\ref{lem::drivingfunction}, we can deduce that $(W_t, t\ge 0)$ is a semimartingale. Denote by $L_t$ the drift term of $W_t$. By It\^{o}'s formula, we have
\begin{align*}
\left(\frac{2}{g_t(x)-W_t}\partial_x+\frac{2}{g_t(a)-W_t}\partial_a+\frac{2}{g_t(b)-W_t}\partial_b+\frac{2}{g_t(c)-W_t}\partial_c+\frac{-2}{(g_t(x)-W_t)^2}\right)Fdt\\+\partial_wFdL_t+\frac{1}{2}\partial_w^2Fd\langle W\rangle_t=0.
\end{align*}
Combining with~\eqref{eqn::Poisson_partialn_PDE}, we have 
\begin{align*}
\partial_w F\left(dL_t-2\frac{f''(W_t; g_t(a), g_t(b), g_t(c))}{f'(W_t; g_t(a), g_t(b), g_t(c))}dt\right)+\frac{1}{2}\partial_w^2 F\left(d\langle W\rangle_t-2dt\right)=0. 
\end{align*}
From~\eqref{eqn::exactform} and~\eqref{eqn::elliptic_derivatives}, we have
\begin{equation*}\label{eqn::lerw_observable_drift_aux}
2\frac{f''(w;a,b,c)}{f'(w;a,b,c)}=\frac{1}{a-w}+\frac{1}{b-w}+\frac{1}{c-w}.
\end{equation*}
Thus, it simplifies as 
\begin{equation}\label{eqn::lastfunction}
\partial_wF\left(dL_t-\left(\frac{1}{g_t(a)-W_t}+\frac{1}{g_t(b)-W_t}+\frac{1}{g_t(c)-W_t}\right)dt\right)+\frac{1}{2}\partial_w^2F(d\langle W\rangle_t-2dt)=0. 
\end{equation}
Note that almost surely, ~\eqref{eqn::lastfunction} holds for all $x\in\QQ\cap(c,+\infty)$. By the continuity, almost surely, it holds for all $x\in(c,+\infty)$. Now we fix $(a,b,c)$ and $t$. Define \[S_1(x):=\partial_wF(g_t(x);g_t(a),W_t,g_t(b),g_t(c)),\quad S_2(x):=\partial^2_wF(g_t(x);g_t(a),W_t,g_t(b),g_t(c)).\] It suffices to prove 
\begin{equation}\label{eqn::nolinear}
\exists x, x'\in (c, \infty)\text{ such that }
S_1(x)S_2(x')\neq S_2(x)S_1(x'). 
\end{equation}
Assume this is true, then we have
\[dL_t=\left(\frac{1}{g_t(a)-W_t}+\frac{1}{g_t(b)-W_t}+\frac{1}{g_t(c)-W_t}\right)dt\quad\text{ and }\quad d\langle W\rangle_t=2dt.\]
This shows that $\gamma^M$ is $\SLE_2(-1; -1, -1)$ as desired. 

It remains to show~\eqref{eqn::nolinear}. 
Denote $f(\cdot)=f(\cdot; a, b, c)$. Note that
{\tiny\begin{align*}
S_1(x)=&-2f’(g_t(x))\frac{\pi}{4K}\times\frac{\pi}{2K}f'(W_t)\times
\sum_{n\in\Z}\left(\frac{\sinh\left(\frac{\pi}{2K}\left(f(W_t)-\Re f(g_t(x))-2n\right)\right)}{\cosh^3\left(\frac{\pi}{2K}\left(f(W_t)-\Re f(g_t(x))-2n\right)\right)}+\frac{\sinh\left(\frac{\pi}{2K}\left(f(W_t)+\Re f(g_t(x))-2n\right)\right)}{\cosh^3\left(\frac{\pi}{2K}\left(f(W_t)+\Re f(g_t(x))-2n\right)\right)}\right),\\
S_2(x)=&\frac{f''(W_t)}{f'(W_t)}S_1(x)\\
&+2f'(g_t(x))\frac{\pi}{4K}\times\left(\frac{\pi}{2K}f'(W_t)\right)^2
\times\sum_{n\in\Z}\left(\frac{2\sinh^2\left(\frac{\pi}{2K}\left(f(W_t)-\Re f(g_t(x))-2n\right)\right)-1}{\cosh^4\left(\frac{\pi}{2K}\left(f(W_t)-\Re f(g_t(x))-2n\right)\right)}+\frac{2\sinh^2\left(\frac{\pi}{2K}\left(f(W_t)+\Re f(g_t(x))-2n\right)\right)-1}{\cosh^4\left(\frac{\pi}{2K}\left(f(W_t)+\Re f(g_t(x))-2n\right)\right)}\right).
\end{align*}}
Define
\[R_1(z):=\sum_{n\in\Z}\left(\frac{\cosh\left(\frac{\pi}{2K}\left(f(W_t)-z-2n\right)\right)}{\sinh^3\left(\frac{\pi}{2K}\left(f(W_t)-z-2n\right)\right)}+\frac{\cosh\left(\frac{\pi}{2K}\left(f(W_t)+z-2n\right)\right)}{\sinh^3\left(\frac{\pi}{2K}\left(f(W_t)+z-2n\right)\right)}\right)\]
and
\[R_2(z):=-\sum_{n\in\Z}\left(\frac{2\cosh^2\left(\frac{\pi}{2K}\left(f(W_t)-z-2n\right)\right)+1}{\sinh^4\left(\frac{\pi}{2K}\left(f(W_t)-z-2n\right)\right)}+\frac{2\cosh^2\left(\frac{\pi}{2K}\left(f(W_t)+z-2n\right)\right)+1}{\sinh^4\left(\frac{\pi}{2K}\left(f(W_t)+z-2n\right)\right)}\right).\]
If~\eqref{eqn::nolinear} is false, then $S_1(x)/S_2(x)$ is constant for $x>c$. Thus, there exists $\lambda$ (which is random) such that $(R_1-\lambda R_2)(z)|_{z\in(\ii K,1+\ii K)}=0$. Since $R_1$ and $R_2$ are analytic functions in $(0,1)\times (0,\ii K)$, this implies that $R_1=\lambda R_2$ in $(0,1)\times (0,\ii K)$. This is a contradiction by considering the asymptotic of $R_1$ and $R_2$ when $z\to f(W_t)$: 
\[\lim_{z\to f(W_{t})}R_{1}(z)(z-f(W_{t}))^{3}=\left(\frac{-2K}{\pi}\right)^{3},\quad \lim_{z\to f(W_{t})}R_{2}(z)(z-f(W_{t}))^{4}=-3\left(\frac{2K}{\pi}\right)^{4}.\]
This completes the proof.
\end{proof} 

\subsection{Consequences}
\label{subsec::Dobrushintoquad}
In this section, we complete the proof for Propositions~\ref{prop::fromSLEtohSLE} and~\ref{prop::hSLE_right_proba} and Corollary~\ref{cor::sle8duality}. 
\begin{proof}[Proof of Proposition~\ref{prop::hSLE_right_proba}]
The conclusion is immediate from Theorem~\ref{thm::cvg_triple} and~\eqref{eqn::hSLE_right_proba_aux}. 
\end{proof}

The proof for Proposition~\ref{prop::fromSLEtohSLE} and Corollary~\ref{cor::sle8duality} bases on the following observation in the discrete for UST. 
Fix a Dobrushin domain $(\Omega; c, d)$ such that $\partial\Omega$ is $C^1$ and simple. Suppose $(\Omega_{\delta}; c_{\delta}, d_{\delta})$ is an approximation of $(\Omega; c, d)$ on $\delta\Z^2$ as in Section~\ref{subsec::ust_dobrushin}. Let $\LT_{\delta}$ be the UST in $(\Omega_{\delta}; c_{\delta}, d_{\delta})$ with $(c_{\delta}d_{\delta})$ wired. Denote by $\eta_{\delta}$ the associated Peano curve along $\LT_{\delta}$ from $d_{\delta}^{\diamond}$ to $c_{\delta}^{\diamond}$.  
Fix $a\in (dc)$ and let $a_{\delta}^{\diamond}$ be the medial vertex along $(c_{\delta}^{\diamond}d_{\delta}^{\diamond})$ nearest to $a$. Let $a_{\delta}\in V(\Omega_{\delta})$ be the primal vertex in $\Omega_{\delta}$ that is nearest to $a_{\delta}^{\diamond}$. Let $a_{\delta}^*$ (resp. $b_{\delta}^*$) be the dual vertex along $(d_{\delta}^*c_{\delta}^*)$ that is nearest to $a_{\delta}^{\diamond}$ and is closer to $d_{\delta}^*$ (resp. closer to $c_{\delta}^*$) along $(d_{\delta}^*c_{\delta}^*)$. See Figure~\ref{fig::single_to_triple}. We divide the Peano curve $\eta_{\delta}$ into two parts: denote by $\tilde{\eta}^L_{\delta}$ the part of $\eta_{\delta}$ from $d_{\delta}^{\diamond}$ to $a_{\delta}^{\diamond}$, and denote by $\eta^R_{\delta}$ the part of $\eta_{\delta}$ from $a_{\delta}^{\diamond}$ to $c_{\delta}^{\diamond}$. Denote by $\eta^L_{\delta}$ the time-reversal of $\tilde{\eta}^L_{\delta}$. 
There is a branch in $\LT_{\delta}$ connecting $a_{\delta}$ to $(c_{\delta}d_{\delta})$ and we denote it by $\gamma_{\delta}$. We parameterize $\gamma_{\delta}$ so that it starts from $a_{\delta}$ and terminates when it hits $(c_{\delta}d_{\delta})$.  We have the convergence of the triple $(\eta^L_{\delta}; \gamma_{\delta}; \eta^R_{\delta})$. 

\begin{figure}[ht!]
\begin{center}
\includegraphics[width=0.4\textwidth]{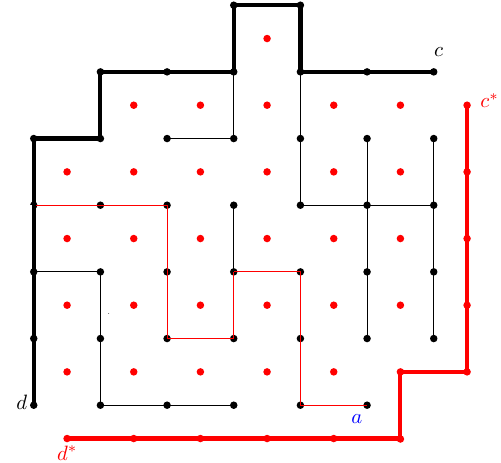}$\quad$
\includegraphics[width=0.4\textwidth]{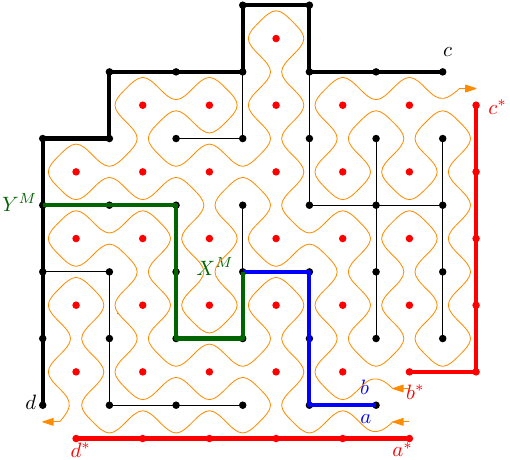}
\end{center}
\caption{\label{fig::single_to_triple} 
In the left panel, the solid edges in black are wired boundary arc $(cd)$, and the solid edges in red are dual-wired boundary arc $(d^*c^*)$. The thin edges are in the UST. The thin edges in red are the branch $\gamma$ in the tree connecting $a$ to $(cd)$. 
Suppose $T$ is the hitting time of $\gamma$ at $(cd)$ and $\tau$ is any stopping time before $T$.
In the right panel, the solid edges in blue are $\gamma[0,\tau]$ and the solid edges in green are $\gamma[\tau, T]$. The two orange curves are $\eta^L$ and $\eta^R$. }
\end{figure}

\begin{lemma}\label{lem::triple_in_single}
Fix a polygon $(\Omega; d, a, c)$ with three marked points such that $\partial\Omega$ is $C^1$ and simple. Suppose that a sequence of medial polygons $(\Omega_{\delta}^{\diamond}; d_{\delta}^{\diamond}, a_{\delta}^{\diamond}, c_{\delta}^{\diamond})$ converges to $(\Omega; d, a, c)$ in the sense of~\eqref{eqn::topology}. Then the triple $(\eta^L_{\delta}; \gamma_{\delta}; \eta^R_{\delta})$ converges weakly to a triple of continuous curves $(\eta^L; \gamma; \eta^R)$ whose law is characterized as follows. Let $\eta$ be an $\SLE_8$ in $\Omega$ from $d$ to $c$ and let $T_a$ be the first time that it swallows $a$. Then, the joint distribution of $(\eta^L; \eta^R)$ is the same as $(\eta(T_a-t), 0\le t\le T_a\,;\, \eta(t), t\ge T_a)$; and $\gamma=\eta^L\cap\eta^R$.  
\end{lemma}
\begin{proof}
First of all, 
recall that the tightness of $\{\eta_{\delta}^{L}\}_{\delta>0}$ and $\{\eta_{\delta}^{R}\}_{\delta>0}$ are given in the proof of Theorem~\ref{thm::ust_Dobrushin}. The tightness of $\{\gamma_{\delta}\}_{\delta>0}$ can be proved in the same way as in Proposition~\ref{prop::tightness_LERW}. Therefore, the triple $\{(\eta_{\delta}^{L};\gamma_{\delta};\eta_{\delta}^{R})\}_{\delta>0}$ is tight. 

Next, we determine the law of subsequential limits. Suppose $(\eta^{L};\gamma;\eta^{R})$ is any subsequential limit. There exists $\{\delta_n\}$ with $\delta_n\to 0$ as $n\to\infty$, such that $\eta^{L}_{\delta_n}\to\eta^{L}$ and $\gamma_{\delta_n}\to\gamma$ and $\eta^{R}_{\delta_n}\to\eta^{R}$ in law as $n\to\infty$. By Theorem~\ref{thm::ust_Dobrushin}, $\{\eta_{\delta_n}\}_n$ converges weakly to $\eta$ as $n\to\infty$. Thus, $(\eta^{L};\eta^{R})$ has the same law as $(\eta(T_a-t), 0\le t\le T_a\,;\, \eta(t), t\ge T_a)$. Since $\SLE_8$ is space filling, we have $\gamma=\eta^{L}\cap\eta^{R}$. This completes the proof.
\end{proof}

From the observation in Lemma~\ref{lem::triple_in_single}, we arrive at the following lemma. 
\begin{lemma}\label{lem::triple_conditional}
Fix a polygon $(\Omega; d, a, c)$ with three marked points such that $\partial\Omega$ is $C^1$ and simple. 
Let $\eta$ be an $\SLE_8$ in $\Omega$ from $d$ to $c$ and let $T_a$ be the first time that it swallows $a$. We define $\gamma:=\partial(\eta[0,T_a])\cap\Omega$ (here we view $\eta[0,T_a]$ as a compact set) and view $\gamma$ as a continuous simple curve starting from $a$ and terminating at some point in $(cd)$.
Denote by $\Omega^L$ and $\Omega^R$ the two connected components of $\Omega\setminus\gamma$ such that $\Omega^L$ has $d$ on the boundary and $\Omega^R$ has $c$ on the boundary.
The joint law of the triple  
\[(\eta(T_a-t), 0\le t\le T_a; \quad \gamma;\quad \eta(t), t\ge T_a)\]
can be characterized as follows: $\gamma$ is $\SLE_2(-1,-1;-1,-1)$ in $\Omega$ from $a$ to $(cd)$ with force points $(d,a^{-};a^{+},c)$; given $\gamma$, the conditional law of $(\eta(T_a-t), 0\le t\le T_a)$ is $\SLE_8$ in $\Omega^L$ from $a^{-}$ to $d$ and the conditional law of $(\eta(t), t\ge T_a)$ is $\SLE_8$ in $\Omega^R$ from $a^{+}$ to $c$, and $(\eta(T_a-t), 0\le t\le T_a)$ and $(\eta(t), t\ge T_a)$ are conditionally independent given $\gamma$.
\end{lemma}
\begin{proof}
 First, we derive the marginal law of $\gamma$. Choose a conformal map $\phi$ from $\Omega$ onto $\HH$ such that $\phi(a)=0$ and $\phi(d)=\infty$. Denote by $(W_t, t\ge 0)$ the driving function of $\phi\left(\gamma\right)$ and by $(g_{t}, t\ge 0)$ the corresponding conformal maps. By the argument in the proof of Lemma~\ref{lem::lerw_observable}, for any $x\in(\phi(c),+\infty)$, the process 
\[\left(g_t'(x)F(g_t(x);g_t(0^{-}),W_t,g_t(0^{+}),g_t(\phi(c))),\, t>0\right)\] 
is a martingale up to the first time that $\gamma$ hits $(cd)$ where $F$ is defined in~\eqref{eqn::Poisson_partialn_def}. For $\eps>0$, define $\tau_\epsilon:=\inf\{t:\dist(\phi(\gamma(t)),0)\ge \epsilon\}$. By the argument in the proof of Theorem~\ref{thm::cvg_lerw}, the conditional law of $(\gamma(t),t\ge\tau_{\epsilon})$ given $\gamma[0,\tau_{\epsilon}]$ is $\SLE_{2}(-1,-1;-1,-1)$ in $\Omega\setminus\gamma[0,\tau_{\epsilon}]$ from $\gamma(\tau_{\epsilon})$ to $(cd)$ with force points $(d,a^{-};a^{+},c)$. Let $\epsilon\to 0$, the law of $\gamma$ is $\SLE_2(-1,-1;-1,-1)$ in $\Omega$ from $a$ to $(cd)$ with force points $(d,a^{-};a^{+},c)$.

Second, the conditional law of $(\eta(T_a-t), 0\le t\le T_a; \eta(t), t\ge T_a)$ given $\gamma$ can be proved in the same way as in Theorem~\ref{thm::cvg_triple}, thanks to the observation in Lemma~\ref{lem::triple_in_single} and Figure~\ref{fig::single_to_triple}. 
\end{proof}

\begin{proof}[Proof of Corollary~\ref{cor::sle8duality}]
The conclusion is immediate from Lemma~\ref{lem::triple_conditional}. 
\end{proof}

\begin{proof}[Proof of Proposition~\ref{prop::fromSLEtohSLE}]
Assume the same notation as in Lemma~\ref{lem::triple_conditional}. 
We parameterize $\gamma$ so that it starts from $a$ and terminates when it hits $(cd)$ at time $T$. 
Let $\tau$ be any stopping time of $\gamma$ before $T$. Consider the conditional law of the following triple given $\gamma[0,\tau]$:  
\[(\eta(T_a-t), 0\le t\le T_a; \quad \gamma(t), \tau\le t\le T; \quad \eta(t), t\ge T_a).\] 
From Lemma~\ref{lem::triple_conditional}, the law of $(\gamma(t), \tau\le t\le T)$ is $\SLE_2(-1, -1; -1, -1)$ in $\HH\setminus\gamma[0,\tau]$ from $\gamma(\tau)$ to $(-\infty, x)$ with force points $(d, a^-; a^+, c)$; the conditional law of $(\eta(T_a-t), 0\le t\le T_a)$ given $\gamma[\tau, T]$ is $\SLE_8$ in $\Omega^L$ from $a^-$ to $d$, the conditional law of $(\eta(t),t\ge T_a)$ given $\gamma[\tau, T]$ is $\SLE_8$ in $\Omega^R$ from $a^+$ to $c$, and $(\eta(T_a-t), 0\le t\le T_a)$ and $(\eta(t),t\ge T_a)$ are conditionally independent given $\gamma[\tau, T]$. Comparing with Theorems~\ref{thm::cvg_triple} and~\ref{thm::cvg_lerw}, we see that the triple has the same law as the triple $(\eta^L; \gamma^M; \eta^R)$ in Theorem~\ref{thm::cvg_triple} in the quad 
$(\Omega\setminus\gamma[0,\tau]; d, a^-, a^+, c)$
conditional on $X^M=\gamma(\tau)$. 
In particular, the law of $(\eta(T_a-t), 0\le t\le T_a)$ is $\hSLE_8$ in $\Omega\setminus\gamma[0,\tau]$ from $a^-$ to $d$ conditional that its last hitting point of $\gamma[0,\tau]$ is $\gamma(\tau)$. Combining with reversibility of $\hSLE_8$ proved in Corollary~\ref{cor::ust_hsle8}, we obtain the conclusion.  
\end{proof}


\appendix
\section{Hypergeometric function and elliptic integral}
\label{appendix_hyper_elliptic}
For $A, B, C\in \R$, the hypergeometric function is defined for $|z|<1$ by the power series:
\begin{equation}\label{eqn::hypergeometric_def}
F(z)=\hF(A, B, C; z)=\sum_{n=0}^{\infty}\frac{(A)_n(B)_n}{(C)_n}\frac{z^n}{n!},
\end{equation}
where $(x)_n:=x(x+1)\cdots(x+n-1)$ for $n\ge 1$ and $(x)_n=1$ for $n=0$. The power series is well-defined when $C\not\in\{0,-1,-2,-3, \ldots\}$. The hypergeometric function is a solution of Euler's hypergeometric differential equation: 
\begin{align}\label{eqn::euler_eq}
z(1-z)F''(z)+\left(C-(A+B+1)z\right)F'(z)-ABF(z)=0.
\end{align}

We collect some properties for hypergeometric functions here.  
For $z\in (-1,1)$, we have (see \cite[Eq.~15.2.1 and Eq.~15.3.3]{AbramowitzHandbook})
\begin{align}
\hF(A, B, C; z)&=(1-z)^{C-A-B}\hF(C-A, C-B, C; z),\label{eqn::hyperf_eulertransform}\\
\frac{d}{dz}\hF(A,B,C;z)&=\frac{AB}{C}\hF(A+1,B+1,C+1;z).
\label{eqn::hyperf_derivative}
\end{align}

The series~\eqref{eqn::hypergeometric_def} is absolutely convergent on $z\in [0,1]$ when $C>A+B$ and $C\not\in\{0, -1, -2, ...\}$. In this case, we have (see \cite[Eq.~15.1.20]{AbramowitzHandbook})
\begin{align}\label{eqn::hyperf_one}
\hF(A, B, C; 1)=\frac{\Gamma(C)\Gamma(C-A-B)}{\Gamma(C-A)\Gamma(C-B)},
\end{align}
where $\Gamma$ is Gamma Function. 

We need the following lemma on hypergeometric functions in Section~\ref{subsec::hsle_continuity}. Fix $\kappa\ge 8$ and $\nu>-2$. Recall from~\eqref{eqn::hyper_def}, we denote 
\begin{align*}
F(z):=\hF\left(\frac{2\nu+4}{\kappa}, 1-\frac{4}{\kappa},\frac{2\nu+8}{\kappa};z\right).
\end{align*}

\begin{lemma}\label{lem::hyperfunction_asy}
Fix $\kappa\geq 8$ and $\nu>-2$. Denote by $\Gamma$ the Gamma function.
The function $F$ defined in \eqref{eqn::hyper_def} is increasing on $[0,1)$ with $F(0)=1$.
Moreover, we have the following asymptotic. 
\begin{itemize}
\item When $\kappa>8$ and $\nu>-2$, we have 
\begin{equation}\label{eqn::hyperfunction_asy}
\lim_{z\rightarrow 1-}(1-z)^{1-8/\kappa}F(z)=\hF\left(\frac{4}{\kappa},\frac{2\nu+12}{\kappa}-1, \frac{2\nu+8}{\kappa};1\right)=\frac{\Gamma(\frac{2\nu+8}{\kappa})\Gamma(1-\frac{8}{\kappa})}{\Gamma(\frac{2\nu+4}{\kappa})\Gamma(1-\frac{4}{\kappa})}\in (0,\infty).
\end{equation}
\item When $\kappa=8$ and $\nu> -2$, we have 
\begin{equation}\label{eqn::hyperfunction_asy_8}
\lim_{z\rightarrow 1-}\frac{F(z)}{\log\frac{1}{1-z}}=\frac{1}{\sqrt{\pi}}\frac{(\nu+2)\Gamma(2+\frac{\nu}{4})}{(\nu+4)\Gamma(\frac{3}{2}+\frac{\nu}{4})}\in (0,\infty).
\end{equation}
\end{itemize}
\end{lemma}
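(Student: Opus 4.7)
The plan is to deduce both asymptotics from classical facts about the Gauss hypergeometric function, combined with a single Euler transformation. Write $a=(2\nu+4)/\kappa$, $b=1-4/\kappa$, $c=(2\nu+8)/\kappa$. Under $\kappa\ge 8$ and $\nu>-2$, all three parameters are strictly positive, so the Taylor series
\[
F(z)=\sum_{n\ge 0}\frac{(a)_n(b)_n}{(c)_n\,n!}\,z^n
\]
has positive coefficients. This immediately gives $F(0)=1$ and strict monotonicity on $[0,1)$, covering the first assertion.

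For $\kappa>8$, I would apply Euler's transformation $\hF(a,b,c;z)=(1-z)^{c-a-b}\hF(c-a,c-b,c;z)$. Direct computation gives $c-a-b=8/\kappa-1<0$, $c-a=4/\kappa$, and $c-b=(2\nu+12)/\kappa-1$, so
\[
(1-z)^{1-8/\kappa}F(z)=\hF\!\left(\tfrac{4}{\kappa},\,\tfrac{2\nu+12}{\kappa}-1,\,\tfrac{2\nu+8}{\kappa};\,z\right).
\]
The parameter balance on the right-hand side is $c-(c-a)-(c-b)=a+b-c=1-8/\kappa>0$, so Gauss's summation formula $\hF(\alpha,\beta,\gamma;1)=\Gamma(\gamma)\Gamma(\gamma-\alpha-\beta)/(\Gamma(\gamma-\alpha)\Gamma(\gamma-\beta))$ applies at $z=1$. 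Specializing it to the present $\alpha,\beta,\gamma$ gives the value $\Gamma(c)\Gamma(a+b-c)/(\Gamma(a)\Gamma(b))$, which is exactly the second expression in~\eqref{eqn::hyperfunction_asy}; the first expression is the definition of the limit before applying Gauss, so both forms in~\eqref{eqn::hyperfunction_asy} come out of the same computation.

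For $\kappa=8$ this strategy degenerates because $c-a-b=0$, and the transformed $\hF$ has a logarithmic singularity at $1$ rather than a finite value. Here I would invoke the standard logarithmic asymptotic for the degenerate Gauss function,
\[
\hF(\alpha,\beta,\alpha+\beta;z)\sim-\frac{\Gamma(\alpha+\beta)}{\Gamma(\alpha)\Gamma(\beta)}\,\log(1-z)\quad\text{as }z\to 1^{-},
\]
a classical identity that can either be quoted from any standard reference on special functions or extracted from the Euler integral $\hF(a,b,c;z)=\Gamma(c)/(\Gamma(a)\Gamma(c-a))\int_0^1 t^{a-1}(1-t)^{c-a-1}(1-zt)^{-b}\,dt$ (valid here because $a>0$ and $c-a=4/\kappa=1/2>0$) by isolating the contribution of the endpoint $t=1$ as $z\to 1^-$. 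Applied with $a=(\nu+2)/4$, $b=1/2$, $c=(\nu+4)/4$ and $\Gamma(1/2)=\sqrt\pi$, this yields $F(z)/\log(1/(1-z))\to \Gamma(1+\nu/4)/(\sqrt\pi\,\Gamma(1/2+\nu/4))$. A short rearrangement using $\Gamma(2+\nu/4)=\frac{\nu+4}{4}\Gamma(1+\nu/4)$ and $\Gamma(3/2+\nu/4)=\frac{\nu+2}{4}\Gamma(1/2+\nu/4)$ matches this with the right-hand side of~\eqref{eqn::hyperfunction_asy_8}.

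The only genuinely nontrivial step is the $\kappa=8$ case, since it sits on the boundary $a+b-c=0$ where Gauss's identity is not sharp; this is precisely the technical difficulty alluded to in the introduction. If one wants a fully self-contained treatment without appealing to a tabulated expansion, the cleanest route is the Euler-integral argument above: split the $t$-interval into $[0,1-\epsilon]$ (uniformly bounded as $z\to 1$) and $[1-\epsilon,1]$ (where $(1-zt)^{-1/2}(1-t)^{-1/2}$ produces the expected $\log$ factor with the correct prefactor after a change of variables).
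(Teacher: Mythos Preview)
Your proposal is correct. The monotonicity argument and the $\kappa>8$ case are identical to the paper's proof: Euler's transformation followed by Gauss's summation at $z=1$.

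For $\kappa=8$ the two arguments diverge slightly. You invoke directly the classical logarithmic asymptotic $\hF(\alpha,\beta,\alpha+\beta;z)\sim-\frac{\Gamma(\alpha+\beta)}{\Gamma(\alpha)\Gamma(\beta)}\log(1-z)$ (either as a tabulated fact or via the Euler integral near $t=1$). The paper instead re-derives this constant from scratch using only the three identities already collected in its appendix: it applies L'H\^opital to reduce the limit to $\lim_{z\to 1^-}(1-z)F'(z)$, uses the derivative rule $F'(z)=\tfrac{AB}{C}\hF(A+1,B+1,C+1;z)$, then Euler-transforms and evaluates at $z=1$ via Gauss. Your route is shorter if one is willing to cite the $c=a+b$ asymptotic as known; the paper's route is more self-contained in that it never leaves the identities \eqref{eqn::hyperf_eulertransform}--\eqref{eqn::hyperf_one}. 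Either way the final constant matches after the Gamma-function shifts you indicate.
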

\begin{proof}
In this lemma, we set
\begin{align*}
A=\frac{2\nu+4}{\kappa},\quad B=1-\frac{4}{\kappa},\quad C=\frac{2\nu+8}{\kappa}.
\end{align*}
When $\kappa\geq 8$ and $\nu>-2$,  we have $A>0, B>0, C>0$, thus $F(z)$ is increasing on $[0,1)$ by the definition in~\eqref{eqn::hypergeometric_def}. It remains to derive the asymptotic. 

When $\kappa>8$ and $\nu>-2$, by~\eqref{eqn::hyperf_eulertransform}, we have
\[
F(z)
=(1-z)^{\frac{8}{\kappa}-1}\hF\left(\frac{4}{\kappa},\frac{2\nu+12}{\kappa}-1, \frac{2\nu+8}{\kappa};z\right).
\]
By~\eqref{eqn::hyperf_one}, we have
\[
\hF\left(\frac{4}{\kappa},\frac{2\nu+12}{\kappa}-1, \frac{2\nu+8}{\kappa};1\right)=\frac{\Gamma(\frac{2\nu+8}{\kappa})\Gamma(1-\frac{8}{\kappa})}{\Gamma(\frac{2\nu+4}{\kappa})\Gamma(1-\frac{4}{\kappa})}\in (0,\infty).
\]This gives~\eqref{eqn::hyperfunction_asy}. 

When $\kappa=8$ and $\nu>-2$, we have
\begin{align*}
\lim_{z\rightarrow 1-}\frac{F(z)}{\log\frac{1}{1-z}}
&=\lim_{z\rightarrow 1-}(1-z)F'(z)\tag{by L'Hospital rule}\\
&=\lim_{z\rightarrow 1-}\frac{AB}{C}(1-z)\hF\left(A+1, B+1, C+1; z\right)\tag{by~\eqref{eqn::hyperf_derivative}}\\
&=\lim_{z\rightarrow 1-}\frac{AB}{C} \hF\left(C-A, C-B, C+1; z\right)\tag{by~\eqref{eqn::hyperf_eulertransform}}\\
&=\frac{AB}{C}\frac{\Gamma(C+1)\Gamma(A+B+1-C)}{\Gamma(1+A)\Gamma(1+B)},\tag{by~\eqref{eqn::hyperf_one}}
\end{align*}
as desired in~\eqref{eqn::hyperfunction_asy_8}.
\end{proof}

\medbreak
Next, we introduce elliptic integral. 
Denote by $\LK$ the elliptic integral of the first kind (see~\cite[Eq.~17.2.6]{AbramowitzHandbook}): for $\varphi\in \overline{\HH}$ and $x\in (0,1)$, 
\begin{equation}\label{eqn::elliptic_def}
\LK(\varphi, x):=\int_0^{\varphi} \frac{d\theta}{\sqrt{1-x\sin^2\theta}},\quad \LK(x):=\int_0^{\pi/2}\frac{d\theta}{\sqrt{1-x\sin^2\theta}}. 
\end{equation}
There is a relation between complete elliptic integral and hypergeometric function (see~\cite[Eq.~17.3.9]{AbramowitzHandbook}): 
\begin{equation}\label{eqn::elliptic_hyper}
\LK(x)=\frac{\pi}{2}\hF\left(\frac{1}{2}, \frac{1}{2}, 1; x\right), \quad \forall x\in (0,1). 
\end{equation}

Let us calculate the derivatives of $\LK$: 
\begin{equation}\label{eqn::elliptic_derivatives}
\partial_{\varphi}\LK(\varphi, x)=\frac{1}{\sqrt{1-x\sin^2\varphi}}, \quad \partial_{\varphi}^2\LK(\varphi, x)=\frac{x\sin\varphi\cos\varphi}{\sqrt{1-x\sin^2\varphi}^3}, \quad \partial_{\varphi}\partial_x\LK(\varphi, x)=\frac{\sin^2\varphi}{2\sqrt{1-x\sin^2\varphi}^3}. 
\end{equation}
The derivatives $\partial_x\LK$ and $\partial_x^2\LK$ involve the elliptic integral of the second kind, but luckily, we do not need them. 
\medbreak

\section{Convergence of discrete harmonic functions}
Discrete harmonic functions have been studied in many cases in proof of convergence of discrete observables to continuous observables, see for instance~\cite{LawlerSchrammWernerLERWUST},~\cite{SmirnovPercolationConformalInvariance},~\cite{SchrammSheffieldDiscreteGFF} and~\cite{CDCHKSConvergenceIsingSLE}. Some standard methods to study discrete harmonic functions and discrete holomorphic functions have been established, see for instance~\cite{ChelkakRobustComplexAnalysis} and~\cite{ChelkakSmirnovDiscreteComplexAnalysis}. In this appendix, we collect some basic facts about harmonic functions with mixed boundary conditions.
\begin{lemma}\label{lem::harmonic_unique}
Fix a quad $(\Omega; a, b, c, d)$ and fix a conformal map $\xi$ from $\Omega$ onto $\U$ and extend its definition continuously to the boundary. There is a unique bounded harmonic function $u$ on $\Omega$ such that $u\circ\xi^{-1}$ satisfies the following boundary data: 
\begin{align}\label{eqn::harmonic_boundarydata}
\begin{cases}
u\circ\xi^{-1}=1, \quad &\text{on }(\xi(a)\xi(b)); \\
 u\circ\xi^{-1}=0, \quad &\text{on }(\xi(c)\xi(d)); \\
\partial_{n}u\circ\xi^{-1}=0,\quad &\text{on }(\xi(b)\xi(c))\cup (\xi(d)\xi(a)); 
\end{cases}
\end{align}
where $n$ is the outer normal vector.
 Moreover, the harmonic function $u$ can only obtain its maximum on $(ab)$ and obtain its minimum on $(cd)$.
\end{lemma}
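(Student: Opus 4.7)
The plan is to transport the problem via a conformal map to the canonical rectangle $R=(0,1)\times(0,\ii K)$, where $K$ is the conformal modulus of $(\Omega;a,b,c,d)$, and classify bounded harmonic functions on $R$ with the resulting mixed boundary data. Let $f\colon\Omega\to R$ be the conformal map from Lemma~\ref{lem::holo_cvg}, sending $(a,b,c,d)$ to the corners $(0,1,1+\ii K,\ii K)$. Since harmonicity and the zero-Neumann condition are conformally invariant on the $C^1$ boundary arcs, it suffices to classify bounded harmonic functions $\tilde u$ on $R$ with $\tilde u=1$ on the bottom side, $\tilde u=0$ on the top side, and $\partial_n\tilde u=0$ on the two vertical sides. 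Existence is immediate: the explicit function $\tilde u_0(x+\ii y):=1-y/K$ has exactly these boundary data, and $u:=\tilde u_0\circ f$ is the desired solution on $\Omega$.

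For uniqueness, let $\tilde u_1,\tilde u_2$ be two bounded harmonic solutions on $R$ and set $v:=\tilde u_1-\tilde u_2$. Then $v$ is bounded harmonic on $R$, vanishes on the top and bottom sides, and has vanishing normal derivative on the two vertical sides. I would extend $v$ by iterated Schwarz reflection: even reflection across each vertical Neumann side is allowed because zero Neumann data is exactly the matching condition for an even reflection to preserve harmonicity, and iterating this across each newly produced vertical side yields a $2$-periodic bounded harmonic function $V$ on the strip $\mathbb{R}\times(0,K)$ that vanishes on both horizontal boundary lines. Next, odd reflection across the horizontal Dirichlet sides $\{\Im z=0\}$ and $\{\Im z=K\}$, iterated across every line $\{\Im z=nK\}$ with $n\in\Z$, extends $V$ to a bounded harmonic function on all of $\mathbb{C}$. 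By Liouville's theorem this extension is constant, and since it vanishes on $\mathbb{R}$ it is identically zero. Hence $v\equiv 0$ on $R$, which gives uniqueness.

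Once uniqueness is established, the maximum/minimum assertion is immediate from the explicit formula $u\circ f^{-1}(x+\ii y)=1-y/K$: the maximum value $1$ is attained exactly on the bottom side $[0,1]$, which corresponds under $f^{-1}$ to $(ab)$; the minimum value $0$ on the top side $[\ii K,1+\ii K]$, which corresponds to $(cd)$; and on the two vertical Neumann sides the function takes values strictly in $(0,1)$.

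The main subtlety is justifying the iterated reflection at the four corners of $R$, where Dirichlet meets Neumann and the boundary conditions cannot match in any pointwise sense. I would handle this by noting that $v$ is bounded near each corner, so after the first reflection the resulting harmonic function has an isolated bounded singularity at the image corner; such singularities are removable for bounded harmonic functions. Repeating the removability argument after each reflection shows that the global extension $V$ is harmonic on $\mathbb{C}$ minus a discrete set of corner points and then, by removability at each of these points, on all of $\mathbb{C}$, so that Liouville's theorem applies as claimed.
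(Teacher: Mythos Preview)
Your argument is correct and follows a genuinely different route from the paper's proof. The paper works on the unit disc $\U$: it takes $\tilde u=(u_1-u_2)\circ\xi^{-1}$, passes to a holomorphic primitive $g=\ii\tilde u-\tilde v$, and performs a \emph{single} Schwarz reflection $g(z)=\overline{g(1/\bar z)}$ across the Neumann arcs to obtain a bounded harmonic extension of $\tilde u$ to $\C\setminus\big((\xi(a)\xi(b))\cup(\xi(c)\xi(d))\big)$. It then invokes the Beurling estimate to show $\tilde u$ is continuous at the four corner points $\xi(a),\xi(b),\xi(c),\xi(d)$, and concludes by the maximum principle on this slit plane. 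The final max/min assertion is obtained by the same reflection trick rather than from an explicit formula.

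Your approach---transporting to the rectangle, iterating even/odd reflections to all of $\C$, invoking removable singularities at the corner lattice, and finishing with Liouville---is arguably more elementary: it avoids the Beurling estimate entirely, and once uniqueness is in hand the explicit solution $1-y/K$ makes the max/min statement immediate. The price is the bookkeeping of infinitely many reflections and corner points (which you handle correctly). One small wording issue: the boundary data in the lemma are posed on $\partial\U$ via $\xi$, not on $\partial\Omega$, so the relevant conformal transport is $h:=f\circ\xi^{-1}\colon\U\to R$; your phrase ``on the $C^1$ boundary arcs'' should really refer to the smooth arcs of $\partial\U$ (across which $h$ extends analytically), not to $\partial\Omega$, which carries no regularity assumption here. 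This is cosmetic and does not affect the validity of your proof.
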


\begin{proof}
The existence of such harmonic function is clear. We only need to show the uniqueness. 
Suppose there are two bounded harmonic functions $u_1$ and $u_2$ with the boundary data~\eqref{eqn::harmonic_boundarydata}. Define $\tilde{u}=(u_1-u_2)\circ\xi^{-1}$. Then $\tilde{u}$ is a bounded harmonic function with the following boundary data: $\tilde{u}=0$ on $(\xi(a)\xi(b))\cup(\xi(c)\xi(d))$ and $\partial_n \tilde{u}=0$ on $(\xi(b)\xi(c))\cup (\xi(d)\xi(a))$. It suffices to show $\tilde{u}=0$. 

First, we extend $\tilde{u}$ to $\C\setminus\left((\xi(a)\xi(b))\cup(\xi(c)\xi(d))\right)$ harmonically as follows. Choose $\tilde{v}$ to be a harmonic conjugate of $\tilde{u}$. Since $\partial_{n}\tilde{u}=0$ on $(\xi(b)\xi(c))\cup (\xi(d)\xi(a))$, we see that $\tilde{v}$ is constant along $(\xi(b)\xi(c))$ and is constant along $(\xi(d)\xi(a))$. We may set $\tilde{v}=0$ on $(\xi(b)\xi(c))$. Define $g=\ii \tilde{u}-\tilde{v}$ and this is a holomorphic function on $\U$. We define $g$ on $\C\setminus \overline{\U}$ by setting $g(z)=\overline{g(1/\overline{z})}$. 
Since $\partial_{n}\tilde{u}=0$ on $(\xi(b)\xi(c))\cup (\xi(d)\xi(a))$, by Schwarz reflection principle, the function $g$ can be extended to a holomorphic function on $\C\setminus\left((\xi(a)\xi(b))\cup(\xi(c)\xi(d))\right)$ which we still denote by $g$. This implies that $\tilde{u}$ can be extended to $\C\setminus\left((\xi(a)\xi(b))\cup(\xi(c)\xi(d))\right)$ harmonically and we still denote its extension by $\tilde{u}$.

Second, we show that $\tilde{u}$ is continuous at $\xi(a),\xi(b),\xi(c)$ and $\xi(d)$. It suffices to show $\lim_{z\to\xi(a)}\tilde{u}(z)=0$ and the limit at the other three points can be derived similarly. Suppose $\left|\tilde{u}\circ\xi^{-1}\right|\le M$ for some $M>0$. Fix two small constants $r>\epsilon>0$. Define $\tilde{u}_{M}$ to be the harmonic function on $B(\xi(a),r)\setminus(\xi(a)\xi(b))$ with the following boundary data: $\tilde{u}_M=0$ on $B(\xi(a),r)\cap(\xi(a)\xi(b))$ and $\tilde{u}_M=M$ on $\partial B(\xi(a),r)$. 
From maximum principle, we have $|\tilde{u}(z)|\le |\tilde{u}_M(z)|$ for all $z\in B(\xi(a),r)\setminus(\xi(a)\xi(b))$. Combining with Beurling estimate, for every $z\in B(\xi(a),\epsilon)$, we have
\[\left|\tilde{u}(z)\right|\le\left|\tilde{u}_M(z)\right|\le CM\sqrt{\epsilon/r},\]
for some universal constant $C>0$. This gives $\lim_{z\to\xi(a)}\tilde{u}(z)=0$ as desired. 

From the first step, $\tilde{u}$ is a bounded harmonic function on $\C\setminus\left((\xi(a)\xi(b))\cup(\xi(c)\xi(d))\right)$, by maximum principle, it obtains its maximum and minimum on $[\xi(a)\xi(b)]\cup [\xi(c)\xi(d)]$. 
In particular, $\tilde{u}$ obtains its maximum and minimum in $\U$ on $[\xi(a)\xi(b)]\cup [\xi(c)\xi(d)]$. 
Recall that $\tilde{u}=0$ on $(\xi(a)\xi(b))\cup (\xi(c)\xi(d))$ and it is continuous at $\xi(a), \xi(b), \xi(c), \xi(d)$ as proved in the second step. Thus the maximum and the minimum of $\tilde{u}$ are both zero on $[\xi(a)\xi(b)]\cup [\xi(c)\xi(d)]$. This gives $\tilde{u}=0$ as desired. 

The last statement can be proved by similar reflection method and the maximum and minimum principle. This completes the proof.
\end{proof}

\begin{lemma}\label{lem::harmonic_cvg}
Fix a quad $(\Omega; a, b, c, d)$ and fix a conformal map $\xi$ from $\Omega$ onto $\U$ and extend $\xi$ continuously to the boundary. 
Suppose a sequence of discrete domains $(\Omega_\delta; a_\delta, b_\delta, c_\delta, d_\delta)$ converges to $(\Omega; a, b, c, d)$  in the Carath\'{e}odory sense as $\delta\rightarrow 0$. 
For $z\in V(\Omega_{\delta})$, let $u_{\delta}(z)$ be the probability that a simple random walk in $\Omega_{\delta}$ starting from $z$ hits $(a_{\delta}b_{\delta})$ before $(c_{\delta}d_{\delta})$.
Let $u(z)$ be the bounded harmonic function in Lemma~\ref{lem::harmonic_unique}. 
Then $u_{\delta}$ converges to $u$ locally uniformly as $\delta\to 0$.  
\end{lemma}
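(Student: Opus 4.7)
The plan is the standard three-step route: extract subsequential limits by compactness, identify any such limit as a bounded harmonic function on $\Omega$ satisfying the mixed boundary data~\eqref{eqn::harmonic_boundarydata}, and invoke the uniqueness statement of Lemma~\ref{lem::harmonic_unique} to pin down the limit, thereby upgrading subsequential convergence to convergence of the whole family.

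First I would set up precompactness. Since $u_\delta\in[0,1]$ and is discrete harmonic at each interior vertex of $\Omega_\delta$, the discrete Harnack inequality on $\delta\Z^2$ yields locally uniform equicontinuity on compact subsets of $\Omega$. Along any sequence $\delta_n\to 0$ I may then extract a subsequence along which $u_{\delta_n}$ converges locally uniformly in $\Omega$ to a bounded function $\tilde u$. Testing the discrete mean value identity against a smooth compactly supported function on $\Omega$ and passing to the limit shows that $\tilde u$ is harmonic in $\Omega$.

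For the Dirichlet boundary data on $(ab)$ and $(cd)$, I would use the Beurling projection estimate for simple random walk on $\delta\Z^2$. For $z_0\in(ab)$ at positive distance from $\{c,d\}$ and $z_\delta\in V(\Omega_\delta)$ converging to $z_0$, the probability that the walk starting at $z_\delta$ hits $(c_\delta d_\delta)$ before $(a_\delta b_\delta)$ is bounded by a constant multiple of $\sqrt{\dist(z_\delta,(a_\delta b_\delta))/\dist(z_0,\{c,d\})}$, which tends to $0$; the same argument gives $u_\delta(z_\delta)\to 0$ when $z_\delta\to z_0\in(cd)$. Thus $\tilde u\circ\xi^{-1}$ attains the correct Dirichlet values on $(\xi(a)\xi(b))\cup(\xi(c)\xi(d))$.

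The hard part is to verify the Neumann condition $\partial_n(\tilde u\circ\xi^{-1})=0$ on $(\xi(b)\xi(c))\cup(\xi(d)\xi(a))$, since no extra regularity on $\partial\Omega$ is assumed and one only has Carath\'eodory convergence of the polygons. My plan is to argue by reflection in the image disc $\U$. Transfer $u_\delta$ to a discrete approximating domain of $\U$ via a discrete conformal map $\xi_\delta$ (supplied by the Carath\'eodory convergence), and reflect this discrete harmonic function across each free arc of $\partial\U$; the reflection produces a discrete harmonic function on an enlarged domain whose boundary data is purely Dirichlet, being the original Dirichlet arcs together with their reflected copies. A second application of Harnack together with Beurling on this enlarged domain yields a harmonic extension of $\tilde u\circ\xi^{-1}$ across the free arcs, and the Schwarz reflection principle identifies this extension with the Neumann condition. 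The delicate point is the treatment of small neighbourhoods of the four marked points $\xi(a),\xi(b),\xi(c),\xi(d)$, where the Dirichlet and free arcs meet and a naive reflection breaks down; there I would excise a shrinking neighbourhood of each corner and invoke the Beurling estimate once more to control the excised contribution uniformly. Once $\tilde u$ is established as a bounded harmonic function on $\Omega$ satisfying~\eqref{eqn::harmonic_boundarydata}, Lemma~\ref{lem::harmonic_unique} forces $\tilde u=u$, and since every subsequential limit equals $u$, the full family $u_\delta$ converges to $u$ locally uniformly.
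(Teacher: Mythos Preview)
Your first three steps --- precompactness of $\{u_\delta\}$, harmonicity of any subsequential limit $\tilde u$, and the Dirichlet data on $(ab)\cup(cd)$ via Beurling --- are correct and match the paper's argument. The gap is in your treatment of the Neumann condition.

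The object ``discrete conformal map $\xi_\delta$ supplied by the Carath\'eodory convergence'' does not exist in a form that makes your reflection argument go through. Carath\'eodory convergence provides continuous conformal maps $\phi_\delta:\U\to\Omega_\delta$, but pushing $u_\delta$ forward (or pulling it back) by a continuous conformal map does not produce a discrete harmonic function on any lattice in $\U$; discrete harmonicity is tied to the combinatorics of $\delta\Z^2$ and is destroyed by a nonlinear change of variable. Consequently there is no lattice in $\U$ on which to perform a discrete Schwarz reflection across the free arcs, and the circular arcs of $\partial\U$ admit no lattice symmetry in any case. Working instead in $\Omega_\delta$ does not help either: the free arcs $(b_\delta c_\delta)$ and $(d_\delta a_\delta)$ are arbitrary lattice paths with no reflection symmetry. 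So as written, the argument does not establish $\partial_n(\tilde u\circ\xi^{-1})=0$ on the free arcs, and without this you cannot invoke Lemma~\ref{lem::harmonic_unique}.

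The paper handles the Neumann condition by constructing the \emph{discrete harmonic conjugate} $v_\delta^*$ of $u_\delta$ on the dual lattice $\Omega_\delta^*$: one sets $v_\delta^*=0$ on $(b_\delta^*c_\delta^*)$ and defines increments of $v_\delta^*$ across dual edges by the increments of $u_\delta$ across the corresponding primal edges. Discrete harmonicity of $u_\delta$ makes this well-defined, and one checks combinatorially that $v_\delta^*$ is constant along each dual free arc, say equal to $L_\delta$ on $(d_\delta^*a_\delta^*)$. After proving $\{L_\delta\}$ is bounded (by a short contradiction argument), the same compactness step yields $v_\delta^*\to v$ locally uniformly with $u+\ii v$ holomorphic; constancy of $v$ on the free arcs then gives $\partial_n(u\circ\xi^{-1})=\partial_t(v\circ\xi^{-1})=0$ there via Cauchy--Riemann and Schwarz reflection applied in the continuum. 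This replaces your nonexistent discrete reflection by a genuine discrete object whose boundary behaviour is transparent.
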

\begin{proof}
This is a consequence of~\cite[Proposition~4.2]{LawlerSchrammWernerLERWUST}. We summarize the proof here for concreteness. 
For any function $g_{\delta}$ on $V(\Omega_{\delta})$ or $V(\Omega^{*}_{\delta})$, define its discrete derivatives  as
\[\partial_{x}^{\delta}g_{\delta}:=\delta^{-1}(g_{\delta}(v+\delta)-g_{\delta}(v)),\quad \partial_{y}^{\delta}g_{\delta}:=\delta^{-1}(g_{\delta}(v+i\delta)-g_{\delta}(v)).\]
We extend $u_{\delta}$ and all its derivatives to functions on $\Omega_{\delta}$ by linear interpolation.

First, we show that any subsequential limit of $u_{\delta}$ is harmonic on $\Omega$. 
Since $u_{\delta}$ is discrete harmonic and $0\le u_{\delta}\le 1$, by~\cite[Lemma 5.2]{LawlerSchrammWernerLERWUST}, for any compact set $S\subset\Omega$ and $k\in\N$, there exists a constant $C>0$ which depends on $S$ and $k$ such that
\[|\partial_{a_{1}}^{\delta}\partial_{a_{2}}^{\delta}\cdots\partial_{a_{k}}^{\delta}u_{\delta}(v)|\le C, \quad \text{for any }\partial_{a_{1}}^{\delta},\ldots,\partial_{a_{k}}^{\delta}\in\{\partial_{x}^{\delta},\partial_{y}^{\delta}\}, \text{ and }v\in S\cap\Omega_{\delta}. \]
By Arzela-Ascoli theorem, for any sequence $\delta_{n}\to 0$, there exists a subsequence, still denoted by $\{\delta_{n}\}$, and continuous functions $u,u_{x},u_{y},u_{xx},u_{yy}$ such that
\[u_{\delta}\to u,\quad \partial_{x}^{\delta_{n}}u_{\delta_{n}}\to u_{x},\quad \partial_{y}^{\delta_{n}}u_{\delta_{n}}\to u_{y},\quad \partial_{x}^{\delta_{n}}\partial_{x}^{\delta_{n}}u_{\delta_{n}}\to u_{x},\quad \partial_{y}^{\delta_{n}}\partial_{y}^{\delta_{n}}u_{\delta_{n}}\to u_{y},\quad \text{locally uniformly.}\]
This implies 
\[u_{x}=\partial_{x}u,\quad u_{y}=\partial_{y}u,\quad u_{xx}=\partial_{x}^2u,\quad u_{yy}=\partial_{y}^2u.\]
Since $u_{\delta_{n}}$ is discrete harmonic, the function $u$ is harmonic. This shows that any subsequential limit of $u_{\delta}$ is harmonic. 

Next, we show that any subsequential limit $u$ has the boundary data given in the statement. Note that such boundary data and harmonicity and boundedness uniquely determines the subsequential limit, hence gives the convergence of sequence $u_{\delta}$. 
From the definition of $u_{\delta}$ and the discrete Beurling estimate, it is clear that $u=1$ on $(ab)$ and $u=0$ on $(cd)$. To derive the boundary data along $(bc)\cup(ad)$, we need to introduce  the discrete harmonic conjugate function $v^{*}_{\delta}$ of $u_{\delta}$. 

Define $v^{*}_{\delta}=0$ on $(b_{\delta}^{*}c_{\delta}^{*})$. For every oriented edge $e^{*}_{\delta}=\{x^{*}_{\delta},y^{*}_{\delta}\}\in E(\Omega_{\delta}^{*})$, there is a unique oriented edge $e=\{x_{\delta},y_{\delta}\}\in E(\Omega_{\delta})$ which crosses $e^{*}_{\delta}$ from its right-side. Define $v^{*}_{\delta}(y_{\delta}^{*})-v^{*}_{\delta}(x_{\delta}^{*}):=u_{\delta}(y_{\delta})-u_{\delta}(x_{\delta})$. Since $u_{\delta}$ is discrete harmonic, this is well-defined and $v^{*}_{\delta}$ is constant on $(d_{\delta}^{*}a_{\delta}^{*})$. Moreover, $v_{\delta}^{*}$ takes its maximum, denoted by $L_{\delta}$, on $(d_{\delta}^{*}a_{\delta}^{*})$. 

We claim that $\{L_{\delta}\}_{\delta>0}$ is uniformly bounded. Assume this is true, by the same argument as above, for any sequence $\delta_n\to 0$, there exists a subsequence, still denoted by $\delta_n$, and a constant $\tilde{K}$ and a harmonic function $v$ such that $v_{\delta_{n}}^*\to v$ and the other related derivatives also converge locally uniformly and that $L_{n}\to \tilde K$ as $n\to\infty$. From the construction and the discrete Beurling estimate, we have $v=\tilde K$ on $(da)$ and $v=0$ on $(bc)$. By the definition of $v_{\delta}^{*}$, the function $f_{\delta}:=u_{\delta}+\ii v_{\delta}^{*}$ is discrete holomorphic. Then, the convergence of discrete derivatives implies that $f:=u+\ii v$ is holomorphic on $\Omega$. By Schwartz reflection principle, we can extend $f\circ\xi^{-1}$ to $\partial\U\setminus\{\xi(a), \xi(b), \xi(c), \xi(d)\}$ analytically. By Cauchy-Riemann equation, we have 
\[\partial_{n}(u\circ \xi^{-1})=\partial_{t}(v\circ \xi^{-1})=0\]
on $(\xi(b)\xi(c))$ and $(\xi(d)\xi(a))$ where $t$ is the tangential vector. This gives the required boundary data of $u$ along $(bc)\cup(da)$. 

Finally, it remains to show that $\{L_{\delta}\}_{\delta>0}$ is uniformly bounded. If this is not the case, there exists a sequence $\delta_{n}\to 0$ such that $L_{\delta_n}\to\infty$ as $n\to\infty$. By the same argument as above, the sequence $\frac{1}{L_{n}}f_{(\Omega_{\delta_{n}}; a_{\delta_{n}}, b_{\delta_{n}}, c_{\delta_{n}}, d_{\delta_{n}})}$ converges to a holomorphic function $h$ locally uniformly. In such case, we have $\Re h=0$ on $\Omega$, thus $h$ is constant. But $\Im h=1$ on $(da)$ and $\Im h=0$ on $(bc)$, this gives a contradiction. Thus, $\{L_{\delta}\}_{\delta>0}$ is uniformly bounded and we complete the proof.
\end{proof}

\section{Tightness}\label{app::tightness}
The goal of this section is to show the tightness of LERW branches (Proposition~\ref{prop::tightness_LERW}) and the Peano curves (Proposition~\ref{prop::tightness}). 
We first recall the tightness of LERW branches in the case of Dobrushin boundary conditions from~\cite{{LawlerSchrammWernerLERWUST}} in Lemma~\ref{lem::tightness_Dobru}. 
We will give a similar conclusion for LERW branches in quad with alternating boundary conditions in Section~\ref{app::C1} and then complete the proof of Proposition~\ref{prop::tightness_LERW} and Proposition~\ref{prop::tightness} in Section~\ref{app::C2}.

The setup for the Dobrushin boundary condition is as follows. Fix two points $a, b$. Suppose $(ab)$ is a curve and $(ba)$ is a simple and $C^1$ curve and $\Omega$ is the domain surrounded by $[ab]\cup[ba]$. 
We emphasize that we do not require extra regularity of $(ab)$ and we allow that $(ab)$ intersects $(ba)$. 
Fix a sequence of discrete Dobrushin domains (defined at the beginning of Section~\ref{subsec::ust_dobrushin}) $\{(\Omega_\delta;a_\delta,b_\delta)\}_{\delta>0}$ such that there exists a constant $C>0$,
\[d((b_\delta a_\delta),(ba))\le C\delta,\quad d((a_\delta b_\delta),(ab))\to 0,\quad\text{as }\delta\to 0,\]
where $d$ is the metric~\eqref{eqn::curves_metric}. 
Fix $o\in \Omega$ and denote by $o_\delta$ the discrete approximation of $o$ on $\Omega_\delta$ and denote by $\tilde\gamma^o_\delta$ the loop-erased random walk from $o_\delta$ to $(a_\delta b_\delta)$ on $\Omega_\delta$.
\begin{lemma}\label{lem::tightness_Dobru}
The sequence $\{\tilde\gamma_{\delta}^{o}\}_{\delta>0}$ is tight. Moreover, any subsequential limit is a simple curve in $\overline \Omega$ which intersects $[ab]$ only at its end. 
\end{lemma}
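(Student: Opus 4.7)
The plan is to adapt the argument of \cite[Lemma~3.12]{LawlerSchrammWernerLERWUST}. First I would establish tightness in the metric~\eqref{eqn::curves_metric} by an annulus-crossing estimate: for every $z\in\C$ and $0<r<R$, the probability that $\tilde\gamma_\delta^o$ crosses the annulus $B(z,R)\setminus B(z,r)$ at least $k$ times should be bounded by $C(r/R)^{c k}$, uniformly in $\delta$. Since the LERW is the loop-erasure of a simple random walk and loop-erasure can only remove crossings, such a bound reduces to the corresponding estimate for simple random walk, which is classical (using a gambler's-ruin/Beurling iteration). Combined with the Aizenman--Burchard criterion, this yields tightness of $\{\tilde\gamma_\delta^o\}_{\delta>0}$ and also uniform H\"older continuity of the limits.

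For any subsequential limit $\gamma^o$, I would pass via Skorokhod representation to a coupling in which $\tilde\gamma_\delta^o\to\gamma^o$ uniformly almost surely. Simplicity of $\gamma^o$ then follows from the domain Markov property of LERW via Wilson's algorithm: conditional on an initial portion $\tilde\gamma_\delta^o[0,t]$, the future is loop-erased random walk in the slit domain, and a harmonic-measure estimate shows that the future is very unlikely to approach $\tilde\gamma_\delta^o[0,t-\eps]$ within a much smaller scale. Quantifying this uniformly in $\delta$ rules out near self-intersections in the limit, so $\gamma^o$ is simple.

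For the last assertion---that $\gamma^o$ touches $[ab]$ only at its terminal point---the input is the $C^1$ regularity of the arc $(ba)$. By Beurling's projection estimate applied near any interior point of $(ba)$, the random walk used by Wilson's algorithm has probability tending to $1$ of hitting $(ba)$ rather than reaching the far boundary, hence the LERW is very unlikely to come microscopically close to interior points of $(ba)$ before reaching $(a_\delta b_\delta)$. Passing to the limit, $\gamma^o$ avoids $(ba)\setminus\{a,b\}$ and, being simple, meets $(ab)$ only at its endpoint. The main obstacle I anticipate is the behavior near the two corners $a$ and $b$, where the $C^1$ regularity of $(ba)$ degenerates and the arc $(ab)$ carries no regularity at all. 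I would handle this by exhausting the corners through a small-neighborhood argument: show that with probability tending to $1$ as $\eta\to 0$, $\tilde\gamma_\delta^o$ does not enter $B(a,\eta)\cup B(b,\eta)$ except possibly at its endpoint, again by Beurling-type estimates against the $C^1$ arc $(ba)$.
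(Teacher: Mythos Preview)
Your tightness argument via annulus-crossing bounds and the Aizenman--Burchard criterion is a legitimate alternative to the paper's route. The paper instead follows Schramm's approach: it proves a ``no near-bubble'' estimate (Lemma~\ref{lem::tech}) and shows that with high probability the discrete LERW lies in a compact set $\chi_{\Upsilon}$ of simple curves \cite[Lemmas~3.10--3.12]{LawlerSchrammWernerLERWUST}, thereby obtaining tightness and simplicity of subsequential limits in a single stroke, whereas you handle them separately.

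There is, however, a genuine confusion in your argument that the limit meets $[ab]$ only at its terminal point. You apply Beurling near the \emph{free} arc $(ba)$ and speak of the random walk ``hitting $(ba)$''; but the walk in Wilson's algorithm is absorbed at $(a_\delta b_\delta)$ and reflected near $(ba)$, so it never hits $(ba)$, and establishing that $\gamma^o$ avoids $(ba)\setminus\{a,b\}$ is neither what is asserted nor does it imply that $\gamma^o$ meets $[ab]$ only once. The scenario to rule out is that the discrete LERW comes close to $(a_\delta b_\delta)$, wanders away, and returns---producing multiple contacts with $[ab]$ in the limit. The paper handles this by a terminal-diameter estimate: with $\tau_\delta^m$ the first time $\tilde\gamma_\delta^o$ comes within $2^{-2m}$ of $(a_\delta b_\delta)$ and $\tau_\delta$ the hitting time, one has $\PP[\diam(\tilde\gamma_\delta^o[\tau_\delta^m,\tau_\delta])\ge 2^{-m}]\le C2^{-cm}$, using the reflecting Beurling estimate (the Dobrushin analogue of Lemma~\ref{lem::Beur}) together with the containment $\tilde\gamma_\delta^o[\tau_\delta^m,\tau_\delta]\subset\LR[\tilde\tau_\delta^m,\tilde\tau_\delta]$; Borel--Cantelli then finishes. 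The $C^1$ regularity of $(ba)$ enters precisely through this reflecting Beurling bound near the corners, not in the way you describe.
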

\begin{proof}
The proof is similar to the proof of~\cite[Lemma 3.12]{LawlerSchrammWernerLERWUST} and we will  prove a similar result  in Lemma~\ref{lem::tightness_LERW1}.
Since the proof of Lemma~\ref{lem::tightness_Dobru} is almost the same as the proof of Lemma~\ref{lem::tightness_LERW1} which is the prerequisite of all of our main results, we choose to provide a detailed proof there.
\end{proof}

\subsection{Tightness of loop-erased random walk}
\label{app::C1}

In this section, the goal is to show the tightness of LERW branches in quad. The setup is as follows. Given a quad $(\Omega;a,b,c,d)$ such that $(bc)$ and $(da)$ are simple and $C^1$. 
Suppose there exists a simply connected domain $\tilde\Omega$ such that $\partial\tilde\Omega$ is $C^1$ and simple with $\Omega\subset\tilde\Omega$ and $\partial\Omega\cap\partial\tilde\Omega=[bc]\cup[da]$. Fix a sequence of discrete quads $\{(\Omega_{\delta}; a_{\delta}, b_{\delta}, c_{\delta}, d_{\delta})\}_{\delta>0}$ satisfying that 
there exists a constant $C>0$ such that
\[d((b_\delta c_\delta),(bc))\le C\delta,\quad d((d_\delta a_\delta),(da))\le C\delta,\quad d((a_\delta b_\delta),(ab))\to 0,\quad d((c_\delta d_\delta),(cd))\to 0,\quad\text{as }\delta\to 0,\]
where $d$ is the metric~\eqref{eqn::curves_metric}.
We consider the $\ust$ on $\Omega_\delta$ with alternating boundary conditions: $(a_{\delta}b_{\delta})$ is wired and $(c_{\delta}d_{\delta})$ is wired. 
Note that we require the simple and $C^1$ regularity only on $(bc)$ and $(da)$ and the stonger convergence of $(b_\delta c_\delta)$ to $(bc)$ and $(d_\delta a_\delta)$ to $(da)$. This is mainly because we generate the $\ust$ by Wilson's algorithm and the random walk ends at $(a_\delta b_\delta)\cup(c_\delta d_\delta)$ and continues when it hits $(b_\delta c_\delta)$ and $(d_\delta a_\delta)$. To get the Beurling estimate near the boundary $(b_\delta c_\delta)$ and $(d_\delta a_\delta)$, more smoothness conditions are needed. For instance, the Beurling estimate fails to hold if there is a non-simple part in the boundary $(b_\delta c_\delta)$ and $(d_\delta a_\delta)$ at the macroscopic level.

Recall that $\gamma_\delta^M$ is the branch of $\ust$ connecting $(a_\delta b_\delta)$ and $(c_\delta d_\delta)$.
Lemma~\ref{lem::tightness_LERW1} below gives the tightness of $\{\gamma_\delta^M\}_{\delta>0}$, which is a weaker version of Proposition~\ref{prop::tightness_LERW}. Lemma~\ref{lem::simple} shows the tightness of branches between any two vertices.
\begin{lemma}\label{lem::tightness_LERW1}
The sequence $\{\gamma_{\delta}^{M}\}_{\delta>0}$ is tight. Moreover, any subsequential limit is a simple curve in $\overline\Omega$ which intersects $[ab]\cup[cd]$ only at its two ends. 
\end{lemma}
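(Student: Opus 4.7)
The plan is to adapt the Aizenman--Burchard strategy used in~\cite[proof of Lemma~3.12]{LawlerSchrammWernerLERWUST} (recalled as Lemma~\ref{lem::tightness_Dobru}) to the quad setting with alternating boundary conditions. By Wilson's algorithm, after collapsing each of the wired arcs $(a_\delta b_\delta)$ and $(c_\delta d_\delta)$ to a single vertex $\mathfrak{a}_\delta$ and $\mathfrak{c}_\delta$, the branch $\gamma_\delta^M$ has the same law as the loop-erasure of a simple random walk in the collapsed graph started at $\mathfrak{a}_\delta$ and stopped upon first hitting $\mathfrak{c}_\delta$. This allows all estimates to be reduced to simple random walk bounds.

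The first step is a uniform multi-crossing estimate: for any annulus $A(z,r,R)$ with $R\le\diam(\tilde\Omega)$,
\[
\PP\left[\gamma_\delta^M \text{ makes at least } k \text{ crossings of } A(z,r,R)\right]\le C(r/R)^{\alpha k},
\]
with constants $C,\alpha>0$ independent of $\delta$. Since loop erasure only decreases the number of crossings, it suffices to establish this for the underlying random walk; this follows from the Beurling projection estimate applied in $\tilde\Omega_\delta$ rather than in $\Omega_\delta$. The extension $\tilde\Omega$ enters crucially here: because $\partial\tilde\Omega$ is $C^1$ and simple, the Beurling estimate in $\tilde\Omega_\delta$ applies with constants independent of the possibly rough geometry of $(ab)$ and $(cd)$. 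The Aizenman--Burchard criterion then yields tightness of $\{\gamma_\delta^M\}$ as curves in $\overline\Omega$. Since each $\gamma_\delta^M$ is a self-avoiding path (being a branch in a tree) and the multi-crossing estimate with $k=3$ rules out the formation of double points in the limit, any subsequential limit is a continuous simple curve.

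The main obstacle is the third claim: each subsequential limit $\gamma^M$ meets $[ab]\cup[cd]$ only at its two endpoints. For a point $p\in[ab]\setminus\{a,b\}$ and small parameters $\varepsilon\ll\varepsilon'$, I would bound
\[
\PP\left[\gamma_\delta^M\cap B(p,\varepsilon)\neq\emptyset,\ \dist(X_\delta^M,p)\ge\varepsilon'\right]
\]
by a quantity tending to $0$ as $\varepsilon\to 0$, uniformly in $\delta$. Using Wilson's description, such an event forces the underlying simple random walk to make an excursion from the $\varepsilon'$-neighborhood of $X_\delta^M$ into $B(p,\varepsilon)$ without hitting $(a_\delta b_\delta)$ in between; the Beurling estimate in $\tilde\Omega_\delta$ bounds this probability uniformly by a power of $\varepsilon/\varepsilon'$. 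A symmetric argument handles $[cd]$, and the $C^1$ regularity of $(bc)\cup(da)$ at the four corners $a,b,c,d$ ensures that the limit curve leaves the wired arcs transversally at its endpoints rather than hugging them. Combined with the first step and the standard $\limsup$ passage to the weak limit, this establishes both tightness and the stated boundary behavior of subsequential limits.
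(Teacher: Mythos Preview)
Your multi-crossing bound for the underlying random walk is false as stated, and this is where the argument breaks. The walk you describe starts at the collapsed vertex $\mathfrak{a}_\delta$, reflects on $(b_\delta c_\delta)\cup(d_\delta a_\delta)$, and is absorbed only at $\mathfrak{c}_\delta$. For an annulus $A(z,r,R)$ sitting in the bulk of $\Omega$, away from both wired arcs, there is nothing to kill the walk locally: it will happily cross such an annulus an unbounded number of times with probability bounded away from zero, so no estimate of the form $C(r/R)^{\alpha k}$ can hold for the walk. Thus even granting that loop erasure never increases annulus crossings, you cannot feed Aizenman--Burchard in this way. The correct argument (here and in Schramm and LSW) bypasses raw walk crossings and works directly with the LERW: one bounds the probability that two points of $\gamma_\delta^M$ lie within $\alpha$ of each other while the arc between them escapes $B(z_0,\beta)$, by decomposing the walk into excursions between $\partial B(z_0,\beta/4)$ and $\partial B(z_0,\beta)$ and observing that such an event forces the walk to approach a \emph{previous} excursion to within $\alpha$ without hitting it---and \emph{that} is what Beurling controls. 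This is the content of the paper's Lemma~\ref{lem::tech}, built on the reflecting Beurling estimate Lemma~\ref{lem::Beur}; tightness then follows by packaging these bounds into the $\chi_\Upsilon$ compactness criterion.

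Your boundary argument also needs repair. The excursion bound you sketch---that the walk goes from near $X_\delta^M$ to $B(p,\varepsilon)$ without hitting $(a_\delta b_\delta)$---is not the right event, since in the collapsed graph the walk may revisit $\mathfrak{a}_\delta$ many times before absorption, and the LERW structure is what prevents returns, not the walk. More seriously, your appeal to $C^1$ regularity ``at the four corners'' to control hugging near the endpoints is misplaced: in this lemma's hypotheses only $(bc)$ and $(da)$ are $C^1$, while $(ab)$ and $(cd)$ are merely curves (this generality is needed later when $(ab)$ is a piece of a Peano curve). The paper's argument is cleaner: by the reflecting Beurling estimate, once the walk is within $2^{-2m}$ of $(c_\delta d_\delta)$ it is absorbed before exiting a ball of radius $2^{-m}$ except with summable probability, so the tail of $\gamma_\delta^M$ near $(cd)$ has small diameter and Borel--Cantelli gives a single hitting point; the $[ab]$ side follows by LERW reversibility.
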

To prove Lemma~\ref{lem::tightness_LERW1}, we first establish the Beurling estimate in $\Omega_\delta$ with reflection boundary  arcs in Lemma~\ref{lem::Beur}. 
Such estimates are crucial in the proof of the tightness. 
We then prove a technical result in Lemma~\ref{lem::tech}. With these tools, we are able to complete the proof of Lemma~\ref{lem::tightness_LERW1}. 
\begin{lemma}\label{lem::Beur}
There exist constants $C=C(\Omega)>0$ and $\delta_0=\delta_0(\Omega)>0$, such that the following holds: for any $\delta\le\delta_1\le\delta_0$ and  connected subgraph $A$ of $\Omega_\delta$ with $\diam(A)>\delta_1$, if  $z_\delta\in\Omega_\delta$ satisfies  $\dist(z_\delta, A\cup(a_\delta b_\delta)\cup(c_\delta d_\delta))\le\delta_1$, then
\[
\PP\left[\LR\text{ hits }\partial B(z_\delta,C\delta_1)\text{ before }A\cup(a_\delta b_\delta)\cup(c_\delta d_\delta)\right]\le 1/2,
\]
where $\LR$ is the simple random walk starting from $z_\delta$, reflecting at $(b_\delta c_\delta)\cup(d_\delta a_\delta)$, and stopping when it hits $(a_\delta b_\delta)\cup(c_\delta d_\delta)$. Similarly, for any discrete curve $\gamma_\delta$  on $\Omega_\delta$, if  $z_\delta\in\Omega_\delta$ satisfies $\dist(z_\delta, A\cup\gamma_{\delta}\cup(a_\delta b_\delta)\cup(c_\delta d_\delta))\le\delta_1$, then
\[
\PP\left[\LR\text{ hits }\partial B(z_\delta,C\delta_1)\text{ before }A\cup\gamma_{\delta}\cup(a_\delta b_\delta)\cup(c_\delta d_\delta)\right]\le 1/2,
\]
where $\LR$ is the same random walk as above.
\end{lemma}
\begin{proof}
{We only prove the first statement since the second is similar.} By the assumptions, there exists a simply connected domain $\tilde\Omega$ such that $\partial\tilde\Omega$ is $C^1$ with $\Omega\subset\tilde\Omega$ and $\partial\Omega\cap\partial\tilde\Omega=[bc]\cup[da]$.
Denote by $\tilde\Omega_\delta$ the discrete approximation of $\tilde{\Omega}$ on $\delta\Z^2$ 
such that $\partial\tilde\Omega_\delta\cap\partial\Omega_\delta=[b_\delta c_\delta]\cup[d_\delta a_\delta]$ 
and that there exists $C>0$ such that,
\[d(\partial\tilde\Omega_\delta,\partial\tilde\Omega)\le C\delta,\]
where $d$ is the metric~\eqref{eqn::curves_metric}.
Denote by $\tilde\LR$ the simple random walk on $\tilde\Omega_\delta$ starting from $z_\delta$ and reflecting at $\partial\tilde\Omega_{\delta}$. Define 
\[P_1:=\PP\left[\tilde\LR\text{ hits }\partial B(z_\delta,C\delta_1)\text{ before }A\right]\]
and
\[P_2:=\PP\left[\tilde\LR\text{ hits }\partial B(z_\delta,C\delta_1)\text{ before }(a_\delta b_\delta)\right],\quad P_3:=\PP\left[\tilde\LR\text{ hits }\partial B(z_\delta,C\delta_1)\text{ before }(c_\delta d_\delta)\right].\]
Then, we have
\[\PP\left[\LR\text{ hits }\partial B(z_\delta,C\delta_1)\text{ before }A\cup(a_\delta b_\delta)\cup(c_\delta d_\delta)\right]\le \min\{P_1,P_2,P_3\}.\]
The conclusion follows from \cite[Lemma~11.2]{SchrammScalinglimitsLERWUST}. The proof there relies crucially on the simple and $C^1$ regularity assumption and the stronger convergence. In the  proof there, the author used estimate of the discrete Dirichlet energy to get estimates on discrete harmonic functions. To this end, the author counted the number of certain  discrete paths. The $C^1$ and simple regularity condition and the stronger convergence make it possible to have a good control of the number of  the discrete paths when $A$ is near $(b_\delta c_\delta)$ and $(d_\delta a_\delta)$.
\end{proof}


\begin{lemma}\label{lem::tech}
For  $z_0\in\Omega$, $\alpha>0$ and $0<\beta\le 1/4\dist((ab),(cd))$, let $\LA_{\delta}(z_0,\beta,\alpha;\gamma_\delta^M)$ denote the event that there are two points $o_1,o_2\in\gamma_\delta^M$ with
$o_1$, $o_2\in B(z_0,\beta/4)$ and $|o_1-o_2|\le\alpha$, such that the subarc of $\gamma_\delta^M$ between $o_1$ and $o_2$ is not contained in $B(z_0,\beta)$. Then for any $\epsilon>0$, there exists $\alpha=\alpha(\beta,\epsilon)$ such that
\[\PP[\LA_{\delta}(z_{0},\beta,\alpha;\gamma_\delta^M)]\le\eps,\quad \forall \delta>0.\]
\end{lemma}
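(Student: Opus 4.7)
The strategy is to represent $\gamma_\delta^M$ via Wilson's algorithm as a loop-erased reflecting random walk and then apply the Beurling-type estimate of Corollary~\ref{coro::Beur} to control close returns of the curve following long excursions. I treat the event $\LA_\delta(z_0,\beta,\alpha)$ as a close-return-with-far-excursion event in the standard sense: after a localization step, it forces $\gamma_\delta^M$ to doubly cross a nested family of annuli.

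First, by Wilson's algorithm~\cite{PemantleSpanningTree}, the branch $\gamma_\delta^M$ has the same law as the loop-erasure of a random walk in $\Omega_\delta$ (after contracting $(a_\delta b_\delta)$ and $(c_\delta d_\delta)$ to single vertices), started at the contracted vertex corresponding to $(a_\delta b_\delta)$, reflected on $(b_\delta c_\delta)\cup(d_\delta a_\delta)$, and stopped upon reaching $(c_\delta d_\delta)$. On the event $\LA_\delta(z_0,\beta,\alpha)$ there are $o_1,o_2\in\gamma_\delta^M$ with $|o_1-o_2|\le\alpha$ whose connecting arc exits $B(z_0,\beta)$. Covering $\overline\Omega$ by a finite $\alpha$-net $\LN$ of cardinality $O(\alpha^{-2})$, I pick $w\in\LN$ within distance $\alpha$ of $o_1$; then the bad subarc must cross the annulus $B(w,c\beta)\setminus B(w,2\alpha)$ at least twice for a constant $c>0$ depending on $\Omega$, where the constraint $\beta\le \tfrac14\dist((ab),(cd))$ ensures this annulus lies comfortably inside $\Omega$.

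Second, for a fixed $w$, I estimate this double-crossing by iterated Wilson's algorithm: let $\tau$ be the first time $\gamma_\delta^M$ enters $B(w,2\alpha)$, and condition on $\gamma_\delta^M[0,\tau]$. In the slit domain $\Omega_\delta\setminus\gamma_\delta^M[0,\tau]$ the continuation is again a loop-erased reflecting walk, and a second crossing requires its underlying random walk to exit $B(w,c\beta)$ and return to $B(w,2\alpha)$ without hitting $\gamma_\delta^M[0,\tau]\cup(a_\delta b_\delta)\cup(c_\delta d_\delta)$. Since $\gamma_\delta^M[0,\tau]$ is a connected set touching $B(w,2\alpha)$ and joining the starting boundary arc, Corollary~\ref{coro::Beur} bounds this probability at a single scale by $C(\alpha/\beta)^{1/2}$.

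A single-scale Beurling bound of order $(\alpha/\beta)^{1/2}$ is not enough to beat the $O(\alpha^{-2})$ union bound over $\LN$. To conclude, I iterate the argument across the dyadic chain of annuli $B(w,2^{-k+1}c\beta)\setminus B(w,2^{-k}c\beta)$, $k=1,\dots,N$, with $N\asymp\log_2(\beta/\alpha)$: the bad subarc must doubly cross every such annulus, and successive conditionings in Wilson's algorithm combined with Corollary~\ref{coro::Beur} at each scale produce a product bound of polynomial form $(\alpha/\beta)^{\gamma N}$ with $\gamma>0$; choosing $\alpha$ small enough that this beats $\alpha^{-2}$ and summing over $\LN$ yields $\PP[\LA_\delta(z_0,\beta,\alpha)]\le\eps$. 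The main obstacle is precisely this multi-scale iteration: one must carefully track how Wilson's algorithm applies to the successive slit domains obtained by peeling off the portion of $\gamma_\delta^M$ up to its first visit to each nested ball, and verify that at every scale the already-explored curve is sufficiently extensive (and crosses the relevant annulus) for the Beurling estimate Corollary~\ref{coro::Beur} to deliver a uniform single-scale gain that truly compounds across scales.
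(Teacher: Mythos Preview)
Your approach has a genuine gap at the core step. When you condition on $\gamma_\delta^M[0,\tau]$, the continuation is indeed the loop-erasure of a random walk in the slit domain, but that walk is the $h$-transform of simple random walk, conditioned to reach $(c_\delta d_\delta)$ before $\gamma_\delta^M[0,\tau]\cup(a_\delta b_\delta)$. Corollary~\ref{coro::Beur} is a statement about the \emph{unconditioned} reflecting walk; it does not bound the probability, under the conditioned law, of exiting $B(w,c\beta)$ and returning to $B(w,2\alpha)$. Since the starting point $\gamma_\delta^M(\tau)$ is adjacent to the very set you condition to avoid, the conditioning event has small probability, and dividing by it can wipe out the Beurling gain. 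The multi-scale iteration only compounds the problem: at each scale you peel off more of $\gamma_\delta^M$ and condition further, and nothing in your outline controls how the successive $h$-transforms interact with the single-scale bound. You correctly identify this as the ``main obstacle'' but do not resolve it.

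The paper, following Schramm's original argument in~\cite[Lemma~3.4]{SchrammScalinglimitsLERWUST}, sidesteps this entirely by never conditioning on the LERW. It works directly with the \emph{unconditioned} random walk $\LR_\delta$ whose loop-erasure is $\gamma_\delta^M$, slices $\LR_\delta$ by its successive entries into $B(z_0,\beta/4)$ and exits from $B(z_0,\beta)$, and observes that the bad event for the loop-erasure forces the raw walk, at some excursion, to come $\alpha$-close inside $B(z_0,\beta/4)$ to one of the at most $j$ previous excursion-components and then escape $B(z_0,\beta)$ without hitting it. That is a pure random-walk event to which Lemma~\ref{lem::Beur} applies directly, giving $Cj(\alpha/\beta)^k$. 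A separate harmonic-measure lower bound~\eqref{eqn::claim} shows the walk reaches $(c_\delta d_\delta)$ with uniformly positive probability from $\partial B(z_0,\beta)$, so the number $m$ of excursions is geometrically controlled; one chooses $m$ large and then $\alpha$ small. Note also that the lemma is stated for a \emph{fixed} $z_0$: your $\alpha$-net of cardinality $O(\alpha^{-2})$ and the resulting need for a multi-scale product to beat $\alpha^{-2}$ are unnecessary here --- in the application (Lemma~\ref{lem::tightness_LERW1}) the net has size $k(\beta)$ independent of $\alpha$.
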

\begin{proof}
The proof is similar to the proof of~\cite[Lemma 3.4]{SchrammScalinglimitsLERWUST}. By the choice of $\beta$, we may assume that $\dist(B(z_0,\beta),(a_\delta b_\delta))>1/4\dist((ab),(cd))$. The case for $\dist(B(z_0,\beta),(c_\delta d_\delta))>1/4\dist((ab),(cd))$ can be dealt similarly by the reversibility of loop-erased random walk and simple random walk. Let $\LR_\delta$ be the simple random walk from $(a_\delta b_\delta)$ to $(c_\delta d_\delta)$ which generates the loop-erased random walk $\gamma_\delta^M$. Let $t_0=0$. For $j\ge 1$, define inductively \[
s_j:=\inf\left\{t\ge t_{j-1}:\LR_\delta(t)\in B\left(z_0,\beta/4\right)\right\}\quad \text{and}\quad t_j:=\inf\{ t\ge s_{j}:\LR_\delta(t)\notin B(z_0,\beta)\}.
\] 
Let $\tau$ be the hitting time of $\LR_\delta$ at $(c_\delta d_\delta)$. For every positive integer $t\in\N$, define $\text{LE}(\LR_\delta[0,t])$ to be the loop erasure of $\LR_\delta[0,t]$. Define $\LA_{\delta}(z_0,\beta,\alpha;\text{LE}(\LR_\delta[0,t_j]))$ similarly as $\LA_{\delta}(z_0,\beta,\alpha;\gamma_\delta^M)$. For $j\geq 0$, define the event $T_j:=\{t_j\le\tau\}$.  For simplicity, we denote by $Y_j$ the event  $\LA_{\delta}(z_0,\beta,\alpha;\text{LE}(\LR_\delta[0,t_j]))$. Note that $Y_1=\emptyset$ and 
\[\LA_{\delta}(z_0,\beta,\alpha;\gamma_\delta^M)\subset\{\cup_{j=2}^{\infty}(Y_j\cap T_j)\}\subset \left\{T_{m+1}\cup\bigcup_{j=2}^{m}Y_j\right\},\quad\text{ for every }m>1.\]
Thus, we only need to control $\PP[\cup_{j=1}^{m}Y_j]$ and $\PP[T_{m+1}]$.

First, for every $j\ge 1$, denote by $Q_j$ the set consisting of connected components of $\LR_\delta[0,s_{j+1}]\cap B(z_0,\beta)$ that intersect $\partial B(z_0,\beta/4)$ and $\partial B(z_0,\beta)$ and do not contain $\LR_\delta(s_{j+1})$. Note that the cardinality $|Q_j|$ of $Q_j$ is at most $j$. Given $Y^{c}_{j}$, on the event $Y_{j+1}$, the random walk $\LR_\delta$ first comes into the $\alpha$-neighbour of $Q_j$ in $B\left(z_0,\alpha\right)$, then it exits $B(z_0,\beta)$ without hitting any component in $Q_j$.

Combining with Lemma~\ref{lem::Beur}, there exist constants $C>0$ and $k>0$, such that
\[\PP[Y_{j+1}\cond Y^c_j]\le C|Q_j|\left(\frac{\alpha}{\beta}\right)^{k}\le Cj\left(\frac{\alpha}{\beta}\right)^{k}.\]
This implies that 
\begin{equation}\label{eqn::first}
\PP[\cup_{j=2}^{m}Y_j]\le\sum_{j=1}^{m-1}\PP[Y_{j+1}\cond Y^c_j]\le Cm^2\left(\frac{\alpha}{\beta}\right)^{k}.
\end{equation}

Second, for every $z_\delta\in\Omega_\delta$, define
\[h_\delta(z_\delta):=\PP\left[\text{the simple random walk starting from }z_\delta\text{ hits }(c_\delta d_\delta)\text{ before }B\left(z_0,\beta/4\right)\cup (a_\delta b_\delta)\right].\]
Recall that the simple random walk will continue when it hits $(b_\delta c_\delta)\cup(d_\delta a_\delta)$ and thus $h_\delta$ is harmonic on $\Omega_\delta\setminus\{(a_\delta b_\delta)\cup(c_\delta d_\delta)\cup B(z_0,\beta/4)\}$.
Let $\partial B_\delta(z_0,\beta)$ be the collection of vertices on $\Omega_\delta$ closest to $\partial B(z_0,\beta)$. We claim that there exists $0<q<1$, such that for every $\delta>0$,
\begin{equation}\label{eqn::claim}
\min_{z_\delta\in\partial B_\delta(z_0,\beta)}h_\delta(z_\delta)\ge q
\end{equation}
Assuming this is true. Combining with the Markov property of random walk, we have
\begin{equation}\label{eqn::second}
\PP[T_{m+1}]\le (1-q)^{m}.
\end{equation}
Combining~\eqref{eqn::first} and~\eqref{eqn::second}, for every $\eps>0$, by first choosing $m$ large enough and then choosing appropriate $\alpha=\alpha(\beta,\epsilon)$, we have
\[\PP[\LA_{\delta}(z_0,\beta,\alpha;\gamma_\delta^M)]\le\eps.\]
This completes the proof.

We are left to prove~\eqref{eqn::claim}. Assuming~\eqref{eqn::claim} does  not hold. Then, there exist $\delta_n\to 0$ and $z_{\delta_n}\in \partial B_{\delta_n}(z_0,\beta)$, such that 
\[\lim_{n\to\infty}h_{\delta_n}(z_{\delta_n})=0.\]
By the minimal principle for discrete harmonic functions, we can find a path $r_{\delta_n}$ connecting $z_{\delta_n}$ to $B\left(z_0,\beta/4\right)$ such that 
\[\max_{v\in r_{\delta_n}}h_{\delta_n}(v)\le h_{\delta_n}(z_{\delta_n}).\]
By extracting a subsequence, we may assume $r_{\delta_n}\subset B(z_0,\beta)$ and $r_{\delta_n}$ converges to a compact set $K$ in Hausdorff metric and $h_{\delta_n}$ converges to a harmonic function $h$ locally uniformly. By Beurling estimate, we have $h$ equals $1$ on $(cd)$ and equals $0$ on $(ab)\cup \partial B\left(z_0,\beta/4\right)$.
Since $\diam(r_{\delta_n})\ge \beta/4$, we have $\diam(K)\ge \beta/4$. 
Choose a sequence of points $\{w_m\}\subset\Omega$ such that $w_m\to w\in K\cap B^{c}\left(z_0,\beta/4\right)$. Denote by $w_m^{\delta_n}$ the discrete approximation of $w_m$ on $\Omega_{\delta_n}$. By Lemma~\ref{lem::Beur}, there exist constants $C>0$ and $k>0$, such that 
\[h_{\delta_n}(w_m^{\delta_n})\le h_{\delta_n}(z_{\delta_n})+C\left(\frac{\dist(w_m^{\delta_n},r_{\delta_n})}{\beta}\right)^k.\]
By letting $n\to\infty$, we have
\[h(w_m)\le C\left(\frac{\dist(w_m,K)}{\beta}\right)^k.\]
By letting $m\to\infty$, we have 
\begin{equation}\label{eqn::contra1}
h(w)=0.
\end{equation}
There are two cases:
\begin{itemize}
\item
If $K\cap B^c\left(z_0,\beta/4\right)\cap\Omega\neq\emptyset$, we can choose $w\in K\cap B^c\left(z_0,\beta/4\right)\cap\Omega$. Then~\eqref{eqn::contra1} contradicts the minimum principle since we have $h$ equals $1$ on $(cd)$.
\item
If $K\cap  B^c\left(z_0,\beta/4\right)\subset\partial\Omega$, in this case, $\Omega_\delta\setminus\{(a_\delta b_\delta\cup  B(z_0,\beta/4)\}$ is simply connected. Thus, we can construct the harmonic conjugate of  $h_\delta$ as in Lemma~\ref{lem::harmonic_cvg}. Then $ h$ only equals $0$ on $(ab)\cup \partial B(z_0,\beta/4)$ by the same argument as in Lemma~\ref{lem::harmonic_unique}.
Since $B(z_0,\beta)\cap(ab)=\emptyset$, we have that $w\in (ba)$. This is a contradiction to~\eqref{eqn::contra1}. 
\end{itemize}
We complete the proof of~\eqref{eqn::claim}.
\end{proof}
\begin{proof}[Proof of Lemma~\ref{lem::tightness_LERW1}]
The proof is similar to the proof of~\cite[Lemma 3.12]{LawlerSchrammWernerLERWUST}, which is essentially contained in~\cite[Theorem 1.1]{SchrammScalinglimitsLERWUST}. 
Suppose $\Upsilon:(0,\infty)\to (0,1]$ is an increasing function. For every simply connected domain $D$, denote by $\chi_{\Upsilon}(D)$ the space of simple curves $r :[0,1]\to\overline D$, such that for every $0\le s_1<s_2\le 1$, 
\[\dist(r[0,s_1],r[s_2,1])\ge\Upsilon(\diam(r[s_1,s_2])).\]
We claim that for $\epsilon>0$, there exists $\Upsilon$ such that
\begin{equation}\label{eqn::tight}
\PP[\gamma_{\delta}^{M}\in\chi_{\Upsilon}(\Omega)]\ge 1-\epsilon,\quad\text{for all }\delta. 
\end{equation}
Roughly speaking, this estimate says that $\gamma_{\delta}^M$ does not create ``almost bubble" with high probability. 
We choose $R>0$ such that $\Omega_{\delta}\subset B(0,R)$ for all $\delta$. Then, we have 
\[\PP[\gamma_{\delta}^{M}\in\chi_{\Upsilon}(B(0,R))]\ge1-\epsilon.\] 
By the same argument as in~\cite[Lemma 3.10]{LawlerSchrammWernerLERWUST}, the set $\chi_{\Upsilon}(B(0,R))$ is a compact set of simple curves. This completes the proof of tightness.

Second, we prove that any subsequential limit $\gamma$ intersects $[ab]\cup[cd]$ only at its two ends. We only prove that $\gamma$ intersects $[cd]$ at its one end. The proof for $[ab]$ is similar by the reversibility of loop-erased random walk. Denote by $\LR_\delta$ the random walk which generates the loop-erased random walk $\gamma_\delta^M$. For $m\ge 1$, define 
\[\tau^m_\delta:=\inf\{t:\dist(\gamma_\delta^M(t),(c_\delta d_\delta))=2^{-2m}\}\]
and let $\tau_\delta$ be the hitting time of $\gamma^M_\delta$ at $(c_\delta d_\delta)$. Similarly, define \[\tilde\tau^m_\delta:=\inf\{t:\dist(\LR_\delta(t),(c_\delta d_\delta))=2^{-2m}\}\]
and let $\tilde\tau_\delta$ be the hitting time of $\LR_\delta$ at $(c_\delta d_\delta)$. By Lemma~\ref{lem::Beur}, there exist constants $C>0$ and $c>0$, such that for every $m>0$, we have
\[\PP[\diam(\LR_\delta[\tilde\tau_\delta^m,\tilde\tau_\delta])\ge 2^{-m}]\le C 2^{-cm}.\]
Since $\gamma_\delta^M[\tau^m_\delta,\tau_\delta]\subset\LR[\tilde\tau^m_\delta,\tilde\tau_\delta]$, we have
\[\PP[\diam(\gamma^M_\delta[\tau_\delta^m,\tau_\delta])\ge 2^{-m}]\le C 2^{-cm}.\]
Now, suppose $\gamma_{\delta_n}^M$ converges to $\gamma$ in law and we may couple them together such that $\gamma_{\delta_n}^M$ converges to $\gamma$ almost surely. We define $\tau^m$ and $\tau$ for $\gamma$ similarly as before. Then, we have 
\[\{\diam(\gamma[\tau^{m},\tau])\ge 2^{-m}\}\subset\cup_{i=1}^{\infty}\cap_{j=i}^{\infty}\{\diam(\gamma_{\delta_j}^M[\tau_{\delta_j}^{m},\tau_{\delta_j}])\ge 2^{-m}\}.\]
This implies that
\[\PP[\diam(\gamma[\tau^{m},\tau])\ge 2^{-m}]\le C2^{-cm}.\]
Thus, we have
\[\sum_{m=1}^{\infty}\PP[\diam(\gamma[\tau^{m},\tau])\ge 2^{-m}]<+\infty.\]
By Borel-Cantelli lemma, we have that $\gamma$ hits $[cd]$ only at its end almost surely.

It remains to prove~\eqref{eqn::tight}. It suffices to show it for any sequence $\delta_n\to 0$. For $\alpha, \beta>0$, denote by $\LA_{\delta_n}(\beta,\alpha)$ the event that there exist $0\le s_{1}<s_{2}\le 1$ such that $\dist(\gamma_{\delta_n}^{M}[0,s_1],\gamma_{\delta_n}^{M}[s_2,1])\le\alpha$ but $\diam(\gamma_{\delta_n}^{M}[s_1,s_2])\ge\beta$. Note that we can choose $\{z_1,\ldots,z_k\}\subset\Omega$, where $k=k(\beta)$, such that every ball with center in $\Omega$ and radius $2\alpha$ is contained in one of the $k$ balls $B(z_j,\frac{\beta}{2})$ for $j=1,\ldots,k$. Recall the definition of $\LA_{\delta}(z,\beta,\alpha;\gamma_\delta^M)$ in Lemma~\ref{lem::tech}. Note that $\LA_{\delta_n}(\beta,\alpha)\subset\cup_{j=1}^{k}\LA_{\delta_n}(z_j,\beta,\alpha;\gamma_\delta^M)$. 
Thus, by Lemma~\ref{lem::tech}, for every $m>0$, we can choose $\alpha_{m}$ such that
\[\PP[\LA_{\delta_n}(2^{-m-1}\dist((ab),(cd)),\alpha_{m})]\le \frac{\epsilon}{2^{m}},\quad\text{for all }n. \]
We choose $\Upsilon$ such that $\Upsilon(t)<\alpha_{m}$ for every $t\le 2^{-m-1}\dist((ab),(cd))$. Then, we have
\[\PP[\gamma_{\delta_n}^{M}\in\chi_{\Upsilon}(\Omega_{\delta_n})]\ge \PP[\left(\cup_{m=1}^{\infty}\LA_{\delta_n}(2^{-m-1}\dist((ab),(cd)),\alpha_m)\right)^{c}]\ge 1-\epsilon.\]
This gives~\eqref{eqn::tight} and completes the proof.
\end{proof}

Fix $o_1,o_2\in\overline\Omega$ and suppose $o_1^\delta,o_2^\delta$ are the approximations of $o_1$ and $o_2$ (we allow $o_1$ and $o_2$ to be $(ab)$ and $(cd)$ and allow $o_1^\delta$ and $o_2^\delta$ to be $(a_\delta b_\delta)$ and $(c_\delta d_\delta)$). Denote by $\gamma_{\delta}^{o_1,o_2}$ the branch of the $\ust$ between $o_1^\delta$ and $o_2^\delta$. Recall that the wired boundary arcs $(a_\delta b_\delta)$ and $(c_\delta d_\delta)$ belong to the $\ust$. Thus, if $\gamma_{\delta}^{o_1,o_2}$ hits boundary arcs $(a_{\delta}b_{\delta})$ or $(c_{\delta}d_{\delta})$, it always contains subarcs of boundary arcs so that it is a continuous simple curve. 
\begin{lemma}\label{lem::simple} 
Fix $o_1,o_2\in\overline\Omega$ and suppose $o_1^\delta,o_2^\delta$ are the approximations of $o_1$ and $o_2$. Then $\left\{\gamma_{\delta}^{o_1,o_2}\right\}_{\delta>0}$  is tight. Moreover, any subsequential limit is a simple curve in $\overline\Omega$.
\end{lemma}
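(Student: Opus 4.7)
The plan is to express $\gamma_\delta^{o_1,o_2}$ via Wilson's algorithm as a bounded concatenation of reflected loop-erased random walks, segments of the wired arcs, and possibly $\gamma_\delta^M$, and then to apply the tightness and no-bubble estimates developed for Lemma~\ref{lem::tightness_LERW1}.

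First, I would use Wilson's algorithm with the wired arcs $(a_\delta b_\delta)$ and $(c_\delta d_\delta)$ serving as two distinguished super-vertices (already joined in the tree by $\gamma_\delta^M$). Run a reflected LERW $\tilde{\gamma}_1^\delta$ from $o_1^\delta$ (random walk reflected on $(b_\delta c_\delta)\cup(d_\delta a_\delta)$) stopped upon hitting $(a_\delta b_\delta)\cup(c_\delta d_\delta)$, then a second reflected LERW $\tilde{\gamma}_2^\delta$ from $o_2^\delta$ stopped upon hitting $\tilde{\gamma}_1^\delta \cup (a_\delta b_\delta)\cup(c_\delta d_\delta)$. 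The branch $\gamma_\delta^{o_1,o_2}$ is the unique path in the tree from $o_1^\delta$ to $o_2^\delta$: it is contained in $\tilde{\gamma}_1^\delta \cup \tilde{\gamma}_2^\delta \cup \gamma_\delta^M \cup [a_\delta b_\delta] \cup [c_\delta d_\delta]$, and is a concatenation of at most five pieces, each being a reflected LERW, a sub-arc of a wired arc, or a sub-arc of $\gamma_\delta^M$. The case where $o_i^\delta$ already lies on a wired arc is degenerate and reduces to one of these sub-cases.

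Next, I would establish tightness of each reflected LERW piece $\tilde{\gamma}_i^\delta$ by repeating the proof of Lemma~\ref{lem::tightness_LERW1}. The argument there rests on two inputs only: the Beurling estimate of Corollary~\ref{coro::Beur} and the no-almost-bubble estimate of Lemma~\ref{lem::tech}. Both transfer essentially verbatim when the terminal set is enlarged by $\tilde{\gamma}_1^\delta$, since the critical estimates are local around a would-be almost-bubble and are not sensitive to the structure of the terminal set. This places each $\tilde{\gamma}_i^\delta$ in a common compact family $\chi_\Upsilon$ of simple curves with probability at least $1-\epsilon$, uniformly in $\delta$. Combined with the tightness of $\gamma_\delta^M$ from Lemma~\ref{lem::tightness_LERW1} and the convergence of the wired arcs as curves to $[ab]$ and $[cd]$, this yields the tightness of $\gamma_\delta^{o_1,o_2}$.

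For the simplicity of subsequential limits, I would apply the no-bubble argument directly to $\gamma_\delta^{o_1,o_2}$: in the discrete it is already a simple path (a branch of a tree), and the Beurling-based bound on almost self-touches --- executed around an interior would-be bubble exactly as in the proof of Lemma~\ref{lem::tightness_LERW1} --- places $\gamma_\delta^{o_1,o_2}$ inside $\chi_\Upsilon$ with high probability, whose compactness forces subsequential limits to be simple curves in $\overline{\Omega}$. The main obstacle will be organizing the case analysis according to where $o_1, o_2$ sit (interior, reflection boundary, or wired arc) and which target each LERW in Wilson's algorithm actually hits; the boundary cases either reduce to Lemma~\ref{lem::tightness_LERW1} or to degenerate reflected LERWs whose control follows from the same $C^1$-based Beurling input already used in Lemmas~\ref{lem::Beur} and~\ref{lem::tech}.
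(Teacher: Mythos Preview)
Your proposal is essentially correct and follows the same Wilson-algorithm decomposition as the paper, but there are two points where the paper's execution is cleaner and one slip worth fixing. First, since you generate $\gamma_\delta^M$ before $\tilde\gamma_1^\delta$, the target set for $\tilde\gamma_1^\delta$ must include $\gamma_\delta^M$ as well as the wired arcs; otherwise this is not Wilson's algorithm. Second, rather than redoing the proof of Lemma~\ref{lem::tightness_LERW1} for each $\tilde\gamma_i^\delta$, the paper observes that once $\gamma_\delta^M$ has been generated it separates $\Omega_\delta$ into two components each carrying a Dobrushin boundary condition, so the subsequent LERW from $o_i^\delta$ lives in a Dobrushin domain and Lemma~\ref{lem::tightness_Dobru} applies as a black box --- yielding both tightness and, crucially, that the limiting piece hits $[ab]\cup\gamma^M\cup[cd]$ only at its terminal point.

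This last ``hits only at its end'' property is what the paper uses to deduce simplicity of the limit: each piece converges to a simple curve, and consecutive pieces meet only at their shared endpoint, so the concatenation is simple. Your proposed alternative --- running the no-bubble estimate of Lemma~\ref{lem::tech} directly on the full path $\gamma_\delta^{o_1,o_2}$ --- is not straightforward, because $\gamma_\delta^{o_1,o_2}$ is not the loop-erasure of a single random walk (it may splice together pieces of $\gamma_\delta^M$, of $\tilde\gamma_i^\delta$, and of the wired arcs), and the excursion-counting argument in Lemma~\ref{lem::tech} is specific to a single underlying walk. The piece-by-piece route avoids this issue entirely.
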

\begin{proof}
It is equivalent to show that the law of $\left\{\gamma_{\delta}^{o_1,o_2}\right\}_{\delta>0}$ is relatively compact.  We fix a subsequence $\left\{\gamma_{\delta_n}^{o_1,o_2}\right\}_{n\ge 0}$. From Lemma~\ref{lem::tightness_LERW1}, the sequence $\{\gamma_{\delta_n}^M\}_{n\ge 0}$ is tight. By extracting a subsequence, we may assume that $\left\{\gamma_{\delta_n}^{M}\right\}_{n\ge 0}$ converges to a continuous simple curve $\gamma^{M}$ in law and therefore we can couple them together such that $\gamma_{\delta_n}^M$ converges to $\gamma^{M}$ almost surely. 
We only need to consider two cases.
\begin{itemize}
\item
At least one of $o_1$ and $o_2$ equals $(ab)$ or $(cd)$. 
\item
Both $o_1$ and $o_2$ belong to the interior of $\Omega$.
\end{itemize}

For the first case, we may assume $o_2$ equals $(ab)$. We first run the loop-erased random walk $\gamma_{\delta_n}^M$ and then run the loop-erased random walk from $o_1^{\delta_n}$ to $(a_{\delta_n} b_{\delta_n})\cup\gamma_{\delta_n}^M\cup(c_{\delta_n} d_{\delta_n})$ which we denote by $\tilde\gamma^{o_1}_{\delta_n}$.
From Lemma~\ref{lem::tightness_Dobru}, the sequence $\left\{\tilde\gamma^{o_1}_{\delta_n}\right\}_{n\ge 0}$ is tight and any subsequential limit $\tilde{\gamma}^{o_1}$ of ${\left\{\tilde\gamma^{o_1}_{\delta_n}\right\}_{n\ge 0}}$ hits $[ab]\cup\gamma^M\cup[cd]$ at $\tilde{\gamma}^{o_1}$'s end. We may couple them together such that $\tilde\gamma^{o_1}_{\delta_n}$ converges to $\tilde\gamma^{o_1}$ almost surely. Note that $\gamma_{\delta_n}^{o_1,(ab)}$ may consist of subsegments of $\gamma^{M}_{\delta_n}$ and $\tilde\gamma^{o_1}_{\delta_n}$ and $(c_{\delta_n}d_{\delta_n})$. From the fact that $\gamma^{M}_{\delta_n}$ converges to $\gamma^M$ and $\tilde\gamma^{o_1}_{\delta_n}$ converges to $\tilde\gamma^{o_1}$ almost surely and the fact that $\tilde\gamma^{o_1}$ intersects $[ab]\cup\gamma^M\cup[cd]$ at $\tilde{\gamma}^{o_1}$'s end almost surely, we have that $\gamma_{\delta_n}^{o_1,(ab)}$ converges to a continuous simple curve almost surely. This completes the proof of the first case.

For the second case, We first run $\gamma_{\delta_n}^M$ and $\tilde\gamma_{\delta_n}^{o_1}$ in the same way as above and then run the loop-erased random walk from $o_2^{\delta_n}$ to $(a_{\delta_n} b_{\delta_n})\cup\gamma_{\delta_n}^M\cup\tilde\gamma_{\delta_n}^{o_1}\cup(c_{\delta_n} d_{\delta_n})$ which we  denote by $\tilde\gamma^{o_2}_{\delta_n}$. The remaining proof is similar to the first case (by replacing $\gamma_{\delta_n}^M$ by $\gamma_{\delta_n}^M\cup\tilde\gamma_{\delta_n}^{o_1}$ and replacing $\tilde\gamma_{\delta_n}^{o_1}$ by $\tilde\gamma_{\delta_n}^{o_2}$). This completes the proof.
\end{proof}

\subsection{Proof of Propositions~\ref{prop::tightness} and~\ref{prop::tightness_LERW}}\label{app::C2}
In this section, we assume the following setup. Suppose $(\Omega;a,b,c,d)$ is a quad such that $\partial\Omega$ is $C^1$ and simple and suppose that a sequence of discrete quads $\{(\Omega_{\delta}; a_{\delta}, b_{\delta}, c_{\delta}, d_{\delta})\}_{\delta>0}$ converges to $(\Omega; a, b, c, d)$ as in the sense of~\eqref{eqn::topology}. Note that here we require that the boundary $\partial\Omega$ is $C^1$ and simple because we need to apply Lemma~\ref{lem::Beur} both on primal lattice and dual lattice. We consider the $\ust$ on $\Omega_\delta$ with alternating boundary conditions. We will complete the proof of Proposition~\ref{prop::tightness} and Proposition~\ref{prop::tightness_LERW}. 
To this end, we first show Lemmas~\ref{lem::tightness_trunk} and~\ref{lem::subseqlimit_avoids_c}.  Recall that $\mathbf{trunk}_{\delta}\left(\eps\right)$, $\mathbf{trunk}_{0}\left(\frac{1}{n}\right)$ and $\mathbf{trunk}$ for the UST and $\mathbf{trunk}_{\delta}^*\left(\eps\right)$, $\mathbf{trunk}^{*}_{0}\left(\frac{1}{n}\right)$ and $\mathbf{trunk}^{*}$ for dual forest are defined in Lemma~\ref{lem::tightness_trunk}.
\begin{proof}[Proof of Lemma~\ref{lem::tightness_trunk}]
The proof of this lemma is almost the same as the proof of~\cite[Theorem~11.1]{SchrammScalinglimitsLERWUST}. We summarize the strategy of the proof of~\cite[Theorem~11.1]{SchrammScalinglimitsLERWUST} briefly. There are two main steps.
\begin{enumerate}
\item \label{step1}
Showing that with large probability, the $\mathbf{trunk}_{\delta}\left(\eps\right)$ is contained in the minimal subtree which contains a collection of vertices $V_\delta(\eps)$ in $\Omega_\delta$ that forms a kind of dense set in $\Omega$ and showing a similar result about $\mathbf{trunk}^{*}_{\delta}\left(\eps\right)$. Here we require that $V_\delta$ satisfies the following condition: for a small constant $\delta_0$ which depends on $\eps$, for every $z_\delta\in\Omega_\delta$, there is a vertex  $v_\delta\in V_\delta(\eps)$ such that $|v_\delta-z_\delta|<\delta_0$.
Note that~\eqref{eqn::trunktrunkstar_C1regularity} is equivalent to the following equation:
\[\lim_{\tilde\eps\to 0}\lim_{\eps\to 0}\varlimsup_{\delta\to 0}\PP[\text{the Hausdorff distance between } \mathbf{trunk}_{\delta}(\eps)\cap\mathbf{trunk}^{*}_{\delta}(\eps)\text{ is less than }\tilde\eps]=0.\]
Thus, the results in this step reduce the estimate of the distance between $\mathbf{trunk}_\delta(\eps)$ and $\mathbf{trunk}^*_{\delta}(\eps)$ to an estimate of the distance between a subtree of the $\ust$ and a subforest of the dual forest.
\item \label{step2}
Reducing the estimate about a subtree of the $\ust$ and a subforest of the corresponding dual forest in Step~\ref{step1} to estimates about random walks on $\Omega_\delta$ and $\Omega_\delta^{*}$ and completing these estimates.
\end{enumerate}
Now, we show how to carry out these steps in our setting.
\paragraph*{Step~\ref{step1}.}The proof, as being pointed out in the proof of~\cite[Theorem~11.1(i)]{SchrammScalinglimitsLERWUST}, is the same as the proof of~\cite[Theorem~10.2]{SchrammScalinglimitsLERWUST}, which only uses the discrete Beurling estimate. On the primal graph, we first generate the branch $\gamma_\delta^M$ between  $(a_\delta b_\delta)$ and $(c_\delta d_\delta)$. Then, the Beurling estimate in $\Omega_\delta\setminus{\gamma_\delta^M}$ is from the second item of  Lemma~\ref{lem::Beur}. Recall that we denote by $\Omega_\delta^L$ and $\Omega_\delta^R$ the two connected components of $\Omega_\delta\setminus\gamma_\delta^M$. The boundary conditions of them are all Dobrushin boundary conditions. Denote by $\mathbf{trunk}^L_{\delta}\left(\eps\right)$ the $\epsilon-$trunk of the $\ust$ in $\Omega_\delta^L$ and denote by $\mathbf{trunk}^R_{\delta}\left(\eps\right)$ similarly. Thus, the proof of~\cite[Theorem~10.2]{SchrammScalinglimitsLERWUST} works for $\mathbf{trunk}^L_{\delta}\left(\eps\right)$ and $\mathbf{trunk}^R_{\delta}\left(\eps\right)$ as well and implies the following: For every $\epsilon>0$, there exists $\hat\delta>0$ with the following property. For $i\in\{L,R\}$,  let  $V_\delta^i$  be a vertex set such that for every $w\in\Omega_\delta^i$, there exists a point $z\in V_\delta^i$  satisfying $\dist(w,z)\le\hat\delta$, then,
\[
\PP\left[\mathbf{trunk}^i_{\delta}\left(\eps\right)\subset T^{i}_{\delta}\right]\ge 1-\epsilon,
\]
where $T^{i}_{\delta}$ the minimal subtree of the $\ust$ which contains $V_\delta^i$. 
Denote by $V_\delta=V_\delta^L\cup V_\delta^R\cup\{a_\delta,b_\delta,c_\delta,d_\delta\}$. Recall that we always require the $\epsilon-$trunk contains the wired boundary arc. By Wilson's algorithm, we can couple the $\ust$s in $\Omega_\delta^L$ and $\Omega_\delta^R$ and $\Omega_\delta$ together, such that
\[\mathbf{trunk}_{\delta}(\eps)\subset\mathbf{trunk}^L_{\delta}(\eps)\cup\mathbf{trunk}^R_{\delta}(\eps).\]
Denoted by $T_\delta^V$ the the minimal subtree of the $\ust$ on $\Omega_\delta$ which contains $V_\delta$. Then, we have
\[
\PP\left[\mathbf{trunk}_{\delta}\left(\eps\right)\subset T^{V}_{\delta}\right]\ge 1-2\epsilon.
\]
On the dual graph $(\Omega_\delta^*;a_\delta^*,b_\delta^*,c_\delta^*,d_\delta^*)$, the dual forest can be obtained as follows. We denote by $\tilde\Omega_\delta^*$ the graph obtained from $\Omega_\delta^*$ by wiring $(b^*_\delta c_\delta^*)$ and $(d_\delta^* a_\delta^*)$ as one vertex and keeping the incident relation with other vertices. We generate the $\ust$ on $\Omega_\delta^*$ and then view its edges as edges on $\Omega_\delta^*$. Note that in this case, there are two connected components for the $\ust$ if we view it on $\Omega_\delta^*$. This is exactly the dual forest. The discrete Beurling estimate for the dual forest  is obtained from Lemma~\ref{lem::Beur} similarly by considering the random walk on $\tilde\Omega_\delta^*$ and hence the conclusion about $\mathbf{trunk}^{*}_{\delta}\left(\eps\right)$ is similar.
\paragraph*{Step~\ref{step2}.}
As explained in the first paragraph of the proof of~\cite[Theorem~11.1]{SchrammScalinglimitsLERWUST}, the remaining proof is the same as the proof of~\cite[Theorem~10.7]{SchrammScalinglimitsLERWUST}. As explained in the third paragraph of the proof of ~\cite[Theorem~10.7]{SchrammScalinglimitsLERWUST}, we only need to prove the following estimate on the primal graph $\Omega_\delta$ and the same estimate on $\Omega_\delta^*$: For every $a_1,a_2\in\Omega$, denote by $a_1^{\delta},a_2^{\delta}$ the corresponding approximations on $\Omega_\delta$ and by $\beta_\delta$ the branch in the $\ust$ between $a_1^{\delta}$ and $a_2^\delta$, for every $v_\delta\in\Omega_\delta\setminus((a_\delta b_\delta)\cup(c_\delta d_\delta))$, denote by $J(v_{\delta}, \beta_{\delta})$ the branch in the $\ust$ from $v_\delta$ to $\beta_\delta$. Here, if $\beta_\delta$ intersects $(a_\delta b_\delta)$ or $(c_\delta d_\delta)$, we require that $(a_\delta b_\delta)\subset\beta_\delta$ or $(c_\delta d_\delta)\subset\beta_\delta$. This is natural by our definition of $\ust$ with $(a_\delta b_\delta)$ wired and $(c_\delta d_\delta)$ wired. Then, for every $s>0$ and $t>0$, we have
\begin{align*}
\lim_{h\to 0}\varlimsup_{\delta\to 0}\PP\big[&\exists v_\delta\in B(a_1^\delta,s)^{c} \cap B(a_2^\delta,s)^{c} \cap\Omega_\delta\setminus((a_\delta b_\delta)\cup(c_\delta d_\delta))\text{ such that }\\
&\dist(v_\delta,\beta_\delta)<h\text{ and }\diam(J(v_{\delta}, \beta_{\delta}))>t \big]=0.
\end{align*}
The proof of this equation is similar to the proof of~\cite[Theorem~10.7]{SchrammScalinglimitsLERWUST}. In~\cite{SchrammScalinglimitsLERWUST}, the main inputs of the proof of~\cite[Theorem~10.7]{SchrammScalinglimitsLERWUST} are~\cite[Lemma~10.8, Corollary~10.6]{SchrammScalinglimitsLERWUST} whose proof requires:
\begin{itemize}
\item[(a)] discrete Beurling estimate;
\item[(b)] estimates of discrete harmonic function, see~\cite[Lemma~10.9]{SchrammScalinglimitsLERWUST};
\item[(c)] the fact that any subsequential limit of the branches of the UST in Hausdorff metric are simple curves. 
\end{itemize}
In our setup, the discrete Beurling estimate (a) is true due to Lemma~\ref{lem::Beur}; the estimates of discrete harmonic function (b) comes from~\cite[Lemma~10.9]{SchrammScalinglimitsLERWUST}  directly (in particular, we do not need the boundary regularity in (b));  and the fact (c) that any subsequential limit of the branches of the UST is  are simple curves also holds due to Lemma~\ref{lem::simple} and its analogue for the dual forest. Therefore, the proof works in our setup as well. 
This completes the proof.
\end{proof}



Next, we will prove Lemma~\ref{lem::subseqlimit_avoids_c}. Recall that $\eta_\delta^L$ is the Peano curve from $a_\delta^{\diamond}$ to $d_\delta^{\diamond}$ and that $\gamma_\delta^M$ is the branch of the $\ust$ connecting $(a_\delta b_\delta)$ to $(c_\delta d_\delta)$. To estimate the probability in~\eqref{eqn::subseqlimit_avoids_c}, we will use Wilson's algorithm which relates $\gamma_{\delta}^M$ to loop-erased random walk, as described in the proof of Lemma~\ref{lem::YconditionalXdiscrete}.
%
\begin{proof}[Proof of Lemma~\ref{lem::subseqlimit_avoids_c}]
The proof is almost the same as the proof of~\cite[Lemma~10.9]{SchrammScalinglimitsLERWUST}.
We denote by $\tilde{\Omega}_{\delta}$ the graph obtained from $\Omega_\delta$ by viewing $(a_\delta b_\delta)$ as one vertex and keeping the incident relation from $(a_\delta b_\delta)$ to other vertices. Note that here we do not view $(c_\delta d_\delta)$ as one vertex. We denote by $\tilde{\LR}$ the simple random walk on $\tilde{\Omega}_{\delta}$ which ends at $(c_\delta d_\delta)$. Denoye by $\PP^{z_\delta}$ the law of the simple random walk starting from $z_\delta$. Define
\[h_{\delta,\eps}(z_\delta):=\PP^{z_\delta}[\tilde\LR\text{ hits }B(c_\delta,\eps)\text{ before }(c_\delta d_\delta)].\]
Note that the right boundary of $\eta_\delta^L$ equals to $\gamma_\delta^M$. By Wilson's algorithm, $\gamma_\delta^M$ can be generated by the loop-erasure of $\tilde\LR$. Thus, it suffices to show $ h_{\delta,\eps}((a_\delta b_\delta))\to 0$ when letting $\delta\to 0$ and then letting $\eps\to 0$.

Fix $r>\eps$ and we denote by $\overline\LR$ the simple random walk on $\tilde{\Omega}_{\delta}$ which ends at $(c_\delta d_\delta)\cap B^c(c_\delta,r)$. We define 
\[\overline h_{\delta,\eps}(z_\delta):=\PP[\overline\LR\text{ hits }B(c_\delta,\eps)\text{ before }(c_\delta d_\delta)].\]
Then, we have
\[h_{\delta,\eps}\le \overline h_{\delta,\eps}\quad\text{on }\tilde\Omega_\delta.\]
Thus, it suffices to show that $\overline h_{\delta,\eps}((a_\delta b_\delta))\to 0$ when letting $\delta\to 0$ and then letting $\eps\to 0$.

The remainning argument is same as the argument in~\cite[Lemma~10.9]{SchrammScalinglimitsLERWUST}. Since $\overline h_{\delta,\eps}$ is a discrete harmonic function on $\tilde\Omega\setminus\{B(c_\delta,\eps)\cup((c_\delta d_\delta)\cap B^c(c_\delta,r))\}$ and equals $0$ on $(c_\delta d_\delta)\cap B^c(c_\delta,r)$ and equals $1$ on $B(c_\delta,\eps)$. By maximum principle, there is a path from $(a_\delta b_\delta)$ to $B(c_\delta,\eps)$, which we denote by $l_\delta$, such that for every $z_\delta\in l_\delta$, we have
\begin{equation*}\overline h_{\delta,\eps}(z_\delta)\ge \overline h_{\delta,\eps}((a_\delta b_\delta)).
\end{equation*}
Next, we will bound the discrete Dirichlet energy of $\overline h_{\delta,\eps}$. Note that there exists a constant $C>0$, which only depends on $\Omega$, such that there are at least $\frac{C}{\delta}$ paths connecting $(c_\delta d_\delta)\cap B^c(c_\delta,r)$ to $l_\delta$ and each path has length less than $\frac{C}{\delta}$. This implies that by Cauchy-Schwarz inequality, the discrete Dirichlet energy of $\overline h_{\delta,\eps}$ satisfies the following bound
\begin{equation}\label{eqn::aux111}
\sum_{\substack{x_\delta,y_\delta\in \tilde\Omega_\delta\\ x_\delta\sim y_\delta}}|\overline h_{\delta,\eps}(x_\delta)-\overline h_{\delta,\eps}(y_\delta)|^2\ge C \overline h_{\delta,\eps}((a_\delta b_\delta)).
\end{equation}
Note that $\overline h_{\delta,\eps}$ is the discrete function with minimal discrete Dirichlet energy over the set
\[\{r_\delta:\tilde\Omega\to\R:r_\delta\text{ equals }0\text{ on }(c_\delta d_\delta)\cap B^c(c_\delta,r)\text{ and equals }1\text{ on }B(c_\delta,\eps)\}.\]
We define a special discrete function $r_\delta$ as follows: $r_\delta$ equals $1$ on $B(c_\delta,\eps)$ and equals $0$ on $B^c(c_\delta,r)$. For $z_\delta\in B(c_\delta,r)\setminus B(c_\delta,\eps)$, we define $r_\delta(z_\delta):=\frac{\log(r/|z_\delta-c_\delta|)}{\log(r/\eps)}$. As in~\cite[Lemma~10.9]{SchrammScalinglimitsLERWUST}, the discrete Dirichlet energy of $r_\delta$ satisfies the following bound: there exists a constant $C>0$,
\[\sum_{\substack{x_\delta,y_\delta\in \tilde\Omega_\delta\\ x_\delta\sim y_\delta}}|r_\delta(x_\delta)-r_\delta(y_\delta)|^2\le C \frac{1}{\log|r/\eps|}.\]
Thus, combining with~\eqref{eqn::aux111}, we have
\[\overline h_{\delta,\eps}((a_\delta b_\delta))\le C\frac{1}{\log|r/\eps|}.\]
This completes the proof.
\end{proof}
\begin{figure}[ht!]
\begin{center}
\includegraphics[width=0.4\textwidth]{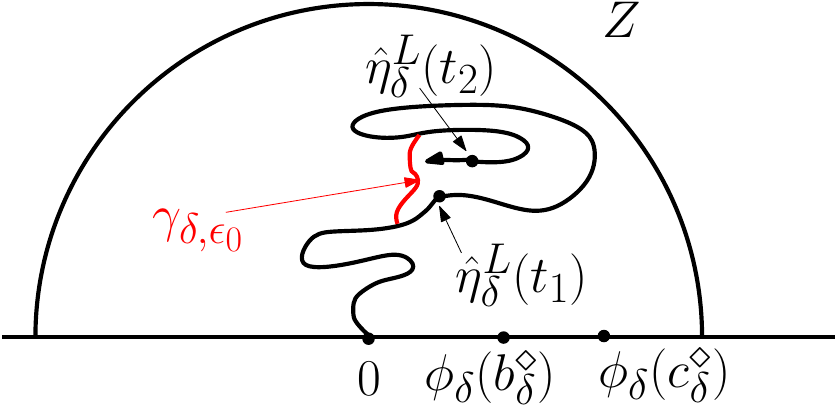}
\includegraphics[width=0.4\textwidth]{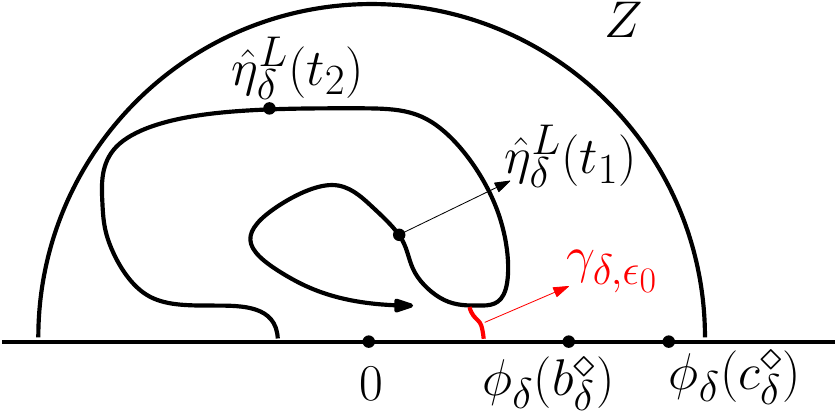}
\includegraphics[width=0.4\textwidth]{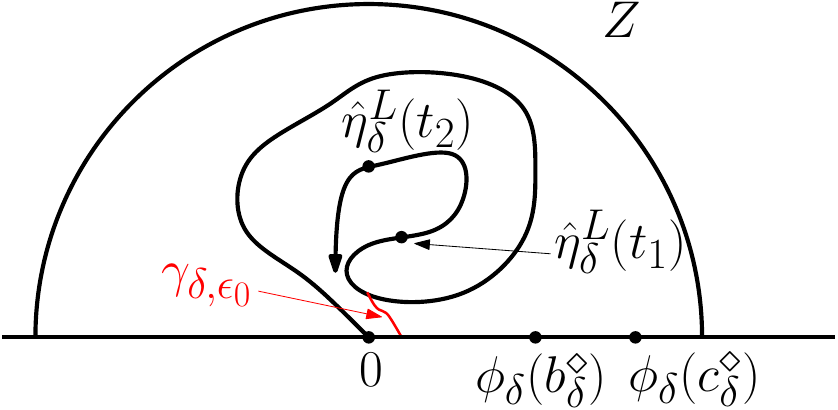}
\includegraphics[width=0.4\textwidth]{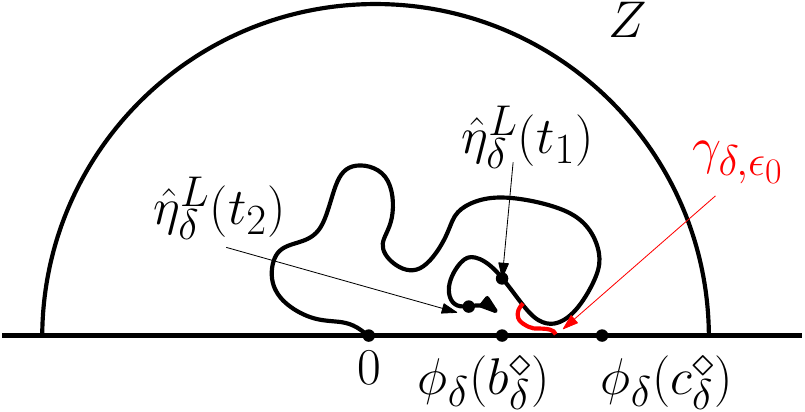}
\end{center}
\caption{\label{fig::fourevents} 
The left-up figure corresponds to the event $\{\LA^{\delta,\epsilon}_{\epsilon_{0}}\setminus \chi^{\delta}_{1,\epsilon_{0}}(s_{1}(\epsilon))\}$. On this event, the ``almost bubble" is in the interior of $\HH$.
The right-up figure corresponds to the event $\{\LA^{\delta,\epsilon}_{\epsilon_{0}}\cap\chi^{\delta}_{1,\epsilon_{0}}(s_{1}(\epsilon))\setminus(\chi^{\delta}_{0,\epsilon_{0}}(s_{0}(\epsilon))\cup
\chi^{\delta}_{b,c,\epsilon_{0}}(s_{0}(\epsilon)))\}$. On this event, the ``almost bubble" is near $\R\setminus\{0,b,c\}$.
The left-bottom figure corresponds to the event $\{\LA^{\delta,\epsilon}_{\epsilon_{0}}\cap\chi^{\delta}_{0,\epsilon_{0}}(s_{0}(\epsilon))\}$. On this event, the ``almost bubble" is near $0$. 
The right-bottom figure corresponds to the event $\{\LA^{\delta,\epsilon}_{\epsilon_{0}}\cap\chi^{\delta}_{b,c,\epsilon_{0}}(s_{0}(\epsilon))\}$. On this event, the ``almost bubble" is near $\{b,c\}$. }
\end{figure}

\begin{proof}[Proof of Proposition~\ref{prop::tightness}]
The proof is similar to~\cite[Proposition~4.5, Lemma~4.6]{LawlerSchrammWernerLERWUST}. We summarize the proof below and adjust it to our setting. 
By the convergence of domains, we can choose conformal maps $\phi_{\delta}:\Omega_\delta^\diamond\to\HH$ with $\phi_{\delta}(a_\delta^\diamond)=0$ and $\phi_{\delta}(d_\delta^\diamond)=\infty$ such that $\phi^{-1}_{\delta}$ converges uniformly on $\overline\HH$. Define $\hat\eta^L_{\delta}:=\phi_{\delta}(\eta^L_{\delta})$ and we parameterize $\hat\eta^L_{\delta}$ by the half-plane capacity. 
To prove the tightness, there are two parts. 
\begin{itemize}
\item For every $t>0, \eps>0$, there exists $\eps_0>0$ such that 
\begin{align}\label{eqn::equicontinuous}
\PP[\sup\{|\hat\eta_{\delta}(t_{2})-\hat\eta_{\delta}(t_{1})|:0\le t_{1}<t_{2}\le t,|t_{2}-t_{1}|\le\epsilon_{0}\}\ge\epsilon]<\epsilon.
\end{align}
\item The transience of curves at $d$: for any $\epsilon>0$, there exist $\epsilon'\le\epsilon$ and $\delta_{0}>0$ such that, for all $\delta\le\delta_{0}$,   
\begin{align}\label{eqn::neard}
\PP[\eta^L_{\delta}\text{ hits } \partial B(d,\epsilon)\text{ after hitting }\partial B(d,\epsilon')]<\epsilon.
\end{align}
\end{itemize}

We first derive~\eqref{eqn::equicontinuous}. For $0<t_{1}<t_{2}<+\infty$, let $(g^{\delta}_{t}, t\ge 0)$ be the corresponding conformal maps of $\hat\eta^L_{\delta}$ and let $Y_{\delta}(t_{1},t_{2}):=\diam(g^{\delta}_{t_{1}}(\hat\eta^L_{\delta}[t_{1},t_{2}]))$. Combining with~\cite[Lemma~2.1]{LawlerSchrammWernerLERWUST}, to prove~\eqref{eqn::equicontinuous}, it suffices to prove the following statement~\cite[Lemma~4.6]{LawlerSchrammWernerLERWUST}: For every $\epsilon>0$, there exist $\epsilon_{0}>0$ and $\delta_{0}>0$ such that, for all $\delta<\delta_{0}$, 
\begin{align}\label{eqn::tightness}
\PP[\sup\{|\hat\eta^L_{\delta}(t_{2})-\hat\eta^L_{\delta}(t_{1})|:0\le t_{1}<t_{2}\le\tau_{\delta},Y_{\delta}(t_{1},t_{2})\le\epsilon_{0}\}\ge\epsilon]<\epsilon
\end{align}
where $\tau_{\delta}:=\inf\{t\ge 0:|\eta^L_{\delta}(t)|=\epsilon^{-1}\}$.
For $\delta, \eps, \eps_0>0$, define  
\[\LA^{\delta,\epsilon}_{\epsilon_{0}}=\{\exists 0\le t_{1}<t_{2}\le\tau_{\delta}\text{ such that }|\hat\eta^L_{\delta}(t_{2})-\hat\eta^L_{\delta}(t_{1})|\ge \epsilon ,\text{ but }Y_{\delta}(t_{1},t_{2})\le\epsilon_{0}\}.\] 
The event $\LA^{\delta,\epsilon}_{\epsilon_{0}}$ implies the existence of ``almost bubble" before time $\tau_\delta$.
To prove~\eqref{eqn::tightness}, it suffices to show that, for a well-chosen positive function $\epsilon_{0}(\eps)$ such that $\epsilon_{0}(\eps)\to 0$ as $\eps\to 0$, we have 
\begin{equation}\label{eqn::tightenss_aux}
\lim_{\epsilon\to 0}\lim_{\delta\to 0}\sup_{\epsilon_{0}\le\epsilon_{0}(\eps)}\PP[\LA^{\delta,\epsilon}_{\epsilon_{0}}]=0.
\end{equation} 

We then show~\eqref{eqn::tightenss_aux}. On the event $\LA^{\delta,\epsilon}_{\epsilon_{0}}$, there exists a simple curve $\gamma_{\delta,\epsilon_0}$ with short length, such that the interior surrounded by $\gamma_{\delta,\epsilon_0}\cup \hat\eta^L_{\delta}[0,t_{1}]$ contains $\hat\eta^L_{\delta}[t_{1},t_{2}]$ as follows. Denote by $Z$ the semicircle $2\epsilon^{-1}\partial\U\cap\HH$. On the one hand, there is a constant $C>0$ such that $\dist(g^{\delta}_{t}(Z),g^{\delta}_{t}(\hat\eta^L_{\delta}[t,\tau_{\delta}]))\ge C$ for all $t\le\tau_{\delta}$. On the other hand,  on the event $\LA^{\delta,\epsilon}_{\epsilon_{0}}$, we have $Y_{\delta}(t_{1},t_{2})\le\epsilon_{0}\to 0$ as $\epsilon_{0}\to 0$. These two facts guarantee that the extremal length of simple arcs in $\HH\setminus g_{t_{1}}(\hat\eta^L_{\delta}[t_{1},t_{2}])$ which separate $g^{\delta}_{t_{1}}(Z)$ from $g^{\delta}_{t_{1}}(\hat\eta^L_{\delta}[t_{1},t_{2}])$ tends to $0$ as $\epsilon_{0}\to 0$. By the conformal invariance of extremal length, there exists a simple curve $\gamma_{\delta,\epsilon_{0}}$ in $\HH\setminus\hat\eta^L_{\delta}[0,t_{1}]$ separating $\hat\eta^L_{\delta}[t_{1},t_{2}]$ and $Z$  such that the length of $\gamma_{\delta,\epsilon_{0}}$ tends to $0$ as $\epsilon_{0}\to 0$.

For $s>0$, we denote
\begin{align*}
&\chi^{\delta}_{0,\epsilon_{0}}(s): =\{\dist(0,\gamma_{\delta,\epsilon_{0}})<s\}, \quad \chi^{\delta}_{1,\epsilon_{0}}(s):=\{\dist(\R,\gamma_{\delta,\epsilon_{0}})<s\}, \\
&\chi^{\delta}_{b,c,\epsilon_{0}}(s): =\{\dist(\{\phi_\delta(b_\delta^\diamond),\phi_\delta(c_\delta^\diamond)\},\gamma_{\delta,\epsilon_{0}})<s\}. 
\end{align*}
We choose two function  $s_{1}(\epsilon)$ and $s_{0}(\epsilon)$ such that $s_{1}(\epsilon)<s_{0}(\epsilon)$ and $\lim_{\epsilon\to 0}\frac{s_{0}(\epsilon)}{\epsilon}=0$. Now, we divide $\LA^{\delta,\epsilon}_{\epsilon_{0}}$ into four events 
\begin{align*}
&\{\LA^{\delta,\epsilon}_{\epsilon_{0}}\setminus \chi^{\delta}_{1,\epsilon_{0}}(s_{1}(\epsilon))\}, 
\quad \{\LA^{\delta,\epsilon}_{\epsilon_{0}}\cap\chi^{\delta}_{1,\epsilon_{0}}(s_{1}(\epsilon))\setminus(\chi^{\delta}_{0,\epsilon_{0}}(s_{0}(\epsilon))\cup
\chi^{\delta}_{b,c,\epsilon_{0}}(s_{0}(\epsilon)))\}, \\
&\{\LA^{\delta,\epsilon}_{\epsilon_{0}}\cap\chi^{\delta}_{0,\epsilon_{0}}(s_{0}(\epsilon))\}, \quad 
\{\LA^{\delta,\epsilon}_{\epsilon_{0}}\cap\chi^{\delta}_{b,c,\epsilon_{0}}(s_{0}(\epsilon))\}.
\end{align*}
These four events are classifed by the location of the ``almost bubble". See Figure~\ref{fig::fourevents}.
For the first two events, we may use the same argument as~\cite[Eq.~(4.9), Eq.~(4.10)]{LawlerSchrammWernerLERWUST}, and we have
\begin{align}\label{eqn::interior}
&\lim_{\epsilon\to 0}\lim_{\delta\to 0}\sup_{\epsilon_{0}\le\epsilon_{0}(\delta)}\PP[\LA^{\delta,\epsilon}_{\epsilon_{0}}\setminus \chi^{\delta}_{1,\epsilon_{0}}(s_{1}(\epsilon))]=0.\\
\label{eqn::boundary}
&\lim_{\epsilon\to 0}\lim_{\delta\to 0}\sup_{\epsilon_{0}\le\epsilon_{0}(\delta)}\PP\left[\LA^{\delta,\epsilon}_{\epsilon_{0}}\cap\chi^{\delta}_{1,\epsilon_{0}}(s_{1}(\epsilon))\setminus(\chi^{\delta}_{0,\epsilon_{0}}(s_{0}(\epsilon))\cup\chi^{\delta}_{b,c,\epsilon_{0}}(s_{0}(\epsilon)))\right]=0.
\end{align}
This part of the proof relies on~\eqref{eqn::trunktrunkstar_C1regularity}. The third event can be estimated by the same argument for~\cite[Eq.~(4.11)]{LawlerSchrammWernerLERWUST}, and we have
\begin{align}\label{eqn::origin}
\lim_{\epsilon\to 0}\lim_{\delta\to 0}\sup_{\epsilon_{0}\le\epsilon_{0}(\delta)}\PP[\LA^{\delta,\epsilon}_{\epsilon_{0}}\cap\chi^{\delta}_{0,\epsilon_{0}}(s_{0}(\epsilon))]=0.
\end{align} 
For the fourth event, from~\eqref{eqn::subseqlimit_avoids_c}, we have
\begin{align}\label{eqn::bc}
\lim_{\epsilon\to 0}\lim_{\delta\to 0}\sup_{\epsilon_{0}\le\epsilon_{0}(\delta)}\PP[\LA^{\delta,\epsilon}_{\epsilon_{0}}\cap\chi^{\delta}_{b,c,\epsilon_{0}}(s_{0}(\epsilon))]=\lim_{\epsilon\to 0}\varlimsup_{\delta\to 0}\PP[\eta_{\delta}^{L}\text{ hits } B(c_{\delta},\epsilon)]=0.
\end{align}
To sum up, for the four events, we have~\eqref{eqn::interior}, \eqref{eqn::boundary}, \eqref{eqn::origin} and~\eqref{eqn::bc}. They imply~\eqref{eqn::tightenss_aux} and complete the proof for~\eqref{eqn::equicontinuous}.

Next, we show~\eqref{eqn::neard}. We follow the same argument as in~\cite[The last paragraph of the proof of Lemma 4.6]{LawlerSchrammWernerLERWUST}.
Fix two constants $r<\epsilon$ and $C>0$. We choose $v\in V(\Omega_{\delta})$ which is adjacent to $\partial B(d,r)$. Suppose $v^{*}\in V(\Omega^{*}_{\delta})$ which is adjacent to $v$. We denote by $\chi_{v}$ the LERW which starts from $v$ and ends at $(c_{\delta}d_{\delta})$. Similarly, we denote by $\chi^{*}_{v^{*}}$ the LERW which starts from $v^{*}$ and ends at $(d^{*}_{\delta}a^{*}_{\delta})$. We define $A_{1}:=\{\diam(\chi_{v})<Cr\}$ and $A_{2}:=\{\dist(d,\chi_{v})>\frac{1}{C}r\}$. We also define $A_{1}^{*}$ and $A_{2}^{*}$ similarly by replacing $\chi_{v}$ with $\chi^{*}_{v^{*}}$. We choose $C$ such that 
\[\PP[A_{1}\cap A_{2}\cap A^{*}_{1}\cap A^{*}_{2}]\ge 1-\epsilon.\]
We choose $r$ such that $Cr<\epsilon$ and we choose $\epsilon'<\frac{1}{C}r$. Note that on the event $A_{1}\cap A_{2}\cap A^{*}_{1}\cap A^{*}_{2}$, we have that $\eta_{\delta}^L\text{ can not hit } \partial B(d,\epsilon)\text{ after hitting }\partial B(d,\epsilon')$. Since $\eta_{\delta}^L$ can only go through the edge $\{v, v^{*}\}$ once. This implies~\eqref{eqn::neard}. 
Together with~\eqref{eqn::equicontinuous}, we complete the proof of tightness. 

Finally, suppose $\eta^L$ is any subsequential limit of $\{\eta_{\delta}^{L}\}_{\delta>0}$, we check that $\PP[\eta^L\cap [bc]=\emptyset]=1$. We have the following two observations.
\begin{itemize}
\item The event $\{\mathbf{trunk}\cap\mathbf{trunk}^*\neq\emptyset\}$ implies $\{\eta^L\cap (bc)\neq\emptyset\}$. Thus $\PP[\eta^L\cap (bc)=\emptyset]=1$. 
\item From~\eqref{eqn::subseqlimit_avoids_c}, we have $\PP[c\not\in\eta^L]=1$. By symmetry, we have $\PP[b\not\in\eta^L]=1$. 
\end{itemize}
In summary, we have $\PP[\eta^L\cap [bc]=\emptyset]=1$ as desired. 
\end{proof}

\begin{remark}
We treat the tightness of the LERW branches (Proposition~\ref{prop::tightness_LERW}) and the Peano curves (Proposition~\ref{prop::tightness}) for UST in topological rectangle in this appendix. Similar tightness argument also holds for the LERW branches and the Peano curves for UST in general topological polygons. These models will be analyzed in subsequent articles~\cite{LiuPeltolaWuUST} and\cite{LiuWuLERW}. 
\end{remark}

\section{Hypergeometric SLE in literature}
\label{appendix_hSLE}
In this section, we compare ``hypergeometric $\SLE$" in the literature~\cite{ZhanReversibilityMore}, ~\cite{QianConformalRestrictionTrichordal} and~\cite{WuHyperSLE}. 
The variant of $\SLE$ whose driving function involves hypergeometric function is first considered in~\cite{ZhanReversibilityMore}. This is a family of curves with two parameters $\kappa$ and $\rho$ and two marked points and the author calls them ``intermediate $\SLE$". In~\cite{ZhanReversibilityMore}, the author considers the case when $\kappa\in(0,4)$ and $\rho\ge \frac{\kappa-4}{2}$ and shows that the time-reversal of $\SLE_\kappa(\rho)$ can be described by intermediate $\SLE_\kappa(\rho)$ with two marked points.

In~\cite{QianConformalRestrictionTrichordal}, the author first uses the terminology ``hypergeometric $\SLE$" and the notation $\hSLE$. The author extends ``intermediate SLE" to more general setup: a family of curves with three parameters $\lambda, \mu$ and $\nu$, which is denoted by $\hSLE_\kappa(\lambda,\mu,\nu)$. The explicit form of driving function will be given below. The parameters of hypergeometric functions are as follows \cite[Eq.(4.2)]{QianConformalRestrictionTrichordal}: 
\begin{equation}\label{eqn::QianConstants}
A:= \lambda + \mu + \nu + 2, \quad B := \mu+ \nu - \lambda, \quad C := 2\nu + 3/2,
\end{equation}
with restrictions \cite[Eq.(4.5)]{QianConformalRestrictionTrichordal}: 
\begin{equation}\label{eqn::QianRestriction}
\lambda>-3/4,\quad \mu\ge -1/4,\quad \nu>-3/4,\quad \mu<\lambda+\nu+3/2.
\end{equation}
The driving function $W$ satisfies the following SDE \cite[Eq.(4.21)]{QianConformalRestrictionTrichordal}: 
\begin{equation}\label{eqn::QianSDE}
\begin{cases}
dW_t = \sqrt{\kappa} dB_t + J( W_t - O_t, W_t - V_t) dt; \\
dO_t = \frac{2dt}{O_t - W_t} dt;\quad dV_t = \frac{2dt}{V_t - W_t} dt.
\end{cases}
\end{equation}
Here $J$ is defined by \cite[Eq.(4.17)]{QianConformalRestrictionTrichordal}:
\begin{equation}\label{eqn::QianJ}
J ( p;q) = J(A,B,C,p;q) = \frac{\kappa}{p} Q \left(A,B,C, \frac{p}{q-p}\right) 
\end{equation}
where (\cite[the equation above Eq.(4.12), the equation above Eq.(4.15)]{QianConformalRestrictionTrichordal}): 
\begin{equation}\label{eqn::QianQG}
Q (A,B,C,z) : = z G'(z)/ G(z), \quad G(z) = z^\nu (1+z)^\mu w(A, B, C; - z),
\end{equation}
and $w(A, B, C;z)$ is an analytic extension of $\hF(A, B, C;z)$. 
In~\cite{QianConformalRestrictionTrichordal}, the author mainly considers the case $\kappa=8/3$. This value is special since it is related to the conformal restriction measure~\cite{LawlerSchrammWernerConformalRestriction}.

In~\cite{WuHyperSLE}, the author uses the same terminology ``hypergeometric $\SLE$" and the same notation $\hSLE$.  In this paper, she extends the definition of the ``intermediate $\SLE$" in another direction, compared with~\cite{QianConformalRestrictionTrichordal}. This is still a family of curves with two parameters $\kappa$ and $\rho$ and two marked points, which is called $\hSLE_\kappa(\rho-2)$ (to avoid the ambiguity, we replace $\nu$ there by $\rho-2$). The author in~\cite{WuHyperSLE} extends it to the case when $\kappa\in (0,8)$ and $\rho\in\R$: 
\begin{itemize}
\item When $\kappa\in(0,8)$ and $\rho> -2\vee (\kappa/2-4)$, the process is defined in~\cite[Eq.(3.2), Eq.(3.4)-(3.7)]{WuHyperSLE}. In this case, the process is defined in the same way as ~\eqref{eqn::hyper_def}-\eqref{eqn::hypersle_sde} in this article. In particular, when $\kappa\in (0,4)$ and $\rho>-2$, it is the same as the intermediate $\SLE_{\kappa}(\rho)$~\cite{ZhanReversibilityMore}.
\item When $\kappa\in (0,8)$ and $\rho\le -2\vee (\kappa/2-4)$, the process is defined in~\cite[Eq.(3.3), Eq.(3.4)-(3.7)]{WuHyperSLE}. 
\end{itemize}
In our Section~\ref{sec::hypersle}, we further extend this definition to the case when $\kappa\ge 8$ and $\rho>\kappa/2-4$. 
In~\cite{WuHyperSLE}, the author relates $\hSLE_\kappa(\rho-2)$ to the scaling limit of interfaces of discrete models in quad, see~\cite[Proposition~1.6]{WuHyperSLE} for Ising model and~\cite{KemppainenSmirnovFKIsingHyperSLE} for FK-Ising model. In our Section~\ref{sec::ust}, we relates $\hSLE_8$ to the scaling limit of the Peano curve in UST in quad. 

Now, let us compare the two notions of ``hypergeometric $\SLE$" in~\cite{QianConformalRestrictionTrichordal} and~\cite{WuHyperSLE}. The “hypergeometric SLE” defined in~\cite{WuHyperSLE} is a subfamily of the “hypergeometric SLE” defined in~\cite{QianConformalRestrictionTrichordal}) (as cited
above~\eqref{eqn::QianConstants}-\eqref{eqn::QianQG}) only when $\kappa=8/3$ and $\rho>-2$: the process $\hSLE_{8/3}(\rho-2)$ defined in~\eqref{eqn::hyper_def}-\eqref{eqn::hypersle_sde} is the same as $\hSLE_{8/3}(-2, 3\rho/8, 3\rho/8)$ defined in~\eqref{eqn::QianConstants}-\eqref{eqn::QianQG} (the restrictions~\eqref{eqn::QianRestriction} needs to be relaxed to include the case when $\lambda=-2$). For other $\kappa$’s, they are two different families of processes. 

In order to get a more general relation, one has to change the constants in~\eqref{eqn::QianConstants} to the following ones:
\begin{equation}\label{eqn::correction}
A=\lambda+\mu+\nu+8/\kappa-1, \quad B=\mu+\nu-\lambda, \quad C=2\nu+4/\kappa,
\end{equation}
and relaxes the restrictions in~\eqref{eqn::QianRestriction}.
\begin{lemma}
When $\kappa>0$ and $\rho>(-2)\vee (\kappa/2-4)$, the process $\hSLE_\kappa(\rho-2)$ defined in~\eqref{eqn::hyper_def}-\eqref{eqn::hypersle_sde} is a special case of the process $\hSLE_{\kappa}(\lambda, \mu, \nu)$ defined in~\eqref{eqn::QianSDE}-\eqref{eqn::correction} for
\begin{equation}\label{eqn::generalpara}
\mu=\nu=\frac{\rho}{\kappa}\quad\text{and}\quad \lambda=1-\frac{8}{\kappa}.
\end{equation}
\end{lemma} 
\begin{proof}
In~\cite{QianConformalRestrictionTrichordal}, the author considers the case $V_0<O_0=0\le W_0$. In this case, $z=p/(q-p)<0$, we use the following relation~\cite[Eq.15.3.4]{AbramowitzHandbook}
\[
w(A, B, C; -z) = (1+z)^{-A} w\left( A, C- B, C ; \frac{z}{z+1}\right)
\]
and write $\tilde w(\cdot) = w( A, C-B, C;\cdot)$. By the choice of $A, B, C$ in~\eqref{eqn::correction}, we have 
\[
\tilde w(\cdot) =\hF(\lambda + \mu + \nu +8/\kappa-1; \nu + \lambda +4/\kappa - \mu; 2\nu+4/\kappa; \cdot).
\]
Setting $p = W_t - O_t$ and $q= W_t- V_t$, 
we find
\[
J (W_t - O_t,W_t- V_t) = \frac{\kappa \nu }{W_t - O_t}  + \frac{\kappa( \mu -A)}{W_t - V_t}  - \frac{\kappa (O_t- V_t)}{(W_t - V_t)^2} \frac{\tilde w' ( (W_t - O_t)/(W_t - V_t))}{\tilde w( (W_t - O_t)/ (W_t - V_t))}. 
\]
To compare with~\eqref{eqn::hsle_sde}, we reflect the $\hSLE_\kappa(\mu,\nu,\kappa)$ with respect to $\ii\R$,  the drift term of the resulting driving function becomes
\begin{equation*}
d L_t:=\frac{\kappa \nu }{W_t - O_t}dt  -\frac{\kappa(A-\mu)}{W_t - V_t}dt-\frac{\kappa (V_t- O_t)}{(W_t - V_t)^2} \frac{\tilde w' ( (W_t - O_t)/(W_t - V_t)))}{\tilde w( (W_t - O_t)/ (W_t - V_t))}dt. 
\end{equation*}
Compared with~\eqref{eqn::hsle_sde}, we see that $\hSLE_\kappa(\rho-2)$ defined in~\eqref{eqn::hsle_sde} is a special case of the process defined in~\eqref{eqn::QianSDE}-\eqref{eqn::correction} with parameters~\eqref{eqn::generalpara}.
\end{proof}


{\small
\newcommand{\etalchar}[1]{$^{#1}$}

}

\end{document}